\journal{Journal de Math\'{e}matiques Pures et Appliqu\'{e}es}
\newenvironment{abstracts}
 {\global\setbox\absbox=\vbox\bgroup
    \hsize=\textwidth
    \linespread{1}\selectfont}
 {\vspace{-\bigskipamount}\egroup}
\renewenvironment{abstract}[1][]
 {\if\relax\detokenize{#1}\relax\else\selectlanguage{#1}\fi
  \noindent\textbf{\abstractname}\par\medskip\noindent\ignorespaces}
 {\par\bigskip}
\definecolor{dmagenta}{rgb}{.4,.1,.5}
\definecolor{dblue}{rgb}{.0,.0,.5}
\definecolor{mblue}{rgb}{.0,.0,.8}
\definecolor{ddblue}{rgb}{.0,.0,.4}
\definecolor{dred}{rgb}{.6,.0,.0}
\definecolor{dgreen}{rgb}{.0,.5,.0}
\definecolor{Eeom}{rgb}{.0,.0,.5}
\newtheorem{lemma}{Lemma}[section]
\newtheorem{theorem}{Theorem}[section]
\newtheorem{proposition}{Proposition}[section]
\newtheorem{corollary}{Corollary}[section]
\newdefinition{definition}{Definition}[section]
\newdefinition{assumption}{Assumption}[section]
\newdefinition{hypothesis}{Hypothesis}[section]
\newdefinition{notation}{Notation}[section]
\newdefinition{example}{Example}[section]
\newdefinition{remark}{Remark}[section]
\newproof{proof}{Proof}
\numberwithin{equation}{section}
\crefname{section}{Section}{Sections}
\crefname{subsection}{Subsection}{Subsections}
\crefname{condition}{Condition}{Conditions}
\crefname{hypothesis}{Hypothesis}{Conditions}
\crefname{assumption}{Assumption}{Assumptions}
\crefname{lemma}{Lemma}{Lemmas}
\Crefname{figure}{Figure}{Figures}
\DeclareRobustCommand\widecheck[1]{{\mathpalette\@widecheck{#1}}}
\def\@widecheck#1#2{%
    \setbox\z@\hbox{\m@th$#1#2$}%
    \setbox\tw@\hbox{\m@th$#1%
       \widehat{%
          \vrule\@width\z@\@height\ht\z@
          \vrule\@height\z@\@width\wd\z@}$}%
    \dp\tw@-\ht\z@
    \@tempdima\ht\z@ \advance\@tempdima2\ht\tw@ \divide\@tempdima\thr@@
    \setbox\tw@\hbox{%
       \raise\@tempdima\hbox{\scalebox{1}[-1]{\lower\@tempdima\box
\tw@}}}%
    {\ooalign{\box\tw@ \cr \box\z@}}}
\newcommand{\df}{\coloneqq}
\DeclareMathOperator{\Exp}{\mathbb{E}} 
\DeclareMathOperator{\Prob}{\mathbb{P}} 
\newcommand{\D}{\mathrm{d}}          
\newcommand{\E}{\mathrm{e}}          
\newcommand{\RR}{\mathbb{R}}         
\newcommand{\Rd}{{\mathbb{R}^d}}       
\newcommand{\NN}{\mathbb{N}}         
\newcommand{\Ind}{\mathds{1}}            
\newcommand{\Act}{\mathbb{U}}        
\newcommand{\Uadm}{\mathfrak{U}}     
\newcommand{\Usm}{\mathfrak{U}_{\mathrm{SM}}}  
\newcommand{\bUsm}{\overline{\mathfrak{U}}_{\mathrm{SM}}}  
\newcommand{\Ussm}{\mathfrak{U}_{\mathrm{SSM}}}  
\newcommand{\Ulb}{\widehat{\mathfrak{U}}_{\mathrm{SM}}}  
\newcommand{\Ulbs}{\widehat{\mathfrak{U}}_{\mathrm{SSM}}} 
\newcommand{\Sob}{\mathscr{W}}       
\newcommand{\Sobl}{\mathscr{W}_{\mathrm{loc}}}  
\newcommand{\Cc}{C}                  
\newcommand{\Ccl}{C_{\mathrm{loc}}}  
\newcommand{\Lp}{L}                  
\newcommand{\Lpl}{L_{\mathrm{loc}}}  
\newcommand{\str}{^{\!*}}          
\newcommand{\lamstr}{\lambda^{\!*}}
\newcommand{\transp}{^{\mathsf{T}}}  
\newcommand{\Lg}{\mathscr{L}}        
\newcommand{\cL}{\mathcal{L}}        
\newcommand{\uuptau}{\Breve{\uptau}}
\newcommand{\grad}{\nabla}
\newcommand{\cA}{\mathcal{A}}    
\newcommand{\sA}{\mathscr{A}}    
\newcommand{\cB}{\mathcal{B}}    
\newcommand{\sB}{\mathscr{B}}    
\newcommand{\cC}{\mathcal{C}}     
\newcommand{\cD}{\mathcal{D}}     
\newcommand{\sE}{\mathscr{E}}     
\newcommand{\sF}{\mathfrak{F}}    
\newcommand{\sJ}{\mathscr{J}}     
\newcommand{\sK}{\mathscr{K}}     
\newcommand{\sM}{\mathscr{M}}     
\newcommand{\cP}{\mathcal{P}}     
\newcommand{\sR}{\mathscr{R}}     
\newcommand{\Lyap}{\mathscr{V}}   
\newcommand{\abs}[1]{\lvert#1\rvert}
\newcommand{\norm}[1]{\lVert#1\rVert}
\newcommand{\babs}[1]{\bigl\lvert#1\bigr\rvert}
\newcommand{\babss}[1]{\biggl\lvert#1\biggr\rvert}
\newcommand{\bnorm}[1]{\bigl\lVert#1\bigr\rVert}
\DeclareMathOperator*{\argmin}{arg\,min}
\DeclareMathOperator*{\essinf}{ess\,inf}
\DeclareMathOperator*{\supp}{support}
\DeclareMathOperator{\trace}{trace}
\DeclareMathOperator{\sign}{sign}
\begin{document}
\begin{frontmatter}

\title
{Strict monotonicity of principal eigenvalues of elliptic operators
in \texorpdfstring{$\Rd$}{}\\ and risk-sensitive control}

\author[ut]{Ari Arapostathis\corref{cor1}}
\cortext[cor1]{Corresponding author}
\ead{ari@ece.utexas.edu}

\author[iiser]{Anup Biswas}
\ead{anup@iiserpune.ac.in}

\author[iitg]{Subhamay Saha}
\ead{saha.subhamay@iitg.ernet.in}

\address[ut]{Department of ECE,
The University of Texas at Austin, 2501 Speedway, EER 7.824,
Austin, TX~78712, USA}

\address[iiser]{Department of Mathematics,
Indian Institute of Science Education and Research,\\ 
Dr. Homi Bhabha Road, Pune 411008, India}

\address[iitg]{Department of Mathematics,
Indian Institute of Technology Guwahati, Assam 781039, India}

\begin{abstracts}
\begin{abstract}
This paper studies the eigenvalue problem on $\Rd$
for a class of second order, elliptic operators of the form
$\Lg^f = a^{ij}\partial_{x_i}\partial_{x_j} + b^{i}\partial_{x_i} + f$,
associated with non-degenerate diffusions.
We show that strict monotonicity of the principal eigenvalue 
of the operator with respect to the potential function $f$
fully characterizes the 
ergodic properties of the associated ground state diffusion, and the
unicity of the ground state, and we present a comprehensive study of the eigenvalue
problem from this point of view.
This allows us to extend or strengthen various results in the literature
for a class of viscous Hamilton--Jacobi
equations of ergodic type with smooth coefficients
to equations with
 measurable drift and potential.
In addition, we 
establish the strong duality for the equivalent infinite dimensional linear
programming formulation of these ergodic control problems.
We also apply these results to the study of the infinite horizon
risk-sensitive control problem for diffusions, and
establish existence of optimal Markov controls,
verification of optimality results,
and the continuity of the controlled principal eigenvalue
with respect to stationary Markov controls.
\end{abstract}
\end{abstracts}

\begin{keyword}
generalized principal eigenvalue\sep
recurrence and transience\sep
viscous Hamilton--Jacobi equation\sep
risk-sensitive control\sep
ergodic control\sep
nonlinear eigenvalue problems.
\MSC[2010] Primary: 35P15\sep Secondary: 35B40\sep 35Q93\sep 60J60\sep 93E20
\end{keyword}

\end{frontmatter}

\tableofcontents

\section{Introduction}\label{S1}
In this paper we study the eigenvalue problem on $\Rd$ for
non-degenerate, second order elliptic operators
$\Lg^f$ of the form
\begin{equation}\label{E-Lgf}
\Lg^f\varphi \;=\; \sum_{i,j=1}^{d}a^{ij}
\frac{\partial^{2}\varphi}{\partial{x}_{i}\partial{x}_{j}}
+\sum_{i=1}^{d} b^{i} \frac{\partial\varphi}{\partial{x}_{i}}+ f\, \varphi\,.
\end{equation}
Here $b,f\in \Lpl^\infty(\Rd)$, $a\in\Ccl^{0,1}(\Rd)$ and $a$, $b$ satisfy
a linear growth assumption in the outward radial direction (see (A2)
in \cref{S1.1}).
In other words, $a$ and $b$ satisfy the usual assumptions for
existence and uniqueness of a strong solution of the It\^o equation
\begin{equation}\label{E1.2}
X_{t} \;=\;x + \int_{0}^{t} b(X_{s})\,\D{s}
+ \int_{0}^{t} \upsigma(X_{s})\,\D{W}_{s}\,, \quad\text{with}\quad
a\df\tfrac{1}{2}\,\upsigma\upsigma\transp,
\end{equation}
where $W$ is a standard Brownian motion.

We focus on certain properties of 
the principal eigenvalue of the operator $\Lg^f$ which play a
key role in infinite horizon risk-sensitive control problems.
When $D$ is a smooth bounded domain,
and $a$, $b$, $f$ are regular enough,
existence of a principal eigenvalue and corresponding eigenfunction
under a Dirichlet boundary condition can be obtained by
an application of Krein-Rutman theory (see for instance \cite{Krein-Rutman, Pinsky}).
This eigenvalue is the bottom of the spectrum of $-\Lg^f$ with Dirichlet boundary
condition. 
For non-smooth domains, a generalized notion of a
principal eigenvalue was introduced in the seminal work of
Berestycki, Nirenberg and Varadhan \cite{Berestycki-94}.
An analogous theory for non-linear
elliptic operators has been developed by Quaas and Sirakov in \cite{Quaas-08a}. 
The principal eigenvalue plays a key role in the study of
non-homogeneous elliptic operators and the maximum principle
(see \cite{Berestycki-94, Berestycki-15,
Furusho-Ogura, Quaas-08a}). 
For some other definitions of the principal (or \emph{critical}) eigenvalue we
refer the reader to the works of
Pinchover \cite{Pinchover-88} and Pinsky \cite[Chapter~4]{Pinsky}.

For unbounded domains,
principal eigenvalue problems have been recently considered
by Berestycki and Rossi in \cite{Berestycki-06, Berestycki-15}.
Not surprisingly, certain properties of the principal eigenvalue which
hold in bounded domains may not be true for unbounded ones.
For instance, when $D$ is smooth and bounded it is well known that
for the Dirichlet boundary value problem, the principal eigenvalue is simple,
and the associated principal eigenfunction is positive.
Moreover, it is the unique eigenvalue with a positive eigenfunction.
But if $D$ is unbounded and smooth, then there exists a constant
$\lamstr=\lamstr(f)$ such that any
$\lambda\in[\lamstr, \infty)$ is an eigenvalue of $\Lg^f$
with a positive eigenfunction \cite[Theorem~1.4]{Berestycki-15}
(see also \cite{Furusho-Ogura} and \cite[Theorem~2.6]{Kaise-06}).
The lowest such value $\lamstr$ serves as a definition of
the principal eigenvalue when $D$ is not bounded.
The principal eigenvalue is known to be strictly monotone as a
function of the bounded domain $D$ (the latter ordered with respect
to set inclusion), and also strictly monotone in the coefficient $f$
when the domain is bounded (see \cite{Berestycki-15} and \cref{L2.1} below).
These properties fail to hold in
unbounded domains as remarked by Berestycki and Rossi 
\cite[Remark~2.4]{Berestycki-15}.
Strict monotonicity of
$f\mapsto\lamstr(f)$ and its implications
are a central theme in our study.
We adopt a probabilistic approach in our investigation.
One can view $\lamstr(f)$ as a risk-sensitive average of $f$ over the diffusion
in \cref{E1.2}.
More precisely, since \cref{E1.2} has a unique solution which exists for all
$t\in[0,\infty)$, then we can define
\begin{equation}\label{E-sE}
\sE_x(f)\;\df\; \limsup_{T\to\infty}\, \frac{1}{T}\;
\log\,\Exp_x\Bigl[\E^{\int_0^T f(X_s)\, \D{s}}\Bigr]\,,\quad x\in\Rd\,,
\end{equation}
with `$\log$' denoting the natural logarithm.
As shown in the proof of Lemma~2.3 in \cite{ari-anup} we have
$\lamstr(f)\le \sE_x(f)$, and equality is indeed the case in
many important situations, although strictly speaking it is only a heuristic.
This heuristic is based on the fact that for a bounded $f$,
the operator
$\Lg^f$ is the infinitesimal generator of a strongly
continuous, positive semigroup with potential $f$, see for instance
\cite[Chapter~IV]{Engel-Nagel}.
If $f$ is a bounded continuous function, and if the occupation measures of
$\{X_t\}$ obey a large deviation principle, then one can express $\sE_x(f)$ in
terms of the large deviation rate function.
This is known as the variational representation for the eigenvalue.
See for instance the article by Donsker and Varadhan \cite{Donsker-75} where
this representation is obtained for compact domains.
But large deviation
principles for $\{X_t\}$ are generally available only under strong hypotheses
on the process (see \cite{Donsker-76}).
In this paper we rely on the stochastic representation of the principal
eigenfunction which can be established under very mild assumptions.
This approach has been recently used by Arapostathis and Biswas in \cite{ari-anup}
to study the multiplicative
Poisson equation when $f$ is \textit{near-monotone}
(which includes the case of inf-compact $f$).
By an \emph{eigenpair} of $\Lg^f$ we mean a pair $(\Psi,\lambda)$, with
$\Psi$ a positive function in
 $\Sobl^{2,p}(\Rd)$, for all $p\in[1,\infty)$, and $\lambda\in\RR$, that satisfies 
\begin{equation}\label{E1.4}
\Lg^f \Psi\;=\; a^{ij} \partial_{ij}\Psi
+ b^{i} \partial_{i}\Psi+ f\, \Psi\;=\;\lambda\Psi\,.
\end{equation}
We refer to $\lambda$ as the \emph{eigenvalue}, and to $\Psi$ as the
\emph{eigenfunction}.
In \cref{E1.4}, and elsewhere in this paper, we adopt the notation
$\partial_{i}\df\tfrac{\partial~}{\partial{x}_{i}}$ and
$\partial_{ij}\df\tfrac{\partial^{2}~}{\partial{x}_{i}\partial{x}_{j}}$
for $i,j\in\NN$, and use the standard summation rule that
repeated subscripts and superscripts are summed from $1$ through $d$.

As mentioned earlier, such a pair $(\Psi,\lambda)$ exists only if
$\lambda\ge\lamstr(f)$ (see \cref{C2.1}).
Given an eigenpair $(\Psi,\lambda)$, the associated
\emph{twisted diffusion} $Y$ (a terminology used in
\cite{Kontoyiannis-02}) is an It\^{o} process as in \cref{E1.2}, but with
the drift $b$ replaced by $b+2a\grad(\log\Psi)$.
It is not generally the case that the twisted process has a strong solution which
exists for all time.
If $\lambda>\lamstr(f)$ the twisted diffusion is always transient (see \cref{L2.6}).
When $\lambda=\lamstr(f)$, the eigenfunction is denoted as $\Psi^*$ and is
called the \emph{ground state} \cite{Pinsky, Wu-94}. The corresponding twisted
diffusion, denoted by $Y^*$, is referred to as the \emph{ground-state diffusion}.

Let $\Cc_{\mathrm{o}}^+(\Rd)$ ($\Cc_{\mathrm{c}}^+(\Rd)$) denote
the class of non-zero, nonnegative real valued continuous functions on $\Rd$
which vanish at infinity (have compact support).
We say that $\lamstr(f)$ is \emph{strictly monotone at $f$} if
there exists $h\in\Cc_{\mathrm{o}}^+(\Rd)$ satisfying $\lamstr(f-h)<\lamstr(f)$.
We also say that $\lamstr(f)$ is \emph{strictly monotone at $f$ on the right} if
$\lamstr(f+h)>\lamstr(f)$ for all $h\in\Cc_{\mathrm{c}}^+(\Rd)$.
In \cref{T2.1} we show that
strict monotonicity at $f$ implies strict monotonicity at $f$ on the right.
Our main results provide sharp characterizations of the ground state $\Psi^*$ and
the ground state process $Y^*$ in terms of these monotonicity properties.
Assume that $f\colon\Rd\to\RR$ is a locally bounded, Borel measurable function,
satisfying $\essinf_\Rd f>-\infty$, and that $\lamstr(f)$ is finite.
We show that strict monotonicity of $\lamstr(f)$ at $f$ on the right implies
the simplicity of $\lamstr(f)$, i.e., the uniqueness of the ground state $\Psi^*$,
and that this is also a necessary and sufficient
condition for the ground state process to be recurrent (see \cref{L2.7,T2.3}).
Another important result is that the ground state diffusion is exponentially ergodic
(see \cref{D2.2}) if and only if $\lamstr(f)$ is strictly monotone at $f$.
These results are summarized in \cref{T2.1} in \cref{S2}. 
Other results in \cref{S2} provide a characterization of the eigenvalue in terms of the
long time behavior of the twisted process and stochastic representations
of the ground state (see \cref{L2.2,L2.3,L2.7}, and \cref{T2.4}).

In \cite{Pinsky}, Pinsky uses the existence of a Green's measure to define
the \emph{critical} eigenvalue of a non-degenerate elliptic  operator.
This critical eigenvalue coincides with the principal eigenvalue when
the boundary of the domain and the coefficients of $\Lg^f$ are smooth enough.
He shows that for any bounded domain, and provided that
the coefficients are in $\Cc^{1,\alpha}(\Rd)$, $\alpha>0$,
and bounded,
there exists a critical value $\lambda_c$
such that for any $\lambda>\lambda_c$ we can find a Green's measure corresponding
to the  operator $\Lg^{(f-\lambda)}$ \cite[Theorem~4.7.1]{Pinsky}.
The result in \cref{T2.3} in \cref{S2} extends this to $\Rd$ without assuming much
regularity on the coefficients.

Continuous dependence of $\lamstr$ on the coefficients
of $\Lg$ has also been a topic of interest.
It is not hard to see that $f\mapsto\lamstr(f)$ is lower-semicontinuous
in the $\Lpl^1(\Rd)$ topology for $f$.
Continuity of this map is also established in 
\cite[Proposition~9.2]{Berestycki-15} with respect to the
$L^\infty(\Rd)$ norm of $f$.
In \cref{T2.4,R4.1} we study the continuity of $\lamstr(f)$
for a class of functions $f$ under the $\Lpl^1(\Rd)$ topology. We also obtain
a pinned multiplicative ergodic theorem which is of independent interest,
and show that $\sE_x(f)=\lamstr(f)$ for a large class of problems.

We next discuss the connection of this problem with
a stochastic ergodic control problem.
Defining $\Breve\psi\df\log\Psi^*$ we obtain from \cref{E1.4} that
\begin{equation}\label{E1.5}
a^{ij} \partial_{ij}\Breve\psi + b^{i} \partial_{i}\Breve\psi
- \langle \grad\Breve\psi, a\grad\Breve\psi\rangle
\;=\; a^{ij} \partial_{ij}\Breve\psi + b^{i} \partial_{i}\Breve\psi
+\min_{u\in\Rd}\bigl[2\langle  a\,u, \grad\Breve\psi\rangle
+ \langle u, au\rangle\bigr]
\;=\;f-\lamstr(f)\,.
\end{equation}
It is easy to see that \cref{E1.5} is related to an ergodic
control problem with controlled drift $b+2a u$ and running cost
$\langle u, au\rangle-f(x)$.
The parameter $\lamstr(f)$
can be thought of as the optimal ergodic value;
see Ichihara \cite{Ichihara-13b}.
Note then that the twisted process defined above corresponds
to the optimally controlled diffusion.
We refer to Ichihara \cite{Ichihara-11,Ichihara-13b}
and Kaise and Sheu \cite{Kaise-06} for some important results in this direction.
For a potential $f$ that vanishes at infinity, Ichihara
\cite{Ichihara-13b, Ichihara-15} considers the ergodic
control problem in \cref{E1.5}, with a more general Hamiltonian and under
scaling of the potential.
When $f$ is nonnegative,
it is shown that the value of the ergodic problem with potential $\beta f$,
$\beta\in\RR$, equals the eigenvalue
$\lamstr(\beta f)$, and $\grad\psi^*$ is the
optimal control when the parameter $\beta$ exceeds a critical value
$\beta_c$,
while below that critical value a bifurcation occurs.
Analogous are the results in \cite{Barles-16} for viscous Hamilton--Jacobi
equations with $a$ the identity matrix and a Hamiltonian which is a power
of the gradient term.
Most of the above results are obtained for bounded,
and Lipschitz continuous $a$, $b$, and $f$.
In \cref{T2.5,T2.6} we extend these results
to measurable $b$ and $f$, and possibly unbounded $a$ and $b$.

Optimality for the ergodic problem is shown in \cite{Ichihara-13b, Ichihara-15}
via the study of the optimal finite horizon problem (Cauchy parabolic problem).
Inevitably, in doing so, optimality is shown in a certain class of controls.
To overcome this limitation,
we take a different approach to the ergodic control problem in \cref{E1.5}.
As well known, ergodic control problems can be cast as infinite dimensional
linear programs \cite{Bhatt-Bor-96,Stockbridge-90}.
Consider a controlled diffusion, with the control taking values in
a space $\Act$ with extended generator $\cA$, where the `action' $u\in\Act$ enters
implicitly as a parameter in $\cA$.
Let $\sR\colon\Act\to\RR$ denote the running cost.
The primal problem then can be written
\begin{equation*}
\alpha_* \;=\;\biggl\{\inf\; \int_{\Rd\times\Act}\sR(x,u)\,\uppi(\D{x},\D{u})\;\colon
\;\cA^*\uppi=0\,,
\ \ \uppi\in\cP(\Rd\times\Act)\biggr\}\,.
\end{equation*}
Here $\cP(\Rd\times\Act)$ denotes the class of probability measures
on the Borel $\sigma$-field of $\Rd\times\Act$. Its elements
are called \emph{ergodic occupation measures} (see \cite{Bhatt-Bor-96}).
The dual problem takes the form
\begin{equation*}
\alpha\;=\; \sup\;\Bigl\{c\in\RR\;\colon\; \inf_{u\in\Act}\,
\bigl[\cA g(x,u) + \sR(x,u)\bigr]\ge c\,,\ \ g\in\cD(\cA)\Bigr\}\,,
\end{equation*}
where $\cD(\cA)$ denotes the domain of $\cA$.
In other words the dual problem is a maximization over subsolutions of the
Hamilton--Jacobi--Bellman (HJB) equation.
For non-degenerate diffusions with a compact action space $\Act$,
under the hypothesis that $\sR$
is near-monotone, or under uniform ergodicity conditions, it is well known
that we have strong duality, i.e., $\alpha_*=\alpha$.
To the best of our knowledge, this has not been established for
problems with non-compact action spaces.
In \cref{T2.7} we establish strong duality for the ergodic problem in
\cref{E1.5}.
In this result, the coefficients $b$ and $f$ are bounded and measurable,
and $a$ is bounded, Lipschitz, and uniformly elliptic.
Moreover, we establish the unicity of the optimal ergodic occupation measure,
and as a result of this, the uniqueness of the optimal stationary Markov control.
The methodology is general enough that can be applied to various classes
of ergodic control problems that are characterized by viscous HJB equations.

The results in \cite{Ichihara-11,Kaise-06}
are obtained for smooth coefficients ($\Cc^{2, \alpha}$),
and under an assumption of exponential ergodicity
(see \cref{P3.1A} below).
We provide a sufficient condition in (H2) under which strict monotonicity
of the principal eigenvalue holds.
It is also shown that the exponential ergodicity condition of
\cite{Ichihara-11, Kaise-06} actually implies (H2);
thus (H2) is weaker.
Moreover, \cref{P3.1A} cannot hold for bounded
coefficients $a$ and $b$.
See \cref{R3.2} for details.
In \cref{T3.3} we cite a sufficient condition under which strict monotonicity of
$\lamstr(f)$ holds even when $a$ and $b$ are bounded.
Let us also remark that the method of proof \cite{Ichihara-11, Kaise-06}
utilizes the smoothness of the coefficients $a$, $b$ and $f$.
This is because a gradient estimate (Bernstein method) is required,
 which is not available under weaker regularity.
But this amount of regularity might not be available in many situations,
for instance in models with a measurable drift which are often encountered
in stochastic control problems.
Let us also mention the unpublished work of Kaise and Sheu in
\cite{Kaise-04} that contains some results similar to ours, in particular,
similar to the results in \cref{S3} and the pinned multiplicative ergodic theorems.
These results are also obtained under sufficient smoothness of
the coefficients $a$, $b$, and $f$.

In \cref{S4} we apply the above mentioned results to study the infinite
horizon risk-sensitive control problem.
We refer the reader to \cite{ari-anup} where the importance of these control problems
is discussed.
Unfortunately, the development of the infinite horizon risk-sensitive control problem
for controlled diffusions
has not been completely satisfactory, and the  same applies to
controlled Markov chain models.
Most of the available results have been obtained under restrictive settings,
and a full characterization such as uniqueness of the solution to the
risk-sensitive HJB equation,
and verification of optimality results is lacking.
Let us give a quick overview of the existing literature on risk-sensitive control
in the context of
controlled diffusions which is relevant to our problem.
Risk-sensitive control for models with a constant diffusion matrix
and \emph{asymptotically flat} drift is studied by Fleming and McEneaney
in \cite{Fleming-95}.
Another particular setting is considered by Nagai \cite{Nagai-96},
where the action space is the whole Euclidean space, and the running cost
has a specific structure.
Menaldi and Robin have considered models with periodic data \cite{Menaldi-05}.
Under the assumption of a near-monotone cost, the infinite
horizon risk-sensitive control problem
is studied in \cite{ari-anup,biswas-11a,Biswas-10}, whereas Biswas in \cite{biswas-11}
has considered this problem under the assumption of exponential ergodicity.
Differential games with risk-sensitive type costs have been studied by
Basu and Ghosh \cite{Basu-Ghosh}, Biswas and Saha \cite{BS-arxiv},
and Ghosh et. al. \cite{Ghosh-16}.
All the above studies, have obtained
existence of a pair $(V,\lamstr)$ that satisfies the risk-sensitive HJB equation,
with $\lamstr$ the optimal risk-sensitive
value, and show that any minimizing selector of the HJB is an optimal control.
The works in \cite{Nagai-96, Menaldi-05} address the existence
and uniqueness of a solution to the HJB equation, in their particular set up, but do not 
contain any verification of optimality results. 
Two main results that are missing
from the existing literature, with the exception of \cite{ari-anup},
are (a) uniqueness of the solution to the HJB equation,
and (b) verification for optimal control.
 
Following the ergodic control paradigm,
we can identify two classes of models:
(i) models with a near-monotone running cost and finite optimal value,
and no other hypotheses on the dynamics,
and (ii) models that enjoy a uniform exponential ergodicity.
Near-monotone running cost models are studied in
\cite{Nagai-96, ari-anup, biswas-11a, Biswas-10}; however, only \cite{ari-anup}
obtains a full characterization without
imposing a blanket ergodicity hypothesis.
Studies for models in class (ii) can be found in \cite{Fleming-95, Basu-Ghosh,
biswas-11, Ghosh-16}.

In this paper we study models in class (ii).
The results developed in \cref{S2,S3} enable us to
obtain a full characterization of the risk-sensitive control problem
in \cref{S4}.
The main hypotheses are \cref{A4.1,A4.2}.
Another  interesting result that we establish in \cref{S4} is the continuity of the
controlled principal eigenvalue with respect to (relaxed) stationary Markov
controls (see \cref{T4.3}).
This facilitates establishing the 
existence of an optimal stationary Markov control for risk-sensitive control
problems under risk-sensitive type constraints.
Let us also remark that this existence result is far from being obvious, since
the controlled risk-sensitive value is lower-semicontinuous with respect 
to Markov controls and 
the equality $\lamstr(f)=\sE(f)$ is not true in general.
Moreover, the usual technique of Lagrange multipliers does not work in this situation,
because of the non-convex nature of the optimization criterion.

To summarize the main contributions of the paper, we have
established several characterizations of the property of
strict monotonicity of the principal eigenvalue, and
extended several results in the literature on viscous HJB equations
with potentials $f$ vanishing at infinity, and smooth data, to measurable potential and
drift (\cref{T2.1,T2.2,T2.4,T2.5,T2.6,T2.7,T2.8,T3.2}).
We have also studied a general class of risk-sensitive control problems under a
uniform ergodicity hypothesis, and established the uniqueness of a solution
to the HJB equation and verification of optimality results
(\cref{T4.1,T4.2}).
Equally interesting are the continuity results of the controlled principal eigenvalue
with respect to stationary Markov controls (\cref{T4.3,T4.5}).

The paper is organized as follows.
\Cref{S1.1} states the assumptions on the coefficients of the operator $\Lg$,
and \cref{S1.2} summarizes the notation used in the paper.
The first three subsections of
\Cref{S2} contain the main results on the principal eigenvalue under
minimal assumptions,
while \cref{S2.4} is devoted to operators
with potential $f$ which vanishes at infinity.
\Cref{S3} improves on the results of \cref{S2}, under the assumption that
\cref{E1.2} is exponentially ergodic.
\Cref{S4} is dedicated to the infinite horizon, risk-sensitive
optimal control problem.

\subsection{Assumptions on the model}\label{S1.1}
The following assumptions on the coefficients of $\Lg$
are in effect throughout the paper
unless otherwise mentioned.
\begin{itemize}
\item[(A1)]
\emph{Local Lipschitz continuity:\/}
The function
$\upsigma\;=\;\bigl[\upsigma^{ij}\bigr]\,\colon\,\RR^{d}\to\RR^{d\times d}$
is locally Lipschitz in $x$ with a Lipschitz constant $C_{R}>0$
depending on $R>0$.
In other words, with $\norm{\upsigma}\df\sqrt{\trace(\upsigma\upsigma\transp)}$,
we have
\begin{equation*}
\norm{\upsigma(x) - \upsigma(y)}
\;\le\;C_{R}\,\abs{x-y}\qquad\forall\,x,y\in B_R\,.
\end{equation*}
We also assume that
$b\;=\;\bigl[b^{1},\dotsc,b^{d}\bigr]\transp\,\colon\,\RR^{d}\to\RR^{d}$
is locally bounded and measurable.

\item[(A2)]
\emph{Affine growth condition:\/}
$b$ and $\upsigma$ satisfy a global growth condition of the form

\begin{equation*}
\langle b(x),x\rangle^{+} + \norm{\upsigma(x)}^{2}\;\le\;C_0
\bigl(1 + \abs{x}^{2}\bigr) \qquad \forall\, x\in\RR^{d},
\end{equation*}
for some constant $C_0>0$.

\item[(A3)]
\emph{Nondegeneracy:\/}
For each $R>0$, it holds that
\begin{equation*}
\sum_{i,j=1}^{d} a^{ij}(x)\xi_{i}\xi_{j}
\;\ge\;C^{-1}_{R} \abs{\xi}^{2} \qquad\forall\, x\in B_{R}\,,
\end{equation*}
and for all $\xi=(\xi_{1},\dotsc,\xi_{d})\transp\in\RR^{d}$,
where, as defined earlier, $a= \frac{1}{2}\upsigma \upsigma\transp$.
\end{itemize}

Let us remark that the assumptions (A1)--(A3) are not optimal,
and can be weakened in many situations. 
For instance, if $\upsigma$ is 
continuous and its weak derivative lies in $L^{2(d+1)}_{\mathrm{loc}}(\Rd)$,
then \cref{E2.1} has a unique strong solution (see \cite{Zhang-05}).
The results in this paper can be extended to this setup as well.

\subsection{Notation}\label{S1.2}
The standard Euclidean norm in $\RR^{d}$ is denoted by $\abs{\,\cdot\,}$,
and $\langle\,\cdot\,,\cdot\,\rangle$ denotes the inner product.
The set of nonnegative real numbers is denoted by $\RR_{+}$,
$\NN$ stands for the set of natural numbers, and $\Ind$ denotes
the indicator function.
Given two real numbers $a$ and $b$, the minimum (maximum) is denoted by $a\wedge b$ 
($a\vee b$), respectively.
The closure, boundary, and the complement
of a set $A\subset\Rd$ are denoted
by $\Bar{A}$, $\partial{A}$, and $A^{c}$, respectively.
We denote by $\uptau(A)$ the \emph{first exit time} of the process
$\{X_{t}\}$ from the set $A\subset\RR^{d}$, defined by
\begin{equation*}
\uptau(A) \;\df\; \inf\;\{t>0\;\colon\, X_{t}\not\in A\}\,.
\end{equation*}
The open ball of radius $r$ in $\RR^{d}$, centered at the origin,
is denoted by $B_{r}$, and we let $\uptau_{r}\df \uptau(B_{r})$,
and $\uuptau_{r}\df \uptau(B^{c}_{r})$.

The term \emph{domain} in $\RR^{d}$
refers to a nonempty, connected open subset of the Euclidean space $\RR^{d}$. 
For a domain $D\subset\RR^{d}$,
the space $\Cc^{k}(D)$ ($\Cc^{\infty}(D)$), $k\ge 0$,
refers to the class of all real-valued functions on $D$ whose partial
derivatives up to order $k$ (of any order) exist and are continuous.
Also, $\Cc^{k}_{b}(D)$ ($\Cc_b^{\infty}(D)$) is the class of functions
whose partial derivatives up to order $k$ (of any order) are continuous and bounded
in $D$, and $\Cc_{\mathrm{c}}^k(D)$ denotes the subset of $\Cc^{k}(D)$,
$0\le k\le \infty$, consisting of functions that have compact support.
In addition, $\Cc_{\mathrm{o}}(\Rd)$ denotes the class of continuous
functions on $\Rd$ that vanish at infinity. By $\Cc_{\mathrm{c}}^+(\Rd)$
and $\Cc_{\mathrm{o}}^+(\Rd)$ we denote
the subsets of $\Cc_{\mathrm{c}}(\Rd)$ and
$\Cc_{\mathrm{o}}(\Rd)$, respectively, consisting of all non-trivial
nonnegative functions.
We use the term \emph{non-trivial} to refer to a function that is not
a.e.\ equal to $0$.
The space $\Lp^{p}(D)$, $p\in[1,\infty)$, stands for the Banach space
of (equivalence classes of) measurable functions $f$ satisfying
$\int_{D} \abs{f(x)}^{p}\,\D{x}<\infty$, and $\Lp^{\infty}(D)$ is the
Banach space of functions that are essentially bounded in $D$.
The standard Sobolev space of functions on $D$ whose generalized
derivatives up to order $k$ are in $\Lp^{p}(D)$, equipped with its natural
norm, is denoted by $\Sob^{k,p}(D)$, $k\ge0$, $p\ge1$.
For a probability  measure $\mu$ in $\cP(\Rd)$ and a real-valued function
$f$ which is integrable with respect to $\mu$ we use the notation
\begin{equation*}
\langle f,\mu\rangle\;=\; \mu(f)\;\df\; \int_{\Rd} f(x)\,\mu(\D{x})\,.
\end{equation*}

In general, if $\mathcal{X}$ is a space of real-valued functions on $Q$,
$\mathcal{X}_{\mathrm{loc}}$ consists of all functions $f$ such that
$f\varphi\in\mathcal{X}$ for every $\varphi\in\Cc_{\mathrm{c}}^{\infty}(Q)$.
In this manner we obtain for example the space $\Sobl^{2,p}(Q)$.

We often use Krylov's extension of the It\^{o} formula for functions
in $\Sobl^{2, d}(\Rd)$  \cite[p.~122]{Krylov}, which we refer to as the
It\^o--Krylov formula.

\section{General results}\label{S2}

Let $(\Omega, \sF, \{\sF_t\}, \Prob)$ be a given
filtered probability space with a complete, right continuous filtration $\{\sF_t\}$.
Let $W$ be a standard Brownian motion adapted to $\{\sF_t\}$.
Consider the stochastic differential equation
\begin{equation}\label{E2.1}
X_{t} \;=\;X_{0} + \int_{0}^{t} b(X_{s})\,\D{s}
+ \int_{0}^{t} \upsigma(X_{s})\,\D{W}_{s}\,.
\end{equation}
The third term on the right hand side of \cref{E2.1} is an
It\^o stochastic integral.
We say that a process $X=\{X_{t}(\omega)\}$ is a solution of \cref{E2.1},
if it is $\sF_{t}$-adapted, continuous in $t$, defined for all
$\omega\in\Omega$ and $t\in[0,\infty)$, and satisfies \cref{E2.1} for
all $t\in[0,\infty)$ a.s.
It is well known that under (A1)--(A3), there exists a unique solution
of \cref{E2.1} \cite[Theorem~2.2.4]{book}.
We let $\Exp_x$ denote the expectation operator on the canonical space of
the process
with $X_0=x$, and $\Prob_x$ the corresponding probability measure.
Recall that $\uptau(D)$ denotes the first exit time of the process $X$ from a domain
$D$. 
The process $X$ is said be \emph{recurrent} if for any bounded domain $D$
we have $\Prob_x(\uptau(D^c)<\infty)=1$ for all $x\in \Bar{D}^c$.
Otherwise the process is called transient. A recurrent process is said to be
\emph{positive recurrent} if $\Exp_x[\uptau(D^c)]<\infty$ for all $x\in \Bar{D}^c$.
It is known that for a non-degenerate diffusion the property of
recurrence (or positive recurrence) is independent of $D$ and $x$, i.e.,
if it holds for
some domain $D$ and $x\in\Bar{D}^c$, then it also holds for every domain $D$, and
all points $x\in \Bar{D}^c$
(see \cite[Lemma~2.6.12 and Theorem~2.6.10]{book}).
We define the extended operator $\Lg\colon\Cc^{2}(\RR^{d})\mapsto\Lpl^{\infty}(\Rd)$
associated to \cref{E2.1} by
\begin{equation}\label{E-Lg}
\Lg g(x) \;\df\; a^{ij}(x)\,\partial_{ij} g(x)
+ b^{i}(x)\, \partial_{i} g(x)\,.
\end{equation}

Let $f\colon\Rd\to\RR$
be a locally bounded, Borel measurable function, which is bounded from below in
$\Rd$, i.e., $\inf_\Rd f>-\infty$.
We refer to a function $f$ with these properties as a \emph{potential},
and let $\Lg^f\df \Lg + f$.

\subsection{Risk-sensitive value and Dirichlet eigenvalues}

The following lemma summarizes some results from \cite{Berestycki-94,Berestycki-15,
Quaas-08a} on the eigenvalues of the Dirichlet problem for the operator $\Lg^f$.
For simplicity, we state it for balls $B_r$, instead of more general domains.

\begin{lemma}\label{L2.1}
For each $r\in(0,\infty)$ there exists a unique pair
$(\widehat\Psi_{r},\Hat\lambda_{r})
\in\bigl(\Sobl^{2,p}(B_{r})\cap\Cc(\Bar{B}_{r})\bigr)\times\RR$,
for any $p\in[1,\infty)$, satisfying
$\widehat\Psi_{r}>0$ on $B_{r}$, $\widehat\Psi_{r}=0$ on
$\partial B_{r}$, and $\widehat\Psi_{r}(0)=1$,
which solves
\begin{equation}\label{EL2.1A}
\Lg \widehat\Psi_{r}(x) + f(x)\,\widehat\Psi_{r}(x)
\;=\; \Hat\lambda_{r}\,\widehat\Psi_{r}(x)
\qquad\text{a.e.\ }x\in B_{r}\,,
\end{equation}
with $\Lg$ as defined in \cref{E-Lg}.
Moreover, $\Hat\lambda_{r}$ has the following properties:
\begin{enumerate}[(a)]
\item
The map $r\mapsto\Hat\lambda_{r}$ is continuous and strictly increasing.

\item
In its dependence on the function $f$,
$\Hat\lambda_r$ is nondecreasing, convex, and Lipschitz continuous
\textup{(}with respect to the $\Lp^{\infty}$ norm\/\textup{)},
 with Lipschitz constant $1$.
In addition, if $f\lneqq f'$, then $\Hat\lambda_r(f)<\Hat\lambda_r(f')$.
\end{enumerate}
\end{lemma}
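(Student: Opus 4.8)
The plan is to establish \cref{L2.1} by reducing it to the classical Berestycki–Nirenberg–Varadhan and Quaas–Sirakov theory on the bounded domain $B_r$, and then extracting the monotonicity and continuity properties by elementary comparison arguments. First I would invoke the results of \cite{Berestycki-94, Quaas-08a}: since $B_r$ is a smooth bounded domain, $a\in\Ccl^{0,1}(\Rd)$ is uniformly elliptic on $B_r$ by (A3), and $b,f$ are bounded on $B_r$, the generalized principal eigenvalue
\[
\Hat\lambda_r \;=\; \sup\{\lambda\in\RR\;\colon\;\exists\,\varphi\in\Sobl^{2,d}(B_r),\ \varphi>0,\ \Lg^f\varphi\le\lambda\varphi\ \text{in }B_r\}
\]
is well defined, and there exists a positive eigenfunction $\widehat\Psi_r\in\Sobl^{2,p}(B_r)\cap\Cc(\Bar{B}_r)$ for every $p\in[1,\infty)$ solving \cref{EL2.1A} with $\widehat\Psi_r=0$ on $\partial B_r$; interior $W^{2,p}$ regularity comes from the Calderón–Zygmund estimates applied to $\Lg\widehat\Psi_r=(\Hat\lambda_r-f)\widehat\Psi_r\in\Lp^\infty_{\mathrm{loc}}$, and continuity up to the boundary from the barrier construction for smooth $\partial B_r$. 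Simplicity of $\Hat\lambda_r$ (hence uniqueness of the pair once the normalization $\widehat\Psi_r(0)=1$ is imposed) follows from the strong maximum principle and Hopf's lemma, which are available in the $\Sobl^{2,d}$ setting via the Aleksandrov–Bakelman–Pucci estimate. This disposes of the existence and uniqueness assertion.

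For part (a), strict monotonicity of $r\mapsto\Hat\lambda_r$ is the standard strict domain monotonicity of the Dirichlet principal eigenvalue: if $r<r'$, then extending $\widehat\Psi_{r'}$ by its own positive values, one has $\widehat\Psi_{r'}>0$ on $\Bar{B}_r$ and $\Lg^f\widehat\Psi_{r'}=\Hat\lambda_{r'}\widehat\Psi_{r'}$, so $\widehat\Psi_{r'}$ is a positive supersolution in $B_r$ for the eigenvalue $\Hat\lambda_{r'}$, giving $\Hat\lambda_r\le\Hat\lambda_{r'}$ by the variational characterization; strictness follows because, were equality to hold, the restriction of $\widehat\Psi_{r'}$ to $B_r$ would (after comparison with $\widehat\Psi_r$ via the generalized maximum principle, exploiting that $\widehat\Psi_{r'}>0$ while $\widehat\Psi_r=0$ on $\partial B_r$) be forced to vanish on $\partial B_r$, a contradiction. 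Continuity of $r\mapsto\Hat\lambda_r$ is obtained by sandwiching: monotonicity gives left and right limits, and a uniform $\Sobl^{2,p}$ bound on $\widehat\Psi_{r_n}$ (normalized at $0$) together with the equation lets one pass to the limit and identify the limiting eigenpair on $B_{r_0}$, forcing $\Hat\lambda_{r_n}\to\Hat\lambda_{r_0}$ by uniqueness.

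For part (b), the dependence on $f$: monotonicity $f\le f'\Rightarrow\Hat\lambda_r(f)\le\Hat\lambda_r(f')$ and the Lipschitz bound $|\Hat\lambda_r(f)-\Hat\lambda_r(f')|\le\|f-f'\|_{\Lp^\infty(B_r)}$ both follow immediately from the sup-characterization above, since adding a constant $\|f-f'\|_\infty$ shifts every admissible $\lambda$ by that constant; convexity follows because for $f_\theta=\theta f_0+(1-\theta)f_1$ the product $\widehat\Psi_{r}(f_0)^\theta\,\widehat\Psi_{r}(f_1)^{1-\theta}$ is, by a direct computation using the concavity of $\log$ and dropping the nonnegative quadratic-gradient cross term, a positive subsolution of $\Lg^{f_\theta}\varphi\le(\theta\Hat\lambda_r(f_0)+(1-\theta)\Hat\lambda_r(f_1))\varphi$, whence $\Hat\lambda_r(f_\theta)\le\theta\Hat\lambda_r(f_0)+(1-\theta)\Hat\lambda_r(f_1)$. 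The main obstacle is the last, strict statement: $f\lneqq f'$ (i.e.\ $f\le f'$ with strict inequality on a set of positive measure) implies $\Hat\lambda_r(f)<\Hat\lambda_r(f')$. Here I would argue by contradiction: if $\Hat\lambda_r(f)=\Hat\lambda_r(f')=:\lambda$, write $w=\widehat\Psi_r(f')$, so $\Lg w + f w - \lambda w = (f-f')w\le 0$ with strict inequality on a positive-measure set, making $w$ a positive strict supersolution for $(\Lg^f,\lambda)$ on $B_r$; then a Donsker–Varadhan / ground-state-substitution computation — testing against $\widehat\Psi_r(f)$, or equivalently applying the generalized refined maximum principle of \cite[Theorem~1.9]{Quaas-08a} which is strict precisely when the operator is not at its principal eigenvalue — forces $\widehat\Psi_r(f)\equiv 0$, a contradiction. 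The delicate point is ensuring the strictness survives under the mere $\Lp^\infty$ measurability of $f,f'$ and the $\Sobl^{2,d}$ regularity of the eigenfunctions; this is handled by the Harnack inequality for $\Sobl^{2,d}$ supersolutions, which guarantees $w$ and $\widehat\Psi_r(f)$ are locally bounded away from zero, so that $\int_{B_r}(f'-f)\,w\,\widehat\Psi_r(f)>0$ strictly, yielding the required contradiction.
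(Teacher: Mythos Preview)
Your approach is correct and coincides with the paper's, which simply cites \cite[Theorem~1.1]{Quaas-08a} and \cite{Berestycki-94} for existence and uniqueness, \cite[Theorem~1.10 and Proposition~2.3\,(iii)--(iv)]{Berestycki-15} for part~(a), and \cite[Proposition~2.1]{Berestycki-94} for part~(b); you have in effect sketched the content of those references rather than merely invoking them. One correction: your variational formula should read $\Hat\lambda_r=\inf\bigl\{\lambda:\exists\,\varphi\in\Sobl^{2,d}(B_r),\ \varphi>0,\ \Lg^f\varphi\le\lambda\varphi\bigr\}$ (or equivalently a $\sup$ with the inequality reversed), since as written the supremum is $+\infty$; correspondingly, the product $\widehat\Psi_r(f_0)^\theta\,\widehat\Psi_r(f_1)^{1-\theta}$ in your convexity step is a positive \emph{supersolution}, not a subsolution, which is precisely what the $\inf$ characterization needs.
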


\begin{proof}
Existence and uniqueness of the solution follow
by \cite[Theorem~1.1]{Quaas-08a} (see also \cite{Berestycki-94}).
Part (a) follows by \cite[Theorem~1.10]{Berestycki-15},
and (iii)--(iv) of \cite[Proposition~2.3]{Berestycki-15},
while part (b) follows by \cite[Proposition~2.1]{Berestycki-94}.
\qed\end{proof}

We refer to $(\widehat\Psi_{r},\Hat\lambda_{r})$ as the \emph{eigensolution}
of the Dirichlet problem, or the
\emph{Dirichlet eigensolution} of $\Lg^f$ on $B_r$.
Correspondingly, $\Hat\lambda_{r}$ and $\widehat\Psi_{r}$ are referred to
as the \emph{Dirichlet eigenvalue} and \emph{Dirichlet eigenfunction},
respectively.

\cref{L2.1}\,(a) motivates the following definition.

\begin{definition}\label{D2.1}
Let $f$ be a potential.
The principal eigenvalue $\lamstr(f)$ on $\Rd$ of the operator $\Lg^f$ given
in \cref{E-Lgf} 
is defined as
$\lamstr(f)\df\lim_{r\to\infty}\,\Hat\lambda_r(f)$.
\end{definition}

For a potential $f$ we also define
\begin{equation}\label{E2.3}
\sE_x(f)\;\df\;\limsup_{T\to\infty}\, \frac{1}{T}\,
\log\Exp_x\Bigl[\E^{\int_0^T f(X_s)\, \D{s}}\Bigr]\,,
\quad \text{and}\quad \sE(f)\;\df\;\inf_{x\in\Rd}\;\sE_x(f).
\end{equation}
We refer to $\sE(f)$ as the \textit{risk-sensitive} average of $f$.
This quantity plays a key role in our analysis.

We also compare \cref{D2.1} with the following definition of the principal
eigenvalue, commonly used in the pde literature \cite{Berestycki-15}.
\begin{equation}\label{D2.1A}
\Hat\Lambda(f)\;=\;\inf\,\bigl\{\lambda\in\RR\;
\colon\; \exists\, \varphi\in\Sobl^{2,d}(\Rd),
\, \varphi>0, \, \Lg\varphi + (f-\lambda)\varphi\le 0, \; \text{a.e. in}\; \Rd
\bigr\}\,.
\end{equation}

The following hypothesis is enforced throughout \cref{S2} without further mention,
and it is repeated only for emphasis.
\begin{itemize}
\item[\textbf{(H1)}]
$f$ is a potential, and $\lamstr(f)$ is finite.
\end{itemize}

\begin{lemma}\label{L2.2}
The following hold
\begin{enumerate}[(i)]
\item
For any $r>0$, the Dirichlet eigensolutions 
$(\widehat\Psi_{n},\Hat\lambda_{n})$ in \cref{EL2.1A} have the following stochastic
representation
\begin{equation}\label{EL2.2A}
\widehat\Psi_{n}(x)\;=\; \Exp_{x} \Bigl[\E^{\int_{0}^{\uuptau_{r}}
[f(X_{t})-\Hat\lambda_{n}]\,\D{t}}\, \widehat\Psi_{n}(X_{\uuptau_{r}})\,
\Ind_{\{\uuptau_{r}<\uptau_{n}\}}\Bigr]\qquad\forall\,
x\in B_{n}\setminus\overline{B}_r\,,
\end{equation}
for all large enough $n\in\NN$.

\item
It holds that $\lamstr(f)=\Hat{\Lambda}(f)$.

\item
Let $\Psi^*$ be any limit point of the Dirichlet eigensolutions
$(\widehat\Psi_{n},\Hat\lambda_{n})$ as $n\to\infty$, and $\sB$ be an open ball centered
at $0$ such that
$\lamstr(f-h) + \sup_{\sB^c}|h|<\lamstr(f)<\infty$ for some
bounded function $h$.
Then with $\uuptau$ denoting the first hitting time of $\sB$ we have
\begin{equation}\label{EL2.2B}
\Psi^*(x) \;=\; \Exp_{x}
\Bigl[\E^{\int_{0}^{\uuptau}
[f(X_{t})-\lamstr(f)]\,\D{t}}\,\Psi^* (X_{\uuptau})\,\Ind_{\{\uuptau<\infty\}}\Bigr]
\qquad\forall\, x\in\sB^c\,.
\end{equation}
\end{enumerate}
\end{lemma}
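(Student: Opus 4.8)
The plan is to obtain all three items from the Dirichlet eigensolutions of \cref{L2.1} by combining the Feynman--Kac / Dynkin machinery with the It\^o--Krylov formula and a careful limiting argument. For part (i), I would start from \cref{EL2.1A} on the annular region $B_n\setminus\overline{B}_r$, where $\widehat\Psi_n\in\Sobl^{2,p}$ solves $\Lg\widehat\Psi_n+(f-\Hat\lambda_n)\widehat\Psi_n=0$. Applying the It\^o--Krylov formula to $t\mapsto\E^{\int_0^t[f(X_s)-\Hat\lambda_n]\,\D s}\widehat\Psi_n(X_t)$ and stopping at $\uuptau_r\wedge\uptau_n$ shows this process is a local martingale; localizing and using that $\widehat\Psi_n$ is bounded on $\overline{B}_n$ (hence the stochastic integral term is a true martingale after localization, and the integrand is bounded) gives, for $x\in B_n\setminus\overline{B}_r$,
\[
\widehat\Psi_n(x)=\Exp_x\Bigl[\E^{\int_0^{\uuptau_r\wedge\uptau_n}[f(X_t)-\Hat\lambda_n]\,\D t}\widehat\Psi_n(X_{\uuptau_r\wedge\uptau_n})\Bigr].
\]
Since $\widehat\Psi_n=0$ on $\partial B_n$, the contribution from $\{\uptau_n<\uuptau_r\}$ vanishes, which yields \cref{EL2.2A}. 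One needs $n$ large enough that $B_n\supsetneq\overline{B}_r$; boundedness of $f-\Hat\lambda_n$ on the compact annulus handles the exponential.

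For part (ii), the inequality $\lamstr(f)\le\Hat\Lambda(f)$ is the easy direction: given any admissible $\varphi>0$ for \cref{D2.1A} with parameter $\lambda$, the function $\varphi$ is a positive supersolution of $\Lg^{f-\lambda}$ on every $B_r$, so comparison with the Dirichlet eigenfunction (via the maximum principle / the stochastic representation in (i), letting $\uptau_n$ play against $\varphi$) gives $\Hat\lambda_r(f)\le\lambda$ for all $r$, hence $\lamstr(f)\le\lambda$; taking the infimum over admissible $\lambda$ gives $\lamstr(f)\le\Hat\Lambda(f)$. The reverse inequality requires producing a positive $\Sobl^{2,d}$ solution of $\Lg\varphi+(f-\lamstr(f))\varphi\le 0$ on all of $\Rd$: this is exactly a limit point $\Psi^*$ of the normalized $\widehat\Psi_n$. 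Here I would invoke the Harnack inequality and interior $\Sob^{2,p}$ elliptic estimates (uniform on compacts, since $\Hat\lambda_n\to\lamstr(f)$ is bounded and $f$ is locally bounded) to extract, via a diagonal subsequence, a limit $\Psi^*>0$ in $\Sobl^{2,p}(\Rd)$ solving $\Lg\Psi^*+(f-\lamstr(f))\Psi^*=0$; normalization $\widehat\Psi_n(0)=1$ plus Harnack keeps the limit non-trivial. This $\Psi^*$ is admissible in \cref{D2.1A} with $\lambda=\lamstr(f)$, so $\Hat\Lambda(f)\le\lamstr(f)$.

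For part (iii), the construction of $\Psi^*$ is as in (ii). I would pass to the limit $n\to\infty$ in the representation \cref{EL2.2A} with $r$ fixed so that $\overline{\sB}\subset B_r$ (or directly with $\sB$ in place of $B_r$), rewriting it as an expectation over $\uuptau\wedge\uptau_n$ where $\uuptau$ is the first hitting time of $\sB$. On $\{\uuptau<\uptau_n\}$ the integrand converges (locally uniform convergence of $\widehat\Psi_n\to\Psi^*$ on $\partial\sB$, and $\Hat\lambda_n\to\lamstr(f)$); the subtle point is controlling the tail $\{\uptau_n\le\uuptau\}$ and justifying that the limit picks up only $\Ind_{\{\uuptau<\infty\}}$ rather than also a contribution from paths escaping to infinity. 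This is where the hypothesis $\lamstr(f-h)+\sup_{\sB^c}|h|<\lamstr(f)$ enters: it provides, via \cref{L2.1} applied to the potential $f-h$, a positive supersolution of $\Lg^{(f-\lamstr(f)+\|h\|)}$ that dominates $\Psi^*$ up to constants and serves as a Lyapunov-type barrier, forcing $\Exp_x\bigl[\E^{\int_0^{\uuptau\wedge\uptau_n}[f(X_t)-\lamstr(f)]\,\D t}\Psi^*(X_{\uuptau\wedge\uptau_n})\Ind_{\{\uptau_n\le\uuptau\}}\bigr]\to 0$. Concretely, on $\sB^c$ one compares $\Psi^*$ with $C\,\widehat\Psi^{(h)}_n$ for the shifted potential and uses the strict gap to get exponential decay of the escape contribution in $n$. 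Then dominated/monotone convergence on $\{\uuptau<\uptau_n\}\uparrow\{\uuptau<\infty\}$ gives \cref{EL2.2B}.

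\textbf{Main obstacle.} The delicate step is part (iii): justifying the interchange of limit and expectation, i.e.\ ruling out mass escaping to infinity along the twisted dynamics. The gap condition on $\lamstr(f-h)$ is precisely the tool that converts "no strict monotonicity loss" into an integrable barrier, and getting the comparison argument with the shifted Dirichlet eigenfunctions clean — uniformly in $n$ — is the crux. Parts (i) and (ii) are comparatively routine applications of It\^o--Krylov, the maximum principle, Harnack, and standard elliptic compactness.
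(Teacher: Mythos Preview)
Your plan for parts (i) and (ii) is correct and matches the paper's approach: (i) is It\^o--Krylov plus the Dirichlet boundary condition, and (ii) is the Harnack/compactness construction of $\Psi^*$ for one inequality together with a maximum-principle comparison against $(\widehat\Psi_r,\Hat\lambda_r)$ for the other.

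For part (iii) you have misdiagnosed the obstruction. In the representation \cref{EL2.2A} there is \emph{no} escape term to control on $\{\uptau_n\le\uuptau\}$: the indicator $\Ind_{\{\uuptau<\uptau_n\}}$ already kills that set, equivalently $\widehat\Psi_n(X_{\uptau_n})=0$. So the expression you propose to show tends to zero, namely $\Exp_x\bigl[\E^{\int_0^{\uuptau\wedge\uptau_n}[f-\lamstr(f)]\,\D t}\Psi^*(X_{\uuptau\wedge\uptau_n})\Ind_{\{\uptau_n\le\uuptau\}}\bigr]$, does not arise from \cref{EL2.2A} at all; it would arise only if you applied It\^o--Krylov directly to $\Psi^*$, but then you need a global comparison $\Psi^*\le C\Tilde\Psi^*$ on $\partial B_n$ uniformly in $n$, and your sketch does not close that loop.

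The actual difficulty is on the \emph{surviving} set $\{\uuptau<\uptau_n\}$: you must pass to the limit in $\Exp_x\bigl[\E^{\int_0^{\uuptau}[f-\Hat\lambda_n]\,\D t}\widehat\Psi_n(X_{\uuptau})\Ind_{\{\uuptau<\uptau_n\}}\bigr]$, where the exponent grows with $n$ (since $\Hat\lambda_n\nearrow\lamstr(f)$) and $\widehat\Psi_n(X_\uuptau)$ varies with $n$. The paper handles this by first getting the lower bound via Fatou, and then for the upper bound splitting $\widehat\Psi_n(X_{\uuptau})=\Psi^*(X_{\uuptau})+(\widehat\Psi_n-\Psi^*)(X_{\uuptau})$. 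The second piece is bounded by $\kappa_n\widehat\Psi_n(x)$ with $\kappa_n\df\bigl(\inf_{\sB}\widehat\Psi_n\bigr)^{-1}\sup_{\sB}\abs{\Psi^*-\widehat\Psi_n}\to 0$, using only uniform convergence on the compact set $\partial\sB$. The first piece is handled by dominated convergence, with the dominating function coming from the gap hypothesis: the principal eigenfunction $\Tilde\Psi^*$ for $f-h$ yields $\Exp_x\bigl[\E^{\int_0^{\uuptau}[f-h-\lamstr(f-h)]\,\D t}\Ind_{\{\uuptau<\infty\}}\bigr]<\infty$, and on $\sB^c$ one has $f-\Hat\lambda_n\le f-h-\lamstr(f-h)$ for large $n$. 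The key point is that this device keeps all delicate comparisons on the compact boundary $\partial\sB$ and never requires growth control of $\Psi^*$ at infinity.
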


\begin{proof}
Part (i) follows from \cite[Lemma~2.10\,(i)]{ari-anup}.

Turning to part (ii),
suppose that $\lamstr(f)$ is finite. Then it is standard to show that
there exists
a positive $\Psi\in\Sobl^{2,d}(\Rd)$ which satisfies
\begin{equation}\label{EL2.2B0}
\Lg\Psi + f\,\Psi \,=\,\lamstr(f)\,\Psi \quad\text{a.e.~on\ }\Rd\,.
\end{equation}
See \cite{ari-anup, biswas-11a} for instance.
It is then clear that $\lamstr(f)\ge \Hat\Lambda(f)$.

To show the converse inequality, suppose that a pair
$(\varphi, \lambda)\in\Sobl^{2,d}\times\RR$,
with $\varphi>0$, satisfies
\begin{equation}\label{EL2.2C}
\Lg\varphi + (f-\lambda)\varphi\,\le\, 0\,, \qquad\text{and}\quad
\lambda\ge\Hat\Lambda(f)\,.
\end{equation}
We claim that $\lamstr(f)\le \lambda$.
If not, then  we can find a pair
$(\widehat\Psi_r, \Hat\lambda_r)$ as in by \cref{L2.1},
satisfying \cref{EL2.1A} and
$\Hat\lambda_r>\lambda$.
By the It\^o--Krylov formula \cite[p.~122]{Krylov}
we have
\begin{equation}\label{EL2.2D}
\varphi(x)\;\ge \;\Exp_{x}\Bigl[\E^{\int_{0}^{\uuptau_r}
[f(X_{t})-\lambda]\,\D{t}}\,
\varphi(X_{\uuptau_r})\,\Ind_{\{\uuptau_r<\infty\}}\Bigr]\,.
\end{equation}
Since $\varphi$ is positive, \cref{EL2.2A,EL2.2D}
imply that we can scale it by multiplying with a
constant $\kappa>0$ so that
$\kappa\varphi-\widehat{\Psi}_r$ attains
it minimum in $\Bar{B}_r$ and this minimum value is $0$.
Combining \cref{EL2.1A,EL2.2C}, we obtain
\begin{equation*}
\Lg(\kappa\varphi-\widehat{\Psi}_r) - (f-\Hat\lambda_r)^{-}
(\kappa\varphi-\widehat{\Psi}_r)
\;\le\; -(f-\Hat\lambda_r)^{+}(\kappa\varphi-\widehat{\Psi}_r)
+ (-\Hat\lambda_r + \lambda)\kappa\varphi\;\le\; 0\quad\text{in\ } B_r\,.
\end{equation*}
It then follows by the strong maximum principle \cite[Theorem~9.6]{GilTru}
that $\kappa\varphi-\widehat{\Psi}_r=0$ in $\Bar{B}_r$, which is 
not possible since $\varphi>0$ on $\Rd$.
This proves the claim.
Since $\lambda$ was arbitrary, this implies that
$\Hat{\Lambda}(f)\ge \lamstr(f)$, and thus we have equality.

It remains  to prove \cref{EL2.2B}.
We follow the same argument as in \cite[Lemma~2.10]{ari-anup}. We fix $\sB=B_r$.
Letting $n\to\infty$ in \cref{EL2.2A} and applying Fatou's lemma we obtain
\begin{equation}\label{EL2.2E}
\Psi^*(x)\;\ge \;\Exp_{x}\Bigl[\E^{\int_{0}^{\uuptau}
[f(X_{t})-\lamstr(f)]\,\D{t}}\,
\Psi^*(X_{\uuptau})\,\Ind_{\{\uuptau<\infty\}}\Bigr]\,.
\end{equation}
Thus, with $\Tilde\Psi^*$ denoting a solution of \cref{EL2.2B0},
with $f$ replaced by $f-h$ and $\lambda=\lamstr(f-h)$,
we also have
\begin{equation*}
\Tilde\Psi^*(x)\;\ge \;\Exp_{x}\Bigl[\E^{\int_{0}^{\uuptau}
[f(X_{t})-h(X_t)-\lamstr(f-h)]\,\D{t}}\,\Tilde\Psi^*(X_{\uuptau})
\,\Ind_{\{\uuptau<\infty\}}\Bigr]\,,
\end{equation*}
which implies that
\begin{equation}\label{EL2.2F}
\Exp_{x}\Bigl[\E^{\int_{0}^{\uuptau}
[f(X_{t})-h(X_t)-\lamstr(f-h)]\,\D{t}}\,\Ind_{\{\uuptau<\infty\}}\Bigr]
\;<\;\infty \qquad \forall \; x\in\sB^c\,,
\end{equation}
since $\Tilde\Psi^*>0$ in $\Rd$.
We write \cref{EL2.2A} as
\begin{equation}\label{EL2.2G}
\widehat\Psi_{n}(x)\;\le\;
\Exp_{x}\Bigl[\E^{\int_{0}^{\uuptau}
[f(X_{t})-\Hat\lambda_{n}]\,\D{t}}\,
\Psi^*(X_{\uuptau})\,\Ind_{\{\uuptau<\uptau_{n}\}}\Bigr]
\;+\; \biggl(\sup_{\sB}\,\babs{\Psi^*-\widehat\Psi_n}\biggr)\;
\Exp_{x}\Bigl[\E^{\int_{0}^{\uuptau}
[f(X_{t})-\Hat\lambda_{n}]\,\D{t}}\,\Ind_{\{\uuptau<\uptau_{n}\}}\Bigr]\,.
\end{equation}
Note that since $\Hat\lambda_{n}\nearrow\lamstr(f)$, the first
term on the right hand side of \cref{EL2.2G} is finite by \cref{EL2.2F} for
all large enough $n$.
Let
\begin{equation*}
\kappa_n\;\df\; \Bigl(\inf_{\sB}\,\widehat\Psi_{n}\Bigr)^{-1}
\sup_{\sB}\,\babs{\Psi^*-\widehat\Psi_n}\,.
\end{equation*}
The second term on the right hand side of \cref{EL2.2G} has the bound
\begin{equation*}
\biggl(\sup_{\sB}\,\babs{\Psi^*-\widehat\Psi_n}\biggr)\;
\Exp_{x}\Bigl[\E^{\int_{0}^{\uuptau}
[f(X_{t})-\Hat\lambda_{n}]\,\D{t}}\,\Ind_{\{\uuptau<\uptau_{n}\}}\Bigr]\;\le\;
\kappa_n\;
\Exp_{x}\Bigl[\E^{\int_{0}^{\uuptau}
[f(X_{t})-\Hat\lambda_{n}]\,\D{t}}\,
\widehat\Psi_{n}(X_{\uuptau})\,\Ind_{\{\uuptau<\uptau_{n}\}}\Bigr]
\;=\; \kappa_n\;\widehat\Psi_{n}(x)\,.
\end{equation*}
By the convergence of $\widehat\Psi_{n}\to\Psi^*$
as $n\to\infty$, uniformly on compact sets,
and since $\widehat\Psi_{n}$ is bounded away from $0$ in $\sB$,
uniformly in $n\in\NN$, by Harnack's inequality,
we have $\kappa_n\to0$ as $n\to\infty$.
Therefore, the second term on the right hand side of \cref{EL2.2G}
vanishes as $n\to\infty$. 
Also, since $\Hat\lambda_{n}$ is nondecreasing in $n$, and
$\Hat\lambda_{n}\nearrow\lamstr(f)$, we obtain
\begin{equation}\label{EL2.2H}
\Exp_{x}\Bigl[\E^{\int_{0}^{\uuptau} [f(X_{t})-\Hat\lambda_{n}]\,\D{t}}\,
\Psi^*(X_{\uuptau})\,\Ind_{\{\uuptau<\uptau_{n}\}}\Bigr]
\;\xrightarrow[n\to\infty]{}\;
\Exp_{x}\Bigl[\E^{\int_{0}^{\uuptau} [f(X_{t})-\lamstr(f)]\,\D{t}}\,
\Psi^*(X_{\uuptau})\,\Ind_{\{\uuptau<\infty\}}\Bigr]\,,
\end{equation}
by \cref{EL2.2F} and dominated convergence.
Thus taking limits in \cref{EL2.2G} as $n\to\infty$,
and using \cref{EL2.2H,EL2.2E}, we obtain \cref{EL2.2B}.
This completes the proof.
\qed\end{proof}

Combining \cref{L2.2}\,(ii) and
\cite[Theorem~1.4]{Berestycki-15} we have the following result.

\begin{corollary}\label{C2.1}
There exists a positive $\Psi\in\Sobl^{2,p}(\Rd)$, $p\ge 1$, satisfying
\begin{equation}\label{E-eigen}
\Lg\Psi + f\,\Psi \,=\, \lambda\Psi \quad\text{a.e.~on\ }\Rd\,,
\end{equation}
if and only if $\lambda\ge\lamstr(f)$.
\end{corollary}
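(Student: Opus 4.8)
The plan is to combine the identification $\lamstr(f)=\Hat\Lambda(f)$ from \cref{L2.2}\,(ii) with \cite[Theorem~1.4]{Berestycki-15}, and then to upgrade the regularity of the resulting eigenfunction by a routine elliptic bootstrap.

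The necessity (``only if'') is immediate: if $\Psi$ is positive, lies in $\Sobl^{2,p}(\Rd)$ for all $p$, and solves $\Lg\Psi+f\,\Psi=\lambda\Psi$ a.e.\ on $\Rd$, then $\Lg\Psi+(f-\lambda)\Psi\le 0$ a.e.\ on $\Rd$, so $\lambda$ belongs to the set appearing in \cref{D2.1A}; hence $\lambda\ge\Hat\Lambda(f)=\lamstr(f)$ by \cref{L2.2}\,(ii).

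For sufficiency, fix $\lambda\ge\lamstr(f)$. By \cref{L2.2}\,(ii) the quantity $\lamstr(f)$ coincides with the generalized principal eigenvalue $\Hat\Lambda(f)$ of $\Lg^f$ on $\Rd$ in the sense of \cref{D2.1A}, which is exactly the object to which \cite[Theorem~1.4]{Berestycki-15} applies; that theorem then provides a positive $\varphi\in\Sobl^{2,d}(\Rd)$ solving $\Lg\varphi+f\,\varphi=\lambda\varphi$ a.e.\ on $\Rd$. For the endpoint $\lambda=\lamstr(f)$ one may alternatively take $\varphi$ to be any limit point $\Psi^*$ of the Dirichlet eigensolutions $(\widehat\Psi_n,\Hat\lambda_n)$, already produced in the proof of \cref{L2.2} (the function satisfying \cref{EL2.2B,EL2.2B0}). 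The case $\lambda>\lamstr(f)$, in which positive eigenfunctions persist for the entire half-line $[\lamstr(f),\infty)$, is the phenomenon special to unbounded domains, and is the substantive content we borrow from \cite[Theorem~1.4]{Berestycki-15}; concretely it rests on the existence, for $\lambda>\Hat\Lambda(f)$, of a positive strict supersolution of $\Lg+(f-\lambda)$ on $\Rd$, which is then compared against the solutions of the Dirichlet problems on the balls $B_n$.

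Finally, to obtain the asserted regularity, note that $\varphi$ solves $\Lg\varphi=(\lambda-f)\,\varphi$ with right-hand side in $\Lpl^{\infty}(\Rd)$ --- since $f\in\Lpl^{\infty}(\Rd)$ and $\varphi$, being in $\Sobl^{2,d}(\Rd)$, is locally bounded and continuous --- and with coefficients $a\in\Ccl^{0,1}(\Rd)$ (hence continuous) and $b\in\Lpl^{\infty}(\Rd)$. The interior $\Lp^{p}$ estimates \cite[Theorem~9.11]{GilTru} then give $\varphi\in\Sobl^{2,p}(\Rd)$ for every $p\in[1,\infty)$, which is the desired $\Psi$. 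The main obstacle is precisely the $\lambda>\lamstr(f)$ part of the sufficiency direction --- everything else is either definitional, a consequence of \cref{L2.2}, or standard elliptic regularity --- and the role of \cref{L2.2}\,(ii) is exactly to make \cite[Theorem~1.4]{Berestycki-15} available by matching our definition of $\lamstr(f)$ in \cref{D2.1} with the generalized principal eigenvalue $\Hat\Lambda(f)$.
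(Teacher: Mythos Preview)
Your proof is correct and follows exactly the approach of the paper, which simply states that the corollary follows by combining \cref{L2.2}\,(ii) with \cite[Theorem~1.4]{Berestycki-15}. You have filled in the details of this combination (the two directions of the equivalence, and the regularity bootstrap) that the paper leaves implicit.
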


As also mentioned in the introduction,
throughout the rest of the paper, by an eigenpair $(\Psi,\lambda)$ of
$\Lg^f$ we mean a positive function $\Psi\in\Sobl^{2,d}(\Rd)$
and a scalar $\lambda\in\RR$ that satisfy \cref{E-eigen}.
In addition, the eigenfunction
$\Psi$ is assumed to be normalized as $\Psi(0)=1$, unless indicated otherwise.
When $\lambda$ is the principal eigenvalue, we refer to
$(\Psi,\lambda)$ as a \emph{principal eigenpair}.
Note, that in view of the assumptions on the coefficients, any
$\Psi\in\Sobl^{2,d}(\Rd)$ which satisfies \cref{E-eigen} belongs
to $\Sobl^{2,p}(\Rd)$, for all $p\in[1,\infty)$.
Therefore, in the interest of notational economy,
we refrain from mentioning
the function space of solutions $\Psi$ of equations of the form \cref{E-eigen},
and any such solution is meant to be in $\Sobl^{2,d}(\Rd)$.
Moreover, since these are always strong solutions, we often suppress the qualifier
`a.e.', and unless a different domain is specified, such
equations or inequalities are meant to hold on $\Rd$.

\subsection{Summary of results}

A major objective in this paper is to relate the
properties of the eigenvalues $\lambda$ in \cref{E-eigen}
to the recurrence properties of the \emph{twisted process} which is
defined as follows.
For an eigenfunction $\Psi$ satisfying \cref{E-eigen}
we let $\psi\df\log\Psi$.
Then we can write \cref{E-eigen} as
\begin{equation}\label{E2.7}
\Lg\psi + \langle \grad\psi, a\grad\psi\rangle + f \,=\, \lambda\,.
\end{equation}
The twisted process corresponding to an eigenpair $(\Psi,\lambda)$
of $\Lg^f$ is defined by the SDE
\begin{equation}\label{E-twisted}
\D{Y_s} \,=\, b(Y_s)\D{s} + 2a(Y_s)\grad\psi(Y_s)\, \D{s}
+ \upsigma(Y_s)\,\D{W_s}\,.
\end{equation}
Since $\psi\in\Sobl^{2,p}(\Rd)$, $p>d$,
 it follows that $\grad\psi$ is locally bounded
(in fact it is locally H\"older continuous), and 
therefore \cref{E-twisted} has a unique strong solution up to its 
explosion time.
We let $\widetilde{\Lg}^\psi_{\phantom{u}}$ denote the extended generator of
\cref{E-twisted},
and $\widetilde{\Exp}^\psi_x$ the associated expectation operator.
The reader might have observed that the twisted process corresponds
to Doob's $h$-transformation of the operator $\Lg^{(f-\lambda)}$
with  $h=\Psi$.

With $\Psi^*$ denoting a principal eigenfunction, i.e., an eigenfunction
associated with $\lamstr(f)$, we let $\psi^*\df\log \Psi^*$, and
denote by $Y^*$ the corresponding twisted process.
A twisted process corresponding to a principal eigenpair is called
a \emph{ground state process}, and the eigenfunction $\Psi^*$ is called
a \emph{ground state}.

Recall that $\Cc_{\mathrm{o}}^+(\Rd)$ denotes the collection of all non-trivial,
nonnegative, continuous
functions which vanish at infinity.
We consider the following two properties of $\lamstr(f)$.

\begin{itemize}
\item[\textbf{(P1)}]
\underline{Strict monotonicity at $f$}.\ \ 
For some $h\in\Cc_{\mathrm{o}}^+(\Rd)$ we have $\lamstr(f-h)<\lamstr(f)$.

\item[\textbf{(P2)}]
\underline{Strict monotonicity at $f$ on the right}.\ \ 
For all $h\in\Cc_{\mathrm{o}}^+(\Rd)$ we have $\lamstr(f)<\lamstr(f+h)$.
\end{itemize}

It follows by the convexity of $f\mapsto\lamstr(f)$ that (P1) implies (P2).

Later, in \cref{S3}, we provide sufficient conditions under which (P1) holds.
Also, the finiteness of $\lamstr(f)$ and $\lamstr(f-h)$ is implicit in (P1).
Indeed, since for every positive $\varphi\in\Sobl^{2,d}(\Rd)$,
and $\lambda\in\RR$ we have
\begin{equation*}
\Lg\varphi + (f-\lambda-\norm{h}_\infty)\varphi \;\le\;\Lg\varphi
+ (f-h-\lambda)\varphi
\;\le\; \Lg\varphi + (f-\lambda)\varphi\,,
\end{equation*}
it follows that $\lamstr(f-h)$  and $\lamstr(f)$ are either both
finite, or both equal to $\pm\infty$.
It is also clear that $\lamstr(f-h)\le \lamstr(f)$ always hold.
As shown in \cref{T2.2}, (P1) implies
that  $\lamstr(f-h)<\lamstr(f)$ for all $h\in\Cc_{\mathrm{o}}^+(\Rd)$.

We introduce the following
definition of exponential ergodicity which we often use.

\begin{definition}[exponential ergodicity]\label{D2.2}
The process $X$ governed by
\cref{E1.2} is said to be exponentially ergodic if for some compact set
$\sB$ and $\delta>0$ we have
$\Exp_x\bigl[\E^{\delta\, \uptau(\sB^c)}\bigr]< \infty$,
 for all $x\in\sB^c$.
\end{definition}

The main results of this section center around the following theorem.

\begin{theorem}\label{T2.1}
Under \textup{(H1)}, the following hold:
\begin{enumerate}[(a)]
\item
A ground state process is recurrent if and only if
$\lamstr(f)$ is strictly monotone at $f$ on the right,
in which case the principal eigenvalue $\lamstr(f)$ is also simple,
and the ground state $\Psi^*$ satisfies
\begin{equation}\label{ET2.1A}
\Psi^*(x)\;=\;\Exp_x\Bigl[\E^{\int_0^{\uuptau_r}[f(X_s)-\lamstr(f)]\, \D{s}}\,
\Psi^*(X_{\uuptau})\,\Ind_{\{\uuptau_r<\infty\}}\Bigr]\qquad
\forall\, x\in \Bar{B}_r^c\,,\quad\forall\,r>0\,.
\end{equation}

\item
The ground state process is exponentially ergodic if and only if
$\lamstr(f)$ is strictly monotone at $f$.

\item
If $\lambda>\lamstr(f)$, the twisted process \cref{E-twisted}
corresponding to any solution $\psi$ of \cref{E2.7} is transient.
\end{enumerate}
\end{theorem}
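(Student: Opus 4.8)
The plan is to prove the three parts in the logically convenient order (c), then (a), then (b), using the stochastic representations from \cref{L2.2} as the main engine and the convexity/monotonicity of $f \mapsto \lamstr(f)$ to pass between the three regimes.

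For part (c), fix $\lambda > \lamstr(f)$ and let $\psi$ solve \cref{E2.7}, so $\Psi = \E^\psi$ is an eigenfunction of $\Lg^f$ with eigenvalue $\lambda$. Since $\lambda > \lamstr(f)$, I can choose $r_0$ with $\Hat\lambda_{r_0} < \lambda$; comparing $\Psi$ with the Dirichlet eigenfunction $\widehat\Psi_{r_0}$ via the It\^o--Krylov formula and the strong maximum principle (exactly as in the proof of \cref{L2.2}\,(ii)) shows that the stochastic representation over exit times from $B_r^c$ is a strict inequality for $r$ large. Concretely, applying It\^o--Krylov to $\Psi$ under the twisted measure, one gets $\widetilde\Exp^\psi_x[\Ind_{\{\uptau_n < \uuptau_r\}}] \to c(x) < 1$ as $n \to \infty$ for $x \in B_n \setminus \Bar B_r$, because the ``missing mass'' is carried by the discounted boundary term $\Exp_x[\E^{\int_0^{\uuptau_r}(f - \lambda)\,\D{t}} \Psi(X_{\uuptau_r}) \Ind_{\{\uuptau_r < \uptau_n\}}]$, which stays bounded below $\Psi(x)$ by a gap that does not close. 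This says precisely that the twisted process $Y$ started outside $B_r$ has positive probability of never returning to $B_r$, i.e.\ $Y$ is transient. The Doob $h$-transform identity $\widetilde\Exp^\psi_x[\,\cdot\,] = \Psi(x)^{-1}\Exp_x[\E^{\int_0^\cdot (f-\lambda)\,\D t}\Psi(X_\cdot)\,\cdot\,]$ is what makes this rigorous; I would also invoke \cref{L2.6} as cited.

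For part (a), the forward direction uses (c) contrapositively together with monotonicity: if the ground-state process $Y^*$ is recurrent, I claim $\lamstr(f)$ is strictly monotone at $f$ on the right. Given $h \in \Cc_{\mathrm{o}}^+(\Rd)$, suppose for contradiction $\lamstr(f+h) = \lamstr(f)$. Then $\Psi^*$ is a subsolution (in fact supersolution, via the sign of $h$) relating the two problems, and running the stochastic representation \cref{EL2.2B} for $\Lg^{f+h}$ against $\Psi^*$ forces $\widetilde\Exp^{\psi^*}_x[\int_0^\infty h(Y^*_t)\,\D t] = 0$, which contradicts recurrence of $Y^*$ since $h$ is strictly positive on a ball that $Y^*$ visits with full probability (and, by positive recurrence type arguments or occupation-time estimates, spends positive expected time in). For the converse, assuming strict monotonicity on the right, I first establish simplicity of $\lamstr(f)$: any two ground states $\Psi_1^*, \Psi_2^*$ give, after scaling, a nonnegative $\Psi_1^* - \kappa\Psi_2^*$ touching zero, and the strong maximum principle applied to the homogeneous equation $\Lg^{f - \lamstr(f)}(\Psi_1^* - \kappa\Psi_2^*) = 0$ forces them proportional — this is the standard Berestycki--Nirenberg--Varadhan argument, and the only subtlety is that in unbounded domains one needs the right-monotonicity to rule out the second eigenfunction, which I would extract from \cref{L2.7} (cited in the introduction as giving exactly this equivalence). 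Then \cref{ET2.1A} follows because, once $\lamstr(f)$ is simple, the limit point $\Psi^*$ in \cref{L2.2}\,(iii) is the unique ground state and the Fatou inequality \cref{EL2.2E} must be an equality: if it were strict on a set of positive measure, the right-hand side would define a strictly smaller positive supersolution, contradicting simplicity (or: it would make $Y^*$ transient, contradicting the already-proved recurrence). Choosing $\sB = B_r$ for arbitrary $r$ gives the stated identity for all $r > 0$.

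For part (b), I would show the two implications separately. If $\lamstr(f)$ is strictly monotone at $f$, pick $h \in \Cc_{\mathrm{o}}^+(\Rd)$ with $\lamstr(f - h) < \lamstr(f)$ and set $\delta \df \lamstr(f) - \lamstr(f-h) > 0$; then \cref{EL2.2F} (with this $h$) reads $\Exp_x[\E^{\int_0^{\uuptau}(f - h - \lamstr(f-h))\,\D t}\Ind_{\{\uuptau<\infty\}}] < \infty$, and rewriting the exponent under the twisted measure $\widetilde\Prob^{\psi^*}$ — where the ground-state dynamics replaces $f - \lamstr(f)$ by a martingale part — turns this into $\widetilde\Exp^{\psi^*}_x[\E^{\int_0^{\uuptau}(\delta - h(Y^*_t))\,\D t}\Ind_{\{\uuptau<\infty\}}] < \infty$; since $h$ vanishes at infinity, on the complement of a large enough ball $\sB'$ we have $\delta - h \ge \delta/2$, giving $\widetilde\Exp^{\psi^*}_x[\E^{(\delta/2)\uptau(\sB'^c)}] < \infty$, which is exponential ergodicity of $Y^*$ per \cref{D2.2}. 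Conversely, if $Y^*$ is exponentially ergodic with rate $\delta$ on $\sB'^c$, I reverse the computation: for $h \in \Cc_{\mathrm{c}}^+(\Rd)$ supported in a ball large enough that $\E^{\delta\uptau(\sB'^c)}$-integrability holds and with $\|h\|_\infty$ small, the twisted representation shows $\Tilde\Psi^* \df \E^{\psi^*}\cdot(\text{a bounded correction})$ is a positive supersolution of $\Lg^{f - h - (\lamstr(f) - \varepsilon)}(\cdot) \le 0$ for some $\varepsilon > 0$, whence $\lamstr(f - h) \le \lamstr(f) - \varepsilon < \lamstr(f)$.

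The main obstacle I anticipate is the converse direction of (b): constructing, from a bare exponential-ergodicity estimate on $Y^*$, an honest positive subsolution witnessing $\lamstr(f-h) < \lamstr(f)$ requires care with the integrability of the twisted exponential functional near the boundary of the support of $h$ and with making the $\varepsilon$ uniform — essentially a quantitative Harnack plus a Feynman--Kac estimate. The recurrence-implies-right-monotonicity half of (a) is the other delicate point, since it needs a positive lower bound on the expected occupation time of a ball by the recurrent (but possibly null-recurrent) process $Y^*$; I would handle this by localizing with the Dirichlet eigenfunctions and using that $Y^*$ returns to any ball a.s., so $\int_0^\infty h(Y^*_t)\,\D t > 0$ $\widetilde\Prob^{\psi^*}_x$-a.s., which already contradicts its expectation being zero.
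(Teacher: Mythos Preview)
Your proposal has the right high-level structure and correctly identifies the stochastic representations and Girsanov/$h$-transform identity as central tools. Part (c) is essentially fine since you defer to \cref{L2.6}, and the forward direction of (a) (recurrent $\Rightarrow$ right-monotone) is also essentially the paper's argument in \cref{T2.3}, phrased via occupation times rather than the Green's measure. The genuine gap is in the \emph{converse} of (a), and to a lesser extent the converse of (b).

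For (a), your route right-monotone $\Rightarrow$ simple $\Rightarrow$ stochastic representation $\Rightarrow$ recurrent is circular. The touching argument for simplicity in $\Rd$ does \emph{not} work from the strong maximum principle alone: without a boundary you cannot guarantee that $\Psi_1^*-\kappa\Psi_2^*$ attains its infimum anywhere, and the paper's touching arguments (e.g.\ \cref{L2.4}) always first use a stochastic representation on $\sB^c$ to force the minimum into $\bar\sB$. But the representation in \cref{L2.2}\,(iii) requires the (P1)-type hypothesis $\lamstr(f-h)+\sup_{\sB^c}|h|<\lamstr(f)$, not merely (P2); so under right-monotonicity alone you have neither simplicity nor the representation to start with, and \cref{L2.7} does not help because its equivalences involve recurrence, not right-monotonicity. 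The paper closes this loop by the \emph{contrapositive}: if $Y^*$ is transient then $G_{\lamstr}(g)<\infty$, and one constructs a positive $\Phi$ solving $\Lg\Phi+(f-\lamstr)\Phi=-\Gamma_0^{-1}g$ (as in the proof of (ii)$\Rightarrow$(iii) of \cref{L2.7}, see \cref{EL2.7D}--\cref{EL2.7F}, and then \cref{ET2.3D}). Setting $\varepsilon=\Gamma_0^{-1}\min_{B_1}(g/\Phi)$ gives $\Lg\Phi+(f+\varepsilon\Ind_{B_1}-\lamstr)\Phi\le 0$, whence $\lamstr(f+\varepsilon\Ind_{B_1})\le\lamstr(f)$ and right-monotonicity fails. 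This Poisson-equation/Green's-function construction is the missing idea; it is what actually converts transience into the failure of (P2).

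For the converse of (b), your plan to manufacture a supersolution ``$\E^{\psi^*}\cdot(\text{bounded correction})$'' for $\Lg^{f-h-(\lamstr(f)-\varepsilon)}$ is not how the argument goes and I do not see how to make it rigorous. The paper (\cref{T2.2}, (i)$\Rightarrow$(ii)) instead uses exponential ergodicity of $Y^*$ and Girsanov to obtain the integrability $\Exp_x\bigl[\E^{\int_0^{\uuptau}[f-\lamstr(f)+\delta]\,\D s}\Ind_{\{\uuptau<\infty\}}\bigr]<\infty$, then feeds this into the mechanism of \cref{L2.2}\,(iii) to show that the ground state $\Tilde\Psi^*$ of $\Lg^{f-h}$ satisfies the stochastic representation on $\sB^c$; only then can one run the touching/maximum-principle comparison between $\Psi^*$ and $\Tilde\Psi^*$ to force $h\,\Tilde\Psi^*=0$, a contradiction. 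The key step you are missing in both (a) and (b) is that the stochastic representation is what makes the maximum principle usable on $\Rd$, and obtaining that representation under the weaker hypothesis is exactly where the work lies.
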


\begin{proof}
Part (a) follows by \cref{L2.7,T2.3,C2.3}.
Part (b) is the statement of \cref{T2.2}, while part (c) is shown
in \cref{L2.6}.
\qed\end{proof}

\cref{T2.1} should be compared with the results in
\cite[Theorem~2.2]{Ichihara-11} and
\cite[Theorem~3.2 and 3.7]{Kaise-06}.
The results in \cite{Ichihara-11, Kaise-06} are obtained under a stronger hypothesis
(same as \cref{P3.1A} below) and for sufficiently regular coefficients. 
For a similar result in a bounded domain we refer the reader
to \cite[Theorem~4.2.4]{Pinsky}, where results are
obtained for a certain class of operators with regular coefficients.

We remark that (P1) does not imply that the underlying process in
\cref{E2.1} is recurrent.
Indeed consider a one-dimensional diffusion with
$b(x)=\frac{3}{2}x$ and $\upsigma=1$, and let $f(x)=x^2$.
Then \cref{E-eigen} holds with $\Psi(x) = \E^{-x^2}$ and
$\lambda=-1$.
But $b(x)+2a \grad{\psi}=-\frac{1}{2}x$, so the twisted
process is exponentially ergodic, while the original diffusion
is transient.


The proof of \cref{T2.1} is divided in several lemmas which also
contain results of independent interest.
These occupy the next section.

\subsection{Proof of \texorpdfstring{\cref{T2.1}}{T2.1} and other results}

In the sequel, we often use the following finite time representation.
This also appears in 
\cite[Lemma~2.4]{ari-anup} but in a slightly different form.
Let $\uptau_\infty\df\lim_{n\to\infty}\,\uptau_n$ where $\uptau_n$
denotes the exit time from the ball $B_n$.
Recall that if $(\Psi, \lambda)$ is an eigenpair of $\Lg^f$, and
$\psi=\log\Psi$, then $\widetilde\Exp_x^\psi$ denotes the expectation operator
associated with the twisted process $Y$ in \cref{E-twisted}.

\begin{lemma}\label{L2.3}
If $(\Psi, \lambda)$ is an eigenpair of $\Lg^f$, then
\begin{equation}\label{EL2.3A}
\Psi(x)\,\widetilde\Exp_x^\psi\bigl[g(Y_{T}) \,
\Psi^{-1}(Y_{T})\,\Ind_{\{T<\uptau_\infty\}}\bigr] \;=\;
\Exp_x\Bigl[\E^{\int_{0}^{T}[f(X_{t})-\lambda]\,\D{t}}\,g(X_{T})\Bigr]
\qquad\forall\,T>0\,,\quad\forall\,x\in\Rd\,,
\end{equation}
and for any function $g\in\Cc_{\mathrm{c}}(\Rd)$,
where $Y$ is the corresponding twisted process defined by \cref{E-twisted}.
\end{lemma}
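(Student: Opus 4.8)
The plan is to derive the identity \cref{EL2.3A} from an application of the It\^o--Krylov formula combined with Girsanov's theorem, treating the exponential weight $\E^{\int_0^T [f(X_t)-\lambda]\,\D t}$ as the Radon--Nikodym density that converts the law of the original diffusion $X$ into the law of the twisted process $Y$. First I would fix $T>0$, $x\in\Rd$, and $g\in\Cc_{\mathrm c}(\Rd)$, and work on the event $\{T<\uptau_n\}$ for a fixed $n$, later letting $n\to\infty$. On this event the path stays in the ball $B_n$, where $\Psi$ is bounded above and bounded away from $0$ (by positivity and continuity), and $\grad\psi=\grad\Psi/\Psi$ is bounded; so all the stochastic integrals below are genuine martingales and there are no integrability subtleties before the passage to the limit.

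The key computation is the following. Apply the It\^o--Krylov formula to the process $t\mapsto \E^{\int_0^t[f(X_s)-\lambda]\,\D s}\,\Psi(X_t)$ up to time $T\wedge\uptau_n$. Using the eigenvalue equation in the form $\Lg\Psi + (f-\lambda)\Psi = 0$, the finite-variation part cancels identically, leaving
\begin{equation*}
\E^{\int_0^{T\wedge\uptau_n}[f(X_s)-\lambda]\,\D s}\,\Psi(X_{T\wedge\uptau_n})
\;=\; \Psi(x) + \int_0^{T\wedge\uptau_n}
\E^{\int_0^t[f(X_s)-\lambda]\,\D s}\,\bigl\langle\grad\Psi(X_t),\upsigma(X_t)\,\D W_t\bigr\rangle\,.
\end{equation*}
Dividing by $\Psi(x)$, the right-hand side is a positive local martingale starting at $1$, and on $B_n$ it is a true martingale; call it $Z^n_t$. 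Writing $Z^n_t = \exp\!\bigl(\int_0^t 2\langle\grad\psi(X_s),\upsigma(X_s)\,\D W_s\rangle - \tfrac12\int_0^t 4\langle\grad\psi, a\grad\psi\rangle(X_s)\,\D s\bigr)$ after a short calculation using $\grad\Psi/\Psi=\grad\psi$ and $a=\tfrac12\upsigma\upsigma\transp$, we recognise $Z^n$ as precisely the Girsanov density that changes the drift of $X$ from $b$ to $b+2a\grad\psi$, i.e.\ turns $X$ into the twisted process $Y$ of \cref{E-twisted}, up to the exit time $\uptau_n$. Hence for any bounded measurable functional, and in particular for $g(X_T)\Psi^{-1}(X_T)\Ind_{\{T<\uptau_n\}}$,
\begin{equation*}
\widetilde\Exp_x^\psi\bigl[g(Y_T)\,\Psi^{-1}(Y_T)\,\Ind_{\{T<\uptau_n\}}\bigr]
\;=\; \Exp_x\bigl[Z^n_T\, g(X_T)\,\Psi^{-1}(X_T)\,\Ind_{\{T<\uptau_n\}}\bigr]
\;=\; \Psi(x)^{-1}\,\Exp_x\bigl[\E^{\int_0^T[f(X_t)-\lambda]\,\D t}\,g(X_T)\,\Ind_{\{T<\uptau_n\}}\bigr]\,,
\end{equation*}
where in the last step the factor $\Psi(X_T)$ from $Z^n_T$ cancels against $\Psi^{-1}(X_T)$. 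Multiplying through by $\Psi(x)$ gives \cref{EL2.3A} with $\uptau_\infty$ replaced by $\uptau_n$.

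Finally I would let $n\to\infty$. On the left, $\{T<\uptau_n\}\uparrow\{T<\uptau_\infty\}$ and the integrand $g(Y_T)\Psi^{-1}(Y_T)$ is nonnegative, so monotone convergence yields the left side of \cref{EL2.3A}. On the right, $g$ has compact support, say $\supp g\subset B_m$; for $n\ge m$ the indicator $\Ind_{\{T<\uptau_n\}}$ may be dropped since $g(X_T)=0$ whenever $X_T\notin B_m$ while on $\{T\ge\uptau_n\}\cap\{X_T\in B_m\}$ — wait, more carefully: $g(X_T)\Ind_{\{T<\uptau_n\}}\to g(X_T)$ pointwise because $\Ind_{\{T<\uptau_n\}}\to 1$ a.s.\ on $\{\uptau_\infty>T\}$ and the diffusion is non-explosive under (A1)--(A3) so $\uptau_\infty=\infty$ a.s.; and $|g(X_T)\,\E^{\int_0^T[f(X_t)-\lambda]\,\D t}|\le \|g\|_\infty\,\E^{(\|f\Ind_{B_m}\|_\infty+|\lambda|)T}\Ind_{\{X_T\in B_m\}}$, which is integrable, so dominated convergence applies. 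This recovers \cref{EL2.3A} in full. The main obstacle to watch is the non-explosion / localization argument: one must justify that the local martingale $Z^n$ is a genuine martingale on $B_n$ (immediate, since its coefficients are bounded there) and that the measure change is valid only up to $\uptau_n$, so the identity must genuinely be proved at the level $\uptau_n$ first and then upgraded by the two monotone/dominated convergence steps above; invoking Girsanov directly on all of $\Rd$ would be unjustified because the twisted process need not be non-explosive.
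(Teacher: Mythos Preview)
Your approach is essentially the one the paper takes: localise on $B_n$, apply It\^o--Krylov (the paper does this to $\psi=\log\Psi$, you to $\E^{\int[f-\lambda]}\Psi$, which is equivalent), recognise the resulting exponential as a Girsanov density for the drift change $b\mapsto b+2a\grad\psi$, and then pass to the limit $n\to\infty$. Two points need attention.

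First, a computational slip: the correct stochastic exponential is
\[
Z^n_t \;=\; \exp\Bigl(\int_0^t\langle\grad\psi(X_s),\upsigma(X_s)\,\D W_s\rangle
-\int_0^t\langle\grad\psi, a\grad\psi\rangle(X_s)\,\D s\Bigr)\,,
\]
without the factor $2$ in front of the stochastic integral; otherwise Girsanov would shift the drift by $4a\grad\psi$, not $2a\grad\psi$.

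Second, and more seriously, your dominated convergence argument for the right-hand side does not go through. You propose the bound
$\babs{g(X_T)\,\E^{\int_0^T[f(X_t)-\lambda]\,\D t}}
\le \norm{g}_\infty\,\E^{(\norm{f\Ind_{B_m}}_\infty+\abs{\lambda})T}\,\Ind_{\{X_T\in B_m\}}$,
but this is false: the path integral $\int_0^T f(X_t)\,\D t$ depends on the entire trajectory, and the condition $X_T\in B_m$ does not prevent $X$ from visiting regions where $f$ is arbitrarily large during $[0,T]$. Since $f$ is only assumed locally bounded (and bounded below), not bounded above, you have no pathwise control of this kind. The paper supplies the missing estimate by using the eigenvalue equation itself: from $\Lg\Psi+(f-\lambda)\Psi=0$, It\^o--Krylov and Fatou give
\[
\Exp_x\Bigl[\E^{\int_0^T[f(X_t)-\lambda]\,\D t}\,\Ind_{\sB}(X_T)\Bigr]
\;\le\;\Bigl(\inf_{\sB}\Psi\Bigr)^{-1}\Psi(x)
\]
for any bounded ball $\sB\supset\supp(g)$, and this furnishes a legitimate dominating function uniform in $n$. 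With this correction (and the observation that for general $g\in\Cc_{\mathrm c}(\Rd)$ one may reduce to $g\ge0$ by linearity) your proof is complete and matches the paper's.
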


\begin{proof}
The equation in \cref{EL2.3A} can be obtained by applying
the Cameron--Martin--Girsanov theorem \cite[p.~225]{LiSh-I}.
Since $\psi$ and $f$ are not bounded, we need to localize the martingale.
We use the first exit times $\uptau_n$ from $B_n$
as localization times.
It is well-known that assumption (A2) implies that
$\uptau_n\to \infty$ as $n\to\infty$ $\Prob_x$-a.s.
Applying the It\^{o}--Krylov formula  and using \cref{E2.7},
we obtain
\begin{align}\label{EL2.3B}
\psi(X_{T\wedge\uptau_n}) -\psi(x) &\;=\;
\int_0^{T\wedge\uptau_n}\Lg\psi(X_{s})\,\D{s}
+ \int_0^{T\wedge\uptau_n} \langle \grad\psi(X_s), \upsigma(X_s)\,\D{W_s}\rangle
\nonumber\\[3pt]
&\;=\; \int_0^{T\wedge\uptau_n}
\Bigl( \lambda- f(X_s)
- \langle\grad\psi,a\grad \psi\rangle(X_s)\Bigr)\,\D{s}
+ \int_0^{T\wedge\uptau_n} \langle \grad\psi(X_s), \upsigma(X_s) \D{W_s}\rangle\,.
\end{align}
Let $g$ be any nonnegative, continuous function with compact support.
Then from  \cref{EL2.3B} we obtain
\begin{align}\label{EL2.3C}
\Exp_x\Bigl[\E^{\int_0^{T\wedge \uptau_n}
[f(X_s)-\lambda]\,\D{s}}\,g(X_{T\wedge\uptau_n})\Bigr]
&\;=\; \Exp_x\biggl[g(X_{T\wedge\uptau_n}) \exp\biggl(
-\psi(X_{T\wedge\uptau_n}) + \psi(x) \nonumber\\[3pt]
&\mspace{30mu}
+\int_0^{T\wedge\uptau_n} \langle \grad\psi(X_s), \upsigma(X_s) \D{W_s}\rangle
-\int_0^{T\wedge\uptau_n} \langle\grad\psi,a\grad \psi\rangle(X_s)\,
\D{s}\biggr)\biggr]\nonumber\\[5pt]
&\;=\; \Psi(x)\,\widetilde\Exp_x^\psi \bigl[g(Y_{T\wedge\uptau_n}) \,
\Psi^{-1}(Y_{T\wedge\uptau_n})\bigr]\,,
\end{align}
where in the last line we use Girsanov's theorem.
Given any bounded ball $\sB$,
by It\^o's formula and Fatou's lemma, we obtain from \cref{E-eigen} that
\begin{equation}\label{EL2.3D}
\Exp_{x}\,\Bigl[\E^{\int_{0}^{T}
[f(X_{t})-\lambda]\,\D{t}}\,\Ind_{\sB}(X_T)\Bigr]\;\le\;
\Bigl(\inf_{\sB}\,\Psi\Bigr)^{-1}\,
\Psi(x)\qquad\forall\,T>0\,,\quad\forall\,x\in\Rd\,.
\end{equation}
Therefore, if we write
\begin{multline*}
\Exp_x\Bigl[\E^{\int_0^{T\wedge \uptau_n}
[f(X_s)-\lambda]\,\D{s}}\,g(X_{T\wedge\uptau_n})\Bigr]
\;=\; \Exp_x\Bigl[\E^{\int_0^{\uptau_n}
[f(X_s)-\lambda]\,\D{s}}\,g(X_{\uptau_n})\,\Ind_{\{T\ge\uptau_n\}}\Bigr]\\[3pt]
+ \Exp_x\Bigl[\E^{\int_0^{T}
[f(X_s)-\lambda]\,\D{s}}\,g(X_{T})\,\Ind_{\{T<\uptau_n\}}\Bigr]\,,
\end{multline*}
we deduce that the first term on the right hand side is equal to $0$ for all
$n$ sufficiently large since $g$ is compactly supported,
while the second term converges as $n\to\infty$ to the right hand side
of \cref{EL2.3A} by \cref{EL2.3D} and dominated convergence.
In addition, since $g$ has compact support,
the term inside the expectation in the right hand side of \cref{EL2.3C} is bounded
uniformly in $n$.
Since also $\widetilde\Exp_x^\psi\bigl[g(Y_{\uptau_n}) \,
\Psi^{-1}(Y_{\uptau_n})\bigr]=0$ for all sufficiently large $n$,
letting $n\to\infty$ in \cref{EL2.3C}, we obtain
\begin{align*}
\Exp_x\Bigl[\E^{\int_0^{T} [f(X_s)-\lambda]\,\D{s}}\,g(X_{T})\Bigr]
&\;=\; \lim_{n\to\infty}\;
\Psi(x)\,\widetilde\Exp_x^\psi \Bigl[g(Y_{T}) \,
\Psi^{-1}(Y_{T})\,\Ind_{\{T<\uptau_n\}}\Bigr]\\[3pt]
&\;=\;
\Psi(x)\,\widetilde\Exp_x^\psi \Bigl[g(Y_{T}) \,
\Psi^{-1}(Y_{T})\,\Ind_{\{T<\uptau_\infty\}}\Bigr]\qquad\forall\,T>0\,.
\end{align*}
This proves \cref{EL2.3A}.
\qed\end{proof}

Recall that $\uptau_\infty\df \lim_{n\to\infty}\,\uptau_n$.
An immediate corollary to \cref{L2.3} is the following.

\begin{corollary}\label{C2.2}
With $(\Psi,\lambda)$ as in \cref{L2.3},
we have
\begin{equation*}
\Psi(x)\,\widetilde\Prob_x^\psi(T<\uptau_{\infty}) \;=\;
\Exp_x\Bigl[\E^{\int_{0}^{T}[f(X_{t})-\lambda]\,\D{t}}\,\Psi(X_{T})\Bigr]
\qquad\forall\,T>0\,,\quad\forall\,x\in\Rd\,.
\end{equation*}
\end{corollary}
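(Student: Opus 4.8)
The plan is to derive the identity directly from \cref{L2.3} by a monotone approximation of the constant function~$1$. First I would rewrite \cref{EL2.3A} in a more convenient form: since $\Psi$ is continuous and strictly positive on $\Rd$, for every $g\in\Cc_{\mathrm{c}}(\Rd)$ the function $g\Psi$ again belongs to $\Cc_{\mathrm{c}}(\Rd)$, so \cref{L2.3} may be applied with $g\Psi$ in place of $g$. The factors $\Psi(Y_T)$ and $\Psi^{-1}(Y_T)$ on the left-hand side then cancel (on $\{T<\uptau_\infty\}$ the point $Y_T$ lies in $\Rd$), and one obtains
\begin{equation*}
\Psi(x)\,\widetilde\Exp_x^\psi\bigl[g(Y_T)\,\Ind_{\{T<\uptau_\infty\}}\bigr]
\;=\; \Exp_x\Bigl[\E^{\int_0^T[f(X_s)-\lambda]\,\D{s}}\,g(X_T)\,\Psi(X_T)\Bigr]
\qquad\forall\,T>0\,,\quad\forall\,x\in\Rd\,.
\end{equation*}

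Next I would choose a sequence $\{g_k\}\subset\Cc_{\mathrm{c}}^+(\Rd)$ with $0\le g_k\le g_{k+1}\le1$ for all $k$ and $g_k\uparrow1$ pointwise on $\Rd$; for instance $g_k(x)\df\bigl(1\wedge(k+1-\abs{x})\bigr)^+$ works. Substituting $g=g_k$ in the displayed identity and letting $k\to\infty$, I would invoke the monotone convergence theorem on each side. On the left, $g_k(Y_T)\,\Ind_{\{T<\uptau_\infty\}}$ is nonnegative and increases pointwise to $\Ind_{\{T<\uptau_\infty\}}$, hence $\widetilde\Exp_x^\psi[g_k(Y_T)\,\Ind_{\{T<\uptau_\infty\}}]\uparrow\widetilde\Prob_x^\psi(T<\uptau_\infty)$. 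On the right, since $\Psi>0$ and the exponential weight is positive, the integrand $\E^{\int_0^T[f(X_s)-\lambda]\,\D{s}}g_k(X_T)\Psi(X_T)$ is nonnegative and increases pointwise to $\E^{\int_0^T[f(X_s)-\lambda]\,\D{s}}\Psi(X_T)$, so the expectations converge to $\Exp_x[\E^{\int_0^T[f(X_s)-\lambda]\,\D{s}}\Psi(X_T)]$. Equating the two limits yields the asserted formula; moreover, since $\widetilde\Prob_x^\psi(T<\uptau_\infty)\le1$, the right-hand side is automatically finite and bounded above by $\Psi(x)$, so no integrability question arises.

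There is no real obstacle here: the two substantive points are checking that $g\Psi$ is admissible in \cref{L2.3} (which is precisely where continuity and positivity of the eigenfunction enter) and setting up the two applications of monotone convergence correctly. Both are routine, which is why the statement is recorded as a corollary rather than a lemma.
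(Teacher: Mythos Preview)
Your proof is correct and is essentially identical to the paper's own argument: the paper also substitutes $g_n\Psi$ for $g$ in \cref{EL2.3A} with a sequence of cut-off functions $g_n\uparrow 1$, and then passes to the limit via monotone convergence. Your write-up simply fills in a few more details (the explicit choice of $g_k$ and the finiteness observation at the end), but the route is the same.
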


\begin{proof}
Choose a sequence of cut-off functions $g_n$ that approximates unity from below.
Then \cref{EL2.3A} holds
with $g$ replaced by $g_n\Psi$.
Thus the result follows by letting $n\to\infty$ and
applying the monotone convergence theorem.
\qed\end{proof}

We are now ready to prove uniqueness of the principal eigenfunction.

\begin{lemma}\label{L2.4}
Under \textup{(P1)}
there exists a unique ground state $\Psi^*$ for $\Lg^f$,
i.e., a positive $\Psi^*\in\Sobl^{2,d}(\Rd)$, $\Psi^*(0)=1$, which solves
\begin{equation}\label{EL2.4A}
\Lg\Psi^* + f\,\Psi^*\;=\;\lamstr(f)\,\Psi^*\,.
\end{equation}
\end{lemma}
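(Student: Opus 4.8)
The plan is to show that any two ground states must coincide by exploiting the strict inequality $\lamstr(f-h)<\lamstr(f)$ from (P1) together with the stochastic representation of \cref{L2.2}\,(iii). First I would fix $h\in\Cc_{\mathrm{o}}^+(\Rd)$ witnessing (P1), choose an open ball $\sB$ large enough that $\sup_{\sB^c}h$ is small (precisely, $\lamstr(f-h)+\sup_{\sB^c}h<\lamstr(f)$, which is possible since $h$ vanishes at infinity), and let $\uuptau$ be the first hitting time of $\sB$. Let $\Psi^*_1$ and $\Psi^*_2$ be two ground states, both normalized at the origin. By passing to limits of Dirichlet eigensolutions (or by directly invoking \cref{L2.2}\,(iii), whose hypotheses are exactly met here), each $\Psi^*_i$ satisfies the representation \cref{EL2.2B} on $\sB^c$:
\begin{equation*}
\Psi^*_i(x)\;=\;\Exp_x\Bigl[\E^{\int_0^{\uuptau}[f(X_t)-\lamstr(f)]\,\D t}\,\Psi^*_i(X_{\uuptau})\,\Ind_{\{\uuptau<\infty\}}\Bigr]\,,\qquad x\in\sB^c\,.
\end{equation*}

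The key step is to upgrade this to a statement that pins down $\Psi^*_i$ on $\sB^c$ in terms of its boundary values on $\partial\sB$. The point is that (P1), via the inequality \cref{EL2.2F} established inside the proof of \cref{L2.2}, guarantees that the integrability
\begin{equation*}
\Exp_x\Bigl[\E^{\int_0^{\uuptau}[f(X_t)-\lamstr(f)]\,\D t}\,\Ind_{\{\uuptau<\infty\}}\Bigr]\;<\;\infty\qquad\forall\,x\in\sB^c
\end{equation*}
holds (here one uses $h\ge0$ and $\sup_{\sB^c}h<\lamstr(f)-\lamstr(f-h)$ to dominate). Consequently the map $\Phi\mapsto\Exp_\cdot[\E^{\int_0^{\uuptau}[f(X_t)-\lamstr(f)]\,\D t}\Phi(X_{\uuptau})\Ind_{\{\uuptau<\infty\}}]$ is a well-defined bounded linear operator on $C(\partial\sB)$, and \cref{EL2.2B} says each $\Psi^*_i$ restricted to $\sB^c$ is the unique bounded-by-a-harmonic-majorant solution of the exterior Dirichlet problem for $\Lg^{f-\lamstr(f)}$ with data $\Psi^*_i|_{\partial\sB}$. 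Thus if I can show $\Psi^*_1=\Psi^*_2$ on $\partial\sB$, then they agree on all of $\sB^c$, and then on $\sB$ too: inside $\sB$ both solve the same linear Dirichlet problem $\Lg\Phi+(f-\lamstr(f))\Phi=0$ with identical boundary data, so by uniqueness for that problem (e.g. the strong maximum principle, as used in the proof of \cref{L2.2}\,(ii)) they coincide.

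So everything reduces to matching the two ground states on the compact boundary $\partial\sB$, and this is where I expect the main difficulty. The standard trick is a scaling-and-touching argument: set $\kappa\df\max_{\partial\sB}(\Psi^*_1/\Psi^*_2)$, which is finite and positive by Harnack (both are positive, continuous), and consider $w\df\kappa\Psi^*_2-\Psi^*_1\ge0$ on $\partial\sB$ with equality at some point $x_0\in\partial\sB$. Using the representation on $\sB^c$, $w\ge0$ on all of $\sB^c$, and using the interior linear equation plus the strong maximum principle, $w$ cannot attain an interior minimum value $0$ unless $w\equiv0$. The delicate part is pushing the touching point argument through the exterior representation: one shows $w(x)=\Exp_x[\E^{\int_0^{\uuptau}[f-\lamstr(f)]\,\D t}w(X_{\uuptau})\Ind_{\{\uuptau<\infty\}}]\ge0$ on $\sB^c$, then by continuity $w(x_0)=0$ forces, via the strong maximum principle applied in a slightly larger annulus $\sB'\setminus\overline{\sB}$ where $w$ solves $\Lg w+(f-\lamstr(f))w=0$ and attains its minimum $0$ at the interior-to-$\sB'$ point $x_0$, that $w\equiv0$ on that annulus, hence (unique continuation / iteration over expanding annuli, as in the $\kappa_n\to0$ argument in \cref{L2.2}) $w\equiv0$ on $\sB^c$. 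Therefore $\kappa\Psi^*_2\equiv\Psi^*_1$ on $\sB^c$, and evaluating this against the representation and extending into $\sB$ gives $\Psi^*_1\equiv\kappa\Psi^*_2$ on $\Rd$; the normalization $\Psi^*_1(0)=\Psi^*_2(0)=1$ forces $\kappa=1$, so $\Psi^*_1=\Psi^*_2$. The only genuinely new input beyond the machinery of \cref{L2.2,L2.3} is the integrability \cref{EL2.2F}, which is precisely what (P1) buys us and which distinguishes this from the merely-$\ge$ Fatou bound available without strict monotonicity; I would flag that this is the crux and that it is exactly the obstruction that makes simplicity fail on unbounded domains in general.
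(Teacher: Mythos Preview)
There is a genuine gap. You invoke \cref{L2.2}\,(iii) to assert that \emph{both} ground states $\Psi^*_1,\Psi^*_2$ satisfy the equality representation \cref{EL2.2B}. But read the hypothesis of \cref{L2.2}\,(iii) carefully: it applies only to $\Psi^*$ that arise as \emph{limit points of the Dirichlet eigensolutions} $(\widehat\Psi_n,\Hat\lambda_n)$. An arbitrary positive $\Sobl^{2,d}$ solution of \cref{EL2.4A} is not known a priori to be such a limit point---that is precisely what uniqueness would give you. For a generic ground state you only get the \emph{inequality}
\[
\Tilde\Psi(x)\;\ge\;\Exp_x\Bigl[\E^{\int_0^{\uuptau}[f(X_t)-\lamstr(f)]\,\D t}\,\Tilde\Psi(X_{\uuptau})\,\Ind_{\{\uuptau<\infty\}}\Bigr]
\]
via It\^o--Krylov and Fatou. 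Your subsequent step, writing $w=\kappa\Psi^*_2-\Psi^*_1$ as an \emph{equality} against its boundary values, therefore fails.

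The paper fixes this by treating the two eigenfunctions asymmetrically: take $\Psi^*$ to be a Dirichlet limit (so the equality \cref{EL2.2B} holds for it), and let $\Tilde\Psi$ be arbitrary (so only the Fatou inequality holds). Then with $\kappa=\min_{\Bar\sB}\tfrac{\Tilde\Psi}{\Psi^*}$ one gets directly
\[
\Tilde\Psi(x)\;\ge\;\Exp_x[\dotsb\Tilde\Psi(X_{\uuptau})\dotsb]\;\ge\;\kappa\,\Exp_x[\dotsb\Psi^*(X_{\uuptau})\dotsb]\;=\;\kappa\,\Psi^*(x)\qquad\forall\,x\in\sB^c\,,
\]
so $\Tilde\Psi-\kappa\Psi^*\ge0$ on all of $\Rd$ with its zero minimum attained at an \emph{interior} point of $\Rd$ (somewhere in $\Bar\sB$). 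The strong maximum principle then applies globally, after splitting the zeroth-order term as $(f-\lamstr)^+ - (f-\lamstr)^-$. This also avoids your detour through ``uniqueness for the interior Dirichlet problem on $\sB$'', which is itself suspect since $\lamstr(f)>\Hat\lambda_\sB(f)$ lies above the Dirichlet principal eigenvalue on $\sB$, and your annulus argument where the touching point $x_0\in\partial\sB$ is not interior to $\sB'\setminus\overline\sB$.
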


\begin{proof}
Let $\Psi^*$ be a solution of \cref{EL2.4A} obtained as a limit of
$\widehat{\Psi}_r$ (see \cref{L2.2}).
Thus by \cref{L2.2}\,(iii) we can find an open ball $\sB$ such that
\begin{equation*}
\Psi^*(x)\;=\;\Exp_x\Bigl[\E^{\int_0^{\uuptau} [f(X_s)-\lamstr(f)]\, \D{s}}\,
\Psi^*(X_{\uuptau})\,\Ind_{\{\uuptau<\infty\}}\Bigr]\,,
\quad x\in\sB^c\,,
\end{equation*}
with $\uuptau=\uptau(\sB^c)$.
Suppose that $\Tilde\Psi$
is another principal eigenfunction of $\Lg^f$.
By the It\^o--Krylov formula and Fatou's lemma, and since $\Psi^*$
is positive on $\Bar\sB$,
we obtain
\begin{align}\label{EL2.4C}
\Tilde\Psi(x)&\;\ge\;\Exp_x\Bigl[\E^{\int_0^{\uuptau} [f(X_s)-\lamstr(f)]\, \D{s}}\,
\Tilde\Psi(X_{\uuptau})\,\Ind_{\{\uuptau<\infty\}}\Bigr]\nonumber\\
&\;\ge\; \Bigl(\min_{\Bar\sB}\,\tfrac{\Tilde\Psi}{\Psi^*}\Bigr)\,\Psi^*(x)
\qquad \forall\,x\in\sB^c\,.
\end{align}
It is clear by \cref{EL2.4C} that if $\Tilde\Psi>\Psi^*$ on $\Bar\sB$,
then $\Tilde\Psi-\Psi^*>0$ on $\Rd$.
Therefore, we can scale $\Psi^*$ by multiplying it with
$\min_{\Bar\sB}\tfrac{\Tilde\Psi}{\Psi^*}$ so that $\Tilde\Psi$ touches $\Psi^*$
from above in $\Bar\sB$ at the points $\argmin_{\Bar\sB}\tfrac{\Tilde\Psi}{\Psi^*}$.
Denoting this scaled $\Psi^*$ also as $\Psi^*$, it follows 
from \cref{EL2.4C} that $\Tilde\Psi-\Psi^*$ is nonnegative in $\Rd$, and its
minimum is $0$ and attained in $\Bar\sB$.
On the other hand, we have
\begin{equation*}
\Lg(\Tilde\Psi-\Psi^*) -\bigl(f-\lamstr(f)\bigr)^{-} (\Tilde\Psi-\Psi^*)
\;=\; -\bigl(f-\lamstr(f)\bigr)^{+} (\Tilde\Psi-\Psi^*)\;\le\; 0\,.
\end{equation*}
Thus  $\Tilde\Psi-\Psi^*=0$
by the strong maximum principle \cite[Theorem~9.6]{GilTru}, and this proves the result.
\qed\end{proof}

We next show that (P1) implies the exponential ergodicity of $Y^*$.

\begin{lemma}\label{L2.5}
Assume \textup{(P1)}.
Let $\Psi^*$ be the ground state of $\Lg^f$,
and $\psi^*=\log\Psi^*$. Then the ground state process $Y^*$ governed by
\begin{equation}\label{EL2.5A}
\D{Y^*_s} \;=\; b(Y^*_s)\,\D{s} + 2a(Y^*_s)\grad\psi^*(Y^*_s)\, \D{s}
+ \upsigma(Y^*_s)\,\D{W_s}\,,
\end{equation}
is exponentially ergodic. In particular, $Y^*$ is positive recurrent.
\end{lemma}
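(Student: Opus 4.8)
The plan is to derive exponential ergodicity of $Y^*$ from property (P1) by constructing a suitable Lyapunov function for the twisted generator $\widetilde{\Lg}^{\psi^*}$ out of the ratio of the ground state of a perturbed operator to $\Psi^*$. Concretely, by (P1) there exists $h\in\Cc_{\mathrm{o}}^+(\Rd)$ with $\lamstr(f-h)<\lamstr(f)$; let $\Phi$ be a ground state (or, more safely, any positive eigenfunction obtained as a limit of Dirichlet eigenfunctions, via \cref{L2.2}) of $\Lg^{(f-h)}$ associated with $\lamstr(f-h)$, and set $V\df \Phi/\Psi^*$. Writing $\varphi\df\log\Phi$ and $\psi^*\df\log\Psi^*$, a direct computation using \cref{E2.7} for both eigenpairs shows that
\begin{equation*}
\widetilde{\Lg}^{\psi^*} V \;=\; \bigl(\lamstr(f-h)-\lamstr(f)+h\bigr)\,V
\;-\;\langle\grad(\varphi-\psi^*),\,a\,\grad(\varphi-\psi^*)\rangle\,V\,.
\end{equation*}
Since the quadratic term is nonnegative, and since $\lamstr(f-h)-\lamstr(f)<0$ while $h\in\Cc_{\mathrm{o}}^+(\Rd)$ so that $h\to 0$ at infinity, there is a $\delta>0$ and a compact ball $\sB$ with $\lamstr(f-h)-\lamstr(f)+h(x)\le-\delta$ for $x\in\sB^c$; hence $\widetilde{\Lg}^{\psi^*} V \le -\delta V$ on $\sB^c$. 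This is exactly a geometric-drift (Foster–Lyapunov) inequality for the twisted diffusion.

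The second step is to convert this differential inequality into the exponential moment bound in \cref{D2.2}. Applying the It\^o–Krylov formula to $\E^{\delta\,(t\wedge\uptau(\sB^c)\wedge\uptau_n)}\,V(Y^*_{t\wedge\uptau(\sB^c)\wedge\uptau_n})$ and using $\widetilde{\Lg}^{\psi^*}V + \delta V\le 0$ on $\sB^c$, we get that this process is a nonnegative supermartingale under $\widetilde\Prob^{\psi^*}_x$ for $x\in\sB^c$. Taking expectations, letting $n\to\infty$ and $t\to\infty$, and using Fatou's lemma together with the positivity of $V$ on $\partial\sB$ (so that $V$ is bounded below by a positive constant there), we obtain
\begin{equation*}
\widetilde\Exp^{\psi^*}_x\bigl[\E^{\delta\,\uptau(\sB^c)}\bigr]\;\le\;
\Bigl(\inf_{\partial\sB} V\Bigr)^{-1} V(x)\;<\;\infty\qquad\forall\,x\in\sB^c\,,
\end{equation*}
which is precisely the definition of exponential ergodicity of $Y^*$; in particular $Y^*$ does not explode (so \cref{EL2.5A} has a global strong solution) and, since $\widetilde\Exp^{\psi^*}_x[\uptau(\sB^c)]<\infty$, it is positive recurrent.

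A few technical points need care. First, one must make sure $V$ is well-defined and that $\grad V$, $\grad\varphi$, $\grad\psi^*$ are locally bounded so the It\^o–Krylov formula applies; this follows from $\Phi,\Psi^*\in\Sobl^{2,p}(\Rd)$ for all $p$, exactly as in the discussion after \cref{E-twisted}. Second, the localization by $\uptau_n$ is essential because a priori $Y^*$ may explode and $V$ is unbounded; the supermartingale property on $\sB^c$ controls the process up to $\uptau(\sB^c)$, which is all that is needed, and the finiteness of $\widetilde\Exp^{\psi^*}_x[\E^{\delta\uptau(\sB^c)}]$ is what actually rules out explosion before hitting $\sB$. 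Third, one should verify that the drift bound $\lamstr(f-h)-\lamstr(f)+h\le-\delta$ outside a compact set genuinely uses $h\in\Cc_{\mathrm{o}}^+(\Rd)$ — boundedness of $h$ alone would not suffice. I expect the main obstacle to be the clean bookkeeping in the Girsanov/It\^o–Krylov localization argument — establishing rigorously that the localized exponential process is a supermartingale and that one may pass to the limit $n\to\infty$ — rather than the Lyapunov computation itself, which is a short application of the two eigen-equations.
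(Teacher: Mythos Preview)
Your Lyapunov construction is the right idea and matches the paper's strategy: setting $V=\Tilde\Psi^*/\Psi^*$ where $\Tilde\Psi^*$ is a principal eigenfunction of $\Lg^{(f-h)}$ does yield
\[
\widetilde{\Lg}^{\psi^*} V \;=\; \bigl(\lamstr(f-h)-\lamstr(f)+h\bigr)V\,,
\]
as in the paper's \cref{EL2.5C}. (Your displayed identity has a spurious extra term $-\langle\grad(\varphi-\psi^*),a\grad(\varphi-\psi^*)\rangle V$; if you redo the computation from \cref{E2.7} you will see that this quadratic term cancels exactly against the one arising from $\widetilde\Lg^{\psi^*}\E^{w}=\E^{w}(\widetilde\Lg^{\psi^*}w+\langle\grad w,a\grad w\rangle)$. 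The error is harmless for the inequality you want, but the equality is cleaner and is what the paper uses.)

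The substantive gap is the passage from the localized supermartingale bound to $\widetilde\Exp^{\psi^*}_x[\E^{\delta\uptau(\sB^c)}]<\infty$. Your Fatou argument, starting from
\[
\widetilde\Exp^{\psi^*}_x\Bigl[\E^{\delta(t\wedge\uptau(\sB^c)\wedge\uptau_n)}V\bigl(Y^*_{t\wedge\uptau(\sB^c)\wedge\uptau_n}\bigr)\Bigr]\;\le\;V(x)\,,
\]
after dropping nonnegative terms and letting $n,t\to\infty$, only yields
\[
\Bigl(\inf_{\partial\sB}V\Bigr)\,\widetilde\Exp^{\psi^*}_x\Bigl[\E^{\delta\uptau(\sB^c)}\,\Ind_{\{\uptau(\sB^c)<\uptau_\infty\}}\Bigr]\;\le\;V(x)\,.
\]
This does \emph{not} exclude the event $\{\uptau_\infty\le\uptau(\sB^c),\;\uptau_\infty<\infty\}$, on which $\uptau(\sB^c)=\infty$ and the expectation you claim to bound is infinite. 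Your sentence ``the finiteness of $\widetilde\Exp^{\psi^*}_x[\E^{\delta\uptau(\sB^c)}]$ is what actually rules out explosion'' is circular: you have not yet shown that expectation is finite. Nothing in the Lyapunov inequality by itself rules out explosion here, since $V$ is not known to be inf-compact.

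This is precisely why the paper's proof devotes its first half to a different task: it establishes the \emph{finite-time} representation $\Psi^*(x)=\Exp_x\bigl[\E^{\int_0^T[f-\lamstr(f)]\,\D s}\Psi^*(X_T)\bigr]$ (by a comparison of the Dirichlet eigensolutions $\widehat\Psi_n$ with $\Tilde\Psi^*$ and a dominated-convergence argument), and then invokes \cref{C2.2} to conclude $\widetilde\Prob^{\psi^*}_x(\uptau_\infty<\infty)=0$, i.e.\ regularity of $Y^*$. Only after regularity is in hand does the Foster--Lyapunov inequality \cref{EL2.5C} legitimately give exponential ergodicity. The paper also uses the stochastic representation \cref{EL2.2B} to show $\inf_\Rd V>0$, which is needed for the standard Foster--Lyapunov conclusion; your use of $\inf_{\partial\sB}V>0$ alone would suffice once regularity is known, but not before. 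To repair your argument you must supply an independent proof of non-explosion of $Y^*$; the paper's route via the finite-time representation and \cref{C2.2} is the natural one under the standing hypotheses.
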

\begin{proof}

We first show that the finite time representation of $\Psi^*$ holds. 
Let $\Tilde\lambda\str\df\lamstr(f-h)$, 
and $\sB$ be a ball as in \cref{L2.2}\,(iii). 
Recall that $\Hat\lambda_n\to\lamstr(f)$ as $n\to\infty$,
and therefore, we have 
$\Hat\lambda_n>\Tilde\lambda\str + \sup_{\sB^c}\abs{h}$
for all sufficiently large $n$.
Consider the following equations
\begin{align*}
\Lg\widehat\Psi_n + f\widehat\Psi_n &\;=\;\Hat\lambda_r\widehat\Psi_n\,,\quad
\widehat\Psi_n>0,\ \ \widehat\Psi_n(0)=1,\ \ \widehat\Psi_n=0 
\text{\ \ on\ } \partial B_n\,,
\\[5pt]
\Lg\Tilde\Psi^* + (f-h)\Tilde\Psi^* &\;=\;\Tilde\lambda\str\Tilde\Psi^*,
\quad \Tilde\Psi(0)=1\,.
\end{align*}
Choose $n$ large enough so that $\sB\subset B_n$.
We can scale $\Tilde\Psi^*$, by multiplying it with a positive constant,
so that $\Tilde\Psi^*$ touches $\widehat\Psi_n$ from above.
Next we show that it can only touch $\widehat\Psi_n$ in $\sB$.
Note that in $B_n\setminus\sB$ we have
\begin{equation*}
\Lg(\Tilde\Psi^*-\widehat\Psi_n) -(f-h-\Tilde\lambda\str)^-(\Tilde\Psi^*-\widehat\Psi_n)
\;=\;
-(f-h-\Tilde\lambda\str)^+(\Tilde\Psi^*-\widehat\Psi_n)
-(\hat\lambda_n-\Tilde\lambda\str -h)\,
\widehat\Psi_n\;\le\; 0\,.
\end{equation*}
Therefore, by the strong maximum principle, if $(\Tilde\Psi^*-\widehat\Psi_n)$
 attains its minimum in $B_n\setminus\sB$, then 
$(\Tilde\Psi^*-\widehat\Psi_n)=0$ in $B_n$, which is not possible.
Thus $\Tilde\Psi^*$ touches $\widehat\Psi_n$ in $\sB$. 
Thus, applying
Harnack's inequality we can find a constant $\kappa_1$ such that
$\kappa_1\Tilde\Psi^*\ge\widehat\Psi_n$ for all sufficiently large $n$.
On the other hand, by the It\^o--Krylov formula and Fatou's lemma we know that
\begin{equation}\label{EL2.5B}
\Exp_x\Bigl[\E^{\int_0^{T}
[f(X_s)-h(X_s)-\Tilde\lambda\str]\,\D{s}}\,\Tilde\Psi^*(X_{T})\Bigr]\;\le\;
\Tilde\Psi^*(x)\qquad \forall\,T>0\,.
\end{equation}
Applying the It\^o--Krylov formula to \cref{EL2.1A} we have
\begin{equation*}
\widehat\Psi_n(x) \;=\; \Exp_x\Bigl[\E^{\int_0^{T}
[f(X_s)-\Hat\lambda_n]\,\D{s}}\, \widehat\Psi_n(X_{T})\,
\Ind_{\{T<\uptau_n\}}\Bigr]\,,
\end{equation*}
and letting $n\to\infty$, using \cref{EL2.5B} and
the dominated convergence theorem, we obtain 
\begin{equation*}
\Psi^*(x) \;=\; \Exp_x\Bigl[\E^{\int_0^{T}
[f(X_s)-\lamstr]\,\D{s}}\,\Psi^*(X_{T})\Bigr]\,,
\end{equation*}
where $\Psi^*$ is the unique solution of \cref{EL2.4A}.
This proves the finite time representation.
Thus it follows from \cref{C2.2} that \cref{EL2.5A} is
regular, i.e.,
$\widetilde\Prob_x^{\psi^*}(\uptau_\infty<\infty)=0$.

If we define
$\Phi\df\frac{\Tilde\Psi^*}{\Psi^*}$,
a straightforward calculation shows that
\begin{equation}\label{EL2.5C}
\widetilde\Lg^{\psi^*}\Phi\;=\;
\Lg\Phi + 2\langle a\grad \psi^*,\grad\Phi\rangle \;=\;
(\lamstr(f-h)-\lamstr(f) + h)\Phi
\;\le\; C\Ind_{\sB} - \epsilon \Phi
\end{equation}
for some positive constants $C$ and $\epsilon$.
Recall $\sB$ from \cref{L2.2}\,(iii).
It is easy to see from \cref{EL2.2B} that
\begin{equation*}
\Phi(x)\;\ge\; \frac{\min_{\sB}\Tilde\Psi^*}{\max_{\sB}\Psi^*}\;
\frac{\Exp_{x}\Bigl[\E^{\int_{0}^{\uuptau}
[f(X_{t})-h(X_t)-\lamstr(f-h)]\,\D{t}}\,\Ind_{\{\uuptau<\infty\}}\Bigr]}
{\Exp_{x}\Bigl[\E^{\int_{0}^{\uuptau}
[f(X_{t})-\lamstr(f)]\,\D{t}}\,\Ind_{\{\uuptau<\infty\}}\Bigr]}
\;\ge\; \frac{\min_{\sB}\Tilde\Psi^*}{\max_{\sB}\Psi^*}\quad
\forall\,x\in\sB^c\,.
\end{equation*}
Thus $\Phi$ is uniformly bounded from below by a positive constant.
Since $Y^*$ in \cref{EL2.5A} is regular,
the Foster--Lyapunov inequality in
\cref{EL2.5C} implies that $Y^*$ is exponentially ergodic.
\qed\end{proof}

We denote the invariant measure of \cref{EL2.5A} by $\mu^*$.
The following
lemma shows that the twisted process is transient for any $\lambda>\lamstr(f)$.

\begin{lemma}\label{L2.6}
Let $\Psi$ be an eigenfunction of $\Lg^f$ for an eigenvalue $\lambda>\lamstr(f)$.
Then the corresponding twisted process $Y$ is transient.
\end{lemma}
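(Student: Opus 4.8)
The plan is to show transience of $Y$ by exhibiting a bounded, positive, non-constant $\widetilde{\Lg}^\psi$-superharmonic function on the complement of a ball, which by the standard criterion for non-degenerate diffusions (see \cite[Lemma~2.6.12]{book}) forces transience. The natural candidate is $\Phi \df \Psi^*/\Psi$, where $\Psi^*$ is the ground state (which exists as a limit of Dirichlet eigenfunctions by \cref{L2.2}). A direct computation, analogous to \cref{EL2.5C}, gives
\begin{equation*}
\widetilde{\Lg}^\psi\Phi \;=\; \Lg\Phi + 2\langle a\grad\psi,\grad\Phi\rangle
\;=\; \bigl(\lamstr(f)-\lambda\bigr)\Phi\;<\;0
\quad\text{on }\Rd\,,
\end{equation*}
since $\lamstr(f)-\lambda<0$ and $\Phi>0$. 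So $\Phi$ is a genuine strict supersolution everywhere. The remaining point is to show $\Phi$ is bounded (or at least bounded on the relevant region), so that it serves as a Lyapunov function witnessing transience rather than positive recurrence.

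First I would establish the finite-time (equivalently, the hitting-time) stochastic representation for $\Psi$ itself. By \cref{C2.2} applied to the eigenpair $(\Psi,\lambda)$, we have $\Psi(x)\,\widetilde\Prob_x^\psi(T<\uptau_\infty) = \Exp_x[\E^{\int_0^T[f(X_t)-\lambda]\D t}\Psi(X_T)]$; letting $T\to\infty$ and using $\lamstr(f)\le\Hat\lambda_r<\lambda$ for all large $r$ (so the exponential weight decays), I would argue via the Dirichlet representation \cref{EL2.2A} that the right-hand side tends to $0$, hence $\widetilde\Prob_x^\psi(\uptau_\infty<\infty)=1$ is possible — but in fact the cleaner route is: from \cref{L2.2}\,(i) and the fact that $\Hat\lambda_n\uparrow\lamstr(f)<\lambda$, a scaling/maximum-principle comparison (exactly as in the proof of \cref{L2.5}, comparing $\widehat\Psi_n$ against a suitable multiple of $\Psi$ on $B_n\setminus\sB$ using that $\lambda-\Hat\lambda_n>0$) shows $\Psi^*\le\kappa\Psi$ on $\Rd$ for some constant $\kappa$. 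This immediately gives $\Phi=\Psi^*/\Psi\le\kappa$, i.e. $\Phi$ is bounded above, and by the Harnack-type lower bound (as in \cref{L2.5}, or simply positivity and the supersolution property) $\Phi$ is bounded below away from $0$ outside any ball.

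Now conclude: $\Phi$ is a positive, bounded, non-constant function (non-constant because $\widetilde{\Lg}^\psi\Phi = (\lamstr(f)-\lambda)\Phi \not\equiv 0$) satisfying $\widetilde{\Lg}^\psi\Phi < 0$. Non-degeneracy of $Y$ (inherited from (A3) via the local boundedness of $\grad\psi$) together with this strict bounded superharmonic function rules out recurrence: a recurrent non-degenerate diffusion admits no non-constant bounded superharmonic function (by the maximum principle plus recurrence, $\Phi$ would have to attain its infimum, contradicting $\widetilde{\Lg}^\psi\Phi<0$ at an interior minimum). Hence $Y$ is transient.

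The main obstacle is the boundedness of $\Phi$. The supersolution identity is a one-line calculation, and the transience-from-bounded-superharmonic argument is standard; what requires care is the comparison $\Psi^*\le\kappa\Psi$, which rests on the Dirichlet eigenfunction approximation and a maximum-principle argument that exploits the \emph{strict} gap $\lambda>\lamstr(f)=\lim_n\Hat\lambda_n$ — this is precisely where the hypothesis $\lambda>\lamstr(f)$ enters, and one must make sure the comparison is uniform in $n$ so the constant $\kappa$ does not blow up in the limit. Once that is in hand, an alternative and perhaps slicker finish is to use \cref{L2.3}: for $g\in\Cc_{\mathrm c}(\Rd)$, $\Psi(x)\widetilde\Exp_x^\psi[g(Y_T)\Psi^{-1}(Y_T)\Ind_{\{T<\uptau_\infty\}}] = \Exp_x[\E^{\int_0^T[f-\lambda]\D t}g(X_T)]$, and the right side $\to 0$ as $T\to\infty$ (dominated by the $\lamstr(f)$-weighted expectation via \cref{EL2.3D}, times $\E^{-(\lambda-\lamstr(f))T}$), so $\widetilde\Exp_x^\psi[g(Y_T)\Psi^{-1}(Y_T)\Ind_{\{T<\uptau_\infty\}}]\to 0$ for every compactly supported $g$, which says $Y$ spends asymptotically no time in any compact set — i.e. $Y$ is transient — without ever needing $\Psi^*$ at all.
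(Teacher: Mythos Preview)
Your ``slicker finish'' at the end is essentially the paper's proof: use \cref{L2.3} to write $\widetilde\Exp_x^\psi[g(Y_T)\Psi^{-1}(Y_T)]=\Psi(x)^{-1}\Exp_x[\E^{\int_0^T(f-\lambda)}g(X_T)]$, then bound the right side by $(\sup g/\Psi^*)\,\Psi^*(x)\Psi(x)^{-1}\E^{-\delta T}$ with $\delta=\lambda-\lamstr(f)$ via the analogue of \cref{EL2.3D} for the pair $(\Psi^*,\lamstr(f))$. Two corrections, though. First, you \emph{do} need $\Psi^*$ for this estimate --- \cref{EL2.3D} as written uses $(\Psi,\lambda)$ and gives no decay in $T$; the exponential factor comes precisely from comparing against a principal eigenfunction, so ``without ever needing $\Psi^*$ at all'' is misleading. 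Second, and this is a genuine gap, concluding transience from $\widetilde\Exp_x^\psi[g(Y_T)\Psi^{-1}(Y_T)]\to 0$ alone is wrong: null-recurrent diffusions (planar Brownian motion, for instance) also satisfy $\Prob_x(Y_T\in K)\to 0$ for every compact $K$. The paper closes this by \emph{integrating} over $T\in[0,\infty)$ --- the exponential bound gives $\int_0^\infty\widetilde\Exp_x^\psi[g(Y_t)]\,\D t<\infty$, i.e.\ finiteness of the Green's measure, which is the correct transience criterion.

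Your first approach via $\Phi=\Psi^*/\Psi$ is a legitimate alternative route, but you make it harder than necessary and leave the hard part unjustified. The comparison $\Psi^*\le\kappa\Psi$ does \emph{not} follow ``exactly as in the proof of \cref{L2.5}'': there the reference function $\Tilde\Psi^*$ has eigenvalue strictly \emph{below} $\Hat\lambda_n$, which produces the sign needed for the strong maximum principle on $B_n\setminus\sB$; here $\Psi$ has eigenvalue $\lambda>\Hat\lambda_n$ and the resulting inequality goes the other way, so the argument does not transfer. Fortunately boundedness is unnecessary. The identity $\widetilde\Lg^\psi\Phi=(\lamstr(f)-\lambda)\Phi<0$ already makes $\Phi$ a positive, non-constant $\widetilde\Lg^\psi$-superharmonic function. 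If $Y$ were non-explosive and recurrent, $\Phi(Y_t)$ would be a nonnegative supermartingale, hence a.s.\ convergent, and recurrence would force $\Phi$ to be constant --- contradicting $\widetilde\Lg^\psi\Phi\not\equiv 0$. If $Y$ explodes with positive probability it is transient anyway. So the first route works cleanly once you drop the boundedness claim rather than trying (and failing) to prove it.
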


\begin{proof}
Let $\psi=\log\Psi$.
If $\widetilde\Prob_x^\psi(\uptau_\infty<\infty)>0$, then there is nothing to prove.
So we assume the contrary.
Hence from \cref{L2.3} we have
\begin{align}
\Psi(x)\,\widetilde\Exp_x^\psi\bigl[g(Y_T)\,\Psi^{-1}(Y_{T})\bigr]
&\;=\;
\Exp_x\Bigl[\E^{\int_{0}^{T}[f(X_{t})-\lambda]\,\D{t}}\,g(X_{T})\Bigr]
\qquad\forall\,T>0\,,\label{EL2.6B}
\end{align}
for any continuous $g$ with compact support.
Let $g\in\Cc_{\mathrm{c}}^+(\Rd)$.
By the It\^o--Krylov formula and Fatou's lemma, we have
\begin{equation*}
\Exp_x\Bigl[\E^{\int_{0}^{T}[f(X_{t})-\lamstr(f)]\,\D{t}}\,g(X_{T})\Bigr]
\;\le\; \biggl(\sup_\Rd\, \frac{g}{\Psi^*}\biggr)\,
\Exp_x\Bigl[\E^{\int_{0}^{T}[f(X_{t})-\lamstr(f)]\,\D{t}}\,\Psi^*(X_{T})\Bigr]
\;\le\; \biggl(\sup_\Rd\, \frac{g}{\Psi^*}\biggr)\,\Psi^*(x)
\end{equation*}
Thus, for $\delta=\lambda-\lamstr(f)>0$, we obtain
\begin{equation}\label{EL2.6D}
\Exp_x\Bigl[\E^{\int_{0}^{T}[f(X_{t})-\lambda]\,\D{t}}\,g(X_{T})\Bigr]
\;\le\; \biggl(\sup_\Rd\, \frac{g}{\Psi^*}\biggr)\, \E^{-\delta T}\,\Psi^*(x)\,,
\quad T>0\,.
\end{equation}
Combining \cref{EL2.6B,EL2.6D}, we have
\begin{equation*}
\biggl(\sup_{\supp (g)}\Psi\biggr)^{-1}
\int_0^\infty \widetilde\Exp_x^\psi\bigl[g(Y_t)\bigr]\, \D{t}\;\le\;
\int_0^\infty \widetilde\Exp_x^\psi\Bigl[\tfrac{g(Y_t)}{\Psi(Y_t)}\Bigr]\, \D{t}
\;\le\;\frac{1}{\delta}\,\biggl(\sup_\Rd\, \frac{g}{\Psi^*}\biggr)\,\Psi^*(x)\,
\Psi^{-1}(x)\,.
\end{equation*}
Therefore, $Y$ is transient.
\qed\end{proof}

\begin{theorem}\label{T2.2}
The following are equivalent.
\begin{enumerate}[(i)]
\item
The process $Y^*$, defined in \cref{EL2.5A}, corresponding to some
principal eigenpair $\bigl(\Psi^*,\lamstr(f)\bigr)$ is exponentially ergodic.

\item 
It holds that $\lamstr(f-h)<\lamstr(f)$ for all $h\in\Cc_{\mathrm{o}}^+(\Rd)$.

\item
It holds that $\lamstr(f-h)<\lamstr(f)$ for some $h\in\Cc_{\mathrm{o}}^+(\Rd)$.
\end{enumerate}
\end{theorem}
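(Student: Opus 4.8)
The plan is to close the cycle $\mathrm{(iii)}\Rightarrow\mathrm{(i)}\Rightarrow\mathrm{(ii)}\Rightarrow\mathrm{(iii)}$. The implication $\mathrm{(iii)}\Rightarrow\mathrm{(i)}$ is exactly \cref{L2.5}, and $\mathrm{(ii)}\Rightarrow\mathrm{(iii)}$ is immediate. The remaining implication $\mathrm{(i)}\Rightarrow\mathrm{(ii)}$ I would break into $\mathrm{(i)}\Rightarrow\mathrm{(iii)}$ and the upgrade $\mathrm{(iii)}\Rightarrow\mathrm{(ii)}$ from ``for some $h$'' to ``for all $h$''. I expect the upgrade to be the main obstacle: for a generic $h\in\Cc_{\mathrm{o}}^+(\Rd)$, which may be small and supported far from any compact set controlling the dynamics, one must exclude the borderline equality $\lamstr(f-h)=\lamstr(f)$; the lever is that, once this equality is assumed together with \textup{(P1)}, the operator $\Lg^{f-h}$ itself satisfies \textup{(P1)}, so its ground-state diffusion is recurrent.

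For $\mathrm{(i)}\Rightarrow\mathrm{(iii)}$: let $\Psi^*$ be a ground state with exponentially ergodic twisted process $Y^*$ and $\psi^*\df\log\Psi^*$. Exponential ergodicity (\cref{D2.2}) is equivalent to a Foster--Lyapunov drift inequality, so there are $\Lyap\in\Sobl^{2,p}(\Rd)$ with $\Lyap\ge1$, a ball $B_R$, and constants $k_0,k_1>0$ with $\widetilde\Lg^{\psi^*}\Lyap\le k_0\Ind_{B_R}-k_1\Lyap$. Choosing $h\in\Cc_{\mathrm{c}}^+(\Rd)$ with $h\equiv k_0$ on $\Bar{B}_R$ and using $\Lyap\ge1$ turns this into $\widetilde\Lg^{\psi^*}\Lyap\le(h-k_1)\Lyap$ on $\Rd$. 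Since $\grad\Psi^*=\Psi^*\grad\psi^*$ gives $\Lg(\Psi^*\Lyap)=\Lyap\,\Lg\Psi^*+\Psi^*\widetilde\Lg^{\psi^*}\Lyap$, and $\Lg\Psi^*+(f-\lamstr(f))\Psi^*=0$, we get
\[
\Lg(\Psi^*\Lyap)+\bigl(f-h-\lamstr(f)+k_1\bigr)\,\Psi^*\Lyap
\;=\;\Psi^*\bigl[\widetilde\Lg^{\psi^*}\Lyap-(h-k_1)\Lyap\bigr]\;\le\;0\,.
\]
As $\Psi^*\Lyap$ is positive and in $\Sobl^{2,d}(\Rd)$, the characterization $\lamstr=\Hat\Lambda$ of \cref{L2.2}\,(ii) yields $\lamstr(f-h)\le\lamstr(f)-k_1<\lamstr(f)$, which is $\mathrm{(iii)}$.

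For $\mathrm{(iii)}\Rightarrow\mathrm{(ii)}$: under $\mathrm{(iii)}$, \cref{L2.4,L2.5} give that $\Psi^*$ is the unique ground state of $\Lg^f$ and that $Y^*$ is exponentially ergodic. Fix $h\in\Cc_{\mathrm{o}}^+(\Rd)$; since $\lamstr(f-h)\le\lamstr(f)$ always, suppose for contradiction that $\lamstr(f-h)=\lamstr(f)$. Then $f-h$ is a potential with $\lamstr(f-h)$ finite, so \textup{(H1)} holds for $\Lg^{f-h}$; moreover, if $h_0\in\Cc_{\mathrm{o}}^+(\Rd)$ is the function supplied by $\mathrm{(iii)}$ then $\lamstr(f-h-h_0)\le\lamstr(f-h_0)<\lamstr(f)=\lamstr(f-h)$, so $\Lg^{f-h}$ satisfies \textup{(P1)} as well. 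Applying \cref{L2.4,L2.5} to $\Lg^{f-h}$, this operator has a unique ground state $\Psi_1$, solving $\Lg\Psi_1+(f-h-\lamstr(f))\Psi_1=0$, whose twisted process $\widetilde{Y}$, with drift $b+2a\grad\psi_1$ where $\psi_1\df\log\Psi_1$, is exponentially ergodic, in particular recurrent. Set $\Phi\df\Psi_1/\Psi^*>0$. From $\Lg\Psi_1=\Lg(\Psi^*\Phi)=\Phi\,\Lg\Psi^*+\Psi^*\widetilde\Lg^{\psi^*}\Phi$ together with the two eigenequations one computes $\widetilde\Lg^{\psi^*}\Phi=h\Phi$, and the same manipulation applied to $\Psi^*=\Psi_1\,\Phi^{-1}$ gives $\widetilde\Lg^{\psi_1}(\Phi^{-1})=\Psi_1^{-1}\bigl(\Lg\Psi^*-\Phi^{-1}\Lg\Psi_1\bigr)=-h\,\Phi^{-1}\le0$, where $\widetilde\Lg^{\psi_1}$ is the extended generator of $\widetilde{Y}$. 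Thus $\Phi^{-1}$ is a nonnegative $\widetilde\Lg^{\psi_1}$-superharmonic function; since $\widetilde{Y}$ is recurrent, $\Phi^{-1}$ — hence $\Phi$ — must be constant. But then $\widetilde\Lg^{\psi^*}\Phi\equiv0$, contradicting $\widetilde\Lg^{\psi^*}\Phi=h\Phi$, which is strictly positive on the nonempty open set $\{h>0\}$. Therefore $\lamstr(f-h)<\lamstr(f)$, and as $h$ was arbitrary this establishes $\mathrm{(ii)}$.

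Routine points to be filled in include: the equivalence of \cref{D2.2} with the Foster--Lyapunov drift inequality; the verification that $f-h$ meets all the standing requirements on a potential so that \cref{L2.4,L2.5} may be invoked for $\Lg^{f-h}$; the two quotient-rule computations, which use that $\Psi^*$ and $\Psi_1$ solve their respective eigenequations; and the classical fact that a recurrent non-degenerate diffusion admits no nonconstant nonnegative superharmonic function (alternatively, one may argue directly that $\Phi^{-1}(x)\ge\widetilde\Exp^{\psi_1}_x[\E^{\int_0^T h(\widetilde{Y}_s)\D s}\,\Phi^{-1}(\widetilde{Y}_T)]$ for all $T$, while the right-hand side blows up since $\int_0^T h(\widetilde{Y}_s)\D s\to\infty$ a.s.\ along the recurrent $\widetilde{Y}$ whereas $\Phi^{-1}(\widetilde{Y}_T)$ keeps returning to values bounded away from $0$). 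Finally, although the twisted processes $Y^*$ and $\widetilde{Y}$ need not obey the affine-growth bound \textup{(A2)}, this is immaterial here: under the hypotheses at hand they are recurrent, hence nonexplosive, which is all that the It\^{o}--Krylov and martingale localizations require.
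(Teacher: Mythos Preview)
Your proof is correct and takes a genuinely different route from the paper's.

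The paper proves $\mathrm{(i)}\Rightarrow\mathrm{(ii)}$ directly, without the intermediate stop at $\mathrm{(iii)}$. From exponential ergodicity of $Y^*$ it extracts, via the Girsanov calculation of \cref{L2.3}, the finiteness bound
\[
\Exp_x\Bigl[\E^{\int_0^{\uuptau}[f(X_s)-\lamstr(f)+\delta]\,\D s}\,\Ind_{\{\uuptau<\infty\}}\Bigr]<\infty
\]
for some ball $\sB$ and $\delta>0$. Given an arbitrary $h\in\Cc_{\mathrm{o}}^+(\Rd)$ with (for contradiction) $\lamstr(f-h)=\lamstr(f)$, this bound lets the Dirichlet-limit argument of \cref{L2.2}\,(iii) go through for $\Lg^{f-h}$, yielding the exact stochastic representation of its ground state $\Tilde\Psi^*$ over $\sB^c$. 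Since $\Psi^*$ satisfies the corresponding inequality by Fatou, one can scale so that $\Psi^*-\Tilde\Psi^*\ge0$ touches zero, and the strong maximum principle forces $\Psi^*=\Tilde\Psi^*$, hence $h\,\Tilde\Psi^*\equiv0$, a contradiction.

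Your argument instead splits $\mathrm{(i)}\Rightarrow\mathrm{(ii)}$ into $\mathrm{(i)}\Rightarrow\mathrm{(iii)}$ and the upgrade $\mathrm{(iii)}\Rightarrow\mathrm{(ii)}$. The first step is a clean Foster--Lyapunov construction of an explicit supersolution $\Psi^*\Lyap$ for $\Lg^{f-h}$ with a specific compactly supported $h$, which is essentially the inverse of the calculation in \cref{L2.5}; note that the equivalence you invoke between \cref{D2.2} and a drift inequality is exactly \cref{T3.1} later in the paper, so this is a (harmless) forward reference. Your upgrade step is the more interesting deviation: rather than comparing the two ground states via stochastic representations and the maximum principle, you observe that if $\lamstr(f-h)=\lamstr(f)$ then $f-h$ inherits \textup{(P1)}, so \emph{its} ground-state diffusion $\widetilde Y$ is recurrent, and the reciprocal ratio $\Phi^{-1}=\Psi^*/\Psi_1$ is $\widetilde\Lg^{\psi_1}$-superharmonic; recurrence then forces $\Phi$ constant. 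This is a neat potential-theoretic alternative to the paper's representation-plus-maximum-principle comparison, and it isolates as a standalone fact that \textup{(P1)} is stable under subtraction of any $h\in\Cc_{\mathrm{o}}^+(\Rd)$. The paper's route, on the other hand, stays entirely within the toolkit already built in \cref{L2.2,L2.3} and avoids invoking the Liouville property for recurrent diffusions.
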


\begin{proof}
(iii)$\,\Rightarrow\,$(i) follows from \cref{L2.5},
and (ii)$\,\Rightarrow\,$(iii) is obvious.

We show that (i)$\,\Rightarrow\,$(ii).
If $Y^*$ is exponentially ergodic, then there exists a ball
$\sB$ and $\delta>0$ such that
\begin{equation*}
\widetilde\Exp_x^{\psi^*}\bigl[e^{\delta\uuptau}\bigr]\;<\; \infty\,,
\qquad \uuptau=\uuptau(\sB^c)\,.
\end{equation*}
Mimicking the calculations in the proof of \cref{L2.3}, we obtain that 
\begin{align*}
\Exp_x\Bigl[\E^{\int_0^{T\wedge\uuptau} [f(X_s)-\lamstr(f) + \delta]\,\D{s}}\,
g(X_{T\wedge\uuptau})
\Psi^*(X_{T\wedge\uuptau})\,\Ind_{\{\uuptau<\infty\}}\Bigr]
&\;=\; \Psi^*(x)\,\widetilde\Exp_x^{\psi^*}\bigl[e^{\delta (T\wedge\uuptau)}
g(Y^*_{T\wedge\uuptau})\bigr]\qquad\forall\,T>0\,,
\end{align*}
for $g\in\Cc_{\mathrm{c}}(\Rd)$.
We apply this equation to an increasing sequence
$\{g_m\}\subset\Cc_{\mathrm{c}}(\Rd)$ which converges to $1$,
and let first $m\to\infty$, and then $T\to\infty$, using Fatou's lemma
and the exponential ergodicity of $Y^*$, to obtain
\begin{equation}\label{ET2.2A}
\Exp_x\Bigl[\E^{\int_0^{\uuptau} [f(X_s)-\lamstr(f) + \delta]\,\D{s}}
\,\Ind_{\{\uuptau<\infty\}}\Bigr]\; <\; \infty\,,
\quad x\in\sB^c.
\end{equation}
Let $h\in\Cc_{\mathrm{o}}^+(\Rd)$.
Since $h$ is bounded, it is easy to see that $\lamstr(f-h)$ is finite.
Let $\Tilde{f}\df f-h$, and
$\bigl(\Tilde\Psi^*,\lamstr(\Tilde{f})\bigr)$ be a solution of 
\begin{equation}\label{ET2.2B}
\Lg\Tilde\Psi^* + (f-h)\Tilde\Psi^*\;=\;\lamstr(\Tilde{f})\Tilde\Psi^*, 
\quad \Tilde\Psi^*>0\,,
\end{equation}
which is obtained as a limit of Dirichlet eigensolutions as in \cref{L2.2}.
If $\lamstr(\Tilde{f})=\lamstr(f)$, then in view of \cref{ET2.2A} and
the calculations in the proof of \cref{L2.2}\,(iii), we have
\begin{equation}\label{ET2.2C}
\Tilde\Psi^*(x) \;=\; \Exp_{x}
\Bigl[\E^{\int_{0}^{\uuptau}
[f(X_{t})-h(X_{t})-\lamstr(\Tilde{f})]\,\D{t}}\,\Tilde\Psi^* (X_{\uuptau})
\,\Ind_{\{\uuptau<\infty\}}\Bigr]
\qquad\forall\, x\in\sB^c\,.
\end{equation}
Applying the It\^o--Krylov formula and Fatou's lemma to \cref{EL2.4A}, we obtain
\begin{equation}\label{ET2.2D}
\Psi^*(x) \;\ge\; \Exp_x\Bigl[\E^{\int_0^{\uuptau} [f(X_s)-\lamstr(f)]\, \D{s}}\,
\Psi^*(X_{\uuptau})\,\Ind_{\{\uuptau<\infty\}}\Bigr]\,,
\quad x\in\sB^c.
\end{equation}
It follows by \cref{ET2.2C,ET2.2D}
that we can multiply $\Psi^*$ with a suitable positive constant so that
$\Psi^*-\Tilde\Psi^*$ attains a minimum of $0$ in $\sB$.
On the other hand, from \cref{EL2.4A,ET2.2B} we have
\begin{equation}\label{ET2.2E}
\Lg(\Psi^*-\Tilde\Psi^*) - \bigl(f-\lamstr(f)\bigr)^-(\Psi^*-\Tilde\Psi^*)
\;=\;- \bigl(f-\lamstr(f)\bigr)^+(\Psi^*-\Tilde\Psi^*)-h\,\Tilde\Psi^*\;\le\; 0\,.
\end{equation}
Thus by strong maximum principle we have $\Psi^*=\Tilde\Psi^*$.
This, in turn, implies that $h\,\Tilde\Psi^*=0$ by \cref{ET2.2E}.
But this is not possible. Hence we have $\lamstr(\Tilde{f})<\lamstr(f)$,
and the proof is complete.
\qed\end{proof}


We define the Green's measure $G_\lambda$, $\lambda\in\RR$, by
\begin{equation*}
G_\lambda (g) \;\df\;\Exp_0\biggl[\int_0^\infty \E^{\int_0^t[f(X_s)-\lambda]\, \D{s}}
\,g(X_t)\, \D{t}\biggr]\quad \text{for all\ } g\in\Cc_{\mathrm{c}}^+(\Rd)\,.
\end{equation*}

The density of the Green's measure with respect to the
Lebesgue measure is called the Green's function.
Existence of a Green's function (and Green's measure) is used by Pinsky
\cite[Chapter~4.3]{Pinsky} in his definition of
the generalized principal eigenvalue of $\Lg^f$.
A number $\lambda\in\RR$ is said to be \emph{subcritical} if
$G_\lambda$ possesses a density,
\emph{critical} if it is not subcritical and $\Lg^{f-\lambda}V=0$
has a positive solution $V$,
and \emph{supercritical} if it is neither
subcritical nor critical.

The lemma which follows is an extension of \cite[Theorem~4.3.4]{Pinsky}
where, under a regularity assumption on the coefficients,
it is shown that a critical eigenvalue $\lambda$ is always simple.
This result establishes several equivalences of
the notion of criticality of $\lambda$.

\begin{lemma}\label{L2.7}
The following are equivalent.
\begin{enumerate}[(i)]
\item
The twisted process $Y$ corresponding to the eigenpair $(\Psi, \lambda)$ is recurrent.

\item
$G_\lambda(g)$ is infinite for some $g\in\Cc_{\mathrm{c}}^+(\Rd)$.

\item
For some open ball $\sB$, and with $\uuptau=\uuptau(\sB)$,
we have
\begin{equation*}
\Psi(x)\;=\;\Exp_x\Bigl[\E^{\int_0^{\uuptau}[f(X_s)-\lambda]\, \D{s}}\,\Psi(X_{\uuptau})
\,\Ind_{\{\uuptau<\infty\}}\Bigr]\,,
\quad x\in \Bar\sB^c\,,
\end{equation*}
where $\Psi$ is an eigenfunction corresponding to the eigenvalue $\lambda$.
\end{enumerate}
In addition, in \textup{(}ii\textup{)}--\textup{(}iii\textup{)}
``some'' may be replaced by ``all'',
and if any one of \textup{(}i\textup{)}--\textup{(}iii\textup{)} holds,
then $\lambda$ is a simple eigenvalue.
\end{lemma}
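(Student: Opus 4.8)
The plan is to tie each of (i)--(iii) to the recurrence of the twisted diffusion $Y$, and then read off simplicity of $\lambda$ from the resulting stochastic representation of $\Psi$ together with the strong maximum principle.

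First I would prove (i)$\Leftrightarrow$(ii). Writing $\psi=\log\Psi$ and integrating the identity in \cref{L2.3} in $T$ over $(0,\infty)$ (Tonelli applies since for $g\ge0$ all integrands are nonnegative) gives, for every $g\in\Cc_{\mathrm{c}}^+(\Rd)$, the representation $G_\lambda(g)=\Psi(0)\,\widetilde\Exp_0^\psi\bigl[\int_0^{\uptau_\infty}g(Y_t)\Psi^{-1}(Y_t)\,\D{t}\bigr]$. Since $\Psi$ is continuous and strictly positive, on the compact set $\supp(g)$ the ratio $g/\Psi$ is squeezed between two positive multiples of $g$, so $G_\lambda(g)$ is infinite for some (equivalently, every) $g\in\Cc_{\mathrm{c}}^+(\Rd)$ precisely when the expected occupation time $\widetilde\Exp_0^\psi[\int_0^{\uptau_\infty}\Ind_D(Y_t)\,\D{t}]$ of a bounded ball $D$ is infinite. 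By the recurrence/transience dichotomy for non-degenerate diffusions (see \cite[Chapter~2]{book}), this occupation time is infinite for all balls $D$ exactly when $Y$ is recurrent, and finite for every ball otherwise; this is (i)$\Leftrightarrow$(ii). I would also note here that $G_\lambda$ does not depend on the choice of eigenfunction, so the twisted processes of all eigenfunctions for $\lambda$ are simultaneously recurrent or transient, which already yields the ``some''$=$``all'' assertion for (ii).

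Next, for (i)$\Leftrightarrow$(iii), fix a ball $\sB$, set $\uuptau=\uuptau(\sB)$, and let $\Breve{\uptau}^{Y}$ denote the first time $Y$ reaches $\overline{\sB}$ (with $\Breve{\uptau}^{Y}=\infty$ if $Y$ explodes without reaching it). Mimicking the localized Girsanov computation in the proof of \cref{L2.3}, but with the stopping time $\uuptau\wedge\uptau_n$ in place of the deterministic $T$ and using that $\Psi(X_t)\E^{\int_0^t[f(X_s)-\lambda]\,\D{s}}$ is a local martingale that is bounded up to $\uuptau\wedge\uptau_n$, I would obtain, after letting $n\to\infty$,
\begin{equation*}
\Psi(x)\;=\;\Exp_x\Bigl[\E^{\int_0^{\uuptau}[f(X_s)-\lambda]\,\D{s}}\,\Psi(X_{\uuptau})\,\Ind_{\{\uuptau<\infty\}}\Bigr]+\Psi(x)\,\widetilde\Prob_x^{\psi}\bigl(\Breve{\uptau}^{Y}=\infty\bigr)\,,\qquad x\in\overline{\sB}^c\,,
\end{equation*}
the first term arising by monotone convergence along the increasing events $\{\uuptau<\uptau_n\}$, and the second by applying the change of measure on $\sF_{\uptau_n}$ together with the identification of $\bigcap_n\{\uuptau\ge\uptau_n\}$, under $\widetilde\Prob_x^{\psi}$, with the event $\{\Breve{\uptau}^{Y}=\infty\}$ that $Y$ never reaches $\overline{\sB}$. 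Hence the representation in (iii) holds at $x$ iff $\widetilde\Prob_x^{\psi}(\Breve{\uptau}^{Y}<\infty)=1$, and, by the definition of recurrence applied with the bounded domain $\sB$ (and its independence of the domain for non-degenerate diffusions), (iii) holds for all $x\in\overline{\sB}^c$ iff $Y$ is recurrent, i.e.\ iff (i). The same reasoning shows (iii) then holds for every ball and every eigenfunction, giving ``some''$=$``all'' for (iii).

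Finally, for simplicity, assuming any one of (i)--(iii), I would take two eigenfunctions $\Psi_1,\Psi_2$ for $\lambda$ normalized by $\Psi_i(0)=1$; by the previous step both satisfy the representation in (iii) over $\overline{\sB}^c$ for every ball $\sB$. Setting $c\df\min_{\overline{\sB}}(\Psi_1/\Psi_2)>0$ and $w\df\Psi_1-c\Psi_2$, one has $w\ge0$ on $\overline{\sB}$ with a zero at some $x_0\in\overline{\sB}$, and for $x\in\overline{\sB}^c$ the representation gives $w(x)=\Exp_x[\E^{\int_0^{\uuptau}[f(X_s)-\lambda]\,\D{s}}\,w(X_{\uuptau})\,\Ind_{\{\uuptau<\infty\}}]\ge0$ since $X_{\uuptau}\in\partial\sB\subset\overline{\sB}$; thus $w\ge0$ on $\Rd$ with an interior minimum equal to $0$. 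Because $\Lg w-(f-\lambda)^-w=-(f-\lambda)^+w\le0$, the strong maximum principle \cite[Theorem~9.6]{GilTru} forces $w\equiv0$, and evaluating at the origin gives $c=1$, hence $\Psi_1=\Psi_2$. I expect the main obstacle to be this middle step: setting up the stopping-time version of the Girsanov identity and rigorously identifying the residual term as $\Psi(x)\,\widetilde\Prob_x^{\psi}(\Breve{\uptau}^{Y}=\infty)$ in the possible presence of explosion of $Y$; once (iii) is recognized as encoding ``$Y$ reaches $\overline{\sB}$ almost surely'', the equivalences and the simplicity conclusion follow routinely.
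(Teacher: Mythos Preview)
Your argument is correct and genuinely different from the paper's in the central step.  The paper proves the cycle (i)$\Rightarrow$(ii)$\Rightarrow$(iii)$\Rightarrow$(i), and the heavy lifting is in (ii)$\Rightarrow$(iii): it introduces the penalized quantities $\Gamma_\alpha$, solves the Dirichlet problems $\Lg\varphi_{\alpha,n}+(f-\lambda-\alpha)\varphi_{\alpha,n}=-\Gamma_\alpha^{-1}g$ on $B_n$, and uses Harnack's inequality together with the hypothesis $\Gamma_\alpha\to\infty$ to extract a limit $\Psi$ satisfying the stochastic representation; the implication (iii)$\Rightarrow$(i) is then outsourced to \cite[Lemma~2.6]{ari-anup}.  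You bypass this PDE construction entirely: your stopping-time Girsanov identity yields the exact decomposition
\[
\Psi(x)\;=\;\Exp_x\Bigl[\E^{\int_0^{\uuptau}[f(X_s)-\lambda]\,\D s}\,\Psi(X_{\uuptau})\,\Ind_{\{\uuptau<\infty\}}\Bigr]
+\Psi(x)\,\widetilde\Prob_x^{\psi}\bigl(\Breve\uptau^{Y}=\infty\bigr)\,,
\]
so (iii) is literally equivalent to $\widetilde\Prob_x^{\psi}(\Breve\uptau^{Y}<\infty)=1$, i.e.\ to recurrence of $Y$.  This is more elementary and makes the probabilistic content of (iii) transparent; it also gives (iii)$\Rightarrow$(i) and (i)$\Rightarrow$(iii) in one stroke, without the external reference.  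The paper's route has the feature that it \emph{constructs} an eigenfunction with the representation property, which is of independent interest but is not needed here since an eigenpair is given.  Two points worth making explicit when you write it up: (a) $\uuptau\wedge\uptau_n$ is a.s.\ finite (exit time from the bounded annulus $B_n\setminus\overline\sB$), so the Girsanov density $M_{\uuptau\wedge\uptau_n}/\Psi(x)$ is a bounded martingale and the change of measure on $\sF_{\uuptau\wedge\uptau_n}$ is rigorous; and (b) the identification $\bigcap_n\{\uptau_n\le\uuptau\}=\{\Breve\uptau^{Y}=\infty\}$ under $\widetilde\Prob_x^{\psi}$ uses path continuity up to explosion, exactly as you indicate.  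The simplicity argument via the strong maximum principle is the same as the paper's.
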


\begin{proof}
The argument of this proof is inspired from \cite[Theorem~2.8]{ari-anup}.
By \cref{C2.1} we have $\lambda\ge\lamstr(f)$.
Assume that (i) holds for some $\lambda\ge \lamstr(f)$.
Let $(\Psi, \lambda)$  be an eigenpair of $\Lg^f$.
Then for any $g\in\Cc_{\mathrm{c}}^+(\Rd)$ we have from \cref{L2.3} that
\begin{align}
\Psi(x)\,\widetilde\Exp_x^\psi\bigl[g(Y_{T}) \,
\Psi^{-1}(Y_{T})\,\bigr] &\;=\;
\Exp_x\Bigl[\E^{\int_{0}^{T}[f(X_{t})-\lambda]\,\D{t}}\,g(X_{T})\Bigr]
\qquad\forall\,T>0\,.\label{EL2.7B}
\end{align}
On the other hand, if $Y$ is recurrent, then 
\begin{equation*}
\int_0^\infty \widetilde\Exp_x^\psi\bigl[g(Y_{t}) \,
\Psi^{-1}(Y_{t})\,\bigr] \D{t}\;=\;\infty\,.
\end{equation*}
Combining this with \cref{EL2.7B} we have $G_\lambda(g)=\infty$. Hence (ii) follows.

Next suppose that (ii) holds, i.e., $G_\lambda(g)=\infty$
for some $g\in\Cc_{\mathrm{c}}^+(\Rd)$ and $\lambda\ge\lamstr(f)$.
Applying the It\^{o}--Krylov formula to $\Lg\Psi + (f-\lambda)\Psi=0$, we have
\begin{equation}\label{EL2.7C}
\Exp_x\Bigl[\E^{\int_0^{t} [f(X_s)-\lambda]\, \D{s}}\, \Ind_{\sB} (X_t) \Bigr]
\;\le\;  \frac{1}{\min_{\sB}\Psi}\,
\Exp_x\Bigl[\E^{\int_0^{t} [f(X_s)-\lambda]\, \D{s}}\, \Psi (X_t) \Bigr]
\;\le\; \frac{1}{\min_{\sB}\Psi}\,\Psi(x)
\end{equation}
for all $t\ge0$, and
for any bounded ball $\sB$.
Define $F_\alpha(x) \df f(x)-\lambda -\alpha$, and 
\begin{equation*}
\Gamma_\alpha \;\df\;
\Exp_0\biggl[\int_0^\infty \E^{\int_0^t F_\alpha (X_s)\, \D{s}}\,
g(X_t)\, \D{t}\biggr]\,.
\end{equation*}
for $\alpha>0$, and some $g\in\Cc_{\mathrm{c}}^+(\Rd)$.
From \cref{EL2.7C} we have $\Gamma_\alpha< \infty$ for all $\alpha>0$.
Moreover, $\Gamma_\alpha\to\infty$ as $\alpha\searrow 0$ by hypothesis.
Choose $n_0$ large enough so that $\supp(g)\subset B_{n_0}$.
Following \cite[Theorem~2.8]{ari-anup} we consider the positive solution 
$\varphi_{\alpha, n}\in\Sobl^{2,p}(B_n)\cap\Cc(\Bar{B}_n)$ of 
\begin{equation}\label{EL2.7D}
\Lg\varphi_{\alpha, n} + F_\alpha\, \varphi_{\alpha, n}
\;=\;-\Gamma_\alpha^{-1} g \quad \text{in\ } B_n\,,
\quad \varphi_{\alpha, n}=0\quad\text{on\ } \partial B_n\,,
\end{equation}
for $n\ge n_0$.
Since for every fixed $n$ we have
\begin{equation*}
\Exp_x\Bigl[\E^{\int_0^T F_\alpha(X_s)\, \D{s}}\,\varphi_{\alpha, n}(X_T)\,
\Ind_{\{T\le\uptau_n\}}\Bigr]
\;\le\; \Bigl(\max_{B_n}\,\varphi_{\alpha, n}\Bigr)\,\E^{-\alpha T}\,
\Exp_x\Bigl[\E^{\int_0^T F_0(X_s)\, \D{s}}
\,\Ind_{B_n}(X_T)\Ind_{\{T\le\uptau_n\}}\Bigr]
\;\xrightarrow[T\to\infty]{}\; 0
\end{equation*}
by \cref{EL2.7C}, applying the It\^o--Krylov formula to \cref{EL2.7D},
we obtain by \cite[Theorem~2.8]{ari-anup} that
\begin{equation}\label{EL2.7E}
\varphi_{\alpha, n}(0) \;=\; \Gamma^{-1}_\alpha
\Exp_0\Bigl[\int_0^{\uptau_n} \E^{\int_0^{t} F_\alpha(X_s)\,\D{s}}\,g(X_t)\,\D{t}\Bigr]
\;\le\; \Gamma^{-1}_\alpha \Gamma_\alpha=1\,.
\end{equation}
Since $\Gamma^{-1}_\alpha$ is bounded
uniformly on $\alpha\in(0,1)$ by hypothesis, we can apply
Harnack's inequality for a class of superharmonic functions
\cite[Corollary~2.2]{AA-Harnack}
to conclude that $\{\varphi_{\alpha, n}\,, n\in\NN\}$ is locally  bounded,
and therefore also uniformly bounded in $\Sobl^{2,p}(B_R)$, $p>d$,
for any $R>0$.
Thus, we have that
$\varphi_{\alpha, n}\to\varphi_\alpha$ weakly in 
$\Sobl^{2,p}(\Rd)$ along some subsequence, and that $\varphi_{\alpha}$ satisfies
\begin{equation}\label{EL2.7F}
\Lg\varphi_{\alpha} + F_\alpha\, \varphi_{\alpha}
\;=\;-\Gamma_\alpha^{-1} g \quad \text{in\ } \Rd
\end{equation}
by \cref{EL2.7D}.
Let $\sB$ be an open ball centered at $0$ such that $\supp(g)\subset\sB$.
Applying the It\^{o}--Krylov formula to \cref{EL2.7D} we obtain
\begin{equation*}
\varphi_{\alpha, n}(x)\;=\;
\Exp_x\Bigl[\E^{\int_0^{\uuptau\wedge T}F_\alpha(X_s)\, \D{s}}\,
\varphi_{\alpha, n}(X_{\uuptau\wedge T})\,\Ind_{\{\uuptau\wedge T<\uptau_n\}}\Bigr]\,,
\quad x\in B_n\setminus\Bar\sB, \quad \forall\; T>0\,,
\end{equation*}
with $\uuptau=\uuptau(\sB^c)$.
As in the derivation of \cref{EL2.7E}, using \cref{EL2.7C} and
a similar argument we obtain
\begin{equation}\label{EL2.7H}
\varphi_{\alpha, n}(x)\;=\;\Exp_x\Bigl[\E^{\int_0^{\uuptau}F_\alpha(X_s)\, \D{s}}\,
\varphi_{\alpha, n}(X_{\uuptau})\,\Ind_{\{\uuptau<\uptau_n\}}\Bigr]\,,
\quad x\in B_n\setminus\Bar\sB\,.
\end{equation}
Letting $n\to\infty$ along some subsequence, and arguing as above,
we obtain a function $\varphi_\alpha$
which satisfies \cref{EL2.7F} and
\begin{equation}\label{EL2.7I}
\varphi_{\alpha}(x)\;=\;\Exp_x\Bigl[\E^{\int_0^{\uuptau}F_\alpha(X_s)\, \D{s}}\,
\varphi_{\alpha}(X_{\uuptau})\,\Ind_{\{\uuptau<\infty\}}\Bigr]\,,
\quad x\in \Bar\sB^c,
\end{equation}
where \cref{EL2.7I} follows from \cref{EL2.7H}.
From \cref{EL2.7E} we have $\varphi_\alpha(0)=1$ for all $\alpha\in(0, 1)$.
Now applying Harnack's inequality once again and letting $\alpha\searrow 0$,
we deduce that $\varphi_\alpha$ converges weakly in $\Sobl^{2,p}(\Rd)$, $p>d$,
to some positive function $\Psi$ which satisfies
$\Lg\Psi + F_0\, \Psi =0$ in $\Rd$,
and
\begin{equation*}
\Psi(x)\;=\;\Exp_x\Bigl[\E^{\int_0^{\uuptau}F_0(X_s)\, \D{s}}\,\Psi(X_{\uuptau})
\,\Ind_{\{\uuptau<\infty\}}\Bigr]\,,
\quad x\in \Bar\sB^c.
\end{equation*}
This implies (iii). 

Lastly, suppose that (iii) holds.
In other words, there exists an eigenpair $(\Psi, \lambda)$ and an open ball
$\sB$ such that
\begin{equation}\label{EL2.7J}
\Psi(x)\;=\;\Exp_x\Bigl[\E^{\int_0^{\uuptau}F_0(X_s)\, \D{s}}\,\Psi(X_{\uuptau})
\,\Ind_{\{\uuptau<\infty\}}\Bigr]\,,
\quad x\in \sB^c.
\end{equation}
We first show that $\lambda$ is a simple eigenvalue, which implies that
there is a unique twisted process $Y$
corresponding to $\lambda$.
To establish the simplicity of $\lambda$ consider another eigenpair
$(\Tilde\Psi,\lambda)$ of $\Lg^f$.
By the It\^{o}--Krylov formula we obtain
\begin{equation*}
\Tilde\Psi(x)\;\ge\;\Exp_x\Bigl[\E^{\int_0^{\uuptau}F_0(X_s)\, \D{s}}\,
\Tilde\Psi(X_{\uuptau})
\,\Ind_{\{\uuptau<\infty\}}\Bigr]\,,
\quad x\in \sB^c\,.
\end{equation*}
Thus using \cref{EL2.7J} and an argument similar to \cref{L2.4} we can show that
$\Psi=\Tilde\Psi$.
Then (iii)$\,\Rightarrow\,$(i) follows from \cite[Lemma~2.6]{ari-anup}.

Uniqueness of the eigenfunction $\Psi$ follows from the stochastic representation in
\cref{EL2.7J} and the proof of (iii)$\,\Rightarrow\,$(i).
\qed\end{proof}

As an immediate corollary to \cref{L2.6,L2.7} we have the following.

\begin{corollary}\label{C2.3}
Let $(\Psi, \lambda)$ be an eigenpair of $\Lg^f$ which satisfies
\begin{equation*}
\Psi(x)\;=\;\Exp_x\Bigl[\E^{\int_0^{\uuptau}[f(X_s)-\lambda]\, \D{s}}\,\Psi(X_{\uuptau})
\,\Ind_{\{\uuptau<\infty\}}\Bigr]
\quad \forall\,x\in \sB^c\,.
\end{equation*}
for some bounded open ball $\sB$ in $\Rd$.
Then $\lambda=\lamstr(f)$, and it is a simple eigenvalue.
\end{corollary}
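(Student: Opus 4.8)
The plan is to read \cref{C2.3} off as an immediate consequence of \cref{L2.6,L2.7} and \cref{C2.1}, with essentially no new estimates. First I would observe that the stochastic representation hypothesized in \cref{C2.3} is precisely condition~(iii) of \cref{L2.7} for the eigenpair $(\Psi,\lambda)$ and the ball $\sB$: the integrand $f-\lambda$, the hitting time $\uuptau=\uuptau(\sB^c)$, the eigenfunction $\Psi$, and the range of $x$ all match, and the fact that the identity is written here for $x\in\sB^c$ rather than $x\in\Bar\sB^c$ is harmless, since on $\partial\sB$ one has $\uuptau=0$ and the identity is a tautology. Hence \cref{L2.7} applies directly.

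Applying \cref{L2.7} then yields two things at once. On the one hand, it gives that $\lambda$ is a simple eigenvalue, which is the second assertion of the corollary. On the other hand, it gives condition~(i): the twisted process $Y$ associated with $(\Psi,\lambda)$ is recurrent. To pin down $\lambda$, I would invoke \cref{C2.1}, which (since $\Psi>0$ solves $\Lg\Psi+f\Psi=\lambda\Psi$) guarantees $\lambda\ge\lamstr(f)$. If the inequality were strict, \cref{L2.6} would force $Y$ to be transient, contradicting the recurrence just obtained; therefore $\lambda=\lamstr(f)$, which completes the proof.

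There is no genuine obstacle here beyond bookkeeping. The one point deserving a moment's attention is verifying that the hypothesis of \cref{C2.3} is literally an instance of \cref{L2.7}\,(iii) — same ball, same $\uuptau$, same integrand, same set of $x$ — so that \cref{L2.7} can be quoted verbatim; once that is checked, the three inputs combine mechanically: \cref{L2.7} (simplicity together with recurrence of $Y$), \cref{C2.1} ($\lambda\ge\lamstr(f)$ for any eigenpair), and \cref{L2.6} (transience of $Y$ whenever $\lambda>\lamstr(f)$) close the argument.
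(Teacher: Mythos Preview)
Your proposal is correct and matches the paper's approach exactly: the paper introduces \cref{C2.3} with the sentence ``As an immediate corollary to \cref{L2.6,L2.7} we have the following,'' and gives no separate proof. Your argument simply spells out the two-line deduction the paper leaves implicit, including the appeal to \cref{C2.1} (which the paper already invokes inside the proof of \cref{L2.7}).
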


\cref{T2.3} below is a generalization of
\cite[Theorem~4.7.1]{Pinsky} in $\Rd$, which is stated in bounded domains, and for
bounded and smooth coefficients.
It is shown in \cite{Pinsky} that for smooth bounded domains, the Green's measure
is not defined at the critical value $\lamstr$ \cite[Theorem~3.2]{Pinsky}.
But by \cref{T2.3} below we see that this is not the case on $\Rd$.
In fact, \cite[Theorem~4.3.2]{Pinsky} shows that $\lambda^*$ could be either
subcritical or critical in the sense of Pinsky.
We show that the criticality of $\lambda^*$ is equivalent to the strict
monotonicity of $\lamstr(f)$ on the right, i.e.,
$\lamstr(f)<\lamstr(f+h)$ for all $h\in\Cc_{\mathrm{o}}^+(\Rd)$.

\begin{theorem}\label{T2.3}
A ground state process is recurrent if and only if 
$\lamstr(f)<\lamstr(f+h)$ for all $h\in\Cc_{\mathrm{o}}^+(\Rd)$.
\end{theorem}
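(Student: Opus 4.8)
The plan is to deduce both implications from the Green's‑measure characterization of recurrence in \cref{L2.7}, applied to a ground state process $Y^*$ (whose eigenvalue is $\lamstr(f)$). By \cref{L2.7}, $Y^*$ is recurrent if and only if $G_{\lamstr(f)}(g)=\infty$ for some --- equivalently, for every --- $g\in\Cc_{\mathrm{c}}^+(\Rd)$, so it suffices to match this dichotomy with the one for strict monotonicity of $\lamstr(f)$ on the right. Note first that $h\ge0$ gives $\lamstr(f+h)\ge\lamstr(f)$ and boundedness of $h$ gives finiteness of $\lamstr(f+h)$; hence the failure of ``$\lamstr(f)<\lamstr(f+h)$ for all $h\in\Cc_{\mathrm{o}}^+(\Rd)$'' is just the existence of some $h\in\Cc_{\mathrm{o}}^+(\Rd)$ with $\lamstr(f+h)=\lamstr(f)$.

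First I would prove ``$Y^*$ recurrent $\Rightarrow$ strict monotonicity on the right'' contrapositively. Assuming $h\in\Cc_{\mathrm{o}}^+(\Rd)$ with $\lamstr(f+h)=\lamstr(f)$, I would take a principal eigenfunction $\Tilde\Psi$ of $\Lg^{f+h}$ (which exists by \cref{C2.1}), so that $\Lg\Tilde\Psi+(f-\lamstr(f))\Tilde\Psi=-h\,\Tilde\Psi$. Applying the It\^o--Krylov formula to $\E^{\int_0^t[f(X_s)-\lamstr(f)]\,\D s}\Tilde\Psi(X_t)$, localizing with the exit times $\uptau_n$, and passing to the limit with Fatou's lemma and monotone convergence --- as in \cref{L2.3} --- gives
\[
\Exp_x\Bigl[\int_0^\infty \E^{\int_0^t[f(X_s)-\lamstr(f)]\,\D s}\,h(X_t)\,\Tilde\Psi(X_t)\,\D t\Bigr]\;\le\;\Tilde\Psi(x)\;<\;\infty\,,\qquad x\in\Rd\,.
\]
Since $h\,\Tilde\Psi$ is continuous, nonnegative and non-trivial, I would choose $g\in\Cc_{\mathrm{c}}^+(\Rd)$ with $g\le h\,\Tilde\Psi$; then $G_{\lamstr(f)}(g)\le\Tilde\Psi(0)<\infty$, so $Y^*$ is not recurrent by \cref{L2.7}.

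For the converse, assume $Y^*$ is not recurrent, so $G_{\lamstr(f)}(g)<\infty$ for \emph{every} $g\in\Cc_{\mathrm{c}}^+(\Rd)$, again by \cref{L2.7}. Fixing one such $g$, I would study the Green potential
\[
V(x)\;\df\;\Exp_x\Bigl[\int_0^\infty \E^{\int_0^t[f(X_s)-\lamstr(f)]\,\D s}\,g(X_t)\,\D t\Bigr]\,,
\]
which is the increasing limit as $\lambda\searrow\lamstr(f)$ of the potentials $V_\lambda$ defined with $\lamstr(f)$ replaced by $\lambda$. For $\lambda>\lamstr(f)$ each $V_\lambda$ is finite everywhere, with the bound $V_\lambda\le(\lambda-\lamstr(f))^{-1}(\sup_\Rd g/\Psi^*)\,\Psi^*$ obtained as in the proof of \cref{L2.6}, and $V_\lambda$ solves $\Lg V_\lambda+(f-\lambda)V_\lambda=-g$; moreover $V(0)=G_{\lamstr(f)}(g)<\infty$. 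By Harnack's inequality for the relevant class of superharmonic functions and interior $\Sob^{2,p}$ estimates --- the argument used in the proof of \cref{L2.7} --- the family $\{V_\lambda\}$ is locally bounded, and a weak limit in $\Sobl^{2,p}$ shows that $V$ is finite and positive, lies in $\Sobl^{2,d}(\Rd)$, and satisfies $\Lg V+(f-\lamstr(f))V=-g$ on $\Rd$. Setting $h\df g/V$ --- continuous, nonnegative, non-trivial, and vanishing off the support of $g$, hence in $\Cc_{\mathrm{c}}^+(\Rd)\subset\Cc_{\mathrm{o}}^+(\Rd)$ --- I would get $\Lg V+(f+h-\lamstr(f))V=0$, whence $\lamstr(f+h)\le\lamstr(f)$ by the identity $\lamstr=\Hat\Lambda$ of \cref{L2.2}\,(ii). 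Thus strict monotonicity on the right fails for this $h$.

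The routine parts --- the It\^o--Krylov/Girsanov localizations and the dominated and monotone convergence passages --- are of the kind already carried out in \cref{L2.3,L2.7}. The hard part will be the regularity step in the converse direction: turning the a priori merely finite Green potential $V$ into a bona fide positive $\Sobl^{2,d}$ solution of $\Lg V+(f-\lamstr(f))V=-g$, so that $g/V$ is a legitimate element of $\Cc_{\mathrm{c}}^+(\Rd)$ and the $\Hat\Lambda$-characterization of \cref{L2.2}\,(ii) applies. I expect this to follow exactly as in the proof of \cref{L2.7}, using the Harnack inequality of \cite{AA-Harnack} and interior elliptic estimates.
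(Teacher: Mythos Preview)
Your proposal is correct and follows essentially the same route as the paper's proof: both directions hinge on the Green's-measure dichotomy of \cref{L2.7}, the forward implication uses a principal eigenfunction of $\Lg^{f+h}$ to bound $G_{\lamstr(f)}$, and the converse builds a positive $\Sobl^{2,d}$ solution of $\Lg\Phi+(f-\lamstr(f))\Phi=-g$ from the finite Green potential via the Dirichlet-problem/Harnack machinery of \cref{L2.7}. Your choice $h=g/V\in\Cc_{\mathrm{c}}^+(\Rd)$, which yields $\Lg V+(f+h-\lamstr(f))V=0$ directly, is in fact a touch cleaner than the paper's $h=\varepsilon\Ind_{B_1}$, which is not continuous and so requires a further (trivial) replacement by a smaller $h\in\Cc_{\mathrm{c}}^+(\Rd)$.
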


\begin{proof}
Suppose first that a ground state process corresponding to $\lamstr(f)$ is recurrent.
Then $G_{\lamstr}(g)=\infty$ for all
$g\in\Cc_{\mathrm{c}}^+(\Rd)$ by \cref{L2.7}.
Let $\Tilde{f}=f+h$ and $\Tilde\lambda\str\df\lamstr(f+h)$. 
Suppose that $\lamstr=\Tilde\lambda\str$.
Let $\Tilde\Psi$ be a principal eigenfunction of $\Lg^{\Tilde{f}}$, i.e.,
\begin{equation}\label{ET2.3B}
\Lg\Tilde\Psi + \Tilde{f}\,\Tilde\Psi\;=\;\Tilde\lambda\str\Tilde\Psi.
\end{equation}
Writing \cref{ET2.3B} as
$\Lg\Tilde\Psi + (f-\lamstr)\Tilde\Psi=-h \Tilde\Psi$,
and applying the It\^{o}--Krylov formula, followed by Fatou's lemma, we obtain
\begin{equation*}
\Exp_x\Bigl[\E^{\int_0^T [f(X_s)-\lamstr]\, \D{s}}\, \Tilde\Psi(X_T)\Bigr]
+ \int_0^T\Exp_x\Bigl[\E^{\int_0^t [f(X_s)-\lamstr]\, \D{s}}\,
h(X_t)\Tilde\Psi(X_t)\Bigr]\, \D{t}\;\le\; \Tilde\Psi(x)\,,
\end{equation*}
which contradicts the property that $G_{\lamstr}(g)=\infty$ for all
$g\in\Cc_{\mathrm{c}}^+(\Rd)$.
Therefore,  $\lamstr(f)<\lamstr(f+h)$
for all $h\in\Cc_{\mathrm{o}}^+(\Rd)$.

To prove the  converse, suppose that $Y^*$ is transient. 
Then for $g\in\Cc_{\mathrm{c}}^+(\Rd)$ with $B_1\subset\supp(g)$ we have
$G_{\lamstr}(g)<\infty$.
Following the arguments in the proof of
(ii)$\,\Rightarrow\,$(iii) in \cref{L2.7}, we obtain a positive $\Phi$ satisfying
\begin{equation}\label{ET2.3D}
\Lg\Phi + (f-\lamstr)\, \Phi\;=\;-\Gamma_0^{-1} g\,.
\end{equation}
Let $\varepsilon=\Gamma_0^{-1}\min_{B_1} \frac{g}{\Phi}$.
Then from \cref{ET2.3D} we have
\begin{equation*}
\Lg\Phi + (f+\varepsilon\Ind_{B_1}-\lamstr)\, \Phi\;\le\; 0\,.
\end{equation*}
This implies that
$\lamstr(f+\varepsilon\Ind_{B_1})\le \lamstr(f)$
by \cref{L2.2}\,(ii).
Thus $\lamstr(f+\varepsilon\Ind_{B_1})= \lamstr(f)$.
Therefore, if $\lamstr(f)<\lamstr(f+h)$ for all $h\in\Cc_{\mathrm{o}}^+(\Rd)$,
then $Y^*$ has to be recurrent.
This completes the proof.
\qed\end{proof}

It is well known that a (null) recurrent diffusion
$\{X_t\}$ with locally uniformly elliptic
and Lipschitz continuous $a$, and locally bounded measurable drift,
admits a $\sigma$-finite invariant probability measure $\nu$ which is a Radon measure
on the Borel $\sigma$-field of $\Rd$ \cite{Hasminskii}.
This measure is equivalent to the Lebesgue measure and is unique up to a multiplicative
constant.
Theorem~8.1 in \cite{Hasminskii} states that if $g$ and $h$ are real-valued functions
which are integrable with respect to the measure $\nu$ then
\begin{equation}\label{EC2.4A}
\Prob_x\,\left( \lim_{T\to\infty}\;\frac{\int_0^T g(X_t)\,\D{t}}{\int_0^T h(X_t)\,\D{t}}
\;=\; \frac{\int_\Rd g(x)\,\nu(\D{x})}{\int_\Rd h(x)\,\nu(\D{x})}\right)\;=\;1\,.
\end{equation}
Suppose $g\colon\Rd\to\RR_+$ is a non-trivial function.
Select $h$ as the indicator function of some open ball.
Then it is well known that the expectation
of $Y^h_t \df \int_0^t h(X_t)\,\D{t}$ tends to $\infty$ as $t\to\infty$.
Adopt the analogous notation $Y^g_t$, and let $\alpha=\frac{\nu(g)}{2\nu(h)}$.
Let $M>0$ be arbitrary, and select $t_0$ large enough such that
$\Exp\bigl[Y^h_{t_0}\bigr]\ge 2M$.
Then of course we may find a positive constant $\kappa$ such
$\Exp\bigl[Y^h_{t_0}\,\Ind_{\{Y^h_{t_0}\le\kappa\}}\bigr]\ge M$.
Since $Y^h_t$ and $Y^g_t$ are nondecreasing in $t$, it follows by
\cref{EC2.4A} that
\begin{equation*}
\Prob_x\,\Bigl(\lim_{t\to\infty}\,\bigl(Y^g_t-\alpha Y^h_{t_0}\bigr)^-
\,\Ind_{\{Y^h_{t_0}\le\kappa\}}\,\ge 0\Bigr)\;=\;0\,.
\end{equation*}
This of course implies, using dominated convergence, that
$\liminf_{t\to\infty}\,\Exp\bigl[Y^g_{t}\,\Ind_{\{Y^h_{t_0}\le\kappa\}}\bigr]
\ge \alpha M$. 
Since $M$ was arbitrary, this shows
that $\Exp\bigl[Y^g_{t}\bigr]\to\infty$ as $t\to\infty$, or equivalently that
$\int_0^\infty \Exp_x[g(X_t)]\,\D{t}=\infty$.
Using this property in the proof of \cref{T2.3} we obtain the following corollary.

\begin{corollary}\label{C2.4} 
For $\lamstr(f)$ to be strictly monotone at $f$ on the right it is sufficient
that there exists some non-trivial
Borel measurable bounded function $g\colon\Rd\to\RR_+$ with compact support
satisfying $\lamstr(f+\epsilon\, g)>\lamstr(f)$ for all $\epsilon>0$.
\end{corollary}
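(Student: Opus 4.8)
The plan is to argue by contraposition, using \cref{T2.3}. Since $\lamstr(f)$ is strictly monotone at $f$ on the right if and only if a ground state process $Y^*$ is recurrent, it suffices to prove that the existence of a $g$ as in the statement forces $Y^*$ to be recurrent. So suppose instead that $Y^*$, the twisted process of a principal eigenpair $(\Psi^*,\lamstr(f))$, is transient. Then \cref{L2.7} (in the form in which ``some'' is replaced by ``all'' in parts (ii)--(iii)) gives $G_{\lamstr}(g')<\infty$ for every $g'\in\Cc_{\mathrm{c}}^+(\Rd)$; since $g$ is bounded, nonnegative and compactly supported, it is dominated by a multiple of such a $g'$, so $\Gamma_0\df G_{\lamstr}(g)$ is finite, and it is strictly positive because $g$ is non-trivial.

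Next I would reproduce, with $g$ itself in place of a continuous test function, the construction from the converse part of the proof of \cref{T2.3}, i.e.\ the variant of the implication (ii)$\,\Rightarrow\,$(iii) of \cref{L2.7} in which the Green's mass $G_{\lamstr}(g)$ is finite. Setting $F_\alpha\df f-\lamstr(f)-\alpha$ and $\Gamma_\alpha\df\Exp_0\bigl[\int_0^\infty\E^{\int_0^tF_\alpha(X_s)\,\D s}g(X_t)\,\D t\bigr]\in(0,\infty)$ for $\alpha\in(0,1)$, one solves the Dirichlet problems $\Lg\varphi_{\alpha,n}+F_\alpha\varphi_{\alpha,n}=-\Gamma_\alpha^{-1}g$ in $B_n$ with $\varphi_{\alpha,n}=0$ on $\partial B_n$; the bound \cref{EL2.7C} and the It\^o--Krylov formula give $\varphi_{\alpha,n}(0)\le1$ uniformly; and Harnack's inequality lets one pass to the limit first in $n$ and then as $\alpha\searrow0$ (using $\Gamma_\alpha\uparrow\Gamma_0<\infty$) to produce a positive $\Phi\in\Sobl^{2,p}(\Rd)$, $p>d$, solving $\Lg\Phi+(f-\lamstr(f))\Phi=-\Gamma_0^{-1}g$ on $\Rd$. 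Every ingredient here --- solvability of the linear Dirichlet problems, the It\^o--Krylov representation, and the Harnack estimate for superharmonic functions --- needs only that $g$ be bounded and Borel, since $g$ enters merely as an $L^p$ source term; this is exactly the point at which the relaxation from continuous to merely measurable test functions costs nothing.

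Finally I would perturb the potential. Because $\Lg\Phi+\bigl(f+\epsilon g-\lamstr(f)\bigr)\Phi=g\,\bigl(\epsilon\Phi-\Gamma_0^{-1}\bigr)$ and $\Phi$ is continuous while $\supp g$ is compact, the choice $\epsilon_0\df\bigl(\Gamma_0\,\max_{\supp(g)}\Phi\bigr)^{-1}$ renders the right-hand side nonpositive on all of $\Rd$ (it is $\le0$ on $\supp g$ and vanishes off it), so $\Phi>0$ satisfies $\Lg\Phi+\bigl((f+\epsilon_0 g)-\lamstr(f)\bigr)\Phi\le0$, whence $\lamstr(f+\epsilon_0 g)\le\lamstr(f)$ by \cref{L2.2}\,(ii). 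By monotonicity $\lamstr(f+\epsilon_0 g)=\lamstr(f)$, contradicting the assumption that $\lamstr(f+\epsilon g)>\lamstr(f)$ for all $\epsilon>0$; hence $Y^*$ is recurrent, and \cref{T2.3} completes the argument. I expect the one genuine obstacle to be verifying that this elliptic/probabilistic construction of $\Phi$ carries over verbatim to a non-continuous $g$ --- everything else is bookkeeping with already established results. Let me also note that the ratio-ergodic property recorded just before the statement yields the converse: when $Y^*$ is recurrent one has $G_{\lamstr}(g)=\infty$, hence $\lamstr(f+\epsilon g)>\lamstr(f)$ for all $\epsilon>0$, for \emph{every} non-trivial nonnegative bounded $g$ with compact support --- so the stated sufficient condition is in fact also necessary.
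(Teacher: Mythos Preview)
Your proof is correct and follows essentially the same route as the paper: both argue by contraposition via \cref{T2.3}, adapt the converse half of its proof (i.e., the construction of $\Phi$ from \cref{L2.7}\,(ii)$\Rightarrow$(iii) in the finite-Green's-measure regime) to the given measurable $g$, and then perturb to produce an $\epsilon_0$ with $\lamstr(f+\epsilon_0 g)=\lamstr(f)$. Your choice $\epsilon_0=(\Gamma_0\max_{\supp g}\Phi)^{-1}$ is the natural analogue of the paper's $\varepsilon=\Gamma_0^{-1}\min_{B_1}\tfrac{g}{\Phi}$ once continuity of $g$ is dropped.

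One point worth highlighting is that the paper's terse ``using this property in the proof of \cref{T2.3}'' refers to the ratio-ergodic argument recorded just before the corollary, whereas your proof shows that for the \emph{sufficiency} direction this property is not actually needed: the bound $G_{\lamstr}(g)<\infty$ follows directly by dominating the bounded compactly supported $g$ by a continuous $g'\in\Cc_{\mathrm c}^+(\Rd)$ and invoking \cref{L2.7} for $g'$. Your closing remark correctly identifies the role of the ratio-ergodic property: it is what yields the \emph{converse} (necessity) --- when $Y^*$ is recurrent one has $G_{\lamstr}(g)=\infty$ even for merely measurable $g$, and then the forward half of \cref{T2.3}'s proof gives $\lamstr(f+\epsilon g)>\lamstr(f)$ for every $\epsilon>0$. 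So your argument is marginally cleaner for the stated direction, and your added observation usefully clarifies where the ratio-ergodic input is really required.
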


\subsubsection{Minimal growth at infinity}

We next discuss the property known as
\textit{minimal growth at infinity} \cite[Definition~8.2]{Berestycki-15}.
As shown in  \cite[Proposition~8.4]{Berestycki-15}, minimal growth at infinity
implies that the eigenspace corresponding  
to the eigenvalue $\lamstr(f)$ is one dimensional, i.e., $\lamstr(f)$ is simple.
We start with the following definition, which is a variation
of \cite[Definition~8.2]{Berestycki-15}.

\begin{definition}
A positive function $\varphi\in\Sobl^{2,d}(\Rd)$
is said to be a solution of minimal growth at infinity of
$\Lg^f\varphi - \lambda\varphi=0$,
if for any $r>0$ and any positive function
$v\in\Sobl^{2,d}(\Rd\setminus B_r)$ satisfying
$\Lg^f v - \lambda v\le 0$ a.e., in $B_r^c$, there exists 
$R>r$ and $k>0$ such that $k\varphi\le v$ in $B^c_R$.
\end{definition}

Define the generalized principal
eigenvalue of $\Lg^f$ in the domain $D$ by
\begin{equation*}
\lambda_1(f,D)\;\df\;\inf\;\bigl\{\lambda\; :\;
\exists\, \varphi\in\Sobl^{2,d}(D),\;
\varphi>0, \; \Lg\varphi + (f-\lambda)\varphi\;\le\; 0 \text{\ a.e. in}\; D
\bigr\}\,.
\end{equation*}
Note that $\lambda_1(f,\Rd)=\Hat\Lambda(f)=\lamstr(f)$.
It is also clear from this definition that for $D_1\subset D_2$ we have
$\lambda_1(f,D_1)\le \lambda_1(f,D_2)$.

It is shown in \cite[Theorem~8.5]{Berestycki-15} that the hypothesis
\begin{enumerate}
\item[(A1)]
$\lim_{r\to\infty}\;\lambda_1(f,B_r^c)\;<\; \lamstr(f)$
\end{enumerate}
implies that the ground state $\Psi^*$ of $\Lg^f$ is a solution of minimal growth at
infinity.

On the other hand, the following result has been established
in \cite[Theorem~2.1]{ABG-arxiv}.

\begin{theorem}
The ground state $\Psi^*$ of $\Lg^f$ is a solution of minimal growth at infinity
of $\Lg^f\Psi^* - \lamstr(f)\Psi^*=0$
if and only if $\lamstr(f)$ is strictly monotone at $f$ on the right.
\end{theorem}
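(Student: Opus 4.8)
The plan is to establish the two implications separately, in each case passing through the recurrence/transience dichotomy for the ground‑state process supplied by \cref{T2.3,L2.7}. For the forward implication, suppose $\lamstr(f)$ is strictly monotone at $f$ on the right. Then $Y^*$ is recurrent by \cref{T2.3}, so \cref{T2.1}(a) gives the representation $\Psi^*(x)=\Exp_x\bigl[\E^{\int_0^{\uuptau_r}[f(X_s)-\lamstr(f)]\,\D s}\,\Psi^*(X_{\uuptau_r})\,\Ind_{\{\uuptau_r<\infty\}}\bigr]$ for all $r>0$ and $x\in\Bar B_r^c$. Given $r>0$ and a positive $v\in\Sobl^{2,d}(\Rd\setminus B_r)$ with $\Lg^f v-\lamstr(f)v\le0$ a.e.\ in $B_r^c$, fix $r'>r$ and apply the It\^o--Krylov formula to $t\mapsto\E^{\int_0^t[f(X_s)-\lamstr(f)]\,\D s}v(X_t)$ on the annuli $B_n\setminus\Bar B_{r'}$; letting $n\to\infty$ and using Fatou's lemma yields
\[
v(x)\;\ge\;\Exp_x\Bigl[\E^{\int_0^{\uuptau_{r'}}[f(X_s)-\lamstr(f)]\,\D s}\,v(X_{\uuptau_{r'}})\,\Ind_{\{\uuptau_{r'}<\infty\}}\Bigr]\,,\qquad x\in\Bar B_{r'}^c\,.
\]
On $\{\uuptau_{r'}<\infty\}$ we have $X_{\uuptau_{r'}}\in\partial B_{r'}$, so $v(X_{\uuptau_{r'}})\ge k\,\Psi^*(X_{\uuptau_{r'}})$ with $k\df(\min_{\partial B_{r'}}v)/(\max_{\partial B_{r'}}\Psi^*)>0$; substituting and invoking the representation of $\Psi^*$ gives $v\ge k\,\Psi^*$ on $\Bar B_{r'}^c$, i.e.\ $\Psi^*$ has minimal growth at infinity (take $R=r'+1$).

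For the converse I argue by contraposition. If $\lamstr(f)$ is not strictly monotone at $f$ on the right, then $Y^*$ is transient by \cref{T2.3}, hence by \cref{L2.7} the Green's measure is finite: $G_{\lamstr(f)}(g)<\infty$ for all $g\in\Cc_{\mathrm{c}}^+(\Rd)$. Fix such a $g$ with $\supp(g)\subset B_1$ and set $\Phi(x)\df\Exp_x\bigl[\int_0^\infty\E^{\int_0^t[f(X_s)-\lamstr(f)]\,\D s}\,g(X_t)\,\D t\bigr]$. Arguing as in the proof of \cref{L2.7} (via Harnack's inequality), $\Phi$ is finite and positive, belongs to $\Sobl^{2,p}(\Rd)$, and solves $\Lg^f\Phi-\lamstr(f)\Phi=-g$, so that $\Lg^f\Phi-\lamstr(f)\Phi=0$ in $B_1^c$; thus $\Phi$ is an admissible competitor (with $r=1$) in the definition of minimal growth. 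If $\Psi^*$ were of minimal growth there would be $R>1$ and $k>0$ with $\Phi/\Psi^*\ge k$ on $B_R^c$, and it remains to contradict this.

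To that end, apply \cref{L2.3} to the principal eigenpair $\bigl(\Psi^*,\lamstr(f)\bigr)$ with $\psi^*=\log\Psi^*$ and test function $g$; integrating in $T$ over $[0,\infty)$ and using Tonelli's theorem gives
\[
\frac{\Phi(x)}{\Psi^*(x)}\;=\;\widetilde{\Exp}_x^{\psi^*}\Bigl[\int_0^{\uptau_\infty}\tfrac{g}{\Psi^*}(Y^*_t)\,\D t\Bigr]\,,
\]
so $\Phi/\Psi^*$ is the Green's potential, relative to the transient ground‑state diffusion $Y^*$, of the compactly supported function $g/\Psi^*$. Since this potential is finite, $\int_0^{\uptau_\infty}(g/\Psi^*)(Y^*_t)\,\D t$ is a.s.\ finite, and the Markov property gives $\widetilde{\Exp}_x^{\psi^*}\bigl[(\Phi/\Psi^*)(Y^*_t)\,\Ind_{\{t<\uptau_\infty\}}\bigr]=\widetilde{\Exp}_x^{\psi^*}\bigl[\int_t^{\uptau_\infty}(g/\Psi^*)(Y^*_s)\,\D s\bigr]\to0$ as $t\to\infty$; that is, $\Phi/\Psi^*$ is a potential in the potential‑theoretic sense, with vanishing greatest $\widetilde{\Lg}^{\psi^*}$‑harmonic minorant. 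Consequently $\Phi/\Psi^*$ cannot be bounded below by the positive constant $k$ on $B_R^c$, for then, being positive and continuous, it would be bounded below on all of $\Rd$ by some $\delta>0$, and the $\widetilde{\Lg}^{\psi^*}$‑harmonic function $\delta$ would be a positive harmonic minorant of it. This is the required contradiction, so $\Psi^*$ is not of minimal growth.

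I expect the last step to be the main obstacle: that the Green's potential of a compactly supported density with respect to the transient diffusion $Y^*$ has vanishing greatest harmonic minorant, equivalently $\liminf_{\abs{x}\to\infty}\Phi(x)/\Psi^*(x)=0$. This is classical potential theory for transient non‑degenerate diffusions (cf.\ \cite[Chapter~4]{Pinsky} and the Green's‑measure machinery already underlying \cref{T2.3,L2.7}), but it requires some care when $Y^*$ is explosive, in which case one runs the argument for the process killed at its explosion time, using the a.s.\ finiteness of the occupation integral together with \cref{C2.2}.
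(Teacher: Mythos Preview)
The paper does not actually prove this theorem; it simply records that the result ``has been established in \cite{ABG-arxiv}'' and moves on. So there is no in-paper argument to compare against, and your attempt is an original proof.

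Your forward implication is correct and clean: once you have the representation \cref{ET2.1A} for $\Psi^*$ from \cref{T2.1}(a), the It\^o--Krylov/Fatou inequality for the competitor $v$ and the comparison on $\partial B_{r'}$ give exactly $v\ge k\Psi^*$ on $B_{r'}^c$.

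The converse has the right architecture --- produce a positive supersolution $\Phi$ via the Green's potential (this is precisely the object built in the proof of \cref{T2.3}) and show $\Phi/\Psi^*$ cannot be bounded below --- but your justification of the last step has a subtle gap in the explosive case that your proposed patch does not quite fix. You argue that the constant $\delta$ is a positive $\widetilde{\Lg}^{\psi^*}$-harmonic minorant of the potential $\Phi/\Psi^*$, contradicting the vanishing of its greatest harmonic minorant. The Riesz decomposition you are invoking, however, is a statement about the (sub-Markovian) semigroup $P_t$ of the killed process: the ``greatest harmonic minorant'' of a potential is $\lim_{t\to\infty}P_t(\Phi/\Psi^*)=0$, and a harmonic minorant in this sense must satisfy $P_t h=h$. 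When $Y^*$ is explosive, $P_t\delta=\delta\,\widetilde\Prob_x^{\psi^*}(t<\uptau_\infty)<\delta$, so the constant is only superharmonic for the killed semigroup and the contradiction evaporates.

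The fix is to bypass Riesz entirely and argue directly with hitting probabilities. For $x\notin\overline{B}_1$ the strong Markov property gives $(\Phi/\Psi^*)(x)\le M\,u(x)$ with $u(x)\df\widetilde\Prob_x^{\psi^*}(\uuptau_1<\uptau_\infty)$ and $M\df\sup_{\partial B_1}\Phi/\Psi^*$. The function $u$ is $\widetilde{\Lg}^{\psi^*}$-harmonic on $\overline{B}_1^{\,c}$ with $u=1$ on $\partial B_1$; applying the mean-value identity on annuli $B_n\setminus\overline{B}_1$ yields
\[
u(x)\;=\;\widetilde\Prob_x^{\psi^*}(\uuptau_1<\uptau_n)
+\widetilde\Exp_x^{\psi^*}\bigl[u(Y^*_{\uptau_n})\Ind_{\{\uptau_n<\uuptau_1\}}\bigr]\,,
\]
and since the first term increases to $u(x)$ as $n\to\infty$, the second tends to $0$. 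If $u\ge\epsilon>0$ everywhere on $\overline{B}_1^{\,c}$ this forces $\widetilde\Prob_x^{\psi^*}(\uptau_n<\uuptau_1)\to0$, i.e.\ $\widetilde\Prob_x^{\psi^*}(\uuptau_1<\uptau_\infty)=1$ for every $x$, contradicting the transience of $Y^*$ furnished by \cref{T2.3}. Hence $\inf u=0$, so $\Phi/\Psi^*$ is not bounded away from zero at infinity, and $\Psi^*$ fails minimal growth. With this replacement your argument is complete in both the non-explosive and explosive cases.
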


It thus follows by the above results that (A1) is a sufficient condition
for strict monotonicity of $\lamstr(f)$ on the right.
It turns out that (A1) is equivalent to strict monotonicity
and, moreover, the map $r\mapsto\lambda_1(f,B_r^c)- \lamstr(f)$ is either
negative on $(0,\infty)$ or identically equal to $0$.
This is the subject of the following theorem.

\begin{theorem}
The following are equivalent.
\begin{enumerate}[(a)]
\item
$\exists\, r>0\,\colon\;\lambda_1(f,B_r^c)<\lambda_1(f,\Rd)$.

\item
$\lamstr(f)$ is strictly monotone at $f$.

\item
$\lambda_1(f,B_r^c)<\lambda_1(f,\Rd) \quad\forall\,r>0$.
\end{enumerate}
\end{theorem}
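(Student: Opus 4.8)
The plan is to prove the cycle (b)$\,\Rightarrow\,$(c)$\,\Rightarrow\,$(a)$\,\Rightarrow\,$(b); the implication (c)$\,\Rightarrow\,$(a) is trivial, so only (b)$\,\Rightarrow\,$(c) and — the crux — (a)$\,\Rightarrow\,$(b) require argument. For (b)$\,\Rightarrow\,$(c): fix $r>0$ and choose a bump function $h\in\Cc_{\mathrm{c}}^+(\Rd)$ with $\supp(h)\subset B_r$; since $h\in\Cc_{\mathrm{o}}^+(\Rd)$, \cref{T2.2} gives $\lamstr(f-h)<\lamstr(f)$. Taking a principal eigenfunction $\Tilde\Psi$ of $\Lg^{f-h}$ and using $h\equiv0$ on $B_r^c$, we get $\Lg\Tilde\Psi+\bigl(f-\lamstr(f-h)\bigr)\Tilde\Psi=0$ a.e.\ in $B_r^c$; as $\Tilde\Psi>0$ belongs to $\Sobl^{2,d}(B_r^c)$, it is admissible in the definition of $\lambda_1(f,B_r^c)$, whence $\lambda_1(f,B_r^c)\le\lamstr(f-h)<\lamstr(f)=\lambda_1(f,\Rd)$, which is (c).

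For (a)$\,\Rightarrow\,$(b) I would argue in two steps. First, since $D\mapsto\lambda_1(f,D)$ is monotone, $r\mapsto\lambda_1(f,B_r^c)$ is nonincreasing, so (a) forces $\lim_{r\to\infty}\lambda_1(f,B_r^c)<\lamstr(f)$, which is precisely the hypothesis of \cite[Theorem~8.5]{Berestycki-15} recalled above; hence $\Psi^*$ is a solution of minimal growth at infinity, so by \cite[Theorem~2.1]{ABG-arxiv} $\lamstr(f)$ is strictly monotone at $f$ on the right, and therefore by \cref{T2.3} the ground state process $Y^*$ is recurrent. Second, fix $r_0>0$ with $\lambda_1(f,B_{r_0}^c)<\lamstr(f)$, set $\epsilon\df\lamstr(f)-\lambda_1(f,B_{r_0}^c)>0$, and take a positive $\varphi\in\Sobl^{2,p}(B_{r_0}^c)$, $p\ge1$, solving $\Lg\varphi+f\varphi=\lambda_1(f,B_{r_0}^c)\varphi$ in $B_{r_0}^c$ (such $\varphi$ exists as a limit of Dirichlet eigensolutions on the annuli $B_n\setminus\Bar B_{r_0}$, $n>r_0$, exactly as $\Psi^*$ is constructed in \cref{L2.1,L2.2}). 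With $w\df\varphi/\Psi^*$, the ground-state equation \cref{EL2.4A} makes $\widetilde\Lg^{\psi^*}$ the Doob $\Psi^*$-transform of $\Lg^{f-\lamstr(f)}$, so the computation underlying \cref{EL2.5C} gives
\begin{equation*}
\widetilde\Lg^{\psi^*}w\;=\;\frac{1}{\Psi^*}\bigl(\Lg\varphi+(f-\lamstr(f))\varphi\bigr)\;=\;-\epsilon\,w\qquad\text{a.e.\ in }B_{r_0}^c\,.
\end{equation*}

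To conclude, fix $r_1>r_0$, put $\delta\df\min_{\partial B_{r_1}}w>0$, and let $\uuptau_{r_1}$ denote the first hitting time of $\Bar B_{r_1}$ by $Y^*$. Applying the It\^o--Krylov formula to $\E^{\epsilon t}w(Y^*_t)$ and localizing with $\uuptau_{r_1}$ and the exit times $\uptau_n$ of $B_n$, the drift identity above shows that $\E^{\epsilon(t\wedge\uuptau_{r_1}\wedge\uptau_n)}w\bigl(Y^*_{t\wedge\uuptau_{r_1}\wedge\uptau_n}\bigr)$ is a nonnegative supermartingale; restricting to the event $\{\uuptau_{r_1}\le t\wedge\uptau_n\}$, where $w(Y^*_{\uuptau_{r_1}})\ge\delta$, and letting first $n\to\infty$ and then $t\to\infty$ (monotone convergence), one obtains
\begin{equation*}
\widetilde\Exp_x^{\psi^*}\Bigl[\E^{\epsilon\uuptau_{r_1}}\,\Ind_{\{\uuptau_{r_1}<\infty\}}\Bigr]\;\le\;\frac{w(x)}{\delta}\;<\;\infty\,,\qquad x\in B_{r_1}^c\,.
\end{equation*}
Since $Y^*$ is recurrent, $\uuptau_{r_1}<\infty$ almost surely, hence $\widetilde\Exp_x^{\psi^*}\bigl[\E^{\epsilon\uuptau_{r_1}}\bigr]<\infty$ for all $x\in B_{r_1}^c$; by \cref{D2.2} this means $Y^*$ is exponentially ergodic, and by \cref{T2.2} this is equivalent to (b). The main obstacle will be this last step: the localization must be carried out carefully, since $f$, $\psi^*$, and the drift of $Y^*$ are only locally bounded and $Y^*$ may a priori explode, and it is exactly the recurrence of $Y^*$ obtained in the first step that allows the estimate carrying $\Ind_{\{\uuptau_{r_1}<\infty\}}$ to be upgraded to a genuine exponential-ergodicity bound.
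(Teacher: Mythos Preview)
Your proof is correct. The (b)$\Rightarrow$(c) argument matches the paper's, which simply cites \cref{L2.5}, \cref{T2.2}, and the definition of $\lambda_1$.

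For (a)$\Rightarrow$(b), however, you take a genuinely different route. The paper's argument is a short, purely PDE-theoretic construction: take the eigenfunction $\psi$ on $B_{\bar r}^c$ built from the annular Dirichlet problems (with $\psi=0$ on $\partial B_{\bar r}$), extend it smoothly inside $B_{r'}$ for some $r'>\bar r$ to a globally positive $\varphi\in\Sobl^{2,d}(\Rd)$, and set $h\df\lambda\varphi-\Lg\varphi-f\varphi$, which is supported in $B_{r'}$. Then $\Lg\varphi+(f+h/\varphi)\varphi=\lambda\varphi$, so $\lamstr(f+h/\varphi)\le\lambda<\lamstr(f)$; since $f-h^-/\varphi\le f+h/\varphi$ one obtains $\lamstr(f-h^-/\varphi)<\lamstr(f)$, which is (P1). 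No stochastic reasoning and no external citations are needed.

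Your route instead passes through the probabilistic characterizations: you first invoke \cite[Theorem~8.5]{Berestycki-15} and \cite[Theorem~2.1]{ABG-arxiv} (both quoted just above this theorem) together with \cref{T2.3} to get recurrence of $Y^*$, and then use the ratio $w=\varphi/\Psi^*$ on $B_{r_0}^c$ and the identity $\widetilde\Lg^{\psi^*}w=-\epsilon w$ (the same $h$-transform computation as in \cref{EL2.5C}) to upgrade recurrence to exponential ergodicity via a hitting-time estimate. This is valid, and it makes transparent the probabilistic content of the gap $\lamstr(f)-\lambda_1(f,B_{r_0}^c)$ as an exponential return rate for the ground-state diffusion; the price is the dependence on two external results that the paper's direct construction avoids entirely.
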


\begin{proof}
It easily follows by \cref{L2.5,T2.2} and the definition of $\lambda_1$ that
(b)$\,\Rightarrow\,$(c).
Thus it remains to prove that (a)$\,\Rightarrow\,$(b).
Suppose that $\lambda\equiv\lambda_1(f,B_{\Bar{r}}^c)<\lambda_1(f,\Rd)=\lamstr(f)$
for some $\Bar{r}>0$.
Using the Dirichlet eigenvalues for the annulus
$\sB_{r}\setminus\Bar{B}_{\Bar{r}}$, for $r>\Bar{r}$,
and letting $r\to\infty$, we can construct a solution
$\psi\in\Sobl^{2,d}(\Bar{B}_{\Bar{r}}^c)$ of
$\Lg \psi + f\psi = \lambda\psi$ on $B_{\Bar{r}}^c$, with $\psi>0$ on
$\Bar{B}_{\Bar{r}}^c$, and $\psi=0$ on $\partial B_{\Bar{r}}$.
Then $\psi$ is bounded away from $0$ on $\partial B_{r'}$ for all $r'>r$.
Using any $r'>r$, we extend $\psi$ smoothly inside $B_{r'}$ to obtain some function
$\varphi\in\Sobl^{2,d}(\Rd)$ which is strictly positive on $\Rd$ and agrees with
$\psi$ on $B_{r'}^c$.
Let $h\df \lambda \varphi-\Lg \varphi - f\varphi$,
and $\Tilde{f} \df f+ \frac{h}{\varphi}$.
Then $\Lg \varphi + \Tilde{f}\varphi \;=\; \lambda \varphi$,
and therefore, we have
\begin{equation*}
\lamstr\bigl(f+ \tfrac{h^-}{\varphi}\bigr)
\;\le\; \lamstr(\Tilde{f})\;\le\;\lambda\;<\;\lamstr(f)\,,
\end{equation*}
which implies strict monotonicity at $f$, and completes the  proof.
\qed\end{proof}

\subsection{Potentials \texorpdfstring{$f$}{f} vanishing at infinity}
\label{S2.4}

Let $\cB_{\mathrm{o}}(\Rd)$ denote the class of bounded
Borel measurable functions which are vanishing at infinity, i.e.,
satisfying $\lim_{R\to\infty}\;\sup_{B^c_R}\,\abs{f}=0$,
and $\cB^+_{\mathrm{o}}(\Rd)$ the class of nonnegative functions
in $\cB_{\mathrm{o}}(\Rd)$ which are not a.e.\ equal to $0$.

\cref{T2.4} which follows is a (pinned) multiplicative ergodic theorem
(compare with \cite[Theorem~7.1]{Kaise-04}).
Note that the continuity result in this theorem  is stronger than that
of \cite[Proposition~9.2]{Berestycki-15}.
See also \cref{R4.1} on the continuity of $\lamstr(f)$ for a larger class of $f$.
We introduce the eigenvalue $\lambda^{\prime\prime}(f)$ defined by
\begin{equation}\label{E-l''}
\lambda^{\prime\prime}(f)\;\df\;\inf\;\Bigl\{\lambda\, \colon\;
\exists\, \varphi\in\Sobl^{2,d}(\Rd), \; \inf_{\Rd}\varphi>0, \;
\Lg\varphi+ (f-\lambda)\varphi\;\le\; 0\text{\ a.e. in\ }\Rd\Bigr\}\,.
\end{equation}

\begin{theorem}\label{T2.4}
Let $f\in\cB_{\mathrm{o}}(\Rd)$.
If the solution of \cref{E2.1} is recurrent,
then $\lamstr(f)=\lambda^{\prime\prime}(f)=\sE(f)$.
In addition, if the solution of \cref{E2.1} is positive recurrent
with invariant measure $\mu$, and $\int_{\Rd} f\, \D{\mu}>0$,
the following hold:
\begin{itemize}
\item[\textup{(}a\textup{)}]
for any measurable $g$ with compact support we have
\begin{equation}\label{ET2.4A}
\Exp_x\Bigl[\E^{\int_0^T [f(X_s)-\lamstr(f)]\, \D{s}}\, g(X_T)\Bigr]
\;\xrightarrow[T\to\infty]{}\; C_g \Psi^*(x)\,,
\end{equation}
for some positive constant $C_g$.
Moreover, the corresponding twisted process $Y^*$ is exponentially ergodic.

\item[\textup{(}b\textup{)}]
If $f_n$ is a sequence of functions in $\cB_{\mathrm{o}}(\Rd)$
satisfying $\sup_{n}\norm{f_n}_\infty<\infty$,
and converging to $f$ in $\Lpl^1(\Rd)$, and also uniformly outside some
compact set $K\subset\Rd$, then
$\lamstr(f_n)\to\lamstr(f)$.
\end{itemize}
\end{theorem}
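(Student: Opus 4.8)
My plan is to record the two easy inequalities and then construct a one‑parameter family of bounded‑below supersolutions. A competitor $\varphi$ for $\lambda''(f)$ in \cref{E-l''} is also a competitor for $\Hat\Lambda(f)$ in \cref{D2.1A}, so $\lamstr(f)=\Hat\Lambda(f)\le\lambda''(f)$ by \cref{L2.2}, and $\lamstr(f)\le\sE_x(f)$ for all $x$ (recalled from \cite{ari-anup} in \cref{S1}) gives $\lamstr(f)\le\sE(f)$. For the reverse inequalities it suffices, for each $\lambda>\lamstr(f)$ (or $\lambda=\lamstr(f)$ in two borderline cases), to produce a positive $\varphi\in\Sobl^{2,d}(\Rd)$ with $\inf_\Rd\varphi>0$ and $\Lg\varphi+(f-\lambda)\varphi\le0$ a.e.: then $\lambda''(f)\le\lambda$, while the It\^o--Krylov formula and Fatou's lemma give $\varphi(x)\ge(\inf_\Rd\varphi)\,\Exp_x[\E^{\int_0^T[f-\lambda]\D s}]$ and hence $\sE_x(f)\le\lambda$, and letting $\lambda\downarrow\lamstr(f)$ finishes. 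To build $\varphi$ I would split into cases. If $\lamstr(f)<0$, pick $R$ with $\sup_{B_R^c}\abs f<-\lamstr(f)$, so $\Lg\Psi^*=(\lamstr(f)-f)\Psi^*<0$ on $B_R^c$; localizing the resulting supermartingale by $\uptau_n$ and using recurrence of $X$ (so the hitting time of a large ball is a.s.\ finite) together with Fatou gives $\inf_{B_R^c}\Psi^*\ge\min_{\partial B_R}\Psi^*>0$, hence $\inf_\Rd\Psi^*>0$, and we take $\varphi=\Psi^*$, $\lambda=\lamstr(f)$. If $\lamstr(f)\ge0$ and $\esssup f\le\lamstr(f)$, take $\varphi\equiv1$, $\lambda=\lamstr(f)$. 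If $\lamstr(f)\ge0$ and $\esssup f>\lamstr(f)$, then for $\lambda\in(\lamstr(f),\esssup f)$ choose $R$ with $\abs f<\lambda$ on $B_R^c$ and set $\varphi=\Psi^*+c$ with $c=(\lambda-\lamstr(f))\bigl(\min_{\Bar B_R}\Psi^*\bigr)/(\esssup f-\lambda)>0$, for which $\Lg\varphi+(f-\lambda)\varphi=(\lamstr(f)-\lambda)\Psi^*+(f-\lambda)c$ is $<0$ on $B_R^c$ and $\le0$ on $\Bar B_R$.

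\textbf{Part (a).} The plan is to derive $\lamstr(f)>0$, deduce the strict monotonicity (P1) from it, and then read off both assertions from the machinery of \cref{S2}. By the $L^1$ ergodic theorem for the positive recurrent $X$, $\tfrac1T\Exp_x[\int_0^Tf(X_s)\D s]\to\int f\D\mu$, so Jensen gives $\sE_x(f)\ge\int f\D\mu>0$, and Part~1 yields $\lamstr(f)=\sE(f)>0$. Choosing $R$ with $\sup_{B_R^c}\abs f<\tfrac12\lamstr(f)$ and using $\lambda_1(0,B_R^c)\le0$ (test $\varphi\equiv1$) and monotonicity of $\lambda_1$ in the potential, $\lambda_1(f,B_R^c)\le\tfrac12\lamstr(f)<\lamstr(f)=\lambda_1(f,\Rd)$; by the characterization of strict monotonicity at $f$ in terms of the exterior eigenvalues $\lambda_1(f,B_r^c)$ proved earlier in \cref{S2}, this means (P1) holds. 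Hence $Y^*$ is exponentially ergodic and non-explosive, with invariant probability measure $\mu^*$ (\cref{L2.5}), and $\Psi^*$ is the unique ground state (\cref{L2.4}). By \cref{L2.3} (extended to bounded measurable $g$ with compact support by a routine approximation) and regularity of $Y^*$,
\[
\Exp_x\Bigl[\E^{\int_0^T[f(X_s)-\lamstr(f)]\D s}g(X_T)\Bigr]
=\Psi^*(x)\,\widetilde\Exp_x^{\psi^*}\bigl[g(Y^*_T)/\Psi^*(Y^*_T)\bigr],
\]
and since $g/\Psi^*$ is bounded ($\inf_{\supp g}\Psi^*>0$ by Harnack) and $Y^*$ is geometrically ergodic, the right-hand side converges as $T\to\infty$ to $\Psi^*(x)\int(g/\Psi^*)\,\D\mu^*=:C_g\Psi^*(x)$, which is \cref{ET2.4A} ($C_g>0$ for $g$ nonnegative and non-trivial).

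\textbf{Part (b).} Lower semicontinuity of $f\mapsto\lamstr(f)$ in $\Lpl^1(\Rd)$ gives $\liminf_n\lamstr(f_n)\ge\lamstr(f)$, so via a subsequence argument it remains to show that any convergent subsequence of $\{\lamstr(f_n)\}$ has limit $\le\lamstr(f)$. Since $\mu$ has a locally bounded density and $f_n\to f$ in $\Lpl^1(\Rd)$ and uniformly off $K$, we get $\int f_n\D\mu\to\int f\D\mu>0$, so for large $n$ Part~(a) applies to $f_n$: then $\lamstr(f_n)>0$, (P1)—hence strict monotonicity on the right—holds for $f_n$, and by \cref{T2.1}(a) the normalized ground state $\Psi_n^*$, $\Psi_n^*(0)=1$, satisfies \cref{ET2.1A}. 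As $\{\lamstr(f_n)\}$ is bounded by $\sup_m\norm{f_m}_\infty$ and, by uniform Harnack and local elliptic ($\Sob^{2,p}$) estimates (using $\sup_m\norm{f_m}_\infty<\infty$), $\{\Psi_n^*\}$ is locally uniformly precompact, I pass to a further subsequence with $\lamstr(f_{n_k})\to\ell\ge\lamstr(f)$ and $\Psi_{n_k}^*\to\Psi_\infty>0$, $\Psi_\infty(0)=1$, $\Lg\Psi_\infty+f\Psi_\infty=\ell\Psi_\infty$ (using $f_{n_k}\Psi_{n_k}^*\to f\Psi_\infty$ in $\Lpl^1$). Fixing $r$ with $K\subset B_r$ and $\sup_{B_r^c}\abs f<\tfrac14\lamstr(f)$, on $\{t<\uuptau_r\}$ the path stays in $B_r^c\subset K^c$, so $\abs{f_{n_k}(X_t)-f(X_t)}\le\norm{f_{n_k}-f}_{L^\infty(K^c)}\to0$; hence for large $k$ the exponential in \cref{ET2.1A} is $\le1$ and converges to $\E^{\int_0^{\uuptau_r}[f-\ell]\D s}$, while $\Psi_{n_k}^*(X_{\uuptau_r})\to\Psi_\infty(X_{\uuptau_r})$ uniformly on $\partial B_r$ with $\Psi_{n_k}^*\le M_r$ there. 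Dominated convergence in \cref{ET2.1A} then gives $\Psi_\infty(x)=\Exp_x[\E^{\int_0^{\uuptau_r}[f(X_s)-\ell]\D s}\Psi_\infty(X_{\uuptau_r})\Ind_{\{\uuptau_r<\infty\}}]$ for $x\in\Bar B_r^c$, whence $\ell=\lamstr(f)$ by \cref{C2.3}, and the subsequence argument yields $\lamstr(f_n)\to\lamstr(f)$.

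\textbf{Expected main obstacle.} The hard part will be the upper bound $\limsup_n\lamstr(f_n)\le\lamstr(f)$ in Part~(b): one cannot use the ground state of $f$ directly as an approximate supersolution for $f_n$, because $f_n-f$ is only $\Lpl^1$-small near $K$, not small pointwise. The point I would exploit is that the representation \cref{ET2.1A} involves $f_n$ only along excursions in $B_r^c$, where $f_n\to f$ uniformly, so the limit can be taken there and \cref{C2.3} forces the limiting eigenvalue to equal $\lamstr(f)$. Securing (P1) for each $f_n$ from $\lamstr(f_n)>0$ (needed so \cref{ET2.1A} is available), and the $\lamstr(f)<0$ case of Part~1 (where $\inf_\Rd\Psi^*>0$ must be extracted from superharmonicity at infinity plus recurrence, with the usual localization care), are the other points that need attention.
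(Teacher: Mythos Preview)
Your proposal is correct and is, in several places, more self-contained than the paper's proof. For Part~1 the paper does not construct supersolutions; it quotes two external results, namely \cite[Theorem~1.9\,(iii)]{Berestycki-15} when $\lamstr(f)\ge\lim_{\abs{x}\to\infty}f(x)=0$ and \cite[Lemma~2.1]{ari-anup} in the near-monotone case $\lamstr(f)<0$, to obtain $\lamstr(f)=\lambda''(f)$ directly. Your case-by-case construction of $\varphi=\Psi^*$, $\varphi\equiv1$, or $\varphi=\Psi^*+c$ is an elementary alternative that avoids those citations at the price of a short computation. In Part~(a) both arguments are essentially the same; the paper establishes strict monotonicity by taking $h=f^+$ so that $\lamstr(f-f^+)\le0<\lamstr(f)$, while you reach the same conclusion through the exterior-eigenvalue criterion $\lambda_1(f,B_R^c)<\lamstr(f)$ from \cref{S2}. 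In Part~(b) both proofs pass to the limit in the stochastic representation of $\Psi^*_n$ and invoke \cref{C2.3}, but the dominating function is obtained differently: the paper fixes $h\in\Cc_{\mathrm c}^+(\Rd)$, sets $2\delta=\lamstr(f)-\lamstr(f-h)>0$, and uses $f_n-\lamstr(f_n)\le f-\lamstr(f-h)$ on $\sB^c$ to dominate by the (finite) quantity $\Exp_x\bigl[\E^{\int_0^{\uuptau}[f-\lamstr(f-h)]\,\D t}\bigr]$; you instead push $r$ so that $f_{n_k}-\lamstr(f_{n_k})<0$ on $B_r^c$, making the exponential $\le1$ outright. Your route needs (P1) for each $f_n$, which you secure via Part~(a); the paper's route needs only (P1) for $f$ but an implicit appeal to \cref{L2.2}\,(iii) to justify the representation of $\Psi^*_n$. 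Both are valid.
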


\begin{proof}
Applying the It\^{o}--Krylov formula to
$\Lg\varphi+ (f-\lambda)\varphi\le 0$, it is easy to see that
$\sE(f)\le \lambda^{\prime\prime}(f)$.
Also, from \cite[Lemma~2.3]{ari-anup} we have $\lamstr(f)\le \sE(f)$.
Thus we obtain
$\lamstr(f)\le \sE(f)\le  \lambda^{\prime\prime}(f)$.
If $\lamstr(f)\ge \lim_{\abs{x}\to\infty}\, f(x)$, then by
\cite[Theorem~1.9\,(iii)]{Berestycki-15}
we have $\lamstr(f)=\lambda^{\prime\prime}(f)$ which in turn implies
that $\lamstr(f)= \sE(f)=  \lambda^{\prime\prime}(f)$.
On the other hand, if $\lamstr(f)<\lim_{\abs{x}\to\infty}\, f(x)$,
then $f$ is near-monotone, relative to $\lamstr(f)$, in the sense of \cite{ari-anup}.
Applying \cite[Lemma~2.1]{ari-anup} we again obtain
$\lamstr(f)= \sE(f)=  \lambda^{\prime\prime}(f)$. 

We now turn to part (a).
Applying Jensen's inequality it is easy to see that $\sE(f)\ge \int f\,\D{\mu}>0$.
Therefore, $\lamstr(f-f^{+})\le 0< \lamstr(f)$. Taking $h=f^+$ and mimicking
the arguments of \cref{T2.1} we see that
$Y^*$ is exponentially ergodic.
Let $\mu^*$ be the unique invariant measure of $Y^*$.
Then
\cref{ET2.4A} follows from \cref{EL2.6B} and \cite[Theorem~1.3.10]{Kunita} with
$C_g\;=\;\int \frac{g}{\Psi^*}\, \D{\mu^*}$.

Next we prove part (b).
By the first part of the theorem we have $\lamstr(f_n)=\sE(f_n)$ for all $n$,
and by the lower-semicontinuity property of $\lamstr$ it holds that
$\liminf_{n\to\infty}\, \lamstr(f_n)\ge \lamstr(f)$.
Let $h\in\Cc_{\mathrm{c}}^+(\Rd)$ and
$\Tilde{f}=f-h$. Then by \cref{T2.2} we have $2\delta:=\lamstr(f)-\lamstr(\Tilde{f})>0$.
Choose a open ball $\sB$, containing $K$,
such that $\sup_{x\in \sB^c} |f_n-f|<\delta$ and $\lamstr(f_n)>\lamstr(f)-\delta$
for all sufficiently large $n$.
Let $(\Psi^*_n, \lamstr(f_n))$ denote
the principal eigenpair.
Then
\begin{equation}\label{ET2.4B}
\Lg \Psi^*_n + f_n\, \Psi^*_n \;=\;\lamstr(f_n)\, \Psi^*_n\,.
\end{equation}
We can choose $\sB$ large enough
 such that
\begin{equation}\label{ET2.4C}
\Psi^*_n(x)\; =\; \Exp_x\Bigl[\E^{\int_0^{\uuptau}[f_n(X_t)-\lamstr(f_n)]\, \D{t}} \,
\Psi^*_n(X_{\uuptau})\Bigr]\,, \quad x\in\sB^c, \quad
\forall \; n\in\NN\,,
\end{equation}
where $\uuptau=\uuptau(\sB)$.
Suppose $\limsup_{n\to\infty}\lamstr(f_n)=\Lambda$.
It is standard to show that for some positive $\Psi$, it holds that
$\Psi^*_n\to\Psi$ weakly in $\Sobl^{2,p}(\Rd)$, $p>d$, as $n\to\infty$,
and therefore, from \cref{ET2.4B} we have
\begin{equation*}
\Lg \Psi + f\, \Psi \;=\;\Lambda\, \Psi\,.
\end{equation*}
Therefore, $\Lambda\ge \lamstr(f)$. Note that on $\sB^c$ we have
\begin{equation*}
f_n-\lamstr(f_n)\;\le\; f+\delta -\lamstr(f_n)\;\le\; f-\lamstr(f)+2\delta
\;=\; f-\lamstr(\Tilde{f})
\end{equation*}
for all $n$ sufficiently large.
Since
$\Exp_x\bigl[\E^{\int_0^{\uuptau}[f(X_t)-\lamstr(\Tilde{f})]\, \D{t}} \, \bigr]
<\infty$,
passing to the limit in \cref{ET2.4C},
and using the dominated convergence theorem, we obtain that
\begin{equation*}
\Psi(x)\; =\; \Exp_x\Bigl[\E^{\int_0^{\uuptau}[f(X_t)-\Lambda]\, \D{t}} \,
\Psi(X_{\uuptau})\Bigr]\,, \quad x\in\sB^c\, .
\end{equation*}
Therefore, $\Lambda=\lamstr(f)$ by \cref{C2.3}.
This completes the proof.
\qed\end{proof}

We pause for a moment to provide an example where (P2) holds but
(P1) fails.

\begin{example}
Let $d=2$ and $\Lg=\Delta$.
If $f=0$, then the ground state is a constant function,
and in turn, the ground state diffusion is a two dimensional Brownian
motion, hence recurrent.
It follows that $\lamstr$ is strictly monotone on the right at $0$.
Now let $f$ a non-trivial non-negative continuous function with compact support.
It is clear that $\lamstr(\beta f)\le 0$ for $\beta\le0$.
On the other hand, by \cref{T2.4},
we have $\lamstr(\beta f)=\sE(\beta f)$ for all $\beta\in\RR$.
Therefore, for $\beta\le0$, we have
\begin{align*}
0\;\ge\;\lamstr(\beta f)\;=\;\sE(\beta f)
&\;=\; \limsup_{T\to\infty}\, \frac{1}{T}\;
\log\,\Exp_x\Bigl[\E^{\int_0^T \beta f(X_s)\, \D{s}}\Bigr]\\
&\;\ge\;
\limsup_{T\to\infty}\; \frac{1}{T} \Exp\biggl[\int_0^T \beta f(X_s)\,\D{s}\biggr]
\;=\;0\,.
\end{align*}
Thus $\lamstr(\beta f)=0$ for all $\beta\leq 0$,
which implies that $\lamstr$ it is not strictly monotone at $0$.
\end{example}

In the rest of this section
we show how the previous development can be used to obtain
results analogous to those reported in \cite{Ichihara-13b},
without imposing any smoothness assumptions on the coefficients.
With $\Breve\psi=-\log\Psi^*=-\psi^*$, we have
\begin{equation}\label{ABEx01}
-a^{ij}\partial_{ij}\Breve\psi - b^i\, \partial_i\Breve\psi
+ \langle \Breve\psi, a\Breve\psi\rangle + f\;=\;\lamstr(f)\,.
\end{equation}
Note that \cref{ABEx01} is a particular form of a more general class of
quasilinear pdes of the form
\begin{equation}\label{ABEx02}
-a^{ij}\partial_{ij}\Breve\psi + H(x, \grad\Breve\psi) + f\;=\;\lamstr(f)\,,
\end{equation}
where the function $H(x,p)$, with $(x,p)\in\Rd\times\Rd$, serves
as a Hamiltonian.
Let $f$ be a non-constant, nonnegative
continuous function satisfying $\lim_{\abs{x}\to\infty} f(x)=0$, and define
$\Lambda_\beta\df\lamstr(\beta f)$, $\beta\in\RR$.
Then by \cite[Proposition~2.3\,(vii)]{Berestycki-15} we know that
$\beta\mapsto\Lambda_\beta$ is non-decreasing and convex.
For the diffusion matrix $a$ equal the identity,
Ichihara studies some qualitative properties of
$\Lambda_\beta$ in \cite{Ichihara-13b} associated to the pde \cref{ABEx02},
and their relation to the recurrence and transience 
behavior of the process with generator
\begin{equation*}
\sA^{\Breve\psi} g\;=\; \Delta g
- \langle \grad_pH(x, \grad\Breve\psi),\, \grad g\rangle, \quad g\in
\Cc^2_{c}(\Rd)\,.
\end{equation*}
It is clear that if
$H(x, p)=-\langle b(x), p\rangle + \langle p, a(x) p\rangle$,
then $\sA^{\Breve\psi}$
is the generator of the twisted process $Y^*$ corresponding to $\Psi^*$.
One of the key assumptions in \cite[Assumption~(H1)\,(i)]{Ichihara-13b} is that
$H(x, p)\ge H(x, 0)=0$ for all
$x$ and $p$. Note that this forces $b$ to be $0$. 

Let $$\beta_{c}\;\df\;\inf\,
\Bigl\{\beta\in\RR\; \colon \Lambda_\beta
>\lim_{\beta\to -\infty} \Lambda_\beta\Bigr\}\,.$$
It is easy to see that $\beta_c\in[-\infty, \infty]$.
The following result is an extension
of \cite[Theorems~2.2 and ~2.3]{Ichihara-13b}
to measurable drifts $b$ and potentials $f$.

\begin{theorem}\label{T2.5}
Let $f\in\cB^+_{\mathrm{o}}(\Rd)$.
Then the twisted process $Y^*=Y^*(\beta)$ corresponding to the eigenpair
$(\Psi^*_\beta, \Lambda_\beta)$
is transient for  $\beta<\beta_c$, exponentially ergodic for $\beta>\beta_c$,
and, provided $f=0$ a.e.\ outside some compact set, it is recurrent
for $\beta=\beta_c$.
In addition, the following hold.
\begin{enumerate}[(i)]
\item
If $\Lg^{0}$ is self-adjoint \textup{(}i.e.,
$\Lg^0=\partial_i(a^{ij}\partial_j)$\/\textup{)},
with the matrix $a$ bounded, uniformly elliptic and
radially symmetric in $\Rd$, and the solution of \cref{E2.1} is transient,
then $\beta_c\ge 0$.
Also $\Lambda_\beta\ge 0$ for all  $\beta\in\RR$.

\item
Provided that the solution of \cref{E2.1} is recurrent,
then $\beta_c<0$ if it is exponentially ergodic,
and $\beta_c=0$ otherwise.

\item
Assume that $\beta>\beta_c$, and that \cref{E2.1} is recurrent in the case that
$\Lambda_\beta\le0$.
Let $\Psi^*_\beta$ and $\mu^*_\beta$ denote the ground
state and the invariant probability measure
of the ground state diffusion, respectively, corresponding to $\Lambda_\beta$.
Then it holds that
\begin{equation}\label{ET2.5A}
\Lambda_\beta\;=\;
\mu^*_\beta\bigl(\beta f-\langle \grad\psi^*_\beta, a \grad\psi^*_\beta\rangle\bigr)\,,
\end{equation}
where, as usual, $\psi^*_\beta=\log\Psi^*_\beta$.
\end{enumerate}
\end{theorem}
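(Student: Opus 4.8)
The plan is to reduce all of the assertions to the characterizations already obtained in \cref{T2.1,T2.2,T2.3}, using that $\beta\mapsto\Lambda_\beta=\lamstr(\beta f)$ is nondecreasing (since $f\ge0$), convex \cite[Proposition~2.3]{Berestycki-15}, and finite for every $\beta$ (as $\beta f$ is bounded). Set $\Lambda_{-\infty}\df\lim_{\beta\to-\infty}\Lambda_\beta=\inf_\beta\Lambda_\beta$. Convexity forces $\Lambda\equiv\Lambda_{-\infty}$ on $(-\infty,\beta_c)$ and $\Lambda$ to be \emph{strictly} increasing on $(\beta_c,\infty)$: if $\Lambda$ were constant on a subinterval of $(\beta_c,\infty)$, convexity would pull its value there down to $\Lambda_{-\infty}$, contradicting the definition of $\beta_c$. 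Hence, for $\beta<\beta_c$, taking $\beta'\in(\beta,\beta_c)$ and $h\df(\beta'-\beta)f$ gives $\lamstr(\beta f+h)=\Lambda_{\beta'}=\Lambda_{-\infty}=\Lambda_\beta$, so $\lamstr(\beta f)$ is \emph{not} strictly monotone at $\beta f$ on the right and $Y^*(\beta)$ is transient by \cref{T2.3}; and for $\beta>\beta_c$, taking $\beta'\in(\beta_c,\beta)$ and $h\df(\beta-\beta')f$ gives $\lamstr(\beta f-h)=\Lambda_{\beta'}<\Lambda_\beta$, i.e.\ strict monotonicity at $\beta f$, so $Y^*(\beta)$ is exponentially ergodic by \cref{T2.1}\,(b). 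Finally, for $\beta=\beta_c$ finite and $f=0$ a.e.\ off a compact $K$, fix a ball $\sB\supset K$ and suppose $\lamstr(\beta_c f+\epsilon_0\Ind_{\sB})=\Lambda_{\beta_c}$ for some $\epsilon_0>0$; since $f=0$ a.e.\ off $\sB$, the choice $\beta\df\beta_c+\epsilon_0\norm{f}_\infty^{-1}>\beta_c$ satisfies $\beta f\le\beta_c f+\epsilon_0\Ind_{\sB}$ a.e., whence $\Lambda_\beta\le\Lambda_{\beta_c}=\Lambda_{-\infty}$, contradicting $\beta>\beta_c$. So $\lamstr(\beta_c f+\epsilon\Ind_{\sB})>\Lambda_{\beta_c}$ for all $\epsilon>0$, and \cref{C2.4} yields strict monotonicity of $\lamstr(\beta_c f)$ at $\beta_c f$ on the right, so $Y^*(\beta_c)$ is recurrent (and $\Psi^*_{\beta_c}$ unique) by \cref{T2.3}. (Here and below the perturbations are nonnegative multiples of $f$; the monotonicity characterizations in \cref{T2.1,T2.2,T2.3} hold verbatim for bounded Borel perturbations vanishing at infinity, as such a perturbation enters the proofs only as a bounded zeroth-order coefficient.)

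Parts (i) and (ii) amount to locating $\beta_c$ relative to $0$. For (ii), if \cref{E2.1} is recurrent then $\lamstr(0)=0$ and the ground state of the zero potential is $\equiv1$, so the ground-state process at $\beta=0$ is $X$ itself; applying \cref{T2.2,T2.3} to the zero potential, $X$ is exponentially ergodic iff $\lamstr(-h)<0$ for all $h\in\Cc_{\mathrm o}^+(\Rd)$, while recurrence of $X$ alone gives $\lamstr(h)>0$ for all such $h$. Taking $h$ a multiple of $f$: if $X$ is exponentially ergodic and $\beta_c\ge0$, then $\Lambda\equiv\Lambda_{-\infty}$ on $(-\infty,0)$ gives, by continuity, $\Lambda_{-\infty}=\Lambda_0=0$, hence $\lamstr(\beta f)=0$ for all $\beta\le0$, contradicting $\lamstr(-h)<0$; so $\beta_c<0$. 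If $X$ is recurrent but not exponentially ergodic, then $\lamstr(-h)=\lamstr(0)=0$ for all such $h$ forces $\Lambda_\beta=0$ for $\beta\le0$ (so $\Lambda_{-\infty}=0$), while $\lamstr(h)>0$ forces $\Lambda_\beta>0$ for $\beta>0$; hence $\beta_c=0$. For (i), self-adjointness of $\Lg^0$ gives the variational representation $\lamstr(g)=\sup_\varphi\bigl(\int_{\Rd} g\varphi^2\,\D{x}-\int_{\Rd} a^{ij}\partial_i\varphi\,\partial_j\varphi\,\D{x}\bigr)$, the supremum over $\varphi\in\Cc_{\mathrm c}^\infty(\Rd)$ with $\int\varphi^2=1$. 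For $\beta\le0$ the potential $\beta f\le0$ gives $\lamstr(\beta f)\le0$, while radial tent functions supported in annuli $\{R<\abs{x}<2R\}$ with $R$ large — where $\abs{f}$ is small and the Dirichlet energy $\int a\abs{\grad\varphi}^2$ is negligible compared with $\int\varphi^2$, since $a$ is bounded and uniformly elliptic — give $\lamstr(\beta f)\ge0$. Thus $\Lambda_\beta=0$ for $\beta\le0$, so $\Lambda_{-\infty}=0$, $\beta_c\ge0$, and $\Lambda_\beta\ge\Lambda_0=0$ for every $\beta$ (the transience and radial-symmetry hypotheses are what make this spectral picture available).

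For part (iii), rewrite \cref{E2.7} for the principal eigenpair $(\Psi^*_\beta,\Lambda_\beta)$ of $\Lg^{\beta f}$: with $\psi^*_\beta=\log\Psi^*_\beta$, $\Lg\psi^*_\beta+\langle\grad\psi^*_\beta,a\grad\psi^*_\beta\rangle+\beta f=\Lambda_\beta$. Since the extended generator of $Y^*(\beta)$ acts by $\widetilde{\Lg}^{\psi^*_\beta}g=\Lg g+2\langle a\grad\psi^*_\beta,\grad g\rangle$, taking $g=\psi^*_\beta$ gives the pointwise identity $\widetilde{\Lg}^{\psi^*_\beta}\psi^*_\beta=\Lambda_\beta-\bigl(\beta f-\langle\grad\psi^*_\beta,a\grad\psi^*_\beta\rangle\bigr)$, so \cref{ET2.5A} is equivalent to $\mu^*_\beta\bigl(\widetilde{\Lg}^{\psi^*_\beta}\psi^*_\beta\bigr)=0$. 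I would obtain this either by integrating the identity directly against $\mu^*_\beta$, or by applying the It\^o--Krylov formula to $\psi^*_\beta$ along $Y^*(\beta)$, dividing by $T$ and letting $T\to\infty$. The main obstacle is the justification of that last step: one needs $\langle\grad\psi^*_\beta,a\grad\psi^*_\beta\rangle$ to be $\mu^*_\beta$-integrable (so that $\mu^*_\beta(\widetilde{\Lg}^{\psi^*_\beta}\psi^*_\beta)$ is well defined and the pointwise ergodic theorem applies), together with $T^{-1}\psi^*_\beta(Y^*_T)\to0$ and $T^{-1}$ times the martingale part $\to0$ a.s. Exponential ergodicity of $Y^*(\beta)$, valid because $\beta>\beta_c$, supplies a Foster--Lyapunov function (as in the proof of \cref{L2.5}) from which these estimates follow; the extra hypothesis that \cref{E2.1} be recurrent when $\Lambda_\beta\le0$ is precisely what secures the growth/integrability bound in that regime, where the Lyapunov function alone is not sufficient.
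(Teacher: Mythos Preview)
Your treatment of the trichotomy and of parts (i)--(ii) is essentially correct and follows the paper's line (convexity of $\beta\mapsto\Lambda_\beta$, then \cref{T2.1,T2.2,T2.3,C2.4}). One difference worth noting: for (i) the paper argues by contradiction --- if $\beta_c<0$ then $Y^*(0)$ is exponentially ergodic, and \cite[Theorem~1.9\,(i)--(ii)]{Berestycki-15} (this is where self-adjointness and radial symmetry enter) gives $\Lambda_0=\sE(0)=0$ with $\Psi^*_0\equiv1$, so $Y^*(0)=X$, contradicting transience. Your direct variational computation is a legitimate alternative and in fact uses neither transience nor radial symmetry, so your parenthetical claim that those hypotheses are ``what make this spectral picture available'' is misplaced for your own argument; they are there because the paper routes through \cite{Berestycki-15}.

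The genuine gap is in part (iii). You correctly write the identity $\widetilde\Lg^{\psi^*_\beta}\psi^*_\beta=\Lambda_\beta-(\beta f-\langle\grad\psi^*_\beta,a\grad\psi^*_\beta\rangle)$ and correctly name the obstacle: justifying $T^{-1}\widetilde\Exp_x^{*}[\psi^*_\beta(Y^*_T)]\to0$ and the passage through the localization $\uptau_R$. But ``a Foster--Lyapunov function as in \cref{L2.5}'' does not by itself close this. The Lyapunov function $\widetilde\Psi_\epsilon=\Psi^*_{\beta-\epsilon}/\Psi^*_\beta$ from \cref{L2.5} controls $\widetilde\Exp_x^{*}[\widetilde\Psi_\epsilon(Y^*_t)]$, not $\widetilde\Exp_x^{*}[\psi^*_\beta(Y^*_t)]$, and $\psi^*_\beta$ is in general not dominated by $\log\widetilde\Psi_\epsilon$. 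The paper splits into two cases with genuinely different mechanisms. When $\Lambda_\beta>0$ it uses $\widecheck\Psi\df(\Psi^*_\beta)^{-1}$, observes that $\widetilde\Lg^{\psi^*_\beta}\widecheck\Psi=(\beta f-\Lambda_\beta)\widecheck\Psi$ is itself a Foster--Lyapunov inequality (here $\beta f-\Lambda_\beta<0$ off a ball precisely because $\Lambda_\beta>0$ and $f$ vanishes at infinity), shows $\inf_{\Rd}\widecheck\Psi>0$ via a Feynman--Kac argument, and then uses $\abs{-\psi^*_\beta}\le\widecheck\Psi$ pointwise. When $\Lambda_\beta\le0$ this route is unavailable; the paper instead proves, via the stochastic representations from \cref{L2.7} and a Jensen/power comparison with some $\alpha>1$ satisfying $\alpha(\beta f-\Lambda_\beta)<(\beta-\epsilon)f-\Lambda_{\beta-\epsilon}$ off a ball, the growth bound
\[
\psi^*_\beta\;\le\;\kappa_1+\tfrac{1}{\alpha-1}\,\log\widetilde\Psi_\epsilon
\qquad\text{on }\Rd\,,
\]
which is exactly what feeds into the It\^o--Krylov/Birkhoff limit. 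The recurrence hypothesis on \cref{E2.1} is used here to ensure $\inf_{\Rd}\Psi^*_\beta>0$ (via exponential ergodicity of \cref{E2.1} and \cite[Lemma~2.1]{ari-anup}), so that only the upper bound above is needed. Your sketch does not supply either of these pointwise comparisons, and without them the limiting step you describe cannot be justified.
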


\begin{proof}
The first part of the proof follows from \cref{T2.1,T2.3}, and \cref{C2.4}.
Next we proceed to prove (i).
Suppose $\beta_c<0$. Then $Y^*=Y^*(0)$, i.e., the twisted process corresponding to
$\Lambda_0$, is exponentially ergodic.
By \cite[Theorem~1.9\,(i)--(ii)]{Berestycki-15} we have $\Lambda_0=\sE(0)=0$.
Moreover, $\Psi^*_0=1$ is a ground state.
Therefore, the twisted process must be given by \cref{E2.1}, which is transient 
by hypothesis.
This is a contradiction.
Hence $\beta_c\ge 0$.
Since $\beta\mapsto\Lambda_\beta$ is convex, it follows that $\Lambda_\beta$
is constant in $(-\infty, \beta_c]\ni 0$.
Hence $\Lambda_\beta=\Lambda_0=0$ for $\beta\le\beta_c$.
This proves (i).

We now turn to part (ii).
By \cref{T2.4} we have $\Lambda_\beta=\sE(\beta f)$.
We claim that if the solution of \cref{E2.1} is recurrent then $\lambda^*(\beta f)>0$,
whenever $\beta>0$.
Indeed, arguing by contradiction, if $\lambda^*(\beta f)=0$ for some
$\beta>0$, then
$\Lg \Psi^*_\beta =  - \beta f \Psi^*_\beta$ on $\Rd$, which implies that
that $\Psi^*_\beta(X_t)$ is a nonnegative supermartingale, and
since it is integrable, it converges a.s.
Since the process $X$ is recurrent, this implies that $\Psi^*_\beta$
must equal to a constant, which, in turn,
necessitates that $f=0$, a contradiction.
This proves the claim, which in turn implies that 
if the solution of \cref{E2.1} is recurrent then $\beta_c\le0$.
Now suppose that $\beta_c$ is negative.
Then the twisted process corresponding to
$\beta=0$ is exponentially ergodic by \cref{T2.1}.
Since $\Psi^*_0=1$,
the ground state diffusion for $\beta=0$ agrees with \cref{E2.1}, which implies
that the latter is exponentially ergodic.

Next, suppose that $X$, and therefore also $Y^*(0)$ is exponentially ergodic.
It then follows from \cref{T2.2} that $\beta\mapsto \Lambda_\beta$ is strictly
monotone at $0$.
This of course implies that $\beta_c<0$.
The proof of part (ii) is now complete.

Next we prove part (iii).
We distinguish two cases.

\noindent\emph{Case 1.} Suppose $\Lambda_\beta>0$.
Let $\widecheck\Psi=\widecheck\Psi_\beta\df(\Psi^*_\beta)^{-1}$
and $\Breve\psi\df\log\widecheck\Psi$.
Then $\widecheck\Psi$ satisfies
\begin{equation}\label{ET2.5B}
\widetilde\Lg^{\psi^*_\beta}\, \widecheck\Psi \;=\;\bigl(\beta f - \Lambda_\beta
\bigr) \widecheck\Psi
\end{equation}
Since $\beta f\in\cB_{\mathrm{o}}(\Rd)$, there exists
$\epsilon_\circ>0$ and a ball $\sB$
such that $\beta f-\Lambda_\beta<-\epsilon_\circ$ for all $x\in\sB^c$.
Applying the Feynman--Kac formula, it follows from \cite[Lemma~2.1]{ari-anup}
that  $\inf_\Rd\,\widecheck\Psi =\min_{\Bar\sB}\,\widecheck\Psi$.
Thus $\widecheck\Psi$ is bounded away from $0$ on $\Rd$.
Let $Y^*$ denote the ground state process corresponding to the
eigenvalue $\Lambda_\beta$.
Simplifying the notation we let $\widetilde\Exp_x^*\df \widetilde\Exp_x^{\psi^*_\beta}$.
By the exponential Foster--Lyapunov equation \cref{ET2.5B} we have
that (see \cite[Lemma~2.5.5]{book})
\begin{equation}\label{ET2.5C}
\widetilde\Exp_x^*\bigl[\widecheck\Psi(Y^*_t)\bigr]
\;\le\; C_0 + \widecheck\Psi(x)\,\E^{-\epsilon_\circ t}\qquad\forall\,t\ge0\,.
\end{equation}
Using this estimate together with the fact that $\inf_{\Rd}\widecheck\Psi>0$,
we obtain
\begin{equation}\label{ET2.5D}
\lim_{t\to\infty}\; \frac{1}{t}\,\widetilde\Exp_x^*\bigl[\Breve\psi(Y^*_t)\bigr]
\;=\;0\,.
\end{equation}
Next, we show that
\begin{equation}\label{ET2.5E}
\lim_{R\to\infty}\;
\widetilde\Exp_x^*\bigl[\Breve\psi(Y^*_{t\wedge\uptau_R})\bigr]
\;=\;\widetilde\Exp_x^*\bigl[\Breve\psi(Y^*_{t})\bigr]\,,
\end{equation}
where $\uptau_R$ denotes the exit time from the ball $B_R$.
First, there exists some constant $k_0$ such that
$\bigl(\beta f - \Lambda_\beta
\bigr) \widecheck\Psi\le k_0$ on $\Rd$.
Thus $\widetilde\Exp_x^*\bigl[\widecheck\Psi(Y^*_t)\bigr] \le k_0 t + \widecheck\Psi(x)$
by \cref{ET2.5B}, and of course also
$\widetilde\Exp_x^*\bigl[\widecheck\Psi(Y^*_{t\wedge\uptau_R})\bigr]
\le k_0 t + \widecheck\Psi(x)$ for all $R>0$.
Let $\Gamma(R,m) \df \{x\in\partial B_R\colon \abs{\Breve{\psi}(x)}\ge m\}$
for $m\ge 1$.
Then
\begin{align*}
\widetilde\Exp_x^*\bigl[\Breve\psi(Y^*_{\uptau_R})\,\Ind_{\{t\ge\uptau_R\}}\bigr]
&\;\le\; m\,\widetilde\Prob_x^*(t\ge\uptau_R)
+ \widetilde\Exp_x^*\bigl[\Breve\psi(Y^*_{\uptau_R})\,
\Ind_{\Gamma(R,m)}(Y^*_{\uptau_R})\,\Ind_{\{t\ge\uptau_R\}}\bigr]\\[5pt]
&\;\le\;m\,\widetilde\Prob_x^*(t\ge\uptau_R)
+\bigl(k_0 t + \widecheck\Psi(x)\bigr)\,\sup_{\Gamma(R,m)}\,
\frac{\Breve\psi}{\widecheck\Psi}\\[5pt]
&\;\le\;m\,\widetilde\Prob_x^*(t\ge\uptau_R)
+\frac{m}{\E^m}\,\bigl(k_0 t + \widecheck\Psi(x)\bigr)\,.
\end{align*}
Taking limits as $R\to\infty$, and since $m\in\RR_+$ is arbitrary, it follows
that
\begin{equation}\label{ET2.5F}
\lim_{R\to\infty}\;
\widetilde\Exp_x^*\bigl[\Breve\psi(Y^*_{\uptau_R})\,\Ind_{\{t\ge\uptau_R\}}\bigr]
\;=\;0\,.
\end{equation}
Write
\begin{equation}\label{ET2.5G}
\widetilde\Exp_x^*\bigl[\Breve\psi(Y^*_{t\wedge\uptau_R})\bigr]
\;=\; \widetilde\Exp_x^*\bigl[\Breve\psi(Y^*_t)\,\Ind_{\{t<\uptau_R\}}\bigr]
+\widetilde\Exp_x^*\bigl[\Breve\psi(Y^*_{\uptau_R})\,\Ind_{\{t\ge\uptau_R\}}\bigr]\,.
\end{equation}
Without loss of generality we assume $\widecheck\Psi\ge1$.
Since $\abs{\Breve\psi}\le \widecheck\Psi$, an application of Fatou's lemma shows that
\begin{align*}
\widetilde\Exp_x^*\bigl[\Breve\psi(Y^*_t)\bigr]&\;\le\;
\liminf_{R\to\infty}\;
\widetilde\Exp_x^*\bigl[\Breve\psi(Y^*_t)\,\Ind_{\{t<\uptau_R\}}\bigr]\\[5pt]
&\;\le\;
\limsup_{R\to\infty}\;
\widetilde\Exp_x^*\bigl[\Breve\psi(Y^*_t)\,\Ind_{\{t<\uptau_R\}}\bigr]
\;\le\; \widetilde\Exp_x^*\bigl[\Breve\psi(Y^*_t)\bigr]\,.
\end{align*}
We use this together with \cref{ET2.5F,ET2.5G} to obtain \cref{ET2.5E}.

We write \cref{ABEx01} as
\begin{align}\label{ET2.5H}
0 &\;=\;   a^{ij} \partial_{ij}\Breve\psi + b^{i} \partial_{i}\Breve\psi
+\min_{u\in\Rd}\bigl[2\langle  a\,u, \grad\Breve\psi\rangle
+ \langle u, au\rangle\bigr] -\beta f
+\Lambda_\beta\nonumber\\[5pt]
&\;=\; a^{ij} \partial_{ij}\Breve\psi + b^{i} \partial_{i}\Breve\psi
-2\langle  \grad\Breve\psi, a \grad\Breve\psi\rangle
+ \langle \grad\Breve\psi, a \grad\Breve\psi\rangle -\beta f
+\Lambda_\beta\nonumber\\[5pt]
&\;=\;\widetilde\Lg^{\psi^*_\beta} \Breve\psi
+ \langle \grad\Breve\psi, a \grad\Breve\psi\rangle -\beta f
+\Lambda_\beta\,.
\end{align}
Let $F\df \langle \grad\Breve\psi, a \grad\Breve\psi\rangle -\beta f
=\langle \grad\psi^*_\beta, a \grad\psi^*_\beta\rangle -\beta f$.
Applying the It\^o--Krylov formula to \cref{ET2.5H}, we obtain
\begin{equation}\label{ET2.5I}
\widetilde\Exp_x^*\bigl[\Breve\psi(Y^*_{t\wedge\uptau_R})\bigr] - \Breve\psi(x)
+\widetilde\Exp_x^*\biggl[\int_0^{t\wedge\uptau_R} F(Y^*_s)\,\D{s}\biggr]
+\Lambda_\beta\widetilde\Exp_x^*\bigl[t\wedge\uptau_R\bigr]
\;=\;0\,.
\end{equation}
Letting $R\to\infty$ in \cref{ET2.5I}, using \cref{ET2.5E}, then
dividing by $t$ and letting $t\to\infty$, using \cref{ET2.5D} and
Birkhoff's ergodic theorem, we obtain
\begin{equation*}
\mu^*_\beta\bigl(\langle \grad\psi^*_\beta, a \grad\psi^*_\beta\rangle -\beta f\bigr)
+ \Lambda_\beta\;=\;0\,,
\end{equation*}
which is the assertion in part (iii).

\smallskip
\noindent\emph{Case 2.} Suppose $\Lambda_\beta\le 0$ and \cref{E2.1} is recurrent.
The case $\Lambda_\beta= 0$ is then trivial, since $\grad\psi^*_0=0$,
so we assume that $\Lambda_\beta< 0$.
Then \cref{E2.1} is exponentially ergodic by part (ii),
and thus $\Psi^*_\beta$ is bounded below in $\Rd$ by \cite[Lemma~2.1]{ari-anup}.
With $\psi^*=\psi^*_\beta=\log\Psi^*_\beta$, in analogy to \cref{ET2.5H} we have
\begin{equation}\label{ET2.5J}
\widetilde\Lg^{\psi^*} \psi^*
- \langle \grad\psi^*, a \grad\psi^*\rangle +\beta f -\Lambda_\beta\;=\;0\,.
\end{equation}
We claim that
\begin{equation}\label{ET2.5K}
\lim_{t\to\infty}\; \frac{1}{t}\,\widetilde\Exp_x^*\bigl[\psi^*(Y^*_t)\bigr]
\;=\;0\,,\quad\text{and}\quad
\lim_{R\to\infty}\;
\widetilde\Exp_x^*\bigl[\psi^*(Y^*_{t\wedge\uptau_R})\bigr]
\;=\;\widetilde\Exp_x^*\bigl[\psi^*(Y^*_{t})\bigr]\,,
\end{equation}
where as defined earlier, $\widetilde\Exp_x^*= \widetilde\Exp_x^{\psi^*}$,
and $Y^*$ denotes the ground state process.
Assuming \cref{ET2.5K} is true, we first apply
the It\^o--Krylov formula to \cref{ET2.5J} to obtain
the analogous equation to \cref{ET2.5I}, and then take limits
and use Birkhoff's ergodic theorem to establish \cref{ET2.5A}.

It remains to prove \cref{ET2.5K}.
Choose $\epsilon>0$ so that $\beta>\beta-\epsilon>\beta_c$, and
let $\Psi^*_{\beta-\epsilon}$ denote the ground state corresponding to
$\Lambda_{\beta-\epsilon}$.
We choose a ball $\sB$ such that 
\begin{equation}\label{ET2.5L}
\epsilon\, f (x) \;<\; \frac{1}{2}(\Lambda_\beta-\Lambda_{\beta-\epsilon})
\qquad \forall\,x\in\sB^c\,.
\end{equation}
Since $f$ vanishes at infinity, and $\Lambda_\beta>\Lambda_{\beta-\epsilon}$,
there exists a constant $\alpha>1$ and a ball also denoted as $\sB$,
such that
\begin{equation}\label{ET2.5M}
\alpha\bigl(\beta f(x)-\Lambda_\beta\bigr) \;<\; (\beta-\epsilon) f(x)
-\Lambda_{\beta-\epsilon}\qquad \forall\,x\in\sB^c\,.
\end{equation}
Since the ground state processes corresponding to the principal eigenvalues
$\Lambda_\beta$ and $\Lambda_{\beta-\epsilon}$ are
ergodic we have from \cref{L2.7} that
\begin{equation}\label{ET2.5N}
\begin{split}
\Psi^*_\beta(x) &\;=\; \Exp_x\Bigl[\E^{\int_0^{\uuptau}[\beta f(X_s)-\Lambda_\beta]\,
\D{s}}\, \Psi^*_\beta(X_{\uuptau})\,\Ind_{\{\uuptau<\infty\}}\Bigr]\,,\\
\Psi^*_{\beta-\epsilon}(x) &\;=\;
\Exp_x\Bigl[\E^{\int_0^{\uuptau}[(\beta-\epsilon)f(X_s)-\Lambda_{\beta-\epsilon}]\,
\D{s}}\, \Psi^*_{\beta-\epsilon}(X_{\uuptau})\,\Ind_{\{\uuptau<\infty\}}\Bigr]\,,
\end{split}
\end{equation}
for all $x\in\sB^c$ where $\uuptau=\uptau(\sB^c)$.
By \cref{EL2.5C}, the function
$\widetilde\Psi_\epsilon\df\frac{\Psi^*_{\beta-\epsilon}}{\Psi^*_\beta}$
satisfies
\begin{equation}\label{ET2.5O}
\widetilde\Lg^{\psi^*}\widetilde\Psi_\epsilon\;=\;
\bigl(\Lambda_{\beta-\epsilon}
-\Lambda_\beta + \epsilon f\bigr)\,\widetilde\Psi_\epsilon\,.
\end{equation}
Applying the Feynman--Kac formula to \cref{ET2.5O},
using \cref{ET2.5L}, it follows as in \cite[Lemma~2.1]{ari-anup}
that  $\inf_\Rd\,\widetilde\Psi_\epsilon =\min_{\Bar\sB}\,\widetilde\Psi_\epsilon$.
Thus $\widetilde\Psi_\epsilon$ is bounded away from $0$ on $\Rd$.

Let $\kappa\df\min_{\sB}\frac{\Psi^*_{\beta-\epsilon}}{(\Psi^*_\beta)^\alpha}$.
Then by \cref{ET2.5M,ET2.5N} we obtain
\begin{align*}
\widetilde\Psi_\epsilon(x) 
&\;\ge\; \frac{\kappa}{\Psi^*_\beta(x)}\,
\Exp_x\Bigl[\E^{\int_0^{\uuptau}\alpha [\beta f(X_s)-\Lambda_\beta]\,\D{s}}\,
\bigl(\Psi^*_\beta(X_{\uuptau})\bigr)^\alpha\,\,\Ind_{\{\uuptau<\infty\}}\Bigr]\\[5pt]
&\;\ge\;\frac{\kappa}{\Psi^*_\beta(x)}\,
\Bigl(\Exp_x\Bigl[\E^{\int_0^{\uuptau}[\beta f(X_s)-\Lambda_\beta]\, \D{s}}\,
\Psi^*_\beta(X_{\uuptau})\,\Ind_{\{\uuptau<\infty\}}\Bigr]\Bigr)^\alpha
\;\ge\; \kappa\,\bigl(\Psi^*_\beta(x)\bigr)^{\alpha-1}\qquad\forall\,x\in\sB^c\,.
\end{align*}
Therefore, for some constant $\kappa_1$ we have
\begin{equation}\label{ET2.5Q}
\psi^*\;\le\; \kappa_1 + \frac{1}{\alpha-1}\,
\log\widetilde\Psi_\epsilon\quad\text{on\ }\Rd\,.
\end{equation}

Let $\epsilon_\circ\df\frac{1}{2}(\Lambda_\beta-\Lambda_{\beta-\epsilon})$.
From \cref{ET2.5L} and exponential Foster--Lyapunov equation \cref{ET2.5O}
we deduce that \cref{ET2.5C} holds for $\widetilde\Psi_\epsilon$.
Thus the first equation in \cref{ET2.5K} follows directly from
\cref{ET2.5C,ET2.5Q} and the fact that $\inf_{\Rd}\psi^*>-\infty$,
while the second one follows by repeating the argument leading to \cref{ET2.5F}.
This completes the proof.
\qed\end{proof}

\begin{remark}
The assumption that \cref{E2.1} is recurrent
in the case that $\Lambda_\beta<0$ in \cref{T2.5}\,(iii) is equivalent
to the statement that $\lamstr(0)=0$.
Note that as shown in \cite[Theorem~2.1]{Ichihara-15},
unless $\lamstr(0)=0$, then \cref{ET2.5A} does not
hold if $\Lambda_\beta<0$.

If \cref{E2.1} is not recurrent, then it is possible that
$\beta_c<0$ and also that $\Lambda_\beta<0$ for $\beta\ge0$.
Consider a diffusion with $d=1$, $b(x)=2x$, and $\upsigma(x)=\sqrt{2}$.
Then, we have $\Lg\varphi=-\varphi$ for
$\varphi(x)=\frac{1}{2}e^{-x^2}$.
Thus $\Hat\Lambda_0\le -1$, where $\Hat\Lambda_0$ denotes the eigenvalue
in \cref{D2.1A} for $f=0$.
Thus  $\lamstr(0)\le-1$ by \cref{L2.2}\,(b).
Since the twisted process corresponding to $\varphi$ is exponentially ergodic,
we must have $\lamstr(0)=-1$ by \cref{T2.1}\,(c),
and thus $\varphi$ is the ground state.
\Cref{T2.1}\,(b) then asserts that
$\beta\mapsto\Lambda_\beta$ is strictly increasing at $\beta=0$.
Thus $\beta_c<0$.
Observe that the ground state diffusion is an Ornstein--Uhlenbeck process having a
Gaussian  stationary distribution of mean $0$ and variance $\nicefrac{1}{2}$.
An easy computation reveals that
$\mu^*\bigl(-\langle \grad\psi^*, a \grad\psi^*\rangle\bigr)=-2$
which is smaller than $\lamstr(0)$.
\end{remark}

The conclusion of \cref{T2.5}\,(iii) can be sharpened.
Consider the controlled diffusion
\begin{equation}\label{E-twist1}
\D{Z}_{t} \;=\;\bigl(b(Z_{t})+ 2 a(Z_{t}) v(Z_t)\bigr)\,\D{t}
+ \upsigma(Z_{t})\,\D{W}_{t}\,.
\end{equation}
Here $v\colon\Rd\to\Rd$ is a locally bounded Borel measurable map.
Let $\Ulb$ denote the class of such maps.
These are identified with the class of locally bounded stationary Markov controls.
Let $\Ulbs\subset\Ulb$ be the collection of those
$v$ under which the diffusion in \cref{E-twist1} is ergodic,
and denote by $\widehat\mu_v$ the associated invariant probability measure.
We let $\sA_v\df\Lg + 2\langle a v,\grad\rangle$,
and use the symbol $\widehat\Exp^v_x$ to denote the 
expectation operator associated with \cref{E-twist1}.

In order to simplify the
notation, we use the norm $\norm{v}_a \df \sqrt{\langle v,av\rangle}$.
For $v\in\Ulb$ we define
\begin{align*}
F_v(z) &\;\df\; \norm{v(z)}^2_{a(z)} - \beta f(z)\,,\\[5pt]
\sJ_x(v) &\;\df\; \limsup_{T\to\infty}\; \frac{1}{T}\,\widehat\Exp^v_x \biggl[\int_0^T
F_v(Z_s)\,\D s\biggr]\,,
\end{align*}
and $\overline\sJ_x \df \inf_{v\in\Ulb}\;\sJ_x(v)$.

\begin{theorem}\label{T2.6}
Assume that $f\in\cB^+_{\mathrm{o}}(\Rd)$ and $\beta>\beta_c$.
Then the following hold
\begin{enumerate}[(a)]
\item
If $\Lambda_\beta>0$, then we have
\begin{equation}\label{ET2.6A}
\overline\sJ_x \;=\;\sJ_x(\grad\psi^*_{\beta}) \;=\; -\Lambda_\beta
\qquad\forall\,x\in\Rd\,.
\end{equation}
In addition,
if $v\in\Ulb$ satisfies $\sJ_x(v)=\overline\sJ_x$, then $v=\grad\psi^*_\beta$ a.e.

\item
If $\Lambda_\beta\le0$ and \cref{E2.1} is recurrent then
\cref{ET2.6A} holds, and $v=\grad\psi^*_\beta$ is the a.e.\ unique
control in $\Ulbs$ which satisfies $\sJ_x(v)=\overline\sJ_x$.

\item
If $\Lambda_\beta<0$ and \cref{E2.1} is not recurrent,
then $\overline\sJ_x=0$ for all $x\in\Rd$.
\end{enumerate}
\end{theorem}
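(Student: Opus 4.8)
The plan is threefold: (i) establish the lower bound $\sJ_x(v)\ge-\Lambda_\beta$ (resp.\ $\ge0$) for \emph{every} $v\in\Ulb$ via a relative‑entropy estimate; (ii) exhibit a control attaining it, recycling the computations in the proof of \cref{T2.5}\,(iii); (iii) read off uniqueness from a completed square. Fix $v\in\Ulb$; if \cref{E-twist1} explodes under $\widehat\Prob^v_x$ then $\sJ_x(v)=+\infty$ and there is nothing to prove, so assume regularity. Localizing at the exit times $\uptau_n$ from $B_n$ as in the proof of \cref{L2.3}, the Cameron--Martin--Girsanov theorem identifies $\widehat\Prob^v_x$ and $\Prob_x$ on $\sF_{T\wedge\uptau_n}$, and a direct computation shows that the relative entropy of the controlled law with respect to the uncontrolled one equals $\widehat\Exp^v_x\bigl[\int_0^{T\wedge\uptau_n}\norm{v(Z_s)}^2_{a(Z_s)}\,\D s\bigr]$. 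Applying the Gibbs variational principle $\log\Exp_x[\E^{G}]=\sup_{Q}\bigl(\Exp_Q[G]-H(Q\|\Prob_x)\bigr)$ with $G=\int_0^{T\wedge\uptau_n}\beta f$ and $Q=\widehat\Prob^v_x$, then rearranging and letting $n\to\infty$ (monotone convergence for the nonnegative part of $F_v$, dominated convergence for the bounded part $F_v^-$), gives
\begin{equation*}
\widehat\Exp^v_x\Bigl[\int_0^{T}F_v(Z_s)\,\D s\Bigr]\;\ge\;-\log\Exp_x\Bigl[\E^{\int_0^{T}\beta f(X_s)\,\D s}\Bigr]\qquad\forall\,T>0\,,
\end{equation*}
so after dividing by $T$ everything reduces to $\gamma(x)\df\lim_{T\to\infty}\frac1T\log\Exp_x\bigl[\E^{\int_0^T\beta f(X_s)\,\D s}\bigr]$.

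To evaluate $\gamma$, note that $\beta f\in\cB_{\mathrm{o}}^+(\Rd)$ vanishes at infinity, so $\lambda^{\prime\prime}(\beta f)=\Lambda_\beta$ in cases (a),(b) and $\lambda^{\prime\prime}(\beta f)=0$ in case (c): the former is \cite[Theorem~1.9\,(iii)]{Berestycki-15} (through \cref{L2.2}\,(ii)) when $\Lambda_\beta\ge0$ and \cref{T2.4} in the recurrent case, the latter the remaining part of \cite[Theorem~1.9]{Berestycki-15}. Taking, for $\varepsilon>0$, some $\varphi_\varepsilon\in\Sobl^{2,d}(\Rd)$ with $\inf_\Rd\varphi_\varepsilon>0$ and $\Lg\varphi_\varepsilon+(\beta f-\lambda^{\prime\prime}(\beta f)-\varepsilon)\varphi_\varepsilon\le0$, the It\^o--Krylov formula makes $\E^{\int_0^t[\beta f(X_s)-\lambda^{\prime\prime}(\beta f)-\varepsilon]\,\D s}\varphi_\varepsilon(X_t)$ a nonnegative supermartingale, so $\limsup_T\frac1T\log\Exp_x[\cdots]\le\lambda^{\prime\prime}(\beta f)$; while $\Exp_x\bigl[\E^{\int_0^t[\beta f(X_s)-\Lambda_\beta]\,\D s}\Ind_{B_1}(X_t)\bigr]\ge c(x)>0$ uniformly in $t$ — the lower half of the (pinned) multiplicative ergodic theorem, available from \cite{ari-anup} in case (a) and from \cref{T2.4} and its proof in case (b) — gives $\liminf_T\frac1T\log\Exp_x[\cdots]\ge\Lambda_\beta$, and $\gamma(x)\ge0$ trivially since the functional is $\ge1$. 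Hence $\gamma(x)=\Lambda_\beta$ in cases (a),(b) and $\gamma(x)=0$ in case (c), and therefore $\sJ_x(v)\ge-\Lambda_\beta$ (resp.\ $\ge0$) for all $v\in\Ulb$.

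For attainment, in cases (a),(b) apply the It\^o--Krylov formula to $\Breve\psi_\beta\df-\psi^*_\beta$ along $Y^*=Y^*(\beta)$ and pass to the limit exactly as in the proof of \cref{T2.5}\,(iii) — via \cref{ET2.5D,ET2.5E} when $\Lambda_\beta>0$ and via \cref{ET2.5K} when $\Lambda_\beta\le0$ — to obtain $\lim_{t\to\infty}\frac1t\widehat\Exp^{\grad\psi^*_\beta}_x\bigl[\int_0^tF_{\grad\psi^*_\beta}(Z_s)\,\D s\bigr]=-\Lambda_\beta$; with the lower bound this proves \cref{ET2.6A}. In case (c), $v\equiv0$ makes \cref{E-twist1} coincide with the transient diffusion \cref{E2.1}, so $f(X_t)\to0$ a.s.\ and $\sJ_x(0)=-\beta\liminf_{T}\frac1T\Exp_x\bigl[\int_0^Tf(X_s)\,\D s\bigr]=0$, hence $\overline\sJ_x=0$. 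For uniqueness, completing the square in the eigenvalue equation for $\Psi^*_\beta$ yields
\begin{equation*}
\sA_v\Breve\psi_\beta(z)+F_v(z)\;=\;\norm{v(z)-\grad\psi^*_\beta(z)}^2_{a(z)}-\Lambda_\beta\;\ge\;-\Lambda_\beta\,,\qquad z\in\Rd\,,
\end{equation*}
with equality a.e.\ iff $v=\grad\psi^*_\beta$ a.e. In case (b), if $v\in\Ulbs$ attains $\sJ_x(v)=-\Lambda_\beta$ then $F_v\in L^1(\widehat\mu_v)$ and, since $\Breve\psi_\beta$ is bounded above here ($\Psi^*_\beta$ being bounded below, by the proof of \cref{T2.5}\,(iii)), integrating the identity against $\widehat\mu_v$ forces $\widehat\mu_v\bigl(\norm{v-\grad\psi^*_\beta}^2_{a}\bigr)=0$, so $v=\grad\psi^*_\beta$ a.e.\ as $\widehat\mu_v$ has a positive density; in case (a), since $\Lambda_\beta>0$ the cost is near‑monotone relative to $-\Lambda_\beta$ (as $\liminf_{\abs z\to\infty}\inf_{u\in\Rd}F_u(z)=0>-\Lambda_\beta$), so any minimizing $v$ has tight mean empirical measures, whose limit points $\pi$ are infinitesimally invariant for $\sA_v$ with $\pi(F_v)\le-\Lambda_\beta$ and $\pi(\sA_v\Breve\psi_\beta)=0$, again forcing $\pi\bigl(\norm{v-\grad\psi^*_\beta}^2_a\bigr)=0$.

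The main obstacle is the evaluation of $\gamma(x)$ for \emph{all} $x$, and especially, in case (c), ruling out a positive exponential rate while the base diffusion is transient — this is exactly where $\lambda^{\prime\prime}(\beta f)=0$, hence \cite[Theorem~1.9]{Berestycki-15} and the decay of $f$, are indispensable. The other delicate point is the near‑monotone tightness/limit argument for uniqueness in case (a), which has to be carried out for the non‑compact action space $\Rd$ with the coercive running cost $\norm{v}^2_a$; the relative‑entropy identity for merely locally bounded $v$ is routine given the localization already performed for \cref{L2.3}.
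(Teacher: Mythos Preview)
Your entropy/Gibbs lower bound is a genuinely different route from the paper's, and it is clean where it works: in parts (a) and (b) the identification of the relative entropy with $\widehat\Exp^v_x\bigl[\int_0^T\norm{v}^2_a\bigr]$ together with the variational inequality does yield $\sJ_x(v)\ge-\sE_x(\beta f)$, and your evaluation $\sE_x(\beta f)=\Lambda_\beta$ is correct there (the upper half via $\lambda''(\beta f)=\Lambda_\beta$ from \cite[Theorem~1.9\,(iii)]{Berestycki-15} when $\Lambda_\beta\ge0$, or from \cref{T2.4} when \cref{E2.1} is recurrent; the lower half via the pinned representation in \cref{L2.3} and ergodicity of $Y^*(\beta)$). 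The paper instead obtains the lower bound and uniqueness in (a) simultaneously, by solving a Poisson equation for the \emph{given} control $v$ and manufacturing an eigenpair for a perturbed potential, then invoking strict monotonicity of $\lamstr$.

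There are, however, two real gaps.

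\textbf{Part (c), lower bound.} Your claim $\lambda''(\beta f)=0$ is not supported by \cite[Theorem~1.9]{Berestycki-15}: its part (iii) requires $\lamstr(\beta f)\ge\limsup\beta f$, which fails here since $\Lambda_\beta<0$, and the remaining parts need self-adjointness or similar structure. When $\beta>0$ (possible in (c): take the example after \cref{T2.5}, where $\lamstr(0)=-1$ and \cref{E2.1} is transient, so $\Lambda_\beta<0$ for small $\beta>0$), the entropy bound gives only $\sJ_x(v)\ge-\sE_x(\beta f)$, and you have no argument that $\sE_x(\beta f)\le0$. The paper handles (c) by a separate contradiction: assuming $v\in\Ulbs$ with $\sJ(v)<0$, it solves the Poisson equation $\sA_v\Breve\varphi+F_v=\sJ(v)$ with $\Breve\varphi$ bounded below, shows $(\E^{-\Breve\varphi},-\sJ(v))$ is a \emph{principal} eigenpair for $\Lg^{\beta f-\norm{v-\grad\varphi}^2_a}$, hence $-\sJ(v)\le\Lambda_\beta<0$, a contradiction.

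\textbf{Part (a), uniqueness.} Here $\Breve\psi_\beta=-\psi^*_\beta$ is bounded \emph{below} (since $\widecheck\Psi=(\Psi^*_\beta)^{-1}$ is bounded below by the proof of \cref{T2.5}), but not above: \cref{E2.1} may be transient, and then $\Psi^*_\beta$ is not bounded away from $0$. Your step ``$\pi(\sA_v\Breve\psi_\beta)=0$'' therefore cannot be obtained by a direct Fatou/localization argument, and the cutoff technique of \cref{T2.7} that would rescue it is carried out only under the extra hypotheses that $a,b$ are bounded and $a$ is uniformly elliptic (to get $\grad\psi^*_\beta$ bounded and $-\psi^*_\beta$ inf-compact). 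Under the hypotheses of \cref{T2.6} alone, this integration is not justified. The paper circumvents the issue entirely by working with the Poisson solution $\Breve\varphi$ for $\sA_v$ itself (which is bounded below by construction from \cite[Lemma~3.7.8\,(d)]{book}), rather than with $\Breve\psi_\beta$, and then reading off $v=\grad\psi^*_\beta$ from strict monotonicity of $\lamstr$ and uniqueness of the ground state.
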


\begin{proof}
We start with part (a).
By \cref{T2.5}\,(iii), we have $\sJ_x(\grad\psi^*_\beta) = -\Lambda_\beta$ in both
of cases (a) and (b).
It suffices then to show that if $\sJ_x(v)\le -\Lambda_\beta$
for some $v\in\Ulb$, then $v=\grad\psi^*_\beta$ a.e.\ in $\Rd$.
Let such a control $v$ be given.
Then \cref{E-twist1} must be positive recurrent under $v$, for otherwise
we must have
$\sJ_x(v)\ge \limsup_{T\to\infty}\; \frac{1}{T}\,\widehat\Exp^v_x \bigl[\int_0^T
-\beta f(Z_s)\,\D s\bigr]\ge0$.
Therefore,
\begin{equation}\label{ET2.6B}
\sJ_x(v) \;=\; \int_\Rd F_{v}(z)\,\widehat\mu_v(\D z) \;\le\; -\Lambda_\beta \;<\;0\,,
\end{equation}
where $\widehat\mu_v$, as defined earlier, denotes the invariant probability measure
associated with $\sA_v\df\Lg + 2 \langle v, a\grad\rangle$.
Thus, $\sJ_x(v)$ does not depend on $x$, and dropping this dependence
in the notation we  let $\sJ(v)=\sJ_x(v)$.
Since $f\in\cB_{\mathrm{o}}(\Rd)$, it follows by
\cref{ET2.6B} and the definition of $F_{v}$ that there exists
a ball $\sB$ such that
\begin{equation}\label{ET2.6Bb}
\sJ(v)-F_{v}(x)\;\le\; -\frac{1}{2}\,\Lambda_\beta \;<\;0
\qquad\forall x\in\sB^c\,.
\end{equation}
By \cref{ET2.6Bb}, and
since $v$ is locally bounded,
and $F_{v}$ is integrable with respect to $\widehat\mu_v$,
we can assert the existence of a solution
$\Breve\varphi\in\Sobl^{2,d}(\Rd)$ to the Poisson equation
\begin{equation}\label{ET2.6C}
\Lg\Breve\varphi(x) + 2\bigl\langle a(x)v(x),\grad\Breve\varphi(x)\bigr\rangle + F_{v}(x)
\;=\; \sJ(v)\,,
\end{equation}
which is bounded below in $\Rd$ (see Lemma~3.7.8\,(d) in \cite{book}).
It follows by \cref{ET2.6C}
that $\Phi\df \E^{\varphi}$, $\varphi=-\breve\varphi$, satisfies
\begin{align}\label{ss1}
\Lg\Phi + \bigl(\beta f- \norm{v-\grad\varphi}^2_a\bigr)\,\Phi \;=\;-\sJ(v)\,\Phi\,.
\end{align}
This shows that $\bigl(\Phi, -\sJ(v)\bigr)$ is an eigenpair for
$\Lg^{\Breve F}$, with
$\Breve F\df \beta f- \norm{v-\grad\varphi}^2_a$.
The corresponding twisted process with
generator $\Tilde\Lg = \Lg + 2\langle a\grad\varphi,\grad\rangle$ then satisfies
\begin{equation}\label{ET2.6E}
\Tilde\Lg\Breve\varphi
+ \norm{v-\grad\varphi}^2_a
 - \beta f \;=\; \sJ(v)\,.
\end{equation}
Since $\Breve\varphi$ is bounded below in $\Rd$ and $\sJ(v)<0$,
\cref{ET2.6E} shows that the twisted process is positive recurrent. 
We claim that $-\sJ(v)$ is the principal eigenvalue of $\Lg^{\Breve F}$.
Indeed, if $\lamstr(\Breve F)<-\sJ(v)$ then by the proof of \cref{L2.3}
and for any 
$g\in\Cc^+_{\mathrm{c}}(\Rd)$ we obtain
\begin{align*}
\Phi(x)\, \widetilde\Exp^{\varphi}_x\bigl[g(Y_T)\Phi^{-1}(Y_T)
\Ind_{\{T\le \uptau_n\}}\bigr] &\;= \;
\Psi^*(x)\, \widetilde\Exp^{\psi^*}_x
\Bigl[e^{[\lamstr(\Breve F)+\sJ(v)] T}g(Y^*_T)(\Psi^*)^{-1}(Y^*_T)
\Ind_{\{T\le \uptau_n\}}\Bigr]\\[3pt]
&\;\le\; \Psi^*(x)\,\Bigl(\sup_{\Rd}\frac{g}{\Psi^*}\Bigr)
e^{[\lamstr(\Breve F)+\sJ(v)] T}
\end{align*}
for all sufficiently large $n$,
where $\uptau_n$ denotes the first exit time from $B_n$.
By first letting $n\to\infty$, and then integrating with respect to $T$ we obtain 
\begin{equation*}
\int_0^\infty \widetilde\Exp^{\varphi}\bigl[g(Y_t)\Phi^{-1}(Y_t)\bigr]\, \D{t}
\;<\; \infty\,.
\end{equation*}
But this contradicts the positive recurrence
of the twisted process corresponding to $\Tilde\Lg$.
Therefore, $-\sJ(v)$ must be the principal eigenvalue of $\Lg^{\Breve F}$,
which implies that
\begin{equation}\label{ET2.6F}
-\sJ(v) \;=\;
\lamstr\bigl(\beta f- \norm{v-\grad\varphi}^2_a\bigr)
\;\le\; \Lambda_\beta\,.
\end{equation}
Thus we have shown that $\sJ_x(v)=\sJ(v)= -\Lambda_\beta$.
The strict monotonicity of $\lamstr$ at $\beta f$ together
with \cref{ET2.6F} imply that $v=\grad\varphi$ a.e.\ in $\Rd$.
In turn, \cref{ss1} and the uniqueness of the ground state
imply that $\Phi=\Psi^*_\beta$, up to a multiplication by a positive constant. 
Therefore, we have $v=\grad\psi^*_\beta$ a.e.\ in $\Rd$,
and this completes the proof of part (a).

We continue with part (b).
The case $\Lambda_\beta=0$ is trivial, so assume that $\Lambda_\beta<0$.
Then \cref{E2.1} is exponentially ergodic by \cref{T2.5}(ii).
Thus $\Psi^*_\beta$ is bounded away from $0$ in $\Rd$ by \cite[Lemma~2.1]{ari-anup}.
Let $v\in\Ulb$, and $\Breve\psi= -\psi^*_\beta$.
We have
\begin{equation}\label{ET2.6G}
\Lg\Breve\psi + 2 \langle v, a\grad\Breve\psi\rangle
-\norm{v+\grad\Breve\psi}^2_a
 +F_v \;=\; -\Lambda_\beta\,.
\end{equation}
Since $\Breve\psi$ is bounded above in $\Rd$, it follows from \cref{ET2.6G}
by a standard argument that
\begin{equation*}
\limsup_{T\to\infty}\;
\frac{1}{T}\,\widehat\Exp^{v}_x \biggl[\int_0^T
F_{v}(Z_s)\,\D s\biggr]\;\ge\;-\Lambda_\beta\,.
\end{equation*}

We next show uniqueness in $\Ulbs$ of the optimal control $\grad\psi^*_\beta$.
Let $v\in\Ulbs$ and suppose $\sJ_x(v)= -\Lambda_\beta$.
In other words, $\widehat\mu_v(F_v)=-\Lambda_\beta$.
By the It\^o--Krylov formula and Fatou's lemma
and since $\Breve\psi$ is bounded above, we obtain from \cref{ET2.6G} that
\begin{equation}\label{ET2.6H}
\widehat\Exp^v_x\bigl[\Breve\psi(Z_t)\bigr]-\Breve\psi(x)
- \widehat\Exp^v_x\biggl[\int_0^t G_v(Z_s)\, \D{s}\biggr] 
+ \widehat\Exp^v_x\biggl[\int_0^t F_v(Z_s)\, \D{s}\biggr]
\;\ge\; -t\Lambda_\beta\,,
\end{equation}
with
$$G_v(z)\;\df\;\bigl\langle \bigl(v(z)+\grad\Breve\psi(z)\bigr),
a(z) \bigl(v(z)+\grad\Breve\psi(z)\bigr)\bigr\rangle\,.$$
Dividing \cref{ET2.6H} by $t$ and taking limits as $t\to\infty$, we obtain
$- \widehat\mu_v(G_v)+\sJ_x(v)\ge -\Lambda_\beta$.
Therefore,
$\widehat\mu_v(G_v)=0$, since $G_v$ is nonnegative.
Thus $G_v=0$ a.e.\ in $\Rd$, and since $\widehat\mu_v$ has a density, this implies that
$v=-\grad\Breve\psi=\grad\psi^*_\beta$ a.e.\ in $\Rd$.

We now turn to part (c).
It is evident that under the control $v=0$, since the diffusion in
\cref{E-twist1} is transient and $f$ vanishes at infinity, we have
$\lim_{t\to\infty}\,\frac{1}{t}\,\widehat\Exp^v_x\bigl[F_0(Z_t)\bigr]=0$.
It is also clear that under any control $v\in\Ulb\setminus\Ulbs$
we have $\lim_{t\to\infty}\,\frac{1}{t}\,\widehat\Exp^v_x\bigl[F_v(Z_t)\bigr]\ge0$.
Suppose that under some $v\in\Ulbs$, we have
$$\liminf_{t\to\infty}\;\frac{1}{t}\;\widehat\Exp^v_x\bigl[F_v(Z_t)\bigr]
\;=\;\sJ_x(v)\;<\;0\,.$$
Then there exists a solution $\Breve\phi$ to the Poisson equation \cref{ET2.6C}
which is bounded below in $\Rd$.
Thus following the proof of Case~1 in part (a)
we obtain by \cref{ET2.6F} that $\sJ_x(v)\ge -\Lambda_\beta$ which is a contradiction.
We have therefore shown that $\sJ_x(v)\ge0$ for all $v\in\Ulb$, which
implies that $0$ is the optimal value in the class of controls $\Ulb$.
\qed\end{proof}

\begin{remark}\label{R2.2}
The assumption that $f$ is nonnegative can be weakened to $f\in\cB_{\mathrm{o}}(\Rd)$.
From the proof of \cref{T2.2} we note that if
$\lamstr(f+h)<\lamstr(f)$ for some $h\in\cB_{\mathrm{o}}(\Rd)$,
then the ground state diffusion corresponding to
$\lamstr(f)$ is geometrically ergodic.
Moreover, due to \cite[Proposition~2.3\,(vii)]{Berestycki-15} the function 
$\beta\mapsto\lamstr(\beta f)$ is convex for every $f\in \cB_{\mathrm{o}}(\Rd)$.
Instead of the critical value $\beta_c$, we can define
a critical value $\lambda_c$ by
$\lambda_c\df \inf_{\beta\in\RR}\,\Lambda_\beta$.
Then if we replace the condition $\beta>\beta_c$ by $\Lambda_\beta>\lambda_c$
as done in \cite{Ichihara-15}, it is evident that
$\lamstr(\beta f)$ is strictly monotone at $\beta f$
and the results in \cref{T2.5}\,(iii)
and \cref{T2.6} still hold, provided $\Lambda_\beta\ne 0$, and the proofs are the same.
\end{remark}

The results in \cref{T2.6}\,(b) can be also stated for nonstationary controls.
Consider the controlled diffusion
\begin{equation}\label{E-twist2}
\D{Z}_{t} \;=\;\bigl(b(Z_{t})+ 2 a(Z_{t}) U_t\bigr)\,\D{t}
+ \upsigma(Z_{t})\,\D{W}_{t}\,.
\end{equation}
Here $U=\{U_t\}$ is an $\Rd-$valued control process which is jointly measurable
in $(t,\omega)\in [0,\infty)\times\Omega$, and is \emph{nonanticipative}: for
$t > s$, $W_{t} - W_{s}$ is independent of
\begin{equation*}
\sF_s\;\df\; \text{the completion of\ }
\cap_{y>s}
\sigma(X_{0}, W_{r}, U_{r}\,\colon r\le y) \text{\ relative to\ } (\sF,\Prob)\,.
\end{equation*}
Let $f\in\cB_{\mathrm{o}}(\Rd)$, not necessarily nonnegative.
Assume that $\Lambda_\beta>\lambda_c$,
$\Lambda_\beta\le0$, and \cref{E2.1} is recurrent (see \cref{R2.2}).
Suppose that under $U$, the diffusion in \cref{E-twist2} has a unique
weak solution.
We claim that
\begin{equation*}
\sJ_x(U) \;\df\; \limsup_{T\to\infty}\; \frac{1}{T}\;\widehat\Exp^U_x \biggl[\int_0^T
\bigl[\langle U_s,a(Z_s)U_s\rangle
 - \beta f(Z_s)\bigr]\,\D s\biggr] \;\ge\; - \Lambda_\beta\,.
\end{equation*}
We can prove this as follows.
By \cref{ET2.5H} we obtain
\begin{equation*}
\Lg\Breve\psi(z) + 2 \langle u, a\grad\Breve\psi\rangle
 +F_u(z) \;\ge\; -\Lambda_\beta
\end{equation*}
for all $u, z\in\Rd$, and we apply the It\^o--Krylov formula and Fatou's lemma
(using the fact that $\Breve\psi$ is bounded above)
with $u=U_t$ to obtain analogously to \cref{ET2.6H} that
\begin{equation}\label{ET2.6Hx}
\widehat\Exp^U_x\bigl[\Breve\psi(Z_t)\bigr]-\Breve\psi(x)
  + \widehat\Exp^U_x\biggl[\int_0^t F_{U_s}(Z_s)\, \D{s}\biggr]
\;\ge\; -t\Lambda_\beta\,.
\end{equation}
Dividing \cref{ET2.6Hx} by $t$ and letting $t\to\infty$,
we obtain $$\liminf_{t\to\infty}\;
\frac{1}{t}\;\widehat\Exp^U_x\biggl[\int_0^t F_{U_s}(Z_s)\, \D{s}\biggr]
\;\ge\; -\Lambda_\beta\,,$$
thus proving the claim.

\subsubsection{Strong duality}
The optimality result in \cref{T2.6} can be strengthened.
Consider the class of
\emph{infinitesimal ergodic occupation measures}, i.e.,
measures $\uppi\in\cP(\Rd\times\Rd)$ which satisfy
\begin{equation}\label{E-eom}
\int_{\Rd\times\Rd} \sA_u g(x)\,\uppi(\D{x},\D{u})
\;=\;0 \qquad \forall\,g\in\Cc_{\mathrm{c}}^{\infty}(\Rd)\,,
\end{equation}
with $\sA_u\df \Lg + \langle 2au,\grad\rangle$.
Disintegrate these as 
$\uppi(\D{x},\D{u}) = \eta_v(\D{x})\,v(\D{u}\,|\, x)$,
and denote this disintegration as $\uppi=\eta_v\circledast v$.
Let $\Hat{v}(x)=\int u\, v(\D{u}\,|\, x)$.
Since $\int\abs{u}^2\eta(\D{x})\,v(\D{u}\,|\, x)
\ge \int\abs{\Hat{v}(x)}^2\eta(\D{x})$,
and $\eta(\D x) \delta_{\Hat{v}(x)}(\D u)$ is also an ergodic
occupation measure, it is enough to consider the class
of infinitesimal ergodic occupation measures
$\uppi$ that correspond to a precise control $v$, i.e.,
a Borel measurable map from $\Rd$ to $\Rd$.
We denote this class by $\sM$.
Thus for $\uppi=\eta_v\circledast v\in\sM$,
\cref{E-eom} takes the form
$\int_{\Rd} \sA_v g(x)\,\eta_v(\D{x})=0$.
Note that $v$ is not necessarily locally bounded,
so this class of controls is, in general, larger than $\Ulbs$.

In \cref{T2.7} below we use the following simple assertions
which are stated as remarks.

\begin{remark}\label{R2.3}
If $\eta_v$ has density
$\rho_v\in\Lpl^{\nicefrac{d}{(d-1)}}(\Rd)$, and $v\in\Lp^{2}(\Rd;\eta_v)$,
then
\begin{equation*}
\int_\Rd \sA_v g(x)\,\eta_v(\D x)\;=\;0\qquad\forall\,g\in\Sobl^{2,d}(\Rd)\cap
\Cc_{\mathrm{c}}(\Rd)\,.
\end{equation*}
This can be proved as follows.
We mollify $g$ with a smooth mollifier family $\{\chi_r,\, r>0\}$, so that
\cref{E-eom} can applied to  the function $g*\chi_r$, where
`$*$' denotes convolution.
Then we separate terms, and applying the H\"older inequality on
$\babs{\int (\Lg g- \Lg(g*\chi_r))\rho_v}$, and using the
convergence of $\Lg(g*\chi_r)$ to $\Lg g$ in $\Lpl^d(\Rd)$, we deduce that
this term tends to $0$ as $r\searrow0$.
Similarly, we apply the H\"older inequality in the form
\begin{equation*}
\babss{\int \langle 2av,\grad (g-g*\chi_r)\rangle\,\rho_v}^2\;\le\;
\int \abs{2av}^2\rho_v \int \abs{\grad (g-g*\chi_r)}^2\rho_v\,.
\end{equation*}
Then the first integral on the right hand side is bounded, and
the second integral vanishes as $r\searrow0$ since
$g*\chi_r$ converges to $g$ uniformly on compact sets.
\end{remark}

\begin{remark}\label{R2.4}
Suppose that the drift $b$ in \cref{E2.1} has at most affine growth.
It is then well known that the map
$x\mapsto\Exp_x[\uptau(\sB^c)]$ is inf-compact
for any open ball $\sB$, provided of course that
\cref{E2.1} is positive recurrent.
This fact together with the stochastic representation in \cref{ET2.1A}
and Jensen's inequality, imply that if $f\in\cB_{\mathrm{o}}(\Rd)$,
$\Lambda_\beta<0$, and \cref{E2.1} is recurrent, then
the ground state $\Psi^*_\beta$ is inf-compact,
and this of course renders \cref{E2.1} positive recurrent.
An analogous argument using the ground state diffusion shows
that, if $b+a\grad\Psi^*_\beta$ has at most affine growth,
and $\Lambda_\beta>\max\{0,\lambda_c\}$ (see \cref{R2.2}),
then $\widetilde\Psi=(\Psi^*_\beta)^{-1}$
is inf-compact.
\end{remark}

The theorem that follows shows that there is no optimality gap
between the primal problem which consists of minimizing
$\int F_u(x) \uppi(\D x,\D u)$ subject to the constraint
\cref{E-eom}, and the dual problem which amounts to a maximization over
subsolutions of the HJB equation, as described in \cref{S1}.
This theorem is stated for  $f\in\cB_{\mathrm{o}}(\Rd)$ which is not
necessarily nonnegative as discussed in \cref{R2.2}.

\begin{theorem}\label{T2.7}
Assume that $f\in\cB_{\mathrm{o}}(\Rd)$, $\Lambda_\beta>\lambda_c$,
 and that one of the following conditions holds.
\begin{enumerate}[(i)]
\item
$\Lambda_\beta>0$, the coefficients $a$ and $b$ are bounded,
and $a$ is uniformly strictly elliptic.

\item
$\Lambda_\beta < 0$, \cref{E2.1} is recurrent, and $b$ has at most
affine growth.
\end{enumerate}
Then any $\uppi=\eta_v\circledast v\in\sM$,  such that $\int_\Rd F_v\,\D\eta_v<\infty$, 
satisfies
\begin{equation}\label{ET2.7A}
\int_\Rd F_v\,\D\eta_v
\;=\; - \Lambda_\beta
+ \int_\Rd \norm{v-\grad\psi^*_\beta}^2_a\,\D\eta_v\,.
\end{equation}
In addition, if $\uppi=\eta_v\circledast v\in\sM$ is optimal, i.e., if it satisfies
$\int_\Rd F_v\,\D\eta_v = -\Lambda_\beta$,
then $v=\grad\psi^*_\beta$ a.e.\ in $\Rd$ and $\eta_v=\mu^*_\beta$.
\end{theorem}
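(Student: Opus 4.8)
The plan is to extract a pointwise ``completion of squares'' identity from the HJB equation for the ground state and then integrate it against $\eta_v$. Write $\Breve\psi\df-\psi^*_\beta=-\log\Psi^*_\beta$, which solves \cref{ABEx01} with $f$ and $\lamstr(f)$ replaced by $\beta f$ and $\Lambda_\beta$. Completing the square in the Hamiltonian exactly as in \cref{ET2.5H} yields, for \emph{every} Borel measurable $v\colon\Rd\to\Rd$, the identity
\begin{equation}\label{P-key}
\sA_v\Breve\psi(x)+F_v(x)+\Lambda_\beta\;=\;\norm{v(x)-\grad\psi^*_\beta(x)}_a^2\qquad\text{a.e.\ }x\in\Rd,
\end{equation}
with $\sA_v=\Lg+2\langle av,\grad\rangle$. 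Granting that $\int_\Rd\sA_v\Breve\psi\,\D\eta_v=0$, integrating \cref{P-key} against the probability measure $\eta_v$ gives \cref{ET2.7A} at once, since $\eta_v(\Rd)=1$.

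To legitimize integrating the generator term, first note that $f$ bounded together with $\int_\Rd F_v\,\D\eta_v<\infty$ forces $v\in\Lp^2(\Rd;\eta_v)$, and that under either hypothesis (i) or (ii) the operator $\Lg$ is locally uniformly elliptic with locally Lipschitz diffusion and locally bounded drift, so $\eta_v$ admits a density $\rho_v\in\Lpl^{\nicefrac{d}{(d-1)}}(\Rd)$ by the standard regularity theory for stationary densities of nondegenerate diffusions. Hence \cref{R2.3} applies and $\int_\Rd\sA_v g\,\D\eta_v=0$ for all $g\in\Sobl^{2,d}(\Rd)\cap\Cc_{\mathrm{c}}(\Rd)$. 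Since $\Breve\psi\in\Sobl^{2,d}_{\mathrm{loc}}(\Rd)\cap\Cc(\Rd)$ but lacks compact support, I would next introduce cutoffs $\chi_n\in\Cc_{\mathrm{c}}^\infty(\Rd)$ with $\chi_n\equiv1$ on $B_n$, $\chi_n\equiv0$ off $B_{2n}$, and $\abs{\grad\chi_n}\le C/n$, $\abs{D^2\chi_n}\le C/n^2$, apply \cref{R2.3} to $\chi_n\Breve\psi$, and expand
\[
\sA_v(\chi_n\Breve\psi)=\chi_n\,\sA_v\Breve\psi+\Breve\psi\,\bigl(\Lg\chi_n+2\langle av,\grad\chi_n\rangle\bigr)+2\langle a\grad\chi_n,\grad\Breve\psi\rangle .
\]
Substituting \cref{P-key} into the first term and letting $n\to\infty$, the pieces free of $\grad\chi_n$ converge by monotone convergence (for $\chi_n\norm{v-\grad\psi^*_\beta}_a^2\ge0$) and dominated convergence (for $\chi_n F_v$ with $F_v\in\Lp^1(\eta_v)$, and for $\chi_n\Lambda_\beta$) to $\int_\Rd\norm{v-\grad\psi^*_\beta}_a^2\,\D\eta_v$ and $\int_\Rd F_v\,\D\eta_v+\Lambda_\beta$ respectively.

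The crux is to show that the remaining ``annulus terms'', supported in $B_{2n}\setminus B_n$, vanish as $n\to\infty$; equivalently, that one may integrate by parts against the a priori non-integrable weight $\Breve\psi$. This is exactly where \cref{R2.4} is used: under hypothesis (ii) the ground state $\Psi^*_\beta$ is inf-compact, and under hypothesis (i) $\widetilde\Psi\df(\Psi^*_\beta)^{-1}$ is inf-compact, so the $\eta_v$-mass of $B_{2n}\setminus B_n$ decays fast enough to beat the (logarithmic-in-$\Psi^*_\beta$) growth of $\abs{\Breve\psi}$ and $\norm{\grad\Breve\psi}_a$ there; combined with $\abs{b(x)}\le C(1+\abs{x})$, $\abs{\grad\chi_n}\le C/n$, $v\in\Lp^2(\eta_v)$ and the Cauchy--Schwarz inequality in the $a$-inner product, the annulus terms are $o(1)$. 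I expect this boundary-term estimate to be the main technical obstacle; everything else is bookkeeping, and once it is in place, passing to the limit delivers \cref{ET2.7A}.

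For the concluding assertion, if $\uppi=\eta_v\circledast v$ is optimal then $\int_\Rd F_v\,\D\eta_v=-\Lambda_\beta$, so \cref{ET2.7A} gives $\int_\Rd\norm{v-\grad\psi^*_\beta}_a^2\,\D\eta_v=0$; since $\eta_v$ is equivalent to Lebesgue measure and $a$ is positive definite, $v=\grad\psi^*_\beta$ a.e.\ in $\Rd$. Consequently $\eta_v$ is an invariant probability measure of the ground-state diffusion $Y^*_\beta$, which is exponentially ergodic by \cref{T2.5} (together with \cref{R2.2} when $\Lambda_\beta<0$), and therefore $\eta_v=\mu^*_\beta$ by uniqueness of its invariant probability measure.
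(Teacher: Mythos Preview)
Your identification of the pointwise identity \cref{P-key} and the overall plan---integrate against $\eta_v$, use the density regularity of \cref{R2.3}, and pass to a limit through a cutoff---matches the paper. The gap is precisely where you flag it: the ``annulus terms'' from your \emph{spatial} cutoffs $\chi_n(x)$ are not shown to vanish, and the heuristic you offer is not a proof. You have no a priori decay of $\eta_v$ at infinity beyond $v\in\Lp^2(\eta_v)$, and in case~(ii) you have no growth control on $\grad\Breve\psi$ (your phrase ``logarithmic-in-$\Psi^*_\beta$ growth of $\norm{\grad\Breve\psi}_a$'' is unjustified; $\grad\Breve\psi=-\grad\Psi^*_\beta/\Psi^*_\beta$ need not be slowly growing when $b$ is merely of affine growth). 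In case~(i) the paper in fact invokes an external gradient estimate (\cite[Lemma~3.3]{ari-anup}) to obtain $\grad\psi^*_\beta\in\Lp^\infty(\Rd)$; you neither use this nor provide a substitute.

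The paper avoids the boundary-term issue altogether by truncating in \emph{value} rather than in space: it composes with a smooth convex $\chi_R\colon\RR\to\RR$ satisfying $\chi_R(t)=t$ for $t\ge0$ and $\chi_R(t)=-R-1$ for $t\le -R-1$, applied to $\psi^*_\beta$ in case~(i) and to $\Breve\psi=-\psi^*_\beta$ in case~(ii). The role of \cref{R2.4} is that in each case the relevant function tends to $-\infty$ at infinity, so $\chi_R(\psi^*_\beta)+R+1$ (respectively $\chi_R(\Breve\psi)+R+1$) has \emph{compact support}, and \cref{R2.3} applies directly without any annulus remainder. The chain rule then produces multiplicative factors $\chi_R'\in[0,1]$ with $\chi_R'\nearrow1$ and $\chi_R''\ge0$, so the limit $R\to\infty$ goes through by monotone convergence. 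In case~(ii) one first extracts from this the energy bound $\int\norm{\grad\Breve\psi}_a^2\,\D\eta_v<\infty$ (via $\norm{v+\grad\Breve\psi}_a^2\le 2\norm{v}_a^2+2\norm{\grad\Breve\psi}_a^2$), and then passes to the limit a second time to obtain \cref{ET2.7A}. This value-truncation device is the missing ingredient in your argument.

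Your uniqueness paragraph is more direct than the paper's (which uses a convexity argument with the midpoint occupation measure), but note that \cref{ET2.7A} only gives $v=\grad\psi^*_\beta$ $\eta_v$-a.e.; the assertion that $\eta_v$ is equivalent to Lebesgue measure needs justification. The paper instead observes that one may modify $v$ off the support of $\eta_v$ without changing the occupation measure, so $\eta_v\circledast\grad\psi^*_\beta\in\sM$, whence $\eta_v=\mu^*_\beta$ by uniqueness of the invariant measure of the ground-state diffusion, and then $v=\grad\psi^*_\beta$ a.e.\ follows.
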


\begin{proof}
We first consider case (i).
Since $a$, $b$, and $f$ are bounded,
it follows that $\grad\psi^*_\beta$ is bounded by \cite[Lemma~3.3]{ari-anup}.
Then $-\psi^*_\beta$ is inf-compact by \cref{R2.4}.
Recall that $\sA_v=\Lg+2\langle  a\,v, \grad\rangle$.
We have
\begin{equation}\label{ET2.7B}
-\sA_v\psi^*_\beta -\norm{v-\grad\psi^*_\beta}^2_a
 +F_v \;=\; -\Lambda_\beta\,,
\end{equation}
Let $\chi$ be a convex $\Cc^2(\mathbb{R})$ function such that
$\chi(x)= x$ for $x\ge0$, $\chi(x) = -1$ for $x\le -1$,
and $\chi'$, $\chi''$ are positive on $(-1,0)$.
Define $\chi_R(x) \df -R + \chi(x+R)$, $R>0$.
Then we have from \cref{ET2.7B} that
\begin{equation}\label{ET2.7C}
-\sA_v\chi_R(\psi^*_\beta) + \chi_R''(\psi^*_\beta)\norm{\grad\psi^*_\beta}^2_a
-\chi_R'(\psi^*_\beta)\norm{v-\grad\psi^*_\beta}^2_a
 + \chi_R'(\psi^*_\beta) F_v \;=\; -\chi_R'(\psi^*_\beta)\Lambda_\beta\,.
\end{equation}
Since $\int \sA_v g\,\D\eta_v=$ for all $g\in\Cc^\infty_{\mathrm c}(\Rd)$,
an application of \cite[Theorem~2.1]{Bogachev-01} shows that $\eta_v$
has a density $\rho_v\in\Lpl^{\nicefrac{d}{(d-1)}}(\Rd)$.
Note that this does not require $a$ or $b$ to be bounded.
Therefore, since $\chi_R(\psi^*_\beta)+R+1$ has compact support,
we have $\int_\Rd \sA_v\chi_R(\psi^*_\beta)\,\eta_v(\D x)=0$ by \cref{R2.3}.
Thus letting $R\to\infty$ in \cref{ET2.7C}, using
monotone convergence, we obtain \cref{ET2.7A}. 

We next show uniqueness.
Let $\uppi=\eta_v\circledast v\in\sM$ be optimal, and
$\uppi_* = \eta_* \circledast v_*$ denote the ergodic occupation measure
corresponding to $v_*=\grad\psi^*_\beta$.
Here, $\eta_*=\mu^*_\beta$.
Let $\rho_*$ denote the density of $\eta_*$.
Define $\Bar\eta\df\frac{1}{2}(\eta_v+\eta_*)$ and
$\Bar{v} \df \zeta_v v + \zeta_* v_*$, with $\zeta_v$ and $\zeta_*$ given by
$\zeta_v\df\frac{\rho_v}{\rho_v+\rho_*}$ and $\zeta_*\df\frac{\rho_*}{\rho_v+\rho_*}$,
respectively.
It is straightforward to verify, using the fact that the drift is affine in the
control, that $\Bar\uppi=\Bar\eta\circledast\Bar{v}$ is in $\sM$.

By optimality, we have
\begin{align}\label{ET2.7D}
0 &\;\ge \; \int_\Rd F_{v}(x)\,\eta_v(\D{x}) + \int_\Rd F_{v_*}(x)\,\eta_*(\D{x})
-2 \int_\Rd F_{\Bar{v}}(x)\,\Bar\eta(\D{x}) \nonumber\\[5pt]
&\;=\;2\,\int_\Rd \Bigl(\zeta_v(x)\,\norm{v(x)}^2_{a(x)}
+ \zeta_*(x)\,\norm{v_*(x)}^2_{a(x)} -\bnorm{\zeta_v(x) v(x)
+ \zeta_*(x) v_*(x)}^2_{a(x)}\Bigr)\,\Bar\eta(\D{x}) \nonumber\\[5pt]
&\;=\;\int_{\Rd}\frac{\rho_v(x)\,\rho_*(x)}{\rho_v(x)+\rho_*(x)}\,
\norm{v(x)-v_*(x)}^2_{a(x)}\, \D{x}\,.
\end{align}
Since $\rho_*$ is strictly positive, \cref{ET2.7D} implies
that $\rho_v\,\abs{v-v_*}=0$ a.e.\ in $\Rd$, and thus
$v=v_*$ on the support of $\eta_v$.
It is clear that if $v$ is modified outside the support of $\eta_v$, then
the modified $\eta_v\circledast v$ is also an infinitesimal ergodic occupation
measure.
Therefore $\eta_v\circledast v_*\in\sM$.
The uniqueness of the invariant measure of the diffusion with
generator $\sA_{v_*}$ then implies that $\eta_v=\eta_*$,
which in turn implies that $v=\grad\psi^*_\beta$ a.e.\ in $\Rd$.

We now turn to case (ii).
By \cref{R2.4}, $\psi^*_\beta$ is inf-compact.
Also, as shown in case (i), $\eta_v$
has a density $\rho_v\in\Lpl^{\nicefrac{d}{(d-1)}}(\Rd)$.
We write \cref{ET2.6G} as
\begin{equation}\label{ET2.7E}
\sA_v\Breve\psi -\norm{v+\grad\Breve\psi}^2_a +F_v \;=\; -\Lambda_\beta\,,
\end{equation}
with $\Breve\psi = - \psi^*_\beta$.
Then we have from \cref{ET2.7E} that
\begin{equation}\label{ET2.7F}
\sA_v\chi_R(\Breve\psi) - \chi_R''(\Breve\psi) \norm{\grad\Breve\psi}^2_a
-\chi_R'(\Breve\psi)\norm{v+\grad\Breve\psi}^2_a
 + \chi_R'(\Breve\psi) F_v \;=\; -\chi_R'(\Breve\psi)\Lambda_\beta\,.
\end{equation}
Using the inequality
$\norm{v+\grad\Breve\psi}^2_a \le 2\norm{v}^2_a +2\norm{\grad\Breve\psi}^2_a$,
then integrating
\cref{ET2.7F} with respect to $\eta_v$, and rearranging
terms we obtain
\begin{equation}\label{ET2.7K}
\int \bigl(\chi_R''(\Breve\psi)+2\chi_R'(\Breve\psi)\bigr)\,
\norm{\grad\Breve\psi}^2_a\,\D\eta_v
\;\le\; \int\chi_R'(\Breve\psi) (F_v+2\beta f)\,\D\eta_v  -\Lambda_\beta
 \int\chi_R'(\Breve\psi)\,\D\eta_v\,.
\end{equation}
Thus letting $R\to\infty$ in \cref{ET2.7K}, using
monotone convergence, we obtain the energy inequality 
\begin{equation}\label{ET2.7L}
2\int \norm{\grad\Breve\psi}^2_a\,\D\eta_v
\;\le\; \int  (F_v+2\beta f)\,\D\eta_v- \Lambda_\beta\;<\;\infty\,.
\end{equation}
Then \cref{ET2.7A} follows by letting $R\to\infty$ in \cref{ET2.7K}, using
again monotone convergence and \cref{ET2.7L}.
Uniqueness follows as in case (i).
This completes the proof.
\qed\end{proof}

\begin{remark}
The proof of \cref{T2.7} provides a general recipe to prove the
lack of an optimality gap in ergodic control problems.
Note that the  model in \cite{Ichihara-13b} is such that $\grad\psi^*_\beta$ is bounded,
and $a$ is also bounded.
Therefore,
$$\int \chi_R''(\Breve\psi)\norm{\grad\Breve\psi}^2_a\,\D\eta_v\;
\xrightarrow[R\to\infty]{}\;0\,,$$
and
the proof of \cref{T2.7} goes through even for the more general Hamiltonian
$H(x,p)$ in \cite{Ichihara-13b}.
\end{remark}

\begin{remark}
If $\Lambda_\beta>\lambda_c$,
and under some nonanticipative control
$U$ the diffusion \cref{E-twist2} has
a unique weak solution, it was shown in the discussion following
\cref{R2.2} that $\sJ_x(U)\ge -\Lambda_\beta$, provided
$\Lambda_\beta\le0$ and \cref{E2.1} is recurrent.
The same conclusion can be drawn if $\Lambda_\beta>0$ and under the
hypotheses of \cref{T2.7}.
Define the set of \emph{mean empirical measures}
$\bigl\{\xi^U_{x,t}\,,\; t\ge0\}$
of \cref{E-twist2} under the control $U$ by
\begin{equation*}
\int_{\Rd\times\Rd} h(x,u)\,\xi^U_{x,t}(\D{x},\D{u})
\;=\; \frac{1}{t}\int_0^t \widehat\Exp_x^U \bigl[h(Z_t,U_t)\bigr]\,\D{t}
\qquad \forall h\in\Cc_b(\Rd\times\Rd)\,.
\end{equation*}
If $\Lambda_\beta>0$, then $F_u(x)-\Lambda_\beta$ is bounded away from zero
for all $x$ outside some compact set, and one can follow the
arguments in the proof of \cite[Lemma~3.4.6]{book} to show that
every limit point in
$\cP(\overline{\Rd\times\Rd})$ (the set of Borel probability measures
on the one-point compactification of $\Rd\times\Rd$)
of a sequence of mean empirical measures $\{\xi^{U_n}_{x,{t_n}}\,,\;n\in\NN\}$ as
$t_n\to\infty$ takes the form $\delta \uppi + (1-\delta)\uppi_\infty$, where
$\uppi$ is an infinitesimal ergodic occupation measure and
$\uppi_\infty(\{\infty\})=1$.
Using this property, one can show, by following the argument in the proof
of \cite[Theorem~3.4.7]{book}, that
if $\sJ_x(U)\le -\Lambda_\beta$, then the mean empirical measures
are necessarily tight in $\cP(\Rd\times\Rd)$ and $\delta=1$ in
this decomposition.
This of course implies that $\sJ_x(U)= -\Lambda_\beta$.
This argument establishes optimality over the largest possible class of
controls $U$.
\end{remark}

\subsubsection{Differentiability of \texorpdfstring{$\Lambda_\beta$}{Lb}}

Differentiability of the map $\beta\mapsto \Lambda_\beta$ for
all $\beta>\beta_c$ is established in \cite[Proposition~5.4]{Ichihara-15}
under the hypothesis that the coefficients $a$, $b$, and $f$ are
Lipschitz continuous and bounded in $\Rd$, but for a more general class
of Hamiltonians (see (A1)--(A3) in \cite{Ichihara-15}).
These assumptions are used to show that $\grad \psi^*$ is bounded in $\Rd$,
and this is utilized in the proofs.

In the next theorem we demonstrate this  differentiability result
for the model in this paper which assumes only measurable $b$ and $f$,
in which case it is not possible, in general, to obtain gradient estimates
and follow the approach in \cite{Ichihara-13b,Ichihara-15,Barles-16}.
The first assertion in this theorem should be compared to
\cite[Proposition~5.4]{Ichihara-15}.
Recall the definition
 $\widetilde\Psi_\epsilon=\frac{\Psi^*_{\beta-\epsilon}}{\Psi^*_\beta}$
after \cref{ET2.5N},
and let $\widetilde\psi_\epsilon=\log \widetilde\Psi_\epsilon$.

\begin{theorem}\label{T2.8}
Suppose $f\in\cB^+_{\mathrm{o}}(\Rd)$, and that $\beta>\beta_c$.
Then for all $\epsilon>0$ such that $\beta-\epsilon>\beta_c$, we have
\begin{equation}\label{ET2.8A}
\epsilon\,
\frac{\mu^*_\beta(f\,\widetilde\Psi_\epsilon)}{\mu^*_\beta(\widetilde\Psi_\epsilon)}
\;\le\; \Lambda_\beta-\Lambda_{\beta-\epsilon}\;=\;
\mu^*_\beta\bigl(\epsilon f -
\norm{\grad\widetilde\psi_\epsilon}^2_a\bigr)\,.
\end{equation}
In addition, we have 
\begin{equation}\label{ET2.8B}
\frac{\D\,\Lambda_{\beta}}{\D\beta} \;=\;\mu^*_\beta(f)\,.
\end{equation}
\end{theorem}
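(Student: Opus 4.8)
The plan is to read off both assertions from the two first‑order relations satisfied by the \emph{relative ground state} $\widetilde\Psi_\epsilon=\Psi^*_{\beta-\epsilon}/\Psi^*_\beta$, integrated against the invariant probability measure $\mu^*_\beta$ of the ground‑state diffusion $Y^*=Y^*(\beta)$; here $\mu^*_\beta$ is well defined and finite because $\beta>\beta_c$ forces $Y^*$ to be exponentially ergodic (\cref{T2.5}), with generator $\widetilde\Lg^{\psi^*_\beta}$. From \cref{ET2.5O} one has $\widetilde\Lg^{\psi^*_\beta}\widetilde\Psi_\epsilon=(\Lambda_{\beta-\epsilon}-\Lambda_\beta+\epsilon f)\widetilde\Psi_\epsilon$, with $\widetilde\Psi_\epsilon$ bounded away from $0$ and, since $\Lambda_{\beta-\epsilon}<\Lambda_\beta$ while $\epsilon f$ vanishes at infinity, a Foster--Lyapunov function for $Y^*$. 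Dividing this identity by $\widetilde\Psi_\epsilon$ and using $\widetilde\Lg^{\psi^*_\beta}\widetilde\Psi_\epsilon=\widetilde\Psi_\epsilon\bigl(\widetilde\Lg^{\psi^*_\beta}\widetilde\psi_\epsilon+\norm{\grad\widetilde\psi_\epsilon}^2_a\bigr)$ gives its logarithmic form
\begin{equation*}
\widetilde\Lg^{\psi^*_\beta}\widetilde\psi_\epsilon+\norm{\grad\widetilde\psi_\epsilon}^2_a\;=\;\Lambda_{\beta-\epsilon}-\Lambda_\beta+\epsilon f\,.
\end{equation*}

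First I would prove the equality in \cref{ET2.8A}; this is the computation that produced \cref{ET2.5A} in Case~2 of the proof of \cref{T2.5}, now carried out with $\widetilde\psi_\epsilon$ and $\widetilde\Psi_\epsilon$ in the roles of $\psi^*$ and $\widecheck\Psi$. Since $\widetilde\psi_\epsilon$ is bounded below and $\widetilde\psi_\epsilon^+\le\widetilde\Psi_\epsilon$, the exponential estimate \cref{ET2.5C} for $\widetilde\Psi_\epsilon$ gives $\tfrac1t\,\widetilde\Exp_x^{\psi^*_\beta}\bigl[\widetilde\psi_\epsilon(Y^*_t)\bigr]\to0$, and the truncation bound behind \cref{ET2.5F,ET2.5G} gives $\widetilde\Exp_x^{\psi^*_\beta}\bigl[\widetilde\psi_\epsilon(Y^*_{t\wedge\uptau_R})\bigr]\to\widetilde\Exp_x^{\psi^*_\beta}\bigl[\widetilde\psi_\epsilon(Y^*_t)\bigr]$ as $R\to\infty$. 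Applying the It\^o--Krylov formula to the logarithmic form along $Y^*$ up to $\uptau_R$, letting $R\to\infty$, dividing by $t$, letting $t\to\infty$ and invoking Birkhoff's ergodic theorem yields $\mu^*_\beta\bigl(\norm{\grad\widetilde\psi_\epsilon}^2_a\bigr)=\Lambda_{\beta-\epsilon}-\Lambda_\beta+\epsilon\,\mu^*_\beta(f)$, which rearranges to $\Lambda_\beta-\Lambda_{\beta-\epsilon}=\mu^*_\beta\bigl(\epsilon f-\norm{\grad\widetilde\psi_\epsilon}^2_a\bigr)$. For the inequality in \cref{ET2.8A} I would integrate \cref{ET2.5O} itself against $\mu^*_\beta$: since $\widetilde\Psi_\epsilon$ is a nonnegative Foster--Lyapunov function, $\mu^*_\beta(\widetilde\Psi_\epsilon)<\infty$, and a localization argument with the exit times $\uptau_R$, together with the invariance of $\mu^*_\beta$, controls the boundary term $\int_\Rd\widetilde\Lg^{\psi^*_\beta}\widetilde\Psi_\epsilon\,\D\mu^*_\beta$; combined with the identity $\int_\Rd\widetilde\Lg^{\psi^*_\beta}\widetilde\Psi_\epsilon\,\D\mu^*_\beta=(\Lambda_{\beta-\epsilon}-\Lambda_\beta)\,\mu^*_\beta(\widetilde\Psi_\epsilon)+\epsilon\,\mu^*_\beta(f\widetilde\Psi_\epsilon)$, this yields the left‑hand bound of \cref{ET2.8A} after dividing by $\mu^*_\beta(\widetilde\Psi_\epsilon)$.

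For \cref{ET2.8B} I would then combine the two parts of \cref{ET2.8A} into the sandwich
\begin{equation*}
\frac{\mu^*_\beta(f\widetilde\Psi_\epsilon)}{\mu^*_\beta(\widetilde\Psi_\epsilon)}\;\le\;\frac{\Lambda_\beta-\Lambda_{\beta-\epsilon}}{\epsilon}\;=\;\mu^*_\beta(f)-\tfrac1\epsilon\,\mu^*_\beta\bigl(\norm{\grad\widetilde\psi_\epsilon}^2_a\bigr)\;\le\;\mu^*_\beta(f)\,,
\end{equation*}
and let $\epsilon\downarrow0$. As $\epsilon\downarrow0$ one has $\widetilde\Psi_\epsilon\to1$ locally uniformly (uniqueness of the ground state, plus elliptic/Harnack estimates), and $\{\widetilde\Psi_\epsilon\}$ is uniformly integrable against $\mu^*_\beta$ for small $\epsilon$ — either because the rate and the amplitude in its Foster--Lyapunov bound are both of order $\epsilon$, or, more simply, because $\widetilde\Psi_\epsilon$ is dominated for $\epsilon\le\epsilon_0$ by the fixed $\mu^*_\beta$‑integrable function $\widetilde\Psi_{\epsilon_0}$ — so, using $f\in\cB^+_{\mathrm{o}}(\Rd)$ to control the tails, $\mu^*_\beta(f\widetilde\Psi_\epsilon)/\mu^*_\beta(\widetilde\Psi_\epsilon)\to\mu^*_\beta(f)$. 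The squeeze gives that the left derivative satisfies $\Lambda'_-(\beta)=\mu^*_\beta(f)$ for every $\beta>\beta_c$. Since $\beta\mapsto\Lambda_\beta$ is convex on $(\beta_c,\infty)$, its left derivative is nondecreasing and left‑continuous and its right derivative satisfies $\Lambda'_+(\beta)=\lim_{\beta'\downarrow\beta}\Lambda'_-(\beta')=\lim_{\beta'\downarrow\beta}\mu^*_{\beta'}(f)$; the continuity of $\beta\mapsto\mu^*_\beta(f)$ then forces $\Lambda'_+(\beta)=\mu^*_\beta(f)=\Lambda'_-(\beta)$, so $\Lambda$ is differentiable at $\beta$ with $\D\Lambda_\beta/\D\beta=\mu^*_\beta(f)$.

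The main obstacle is neither the integration by parts nor the ergodic limits — those are a rerun of the Foster--Lyapunov machinery already set up for \cref{T2.5} — but the two passages to the limit in the last step: the uniform integrability of $\{\widetilde\Psi_\epsilon\}$ against $\mu^*_\beta$ as $\epsilon\downarrow0$, and, above all, the continuity of $\beta\mapsto\mu^*_\beta(f)$ for $f\in\cB^+_{\mathrm{o}}(\Rd)$, which is what upgrades the identity $\Lambda'_-(\beta)=\mu^*_\beta(f)$ to genuine differentiability of $\Lambda$ at every $\beta>\beta_c$. That continuity should follow from the continuous dependence of $\grad\psi^*_\beta$ on $\beta$ (elliptic estimates, Harnack, uniqueness of the ground state) together with the continuity of the invariant measure on the drift under the locally‑uniform‑in‑$\beta$ exponential ergodicity, the vanishing of $f$ at infinity preventing any escape of mass.
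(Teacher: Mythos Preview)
Your proposal is correct and follows essentially the same route as the paper. Both arguments derive the inequality in \cref{ET2.8A} by applying It\^o--Krylov and Fatou's lemma to \cref{ET2.5O} and then invoking Birkhoff's theorem, and derive the equality by rerunning the Foster--Lyapunov machinery of \cref{T2.5} on the log-transformed relative ground state $\widetilde\psi_\epsilon$; for \cref{ET2.8B} both use the sandwich built from \cref{ET2.8A} and convexity, prove $\mu^*_\beta(f\widetilde\Psi_\epsilon)/\mu^*_\beta(\widetilde\Psi_\epsilon)\to\mu^*_\beta(f)$ via domination of $\widetilde\Psi_\epsilon$ by a fixed $\widetilde\Psi_{\epsilon_0}$ (the paper uses the bound $\Psi^*_{\beta-\epsilon}\le\kappa\Psi^*_{\beta-2\epsilon_1}$ from the stochastic representation \cref{ET2.8F}), and reduce the remaining step to the right-continuity of $\beta'\mapsto\mu^*_{\beta'}(f)$.

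One small difference worth noting: the paper's sandwich \cref{ET2.8D} bounds the forward difference quotient directly by $\mu^*_{\beta+\epsilon}(f)$, whereas you first pin down $\Lambda'_-(\beta)=\mu^*_\beta(f)$ from the equality and then pass to $\Lambda'_+$ via convexity; either way the crux is the same continuity of $\mu^*_{\beta'}(f)$. Your sketch of that continuity is accurate in spirit, but the paper carries it out with more care than ``continuous dependence of the invariant measure on the drift'': it first shows $\inf_{\epsilon\in[0,\epsilon_1]}\mu^*_{\beta+\epsilon}(\sB)>0$ via Has$^{_{^{\prime}}}\!$minski\u{\i}'s characterization and uniform exit-time bounds (using the Lyapunov function $\Psi^*_{\beta-2\epsilon_1}/\Psi^*_{\beta+\epsilon}$), then uses Harnack and interior H\"older estimates on the Fokker--Planck densities $\eta^*_{\beta+\epsilon}$ to extract a locally uniform limit, identifies it as a positive multiple of $\eta^*_\beta$ by uniqueness of the invariant measure, and finally uses $f\in\cB^+_{\mathrm{o}}(\Rd)$ to control the tails. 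You would need these ingredients in your last paragraph as well.
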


\begin{proof}
Fix some $\epsilon_1>0$ such that $\beta-2\varepsilon_1>\beta_c$,
and consider \cref{ET2.5O}.
As argued in the proof of \cref{T2.5}, the function
$\widetilde\Psi_\epsilon$ is bounded away from $0$ on $\Rd$
for all $\epsilon\in(0,\varepsilon_1]$.
We recall the notation $\widetilde\Exp^{\psi^*_\beta}[\,\cdot\,]
=\widetilde\Exp^{*}[\,\cdot\,]$.
Applying the It\^o--Krylov formula and Fatou's lemma to \cref{ET2.5O} we obtain
\begin{equation*}
\frac{1}{T}\;\widetilde\Exp_x^{*}\biggl[
\int_0^T \bigl(\Lambda_{\beta-\epsilon}
-\Lambda_\beta + \epsilon f(Y_t^*)\bigr)\,\widetilde\Psi_\epsilon(Y_t^*)\,\D t \biggr]
\;\ge\; 0\,,
\end{equation*}
from which the left hand side inequality of \cref{ET2.8A} follows by
an application of Birkhoff's ergodic theorem.
Also the analogous estimate to
\cref{ET2.5C} holds for $\widetilde\Psi_\epsilon$, which implies that
\begin{equation}\label{ET2.8C}
\lim_{t\to\infty}\; \frac{1}{t}\,
\widetilde\Exp_x^*\bigl[\widetilde\Psi_\epsilon(Y^*_t)\bigr]\;=\;0
\qquad\forall\,\epsilon\in(0,\varepsilon_1]\,.
\end{equation}
The second equality in \cref{ET2.8A} then follows by first using
the technique in the proof of \cref{T2.5} and \cref{ET2.8C} to establish
\cref{ET2.5E} for $\widetilde\psi_\epsilon$, $\epsilon\in(0,\varepsilon_1)$,
and then applying the It\^o--Krylov formula to
the log-transformed equation corresponding to \cref{ET2.5O} as in \cref{ET2.5I},
and taking limits at $t\to\infty$.

Using the convexity of $\beta\mapsto\Lambda_\beta$,
we write \cref{ET2.8A} as
\begin{equation}\label{ET2.8D}
\frac{\mu^*_\beta(f\,\widetilde\Psi_\epsilon)}{\mu^*_\beta(\widetilde\Psi_\epsilon)}
\;\le\; \frac{\Lambda_\beta-\Lambda_{\beta-\epsilon}}{\epsilon}
\;\le\; \frac{\Lambda_{\beta+\epsilon}-\Lambda_\beta}{\epsilon}
\;\le\; \mu^*_{\beta+\epsilon}(f)\,.
\end{equation}

Fix an open ball $\sB\subset\Rd$, such that
\begin{equation}\label{ET2.8E}
\Lambda_{\beta-2\epsilon_1} - \Lambda_{\beta-\epsilon}
+(2\epsilon_1-\epsilon) f(x)\;\le\;-\delta\;<\;0 
\qquad\forall\,\epsilon\in[-\epsilon_1,\epsilon_1]\,,\ \ \forall\,x\in\sB^c\,.
\end{equation}
This is clearly possible since $\epsilon\mapsto\Lambda_{\beta-\epsilon}$
is nonincreasing, $\Lambda_{\beta-2\epsilon_1}<\Lambda_{\beta-\epsilon_1}$,
 and $f$ vanishes at infinity.
Let $\uuptau=\uptau(\sB^c)$.
Since the ground state process corresponding to $\Lambda_{\beta-\epsilon}$
is exponentially ergodic for $\epsilon<\epsilon_1$ by \cref{T2.5},
we have
\begin{equation}\label{ET2.8F}
\Psi^*_{\beta-\epsilon}(x) \;=\;\Exp_x\Bigl[\E^{\int_0^{\uuptau}
[(\beta-\epsilon) f(X_s)-\Lambda_{\beta-\epsilon}]\, \D{s}}\,
\Psi^*_{\beta-\epsilon}(X_{\uuptau})\,\Ind_{\{\uuptau<\infty\}}\Bigr]\qquad
\forall\,\epsilon\in[-\epsilon_1,\epsilon_1]
\end{equation}
by \cref{L2.7}.
Since $\Psi^*_{\beta-\epsilon}$ and its inverse are bounded on $\sB$, uniformly
in $\epsilon\in[-\epsilon_1,2\epsilon_1]$, it follows from \cref{ET2.8E,ET2.8F}
that there exists $\kappa$ such that
$\Psi^{*}_{\beta-\epsilon}\le \kappa \Psi^*_{\beta-2\epsilon_1}$
for all $\epsilon\in[-\epsilon_1,\epsilon_1]$.
Therefore, since the collection $\bigl\{\Psi^*_{\beta-\epsilon}\,,\,
\epsilon\in[-\epsilon_1,\epsilon_1]\bigr\}$, is bounded in 
$\Ccl^{1,\alpha}(\sB)$, $\alpha>0$, we can use \cref{ET2.8F} and the dominated
convergence theorem to conclude that $\widetilde\Psi_\epsilon\to1$
as $\epsilon\searrow0$.
Thus, one more application of the dominated convergence theorem shows that
$\mu^*_\beta(f\,\widetilde\Psi_\epsilon)\to \mu^*_\beta(f)$ and
$\mu^*_\beta(\widetilde\Psi_\epsilon)\to 1$ as $\epsilon\searrow0$.
This shows that
\begin{equation}\label{ET2.8G}
\lim_{\epsilon\searrow0}\;
\frac{\mu^*_\beta(f\,\widetilde\Psi_\epsilon)}{\mu^*_\beta(\widetilde\Psi_\epsilon)}
\;=\; \mu^*_\beta(f)\,.
\end{equation}

We next study the term $\mu^*_{\beta+\epsilon}(f)$.
Let $\widetilde\Exp_x^{*,\epsilon}$ denote the expectation operator for the ground
state diffusion corresponding to $\Lambda_{\beta+\epsilon}$.
Since
\begin{equation*}
\widetilde\Lg^{\psi^*_{\beta+\epsilon}}\tfrac{\Psi^*_{\beta-2\epsilon_1}}
{\Psi^*_{\beta+\epsilon}}\;=\;
\bigl(\Lambda_{\beta-2\epsilon_1}
-\Lambda_{\beta+\epsilon} - (2\epsilon_1+\epsilon)f\bigr)\,
\tfrac{\Psi^*_{\beta-2\epsilon_1}}{\Psi^*_{\beta+\epsilon}}\,,
\end{equation*}
it follows by an estimate similar to \cref{ET2.8E} that
$\widetilde\Exp_x^{*,\epsilon} [\E^{\kappa\uuptau}]\;\le\;
\tfrac{\Psi^*_{\beta-2\epsilon_1}}{\Psi^*_{\beta+\epsilon}}(x)$
for all $x\in\sB^c$ (see also \cref{T3.1} in \cref{S3}).

We claim that
\begin{equation}\label{ET2.8H}
\inf_{\epsilon\in[0,\epsilon_1]}\,\mu^*_{\beta+\epsilon}(\sB)\;>\;0\,.
\end{equation}
Indeed, let $\Tilde\sB$ be a larger ball such that $\Bar\sB\subset\Tilde\sB$.
It suffices to exhibit the result for $\Tilde\sB$.
For some positive constants $\delta_i$, $i=1,2,3$, we have
\begin{equation*}
\sup_{\epsilon\in[0,\epsilon_1]}\;\sup_{x\in\partial\Tilde\sB}\;
\widetilde\Exp_x^{*,\epsilon} [\uuptau] \;\le\; \kappa^{-1}
\sup_{\epsilon\in[0,\epsilon_1]}\;\sup_{x\in\partial\Tilde\sB}\;
\tfrac{\Psi^*_{\beta-2\epsilon_1}}{\Psi^*_{\beta+\epsilon}}(x)
\;=:\; \delta_1 <\infty\,,
\end{equation*}
and also (see \cite[Theorem~2.6.1]{book})
\begin{equation*}
0\;<\; \delta_2 \;\le\;
\inf_{\epsilon\in[0,\epsilon_1]}\;\sup_{x\in\partial\sB}\;
\widetilde\Exp_x^{*,\epsilon} [\uptau(\Tilde\sB^c)] \;\le\;
\sup_{\epsilon\in[0,\epsilon_1]}\;\sup_{x\in\partial\sB}\;
\widetilde\Exp_x^{*,\epsilon} [\uptau(\Tilde\sB^c)]
\;\le\;  \delta_3 \;<\;\infty\,.
\end{equation*}
We use the inequality
$\mu^*_{\beta+\epsilon}(\Tilde\sB)\ge
\frac{\delta_2}{\delta_1+\delta_3}$,
which follows from the well-known characterization of invariant probability
measures due to Has$^{_{^{^\prime}}}\!$minski\u{\i} \cite[Theorem~2.6.9]{book},
and which establishes the claim.

It follows from \cref{ET2.8H} that the corresponding densities
$\eta^*_{\beta+\epsilon}$ are
locally bounded and also bounded away from $0$
uniformly in $\epsilon\in[0,\epsilon_1]$ by the Harnack inequality
(see proof of equation (3.2.6) in \cite{book}).
Therefore, standard pde estimates of the Fokker--Planck equation
show that this family of densities is locally H\"older equicontinuous
\cite[Theorem~8.24, p.~202]{GilTru}.
Given any $\theta\in(0,1)$ we may enlarge $\sB$ so that
$\mu^*_\beta(\sB)\ge 1-\theta$ and $\abs{f}\le\theta$ on $\sB^c$.
Let $\Bar\eta_\beta$ be the (uniform) limit of
$\eta^*_{\beta+\epsilon_n}$ on $\sB$ along some subsequence $\epsilon_n\searrow0$.
Since $\grad\psi^*_{\beta-\epsilon}$ is H\"older equicontinuous on $\sB$,
uniformly in $\epsilon\in[-\epsilon_1,\epsilon_1]$ as argued earlier,
it follows that $\Bar\eta_\beta$ is strictly positive on $\Bar\sB$.
It is straightforward to show then that $\Bar\eta_\beta$ is a positive solution of
the Fokker--Planck  equation for the (adjoint of the) operator
$\Lg + 2\langle a\grad\psi^*_\beta,\grad\rangle$.
By the uniqueness of the invariant probability measure we have
$\Bar\eta_\beta = C \eta^*_\beta$ for some positive constant $C$.
Since $\int_{\sB}\Bar\eta_\beta(x)\,\D x\le1$, we have
$C\le(1-\theta)^{-1}$.
Thus, since $\sup_{\epsilon\in[0,\epsilon_1]}\,
\norm{\eta^*_{\beta+\epsilon}}_\infty<\infty$, and $\abs{f}<\theta$ on $\sB^c$,
by Fatou's lemma we obtain
\begin{align*}
\limsup_{n\to\infty}\;\mu^*_{\beta+\epsilon_n}(f)
& \;\le\; \limsup_{n\to\infty}\; \int_\sB f(x) \eta^*_{\beta+\epsilon_n}(x)\,\D x
+ \theta  \\[5pt]
& \;\le\; \int_\sB f(x) \Bar\eta_\beta(x)\,\D x + \theta\\[5pt]
& \;\le\; (1-\theta)^{-1}\,\int_\sB f(x)\eta^*_\beta\,\D x + \theta\\[5pt]
&\;\le\; (1-\theta)^{-1}\,\mu^*_\beta(f)+ \theta\,.
\end{align*}
Since $\theta$ can be selected arbitrarily close to $0$,
we obtain from \cref{ET2.8D} that
$\lim_{\epsilon\searrow0}\,\frac{\Lambda_{\beta+\epsilon}-\Lambda_\beta}{\epsilon}
\le \mu^*_\beta(f)$.
Combining this with \cref{ET2.8F,ET2.8G}
we obtain \cref{ET2.8B}.
\qed\end{proof}

\section{Exponential ergodicity and strict monotonicity of principal eigenvalues}
\label{S3}

In this section we show that exponential ergodicity of \cref{E2.1}
is a sufficient condition for the strict monotonicity of the principal eigenvalue.
In \cite{Ichihara-11, Kaise-06}
exponential ergodicity is used to obtain results similar to \cref{T2.1}.
In these studies the coefficients
$a$, $b$, and $f$ are assumed to be $\Cc^2$, and this assumption seems hard
to waive as the technique used relies on a gradient estimate
(see \cite[Theorem~3.1]{Ichihara-11} and \cite[Lemma~2.4]{Kaise-06})
which is not available for less regular coefficients.
Our approach has allowed us to obtain the results in
\cref{S2} under much weaker hypotheses
on the coefficients.
Under some additional hypotheses, we show in this section that $\lamstr(f)=\sE(f)$.
Recall the definition of $\lambda''(f)$ in \cref{E-l''}.
It is straightforward to show that $\lambda''(f)\ge\sE(f)$.
We present an example where $\lamstr(f)<\sE(f)$, and therefore
also $\lamstr(f)<\lambda''(f)$.

\begin{example}\label{E3.1}
Let $\phi:\RR\to\RR_{+}$ be a smooth function which is strictly
positive on $[-1,1]$ and satisfies
$\phi(x) =\E^{-\frac{1}{2}\abs{x}}$ for $\abs{x}\ge1$.
Define
\begin{equation*}
f(x) \;\df\; -\frac{1}{\phi(x)}\,
\bigl(\phi''(x) + \sign(x)\, \phi'(x) - \phi(x)\bigr)\,.
\end{equation*}
Then $f(x) = \frac{5}{4}$ for $\abs{x}\ge1$, and
\begin{equation}\label{EE3.1A}
\phi''(x) +\sign(x)\, \phi'(x) + f(x)\phi(x) \;=\; \phi(x)\,.
\end{equation}

Consider the one-dimensional controlled diffusion
\begin{equation}\label{EE3.1B}
\D X_t \;=\; \sign(X_t)\,\D{t} + \sqrt{2}\, \D{W}_t\,.
\end{equation}
From \cref{EE3.1A} and \cref{L2.2}\,(ii) we have $\lamstr(f)\le 1$.
It is clear that \cref{EE3.1B}
is a transient process. Therefore, for any initial data $x$
\begin{equation*}
\sE_x(f)\;\ge\; \limsup_{T\to\infty}\; \frac{1}{T}\,
\Exp_x\biggl[\int_0^T f(X_t)\, \D{t}\biggr]\;=\;\frac{5}{4}\,.
\end{equation*}
Hence $\lamstr(f)<\sE(f)$.
\end{example}

\begin{remark}
\cref{E3.1} presents a
case where the conclusion of \cite[Theorem~1.9]{Berestycki-15}
fails to hold.
Since the operator $\Lg$ in this example is uniformly elliptic,
has bounded coefficients,
and $d=1$, the only aspect that makes it different from the class of operators in
part (i) of \cite[Theorem~1.9]{Berestycki-15}, is that it is not self-adjoint.
\end{remark}

Let us start by summarizing some equivalent characterizations
of exponential ergodicity.

\begin{theorem}\label{T3.1}
The following are equivalent.
\begin{enumerate}[(a)]
\item
For some ball $\sB_\circ$ there exists $\delta_\circ>0$ 
and $x_\circ\in \Bar{\sB}_\circ^c$ such
that $\Exp_{x_\circ}[\E^{\delta_\circ\, \uptau(\sB^c_\circ)}]<\infty$.

\item
For every ball $\sB$ there exists $\delta>0$ such
that $\Exp_{x}[\E^{\delta\, \uptau(\sB^c)}]<\infty\;$ for all $x\in\sB^c$.

\item
For every ball $\sB$, there exists
a positive function $\Lyap\in\Sobl^{2,d}(\Rd)$, with $\inf_\Rd\,\Lyap>0$, and positive
constants $\kappa_0$ and $\delta$
such that
\begin{equation}\label{ET3.1A}
\Lg\Lyap(x) \;\le\; \kappa_0\,\Ind_{\sB}(x) - \delta\Lyap(x)\qquad\forall\,x\in\Rd\,.
\end{equation}

\item
\Cref{E2.1} is recurrent, and
 $\lamstr(\Ind_{\sB^c})<1$ for every ball $\sB$.
\end{enumerate}
\end{theorem}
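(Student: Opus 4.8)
The plan is to prove the cycle $(a)\Rightarrow(b)\Rightarrow(c)\Rightarrow(d)\Rightarrow(b)$, and to observe that $(b)\Rightarrow(a)$ is immediate (take $\sB_\circ=\sB$ and any $x_\circ$). The three implications $(b)\Rightarrow(c)\Rightarrow(d)\Rightarrow(b)$ establish the ball-by-ball equivalence of (b), (c), (d) by PDE and martingale arguments, and the only genuinely probabilistic work is in $(a)\Rightarrow(b)$, which is where I expect the main difficulty.

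For $(b)\Rightarrow(c)$, fix a ball $\sB$ and let $\delta>0$ be as in (b). Put $\Lyap_0(x)\df\Exp_x\bigl[\E^{\delta\,\uptau(\sB^c)}\bigr]$ for $x\in\Bar\sB^c$; by the standard regularity theory for such exponential exit functionals (the Feynman--Kac analogue of the Dirichlet problem, see \cite{book}), $\Lyap_0\in\Sobl^{2,p}(\Bar\sB^c)$ for all $p$, is $\ge1$, equals $1$ on $\partial\sB$, and solves $\Lg\Lyap_0+\delta\Lyap_0=0$ in $\Bar\sB^c$; gluing a smooth positive function inside $\sB$ produces $\Lyap\in\Sobl^{2,d}(\Rd)$ with $\inf_\Rd\Lyap>0$, $\Lg\Lyap+\delta\Lyap=0$ on $\sB^c$, and $\Lg\Lyap+\delta\Lyap\le\kappa_0$ on $\sB$ with $\kappa_0\df\norm{(\Lg\Lyap+\delta\Lyap)^+}_{L^\infty(\sB)}$, which is (c). For $(d)\Rightarrow(b)$, fix a ball $\sB$; since $\lamstr(\Ind_{\sB^c})<1$, \cref{C2.1} furnishes a $\lambda\in[\lamstr(\Ind_{\sB^c}),1)$ and a positive $\varphi\in\Sobl^{2,d}(\Rd)$ with $\Lg\varphi+(\Ind_{\sB^c}-\lambda)\varphi=0$, hence $\Lg\varphi+\delta\varphi=0$ on $\sB^c$ with $\delta\df1-\lambda>0$; applying the It\^o--Krylov formula to $\E^{\delta s}\varphi(X_s)$ stopped at $\uuptau\df\uptau(\sB^c)$ and localised at $\uptau_n$, then letting $n\to\infty$ and $s\to\infty$ using recurrence ($\uuptau<\infty$ a.s.), Fatou's lemma, and path continuity ($X_\uuptau\in\partial\sB$), gives $\bigl(\min_{\partial\sB}\varphi\bigr)\Exp_x\bigl[\E^{\delta\uuptau}\bigr]\le\varphi(x)<\infty$ for all $x\in\sB^c$, which is (b). The same computation applied to the function $\Lyap$ of (c), for which $\Lg\Lyap+\delta\Lyap\le0$ on $\sB^c$ and $\inf_\Rd\Lyap>0$, forces $\Exp_x\bigl[\E^{\delta(s\wedge\uuptau)}\bigr]\le\Lyap(x)/\inf_\Rd\Lyap$ for all $s$, hence $\uuptau<\infty$ a.s.\ for every ball, i.e.\ \cref{E2.1} is recurrent; this is the recurrence half of $(c)\Rightarrow(d)$.

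For the eigenvalue half of $(c)\Rightarrow(d)$, first note that recurrence yields $\lamstr(0)=0$: the constant $\Psi\equiv1$ is an eigenfunction of $\Lg$ for $f=0$ and $\lambda=0$ satisfying the stochastic representation in \cref{C2.3} (as $\Prob_x(\uuptau<\infty)=1$), so $0=\lamstr(0)$. Now fix a ball $\sB$, take $\Lyap,\kappa_0,\delta$ as in (c) with $\delta<1$ after shrinking $\delta$ if necessary, and set $M\df\kappa_0/\inf_\sB\Lyap>0$. Since $\kappa_0\Ind_\sB\le M\,\Lyap\,\Ind_\sB$, inequality (c) reads $\Lg\Lyap+\bigl((-M\Ind_\sB)-(-\delta)\bigr)\Lyap\le0$, whence $\lamstr(-M\Ind_\sB)\le-\delta<0$ by \cref{L2.2}\,(ii) and \cref{D2.1A}. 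The map $t\mapsto\lamstr(-t\,\Ind_\sB)$ is convex, non-increasing, and vanishes at $t=0$ (monotonicity, convexity, and finiteness following from the variational characterisation of $\lamstr$ and $\lamstr(0)=0$); these properties give $\lamstr(-\Ind_\sB)<0$ — if $M\le1$ by monotonicity, and if $M>1$ by convexity against the points $0$ and $M$. By translation equivariance of $\lamstr$ we get $\lamstr(\Ind_{\sB^c})=1+\lamstr(-\Ind_\sB)<1$, completing $(c)\Rightarrow(d)$.

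The remaining step $(a)\Rightarrow(b)$ is the main obstacle. From (a), set $\Lyap_n(x)\df\Exp_x\bigl[\E^{\delta_\circ(\uptau(\sB_\circ^c)\wedge\uptau_n)}\bigr]$; each $\Lyap_n$ is finite, solves $\Lg\Lyap_n+\delta_\circ\Lyap_n=0$ in $B_n\setminus\Bar\sB_\circ$ with boundary value $1$, and $\Lyap_n\nearrow\Exp_\cdot\bigl[\E^{\delta_\circ\uptau(\sB_\circ^c)}\bigr]$. Since $\Lyap_n(x_\circ)\le\Exp_{x_\circ}\bigl[\E^{\delta_\circ\uptau(\sB_\circ^c)}\bigr]<\infty$, Harnack's inequality for this class of functions (\cite[Corollary~2.2]{AA-Harnack}, used in the same way in the proof of \cref{L2.7}) shows $\{\Lyap_n\}$ is locally bounded on $\Bar\sB_\circ^c$; hence $\Exp_x\bigl[\E^{\delta_\circ\uptau(\sB_\circ^c)}\bigr]<\infty$ for every $x\in\sB_\circ^c$, which is (b) for $\sB_\circ$, and for every larger ball $\sB\supseteq\sB_\circ$ it follows from $\uptau(\sB^c)\le\uptau(\sB_\circ^c)$ on $\sB^c$. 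The delicate case is a smaller ball $\sB\subset\subset\sB_\circ$: one decomposes the path up to its first hit of $\Bar\sB$ into alternating excursions — exits of a bounded annulus $\sB_1\setminus\Bar\sB$ with $\Bar\sB_\circ\subset\sB_1$ (these have a uniform exponential moment and a uniform positive probability of ending on $\Bar\sB$), followed by returns from $\partial\sB_1$ to $\partial\sB_\circ$ controlled by (b) for $\sB_\circ$ — and since the number of unsuccessful excursions is dominated by a geometric random variable, the resulting sum has an exponential moment of some order $\delta'>0$, giving $\Exp_x\bigl[\E^{\delta'\uptau(\sB^c)}\bigr]<\infty$. This independence of the reference ball for exponential ergodicity is essentially the content of \cite{book}, and is the only place where a probabilistic excursion argument rather than a PDE estimate is needed.
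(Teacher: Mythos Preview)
Your proof is correct, but it follows a different cycle from the paper's. The paper proves $(a)\Rightarrow(d)\Rightarrow(c)\Rightarrow(b)$; you prove $(a)\Rightarrow(b)$, $(b)\Rightarrow(c)\Rightarrow(d)\Rightarrow(b)$. The substantive difference lies in how the passage from the single-point hypothesis (a) to a statement valid for \emph{every} ball is handled.

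You correctly identify $(a)\Rightarrow(b)$ for small balls $\sB\subsetneq\sB_\circ$ as the crux, and you treat it by a probabilistic excursion argument (geometric number of unsuccessful attempts to hit $\Bar\sB$, each cycle having a uniform exponential moment). This is standard and correct, though only sketched. The paper sidesteps this decomposition entirely by going $(a)\Rightarrow(d)$ directly via a maximum-principle contradiction: assuming $\lamstr(\Ind_{\sB^c})=1$ for some $\sB\subset\sB_\circ$, the limiting eigenfunction $\Psi$ satisfies $\Lg\Psi=\Ind_\sB\,\Psi\ge0$ on $\Rd$; the technique of \cref{L2.2} (with $\Exp_x[\E^{\delta_\circ\uptau(\sB_\circ^c)}]$ as dominating function, its finiteness extended from $x_\circ$ to all $x$ by Harnack exactly as you do) yields $\Psi(x)=\Exp_x\bigl[\Psi(X_{\uptau(\sB_\circ^c)})\bigr]$ on $\sB_\circ^c$, so $\Psi$ attains its global maximum on the compact set $\Bar\sB_\circ$; the strong maximum principle then forces $\Psi$ constant, contradicting $\Lg\Psi>0$ on $\sB$. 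This PDE route is shorter and avoids the excursion bookkeeping altogether.

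Your $(c)\Rightarrow(d)$ via $\lamstr(0)=0$ (from \cref{C2.3}, since the constant $1$ is an eigenfunction with recurrent twisted process) together with convexity of $f\mapsto\lamstr(f)$ is a pleasant alternative, though it invokes more machinery than the paper's one-line $(d)\Rightarrow(c)$, which simply takes the principal eigenfunction of $\Lg^{\Ind_{\sB^c}}$ as the Lyapunov function (it is bounded below by \cite[Lemma~2.1]{ari-anup}). Your $(b)\Rightarrow(c)$ and $(d)\Rightarrow(b)$ are essentially the paper's $(c)\Rightarrow(b)$ run in reverse and forward, respectively. The shrinking of $\delta$ below $1$ in your $(c)\Rightarrow(d)$ is unnecessary but harmless.
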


\begin{proof}
We first show that
(a)$\,\Rightarrow\,$(d).
It is clear that (a) implies that \cref{E2.1} is positive recurrent, and that
it is enough to prove that $\lamstr(\Ind_{\sB^c})<1$ for any $\sB\subset\sB_\circ$.
Let $f= \Ind_{\sB^c}$, and consider the Dirichlet eigensolutions 
$(\widehat\Psi_{n},\Hat\lambda_{n})$ in \cref{EL2.1A}.
It is easy to see that $\Hat\lambda_{n}<1$ for all $n$.
We claim that $\lamstr(f)<1$.
If not, then $\Hat\lambda_{n}\nearrow1$ as $n\to\infty$,
and $\widehat\Psi_{n}$ converges to some $\Psi\in\Sobl^{2,p}(\Rd)$, $p\ge d$,
which satisfies $\Lg\Psi= \Ind_{\sB}\Psi$ on $\Rd$ and $\Psi(0)=1$.
The same argument used in the proof of
\cref{L2.2} then shows that
$\Psi(x)=\Exp_x\bigl[\Psi(X_{\uptau(\sB^c_\circ)}\bigr]$.
Therefore, $\Psi$ attains a maximum on $\Bar\sB_\circ$, and by the strong maximum
principle it must be constant.
Thus $\Lg\Psi=0$ which contradicts the fact that $\Psi(0)=1$.

Next we show that (d)$\,\Rightarrow\,$(c).
If $\lamstr(f)<1$, for $f= \Ind_{\sB^c}$, then any limit point
$\Psi$ of the Dirichlet
eigenfunctions $\widehat\Psi_{n}$ as $n\to\infty$ satisfies
\begin{equation*}
\Lg\Psi\;=\; \Ind_{\Bar\sB}\Psi- \bigl(1-\lamstr(f)\bigr)\,\Psi
\;\le\; \Bigl(\sup_{\sB}\,\Psi\Bigr)\Ind_{\Bar\sB}
- \bigl(1-\lamstr(f)\bigr)\,\Psi\,.
\end{equation*}
Also by \cite[Lemma~2.1\,(c)]{ari-anup},
we have $\inf_\Rd\,\Psi = \min_{\Bar\sB}\,\Psi>0$.
Thus (c) holds with $\delta=1-\lamstr(f)$.

That (c)$\,\Rightarrow\,$(b) is well known, and can be shown by
a standard application of the It\^o--Krylov formula to \cref{ET3.1A}, by which we
obtain
\begin{align*}
\biggl(\inf_\Rd\;\Lyap\biggr)\,\Exp_{x}\Bigl[\E^{\delta
(\uptau(\sB^c)\wedge \uptau_R)}\Bigr]
- \Lyap(x)&\;\le\; \Exp_{x}\Bigl[\E^{\delta(\uptau(\sB^c)\wedge\uptau_R)}
\Lyap(X_{\uptau(\sB^c)\wedge\uptau_R})\Bigr] - \Lyap(x)\\
&\;\le\; \Exp_{x}\biggl[\int_{0}^{\uptau(\sB^c)\wedge\uptau_R}
\Bigl(\delta\E^{\delta t}\,\Lyap(X_t)
+ \E^{\delta t}\Lg\Lyap(X_t)\Bigr)\,\D{t}\biggr]\;\le\; 0\,.
\end{align*}
The result then follows by letting $R\to\infty$, and this completes the proof.
\qed\end{proof}

We introduce the following hypothesis.
\begin{itemize}
\item[\textbf{(H2)}]
There exists a lower-semicontinuous, inf-compact function
$\ell:\Rd\to[0, \infty)$ such that
$\sE(\ell)<\infty$, where $\sE(\cdot)$ is as defined in \cref{E-sE}.
\end{itemize}

\begin{lemma}
Under \textup{(H2)}, we have $\sE(\ell)=\lamstr(\ell)$,
and there exists a positive $V\in\Sobl^{2,p}(\Rd)$, $p\ge d$,
with $\inf_{\Rd} V>0$, and $V(0)=1$, satisfying
\begin{equation}\label{EL3.1A}
\Lg V + \ell\, V \;=\; \lamstr(\ell)\, V \quad \text{a.e. in\ } \Rd\,.
\end{equation}
In particular, the unique strong solution of \cref{E2.1} is exponentially
ergodic.
\end{lemma}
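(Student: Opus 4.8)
The plan is to recognize $\ell$ as a near-monotone potential and then obtain every assertion from the results of \cref{S2} together with the Foster--Lyapunov characterization of exponential ergodicity in \cref{T3.1}. First I would check that (H1) holds for $f=\ell$: by \cite[Lemma~2.3]{ari-anup} we have $\lamstr(\ell)\le\sE(\ell)$, which is finite by (H2), while by \cref{L2.1} (monotonicity of $\Hat\lambda_r$ in the potential, together with its monotonicity in $r$) we have $\lamstr(\ell)\ge\Hat\lambda_1(0)>-\infty$; hence $\lamstr(\ell)\in\RR$. Since $\ell\ge0$ is inf-compact, $\lim_{\abs{x}\to\infty}\ell(x)=\infty>\lamstr(\ell)$, so $\ell$ is near-monotone relative to $\lamstr(\ell)$ in the sense of \cite{ari-anup}.

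Exactly as in the proof of \cref{T2.4}, applying \cite[Lemma~2.1]{ari-anup} to this near-monotone potential yields the identity $\sE(\ell)=\lamstr(\ell)$. For the eigenfunction, \cref{C2.1} with $\lambda=\lamstr(\ell)$ provides a positive $V\in\Sobl^{2,p}(\Rd)$, $p\ge d$, solving \cref{EL3.1A}; after normalizing $V(0)=1$ and choosing a ball $\sB$ such that $\ell>\lamstr(\ell)$ on $\sB^c$, the lower bound $\inf_\Rd V=\min_{\Bar\sB}V>0$ follows from \cite[Lemma~2.1\,(c)]{ari-anup} --- invoked in precisely this fashion in the proof of \cref{T3.1}. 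This settles the first two assertions.

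For exponential ergodicity of \cref{E2.1} I would rewrite \cref{EL3.1A} as $\Lg V=\bigl(\lamstr(\ell)-\ell\bigr)V$. Enlarging $\sB$ so that $\ell-\lamstr(\ell)\ge1$ on $\sB^c$ --- possible by inf-compactness of $\ell$ --- and using that $\bigl(\lamstr(\ell)-\ell\bigr)V$ is bounded on $\Bar\sB$ while $\inf_\Rd V>0$, one produces positive constants $\kappa_0,\delta$ with $\Lg V\le\kappa_0\Ind_\sB-\delta V$ on $\Rd$. This is condition (c) of \cref{T3.1} for the ball $\sB$, so the It\^o--Krylov argument used there to prove (c)$\,\Rightarrow\,$(b) gives $\Exp_x\bigl[\E^{\delta\uptau(\sB^c)}\bigr]<\infty$ for all $x\in\sB^c$; that is exactly exponential ergodicity of \cref{E2.1} in the sense of \cref{D2.2}.

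The only delicate point is the double appeal to \cite[Lemma~2.1]{ari-anup}, both for $\sE(\ell)=\lamstr(\ell)$ and for the strictly positive lower bound on the ground state. One must confirm that the lower-semicontinuous, inf-compact $\ell$ is an admissible potential for that lemma (local boundedness, or the weaker $\Lpl^p$-regularity, being implicit in (H2)), and that near-monotonicity is invoked there only relative to $\lamstr(\ell)$. Granting this, the finiteness of $\lamstr(\ell)$, the passage from the eigenvalue equation to the Foster--Lyapunov inequality, and the invocation of \cref{T3.1} are all routine.
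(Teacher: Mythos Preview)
Your proposal is correct and follows essentially the same route as the paper. The paper's own proof also reduces everything to citations from \cite{ari-anup} (Theorem~1.4 there for $\sE(\ell)=\lamstr(\ell)$ and Lemma~2.1 for the eigenfunction $V$ with $\inf_\Rd V>0$) and then deduces exponential ergodicity from \cref{EL3.1A} via \cref{T3.1} exactly as you do; the one additional step the paper inserts is to first note, via Jensen's inequality and inf-compactness of $\ell$, that \cref{E2.1} is positive recurrent --- a point you bypass but which is not essential given the near-monotone framework you invoke.
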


\begin{proof}
By \cref{E2.3} we have
\begin{equation}\label{EL3.1B}
\sE_x(\ell)\;\ge\;\limsup_{T\to\infty}\; \frac{1}{T}\,
\Exp_x\biggl[\int_0^T \ell(X_s)\, \D{s}\biggr]\,.
\end{equation}
Since $\sE(\ell)=\inf_{\Rd}\sE_x(\ell)$, \cref{EL3.1B} implies that
\begin{equation*}\limsup_{T\to\infty}\; \frac{1}{T}\,
\Exp_x\biggl[\int_0^T \ell(X_s)\, \D{s}\biggr]\;<\;\infty\end{equation*}
for some $x\in\Rd$.
The inf-compactness of $\ell$ then implies that
the unique strong solution of \cref{E2.1} is positive recurrent.
That $\sE(\ell)=\lamstr(\ell)$, and the existence
of a solution $V$ then follow by Theorem~1.4 and Lemma~2.1 in \cite{ari-anup},
respectively.
Exponential ergodicity then follows from \cref{EL3.1A}, using \cref{T3.1}.
\qed\end{proof}

An application of the It\^{o}--Krylov formula to \cref{EL3.1A},
followed by Fatou's lemma, shows that
\begin{equation}\label{E3.6}
\Exp_{x}\Bigl[\E^{\int_{0}^{\uuptau_r}
[\ell(X_{t})-\lamstr(\ell)]\,\D{t}}\, V (X_{\uuptau})\Bigr]\;\le\; V(x)
\qquad \forall\,x\in B^c_r\,,\ \forall\, r>0\,,
\end{equation}
where $\uuptau_r$, as defined earlier, denotes the first
hitting time of the ball $B_r$.

The next result shows that (H2) implies (P1).

\begin{theorem}\label{T3.2}
Assume \textup{(H2)}, and suppose that $f$ is a potential such that
$\ell-f$ is inf-compact.
Then for any continuous $h\in\Cc_{\mathrm{o}}^+(\Rd)$ we have
\begin{equation*}
\lamstr(f-h)\;<\;\lamstr(f)\;=\;\sE_x(f)\quad\forall\,x\in\Rd\,.
\end{equation*}
\end{theorem}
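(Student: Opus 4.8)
The plan is to build, in this order: (i) finiteness of $\lamstr(f)$, so that all of \cref{S2} applies; (ii) the finite-time stochastic representation of a ground state $\Psi^*$ of $\Lg^f$, which forces the ground-state diffusion $Y^*$ to be regular; (iii) exponential ergodicity of $Y^*$, which through \cref{T2.2} yields $\lamstr(f-h)<\lamstr(f)$ for every $h\in\Cc_{\mathrm{o}}^+(\Rd)$; and (iv) the identity $\lamstr(f)=\sE_x(f)$. The comparison function throughout is the eigenfunction $V$ of $\Lg^\ell$ supplied by the preceding lemma, which satisfies $\Lg V+\ell V=\lamstr(\ell)V$, $\inf_\Rd V>0$, $V(0)=1$, and \cref{E3.6}. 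For (i): since $\ell-f$ is inf-compact it is bounded below, so $f\le\ell+c_0$ for some constant $c_0$; the monotonicity and translation properties of $\lamstr$ then give $\lamstr(f)\le\lamstr(\ell)+c_0<\infty$, while $\lamstr(f)\ge\inf_\Rd f>-\infty$ (here $\lamstr(0)=0$, since \cref{E2.1} is recurrent under \textup{(H2)}). Thus \textup{(H1)} holds.

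For (ii), let $(\widehat\Psi_n,\Hat\lambda_n)$ be the Dirichlet eigensolutions and $\Psi^*$ a ground state obtained as a limit of the $\widehat\Psi_n$ (cf.\ \cref{L2.2,C2.1}). Put $w_n\df\widehat\Psi_n/V$ on $B_n$; a straightforward computation (the ground-state transform) gives $\widetilde\Lg^{\log V}w_n=\bigl[(\ell-f)-\lamstr(\ell)+\Hat\lambda_n\bigr]w_n$ on $B_n$. Since $\Hat\lambda_n\nearrow\lamstr(f)$ and $\ell-f\to\infty$, this coefficient is positive outside a fixed ball $\sB$ for all large $n$, so $w_n$ is $\widetilde\Lg^{\log V}$-subharmonic on $B_n\setminus\Bar\sB$ and vanishes on $\partial B_n$; the maximum principle forces $w_n\le\max_{\Bar\sB}w_n$ on $B_n$, and Harnack's inequality (with $\widehat\Psi_n(0)=1$ and $\inf_\Rd V>0$) bounds $\max_{\Bar\sB}w_n$ by a constant $\kappa_1$ independent of $n$. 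Hence $\widehat\Psi_n\le\kappa_1 V$ on $B_n$, and $\Psi^*\le\kappa_1 V$. Applying the It\^o--Krylov formula to $\Lg\widehat\Psi_n+(f-\Hat\lambda_n)\widehat\Psi_n=0$ and letting $n\to\infty$ by dominated convergence --- the integrand being dominated, via $\widehat\Psi_n\le\kappa_1 V$ and $f\le\ell+c_0$, by a fixed multiple of $\E^{\int_0^T\ell(X_s)\,\D s}V(X_T)$, which is $\Exp_x$-integrable because $\Exp_x\bigl[\E^{\int_0^T\ell(X_s)\,\D s}V(X_T)\bigr]\le\E^{\lamstr(\ell)T}V(x)$ (the $T$-analogue of \cref{E3.6}, obtained from \cref{EL3.1A} by It\^o--Krylov and Fatou) --- we obtain
\begin{equation*}
\Psi^*(x)\;=\;\Exp_x\Bigl[\E^{\int_0^{T}[f(X_s)-\lamstr(f)]\,\D s}\,\Psi^*(X_{T})\Bigr]\qquad\forall\,T>0,\ x\in\Rd.
\end{equation*}
By \cref{C2.2} this says $\widetilde\Prob_x^{\psi^*}(\uptau_\infty<\infty)=0$, i.e., $Y^*$ is regular.

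For (iii), set $\Phi\df V/\Psi^*$. The two eigenvalue equations give $\widetilde\Lg^{\psi^*}\Phi=-g\,\Phi$, where $g\df(\ell-f)-\bigl(\lamstr(\ell)-\lamstr(f)\bigr)$ is inf-compact, hence $\widetilde\Lg^{\psi^*}\Phi\le\kappa\,\Ind_{\sB'}-\delta\Phi$ for some ball $\sB'$ and constants $\kappa,\delta>0$; moreover $\Phi\ge\kappa_1^{-1}>0$ by step (ii). Since $Y^*$ is regular, this exponential Foster--Lyapunov inequality makes $Y^*$ exponentially ergodic, exactly as in the proof of \cref{L2.5}; then \cref{T2.2} yields $\lamstr(f-h)<\lamstr(f)$ for all $h\in\Cc_{\mathrm{o}}^+(\Rd)$. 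For (iv), we already have $\lamstr(f)\le\sE(f)\le\sE_x(f)$ for all $x$ by \cite[Lemma~2.3]{ari-anup}. For the reverse inequality, apply \cref{L2.3} with an increasing sequence $g_m\uparrow1$ in $\Cc_{\mathrm c}(\Rd)$ and use monotone convergence together with the regularity of $Y^*$ to get
\begin{equation*}
\Exp_x\Bigl[\E^{\int_0^{T}[f(X_s)-\lamstr(f)]\,\D s}\Bigr]\;=\;\Psi^*(x)\,\widetilde\Exp_x^{\psi^*}\bigl[(\Psi^*)^{-1}(Y^*_T)\bigr]\,.
\end{equation*}
Bounding $(\Psi^*)^{-1}(Y^*_T)\le(\inf_\Rd V)^{-1}\Phi(Y^*_T)$ and using $\widetilde\Exp_x^{\psi^*}\bigl[\Phi(Y^*_T)\bigr]\le\tfrac{\kappa}{\delta}+\Phi(x)$ (Dynkin's formula and the Foster--Lyapunov inequality, $Y^*$ regular), the right-hand side is bounded uniformly in $T$; hence $\sE_x(f)\le\lamstr(f)$, and $\sE_x(f)=\lamstr(f)$ for every $x$.

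The step I expect to be the crux is (ii): the uniform comparison $\widehat\Psi_n\le\kappa_1 V$ and the resulting finite-time representation of $\Psi^*$. This is where the hypothesis that $\ell-f$ is inf-compact is used decisively, and it is the step that breaks the apparent circularity, since regularity of $Y^*$ must be in hand before the Foster--Lyapunov arguments of (iii)--(iv) can be invoked. Once (ii) is in place, the remainder is a standard application of the ground-state transform together with Foster--Lyapunov theory, with $\Phi=V/\Psi^*$ as the Lyapunov function.
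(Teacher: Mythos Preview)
Your proof is correct, and it takes a genuinely different route from the paper's. The paper works with the \emph{first-hitting-time} representations: using \cref{E3.6} as a dominating function, it passes to the limit in \cref{EL2.2A} to obtain $\Psi^*(x)=\Exp_x\bigl[\E^{\int_0^{\uuptau}[f-\lamstr(f)]\,\D s}\Psi^*(X_{\uuptau})\bigr]$ and the analogous identity for $\Tilde\Psi^*$, and then runs the strong-maximum-principle comparison of the proof of \cref{T2.2} directly on $\Psi^*-\Tilde\Psi^*$ to contradict $\lamstr(f-h)=\lamstr(f)$. For $\sE_x(f)=\lamstr(f)$ it then invokes the pinned multiplicative ergodic theorem (via \cite{Kunita}) together with the Lyapunov inequality $\Lg V+(f-\lamstr(f))V\le\kappa\Ind_{\Tilde\sB}$ to bound $\Exp_x\bigl[\E^{\int_0^T[f-\lamstr(f)]\,\D s}\bigr]$ linearly in $T$. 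By contrast, you go through the \emph{finite-time} representation: your comparison $\widehat\Psi_n\le\kappa_1 V$, obtained from the maximum principle applied to the ground-state transform $w_n=\widehat\Psi_n/V$, lets you pass to the limit by dominated convergence and conclude regularity of $Y^*$ via \cref{C2.2}; then the single Lyapunov function $\Phi=V/\Psi^*$ (bounded below by $\kappa_1^{-1}$) gives exponential ergodicity of $Y^*$ directly, and \cref{T2.2} delivers strict monotonicity; finally the Girsanov identity of \cref{L2.3} plus the Foster--Lyapunov bound $\widetilde\Exp_x^{\psi^*}[\Phi(Y_T^*)]\le\tfrac{\kappa}{\delta}+\Phi(x)$ yields $\sE_x(f)\le\lamstr(f)$. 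Your ordering (regularity $\to$ exponential ergodicity $\to$ strict monotonicity) inverts the paper's (hitting-time representation $\to$ strict monotonicity $\to$ exponential ergodicity via \cref{T2.1}); the payoff is that you never need the separate eigenfunction $\Tilde\Psi^*$ or the contradiction argument, and the uniform bound $\Psi^*\le\kappa_1 V$ drops out along the way, which is of independent interest.
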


\begin{proof}
Let $h\in\Cc_{\mathrm{o}}^+(\Rd)$, and $\Tilde{f}\df f-h$.
It is easy to see that $\sE(f)$ and $\sE(\Tilde{f})$ are both finite.
It is shown in \cite{ari-anup, biswas-11} that the
Dirichlet eigensolutions $(\widehat\Psi_r,\Hat\lambda_r)$ in \cref{EL2.1A}
converge, along some subsequence as $r\to\infty$, to $\bigl(\Psi^*,\lamstr(f)\bigr)$
which satisfies
\begin{equation}\label{ET3.2A}
\Lg\Psi^* + f\,\Psi^* \;=\;\lamstr(f)\,\Psi^*
\quad\text{on\ }\Rd\,,\qquad\text{and} \quad \lamstr(f)\;\le\; \sE_x(f)
\quad\forall\,x\in\Rd\,.
\end{equation}
It is also clear that \cref{L2.2}\,(i) holds for $(\widehat\Psi_n, \Hat\lambda_n)$.
Now choose a bounded ball $\sB$ such that 
\begin{equation*}
\bigl(\abs{f(x)}+\abs{h(x)}\bigr ) + \sup_n\bigl(\abs{\Hat\lambda_n(f)}
+\abs{\Hat\lambda_n(\Tilde{f})}\bigr) + \lamstr(\ell)+ 1 \;<\; \ell(x)
\quad  \forall\,x\in\sB^c\,.
\end{equation*}
This is possible since $\ell-f$ is inf-compact.
In view of \cref{E3.6} we note that \cref{EL2.2F} holds with
$f-h-\lamstr(f-h)$ replaced by $\ell-\lamstr(\ell)$.
Thus with the above choice of $\sB$, we can justify the passing to
the limit in \cref{EL2.2H}, and therefore, we obtain
\begin{equation}\label{ET3.2B}
\Psi^*(x) \;=\; \Exp_{x}
\Bigl[\E^{\int_{0}^{\uuptau}
[f(X_{t})-\lamstr(f)]\,\D{t}}\, \Psi^* (X_{\uuptau})\Bigr]
\qquad\forall\, x\in\sB^c\,,
\end{equation}
with $\uuptau=\uptau(\sB^c)$.
Recall the definition of $\bigl(\Tilde\Psi^*,\lamstr(\Tilde{f})\bigr)$
in \cref{ET2.2B}.
A similar argument also gives
\begin{equation}\label{ET3.2C}
\Tilde\Psi^*(x) \;=\; \Exp_{x}
\Bigl[\E^{\int_{0}^{\uuptau}
[\Tilde{f}(X_{t})-\lambda\str(\Tilde{f})]\,\D{t}}\, \Tilde\Psi^* (X_{\uuptau})\Bigr]
\qquad\forall\, x\in\sB^c\,.
\end{equation}
In fact, the above relations hold for any bounded domain
$D\supset \sB$ with $\uuptau=\uptau(D^c)$.

Suppose that $\lambda\str(f)=\lamstr(\Tilde{f})$.
Then
\begin{equation*}
f(x)-h(x)-\lamstr(\Tilde{f})\;\le\; f(x) - \lambda\str(f)\,.
\end{equation*}
Thus if we multiply $\Psi^*$ with a suitable positive constant such that
$\Psi^*-\Tilde\Psi^*$
is nonnegative in $\sB$ and attains a minimum of $0$ in $\sB$,
it follows from \cref{ET3.2B,ET3.2C} that $\Psi^*-\Tilde\Psi^*$
is nonnegative in $\Rd$.
Since \cref{ET2.2E} holds,
and we conclude exactly as in the proof of
\cref{T2.2} that $\lamstr(\Tilde{f})<\lamstr(f)$.

Next we show that  $\lamstr(f)=\sE_x(f)$ for all $x\in\Rd$.
We have already established the strict monotonicity
of $\lamstr(f)$ at $f$, and therefore, \cref{T2.1} applies.
Hence for any continuous $g$ with compact support we
have from \cite[Theorem~1.3.10]{Kunita} that
\begin{align}\label{ET3.2D}
\Exp_x\Bigl[\E^{\int_{0}^{T}[f(X_{t})-\lamstr(f)]\,\D{t}}\,g(X_{T})\Bigr]
\;=\;\Psi^*(x)\, \widetilde\Exp^{\psi^*}_x\biggl[\frac{g(Y^*_T)}{\Psi^*(Y^*_T)}\biggr]
\;\xrightarrow[T\to\infty]{}\; \Psi^*(x)\,
\mu^*\Bigl(\frac{g}{\Psi^*}\Bigr) \;>\;0\,,
\end{align}
where $\mu^*$ denotes the invariant measure of the twisted process $Y^*$
satisfying \cref{EL2.5A}.
Let $\Tilde\sB$ be a ball such that $f(x)-\lamstr(f)<\ell(x)-\lamstr(\ell)$
for $x\in\Tilde\sB^c$.
Thus from \cref{EL3.1A} we obtain
\begin{equation}\label{ET3.2E}
\Lg V + \bigl(f-\lamstr(f)\bigr) V \;\le\; \kappa \Ind_{\Tilde\sB}\,,
\end{equation}
with $\kappa=\max_{\Tilde\sB}\bigl(\abs{f}+\abs{\ell}+\abs{\lamstr}
+\lamstr(\ell)\bigr)\,V$.
Applying the It\^o--Krylov formula
to \cref{ET3.2E} followed by Fatou's lemma we obtain
\begin{align*}
\Bigl(\min_{\Rd} V\Bigr)\,
\Exp_x\bigl[\E^{\int_{0}^{T}[f(X_{t})-\lamstr(f)]\,\D{t}}\bigr]
& \;\le\; \Exp_x\bigl[\E^{\int_{0}^{T}[f(X_{t})-\lamstr(f)]\,\D{t}}\, V(X_T)\bigr]\\
&\;\le\; \kappa\, \int_0^T \Exp_x\bigl[\E^{\int_{0}^{t}[f(X_{t})-\lamstr(f)]\,\D{t}}\,
\Ind_{\Tilde\sB}(X_T)\bigr]\,\D{t} + V(x)\\
&\;\le\; \kappa'\, T + V(x)\,,
\end{align*}
for some constant $\kappa'$, where in the last inequality we have used \cref{ET3.2D}.
Taking logarithms on both sides of the preceding inequality,
then dividing by $T$, and letting $T\to\infty$, we obtain
$\lamstr(f)\ge \sE_x(f)$ for all $x\in\Rd$.
Combining this with \cref{ET3.2A} results in equality.
\qed\end{proof}

\begin{remark}\label{R3.1}
Continuity of $h$ is superfluous in \cref{T3.2}.
The result holds if $h$ is a non-trivial, nonnegative measurable function,
vanishing at infinity.
\end{remark}

\begin{corollary}
Under the assumptions of \cref{T3.2}, for any potential $\Tilde{f}\lneqq f$, we have
$\lamstr(\Tilde{f})<\lamstr(f)$.
\end{corollary}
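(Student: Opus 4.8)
Set $h\df f-\tilde f$. Since $\tilde f$ is a potential with $\tilde f\le f$ a.e.\ and $\tilde f$ not a.e.\ equal to $f$, the function $h$ is locally bounded, Borel measurable, nonnegative, and not a.e.\ equal to $0$. By the monotonicity of the principal eigenvalue in the potential (\cref{L2.1}\,(b) combined with \cref{D2.1}) we have $\lamstr(\tilde f)\le\lamstr(f)$, and the latter is finite under the standing hypotheses; so the whole content of the statement is that this inequality must be strict, and it suffices to exclude equality.

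The one point that prevents this from being an immediate instance of \cref{T3.2} is that $h$ need not vanish at infinity, nor even be bounded. The plan is to circumvent this by passing to a truncation. Since $h$ is not a.e.\ zero, there is an $\epsilon\in(0,1]$ such that $\{h\ge\epsilon\}$ has positive Lebesgue measure, and hence $\{h\ge\epsilon\}\cap B_R$ has positive measure for all sufficiently large $R$. Fix such an $R$ and set $h'\df(h\wedge 1)\,\Ind_{B_R}$. Then $h'$ is Borel measurable, $0\le h'\le h$ a.e., has compact support contained in $\Bar{B}_R$ (so it vanishes at infinity), and $h'\ge\epsilon$ on a set of positive measure, hence is non-trivial; in particular $h'\in\cB^+_{\mathrm{o}}(\Rd)$.

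The hypotheses of the corollary are exactly \textup{(H2)} together with the inf-compactness of $\ell-f$, which are precisely the assumptions of \cref{T3.2} on $f$; and although $h'$ need not be continuous, \cref{R3.1} asserts that \cref{T3.2} remains valid for any non-trivial, nonnegative, measurable perturbation that vanishes at infinity. Applying \cref{T3.2} (as amended by \cref{R3.1}) to $f$ and $h'$ therefore yields $\lamstr(f-h')<\lamstr(f)$. Finally, $h\ge h'\ge 0$ gives $\tilde f=f-h\le f-h'$ a.e., and $f-h'$ is again a potential (bounded below by $\essinf_{\Rd}f-1$), so the monotonicity of $\lamstr$ gives $\lamstr(\tilde f)\le\lamstr(f-h')<\lamstr(f)$, which is the claim.

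If one prefers a self-contained argument that does not route through \cref{T3.2}, one can instead assume $\lamstr(\tilde f)=\lamstr(f)$ and repeat the reasoning of the proofs of \cref{T3.2} and of \cref{T2.2}, implication (i)$\Rightarrow$(ii). Since $h\ge 0$, the function $\ell-\tilde f=(\ell-f)+h$ is inf-compact, so the bound \cref{E3.6} (coming from \textup{(H2)} via \cref{EL3.1A}) permits choosing a ball $\sB$ large enough that $\Psi^*$ and a ground state $\tilde\Psi^*$ of $\Lg^{\tilde f}$ obtained as a limit of Dirichlet eigensolutions both satisfy, over $\sB^c$, the stochastic representations \cref{ET3.2B} and \cref{ET3.2C}. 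Using $\tilde f-\lamstr(\tilde f)=f-h-\lamstr(f)\le f-\lamstr(f)$, one scales $\Psi^*$ by a positive constant so that $\Psi^*-\tilde\Psi^*\ge 0$ on $\Rd$ with an interior minimum value $0$ attained in $\Bar\sB$; since $\Lg(\Psi^*-\tilde\Psi^*)-(f-\lamstr(f))^-(\Psi^*-\tilde\Psi^*)=-(f-\lamstr(f))^+(\Psi^*-\tilde\Psi^*)-h\,\tilde\Psi^*\le 0$, the strong maximum principle forces $\Psi^*\equiv\tilde\Psi^*$, hence $h\,\tilde\Psi^*=0$ a.e.\ and so $h=0$ a.e., a contradiction. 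In either route the genuinely delicate point is the same: extracting from the bare fact that $h=f-\tilde f$ is nonnegative and not a.e.\ zero a ``localized, decaying'' perturbation strong enough to engage the machinery built for potentials vanishing at infinity — and it is the truncation $h'$ (equivalently, the inf-compactness of $\ell-\tilde f$) that makes this possible.
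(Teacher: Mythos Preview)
Your proof is correct and takes essentially the same approach as the paper. The paper's argument is the one-liner $\lamstr(\tilde f)\le\lamstr\bigl(\chi\tilde f+(1-\chi)f\bigr)=\lamstr(f-\chi h)<\lamstr(f)$ for a cut-off function $\chi$, invoking \cref{T3.2,R3.1}; your truncation $h'=(h\wedge 1)\Ind_{B_R}$ plays exactly the role of $\chi h$, and your alternative self-contained route simply unpacks that same machinery.
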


\begin{proof}
Note that for any cut-off function $\chi$ we have
$\lamstr(\Tilde{f})\le \lamstr(\chi\Tilde{f}+ (1-\chi)f)$.
Then the result follows from \cref{T3.2,R3.1}.
\qed\end{proof}

\begin{remark}
In \cref{T3.2} we can replace the assumption
that $f$ is bounded from below in $\Rd$ by the  hypothesis
that  $\ell-\abs{f}$ is inf-compact.
\end{remark}

Let us now discuss the exponential ergodicity and show that this implies (H2).

\begin{proposition}\label{P3.1}
Let $\ell:\Rd\to\RR$ be inf-compact,
and suppose $\phi\in\Sobl^{2,d}(\Rd)$ is bounded below in $\Rd$ and satisfies
\begin{equation}\label{P3.1A}
\Lg\phi +\langle \grad\phi, a\grad\phi\rangle \;=\; -\ell\,.
\end{equation}
 then $\sE_x(\ell)<\infty\;$ for all $x\in\Rd$.
\end{proposition}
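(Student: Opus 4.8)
The plan is to produce, directly from $\phi$, an explicit positive and bounded--below solution of the eigenvalue equation for $\Lg^{\ell}$ at eigenvalue $0$, and then to read off the bound on $\sE_x(\ell)$ from the associated Feynman--Kac (super)martingale, exactly as in the proof of \cref{T2.4}.

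The first step is to set $\Psi \df \E^{\phi}$. Since $\phi\in\Sobl^{2,d}(\Rd)$ it is continuous, and a routine second--order chain rule computation gives $\partial_i\Psi = \Psi\,\partial_i\phi$ and $\partial_{ij}\Psi = \Psi\,(\partial_{ij}\phi + \partial_i\phi\,\partial_j\phi)$; hence $\Psi\in\Sobl^{2,d}(\Rd)$, $\Psi>0$, and
\[
\Lg\Psi \;=\; \Psi\,\bigl(\Lg\phi + \langle\grad\phi, a\grad\phi\rangle\bigr)\;=\;-\ell\,\Psi
\]
by the hypothesis \cref{P3.1A}, so that $\Lg\Psi + \ell\,\Psi = 0$ a.e.\ on $\Rd$. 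Moreover, since $\phi$ is bounded below, $\inf_{\Rd}\Psi = \E^{\inf_\Rd\phi}>0$, so $\Psi$ is admissible in the definition \cref{E-l''} and already witnesses $\lambda^{\prime\prime}(\ell)\le 0$.

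The second step is the It\^o--Krylov/Feynman--Kac estimate that underlies the inequality $\sE_x(\cdot)\le\lambda^{\prime\prime}(\cdot)$ in the proof of \cref{T2.4}. Applying the It\^o--Krylov formula to $\E^{\int_0^t \ell(X_s)\,\D s}\,\Psi(X_t)$ stopped at the exit time $\uptau_n$ from $B_n$, and using $\Lg\Psi + \ell\Psi = 0$, shows that this process is a nonnegative local martingale, hence a supermartingale, so
\[
\Exp_x\Bigl[\E^{\int_0^{T\wedge\uptau_n}\ell(X_s)\,\D s}\,\Psi(X_{T\wedge\uptau_n})\Bigr]\;\le\;\Psi(x)\,.
\]
Letting $n\to\infty$, using that $\uptau_n\to\infty$ $\Prob_x$--a.s.\ under (A2) together with monotone convergence on $\{T<\uptau_n\}$, yields $\Exp_x\bigl[\E^{\int_0^{T}\ell(X_s)\,\D s}\,\Psi(X_{T})\bigr]\le\Psi(x)$ for all $T>0$ and all $x\in\Rd$. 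Dividing by $\inf_\Rd\Psi>0$, taking $\frac{1}{T}\log$ and $\limsup_{T\to\infty}$, then gives $\sE_x(\ell)\le 0<\infty$ for every $x\in\Rd$, which is the assertion.

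I expect the only delicate point to be the localization in the second step: $\ell$ is a priori only in $\Lpl^{d}(\Rd)$ (being $-\Lg\phi-\langle\grad\phi,a\grad\phi\rangle$) and need not be bounded above, and $\Psi$ need not be bounded on $\Rd$, so $\E^{\int_0^t\ell(X_s)\,\D s}\,\Psi(X_t)$ cannot be treated as a genuine martingale without stopping. This is handled precisely as in \cref{T2.4}: keep the stopping times $\uptau_n$, use that a nonnegative local martingale is a supermartingale to obtain the inequality, and only then send $n\to\infty$ using non-explosion. Beyond that, nothing more than the $\Sobl^{2,d}$ regularity of $\phi$ is used — in particular, in contrast to the approach of \cite{Ichihara-11,Kaise-06}, no gradient estimate is required.
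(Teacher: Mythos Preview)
Your proof is correct and follows essentially the same route as the paper: define $\Psi=\E^\phi$ (the paper writes $\Phi$), compute $\Lg\Psi+\ell\Psi=0$, apply the It\^o--Krylov formula with the localization $\uptau_n$ and pass to the limit (the paper invokes Fatou's lemma where you use monotone convergence on $\{T<\uptau_n\}$), then take $\frac{1}{T}\log$ and $\limsup$. The only cosmetic addition on your side is the observation that $\Psi$ witnesses $\lambda''(\ell)\le 0$, which the paper does not mention here.
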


\begin{proof}
Let $\Phi(x)=\exp(\phi(x))$.
Then $\inf_{\Rd}\Phi>0$, and \cref{P3.1A} gives
\begin{equation}\label{P3.1B}
\Lg\Phi + \ell\Phi\;=\;0\,.
\end{equation}
Now apply the It\^o--Krylov formula to \cref{P3.1B} followed by Fatou's lemma to obtain
\begin{equation*}
\Exp_x\Bigl[\E^{\int_0^T \ell(X_s)\, \D{s}}\, \Phi(X_T) \Bigr] \;\le\; \Phi(x)\,.
\end{equation*}
Taking logarithm on both sides, diving by $T$ and letting $T\to\infty$,
we obtain $\sE_x(\ell)<\infty$.
\qed\end{proof}

\begin{example}\label{EG3.1}
Let $a=\frac{1}{2}I$ and $b(x)= b_1(x) + B(x)$ where $B$ is bounded and 
\begin{equation*}
\langle b_1(x), x\rangle \;\le\; -\kappa \abs{x}^\alpha,
\quad \text{for some}\; \alpha\in(1, 2]\,.
\end{equation*}
Then we take $\phi(x)=\theta\,|x|^{\alpha}$ for $\abs{x}\ge 1\, ,\theta\in(0, 1)$.
It is easy to check that for a suitable choice of $\theta\in(0, 1)$,
\cref{P3.1A} holds for 
$\ell(x)\sim\abs{x}^{2\alpha-2}$.
\end{example}

\begin{remark}\label{R3.2}
\Cref{P3.1A} is a stronger condition than strict monotonicity
of $\lamstr(f)$ at $f$.
In fact, \cref{P3.1A} might not hold in many important situations.
For instance, if $a$ and $b$ are both bounded, and $a$ is uniformly elliptic,
then it is not possible to find inf-compact $\ell$ satisfying \cref{P3.1A}.
Otherwise, we can find a finite principal
eigenvalue for the operator $\Lg^{\ell}$, by a same method as in \cref{ET3.2A},
which would contradict \cite[Proposition~2.6]{Berestycki-15}.
\end{remark}

Even though \cref{P3.1A} does not hold for bounded $a$ and $b$,
strict monotonicity
of $\lamstr(f)$ at $f$ can be asserted under suitable hypotheses.
This is the subject of the following theorem.

\begin{theorem}\label{T3.3}
Let $\Lyap\in\Sobl^{2,d}(\Rd)$ such that
$\inf_{\Rd}\Lyap >0$, satisfying
\begin{equation}\label{ET3.3A}
\Lg\Lyap\;\le\; \kappa_0 \Ind_{\sK} -\gamma \Lyap\quad\text{on\ }\Rd\,,
\end{equation}
for some compact set $\sK$ and positive constants $\kappa_0$ and $\gamma$.
Let $f$ be a
nonnegative bounded measurable function with
$\limsup_{x\to\infty}\,f(x)< \gamma$.
Then for any $h\in\Cc_{\mathrm{o}}^+(\Rd)$,
we have $\lamstr(f-h)<\lamstr(f)=\sE_x(f)$ for all $x\in\Rd$.
\end{theorem}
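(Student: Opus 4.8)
The plan is to follow the structure of the proof of \cref{T3.2}, with the Foster--Lyapunov function $\Lyap$ of \cref{ET3.3A} playing the role that the function $V$ of \cref{EL3.1A} played there; the assumption $\sE(\ell)<\infty$ is replaced by the supermartingale that $\Lyap$ supplies. Two preliminary remarks first: \cref{ET3.3A} together with $\inf_\Rd\Lyap>0$ implies, via \cref{T3.1}, that the solution of \cref{E2.1} is exponentially ergodic, hence recurrent; therefore $\lamstr(0)=\sE(0)=0$ by \cref{T2.4}, so that $\lamstr(f)\ge0$ by the monotonicity in \cref{L2.1}\,(b) (recall $f\ge0$), and $\lamstr(f)$ is finite since $f$ is bounded.

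The first substantive step is to localize the potential against $\Lyap$. Choose $\gamma'$ with $\limsup_{\abs{x}\to\infty}f(x)<\gamma'<\gamma$, set $\epsilon\df\gamma-\gamma'>0$, and pick a ball $\sB\supset\sK$ with $f<\gamma'$ on $\sB^c$. On $\sB^c$ one has $\Lg\Lyap\le-\gamma\Lyap$ from \cref{ET3.3A}, whence
\begin{equation*}
\Lg\Lyap + \bigl(f-\lamstr(f)+\epsilon\bigr)\Lyap
\;\le\; \bigl(\gamma'+\epsilon-\gamma-\lamstr(f)\bigr)\Lyap
\;=\; -\lamstr(f)\,\Lyap\;\le\;0\qquad\text{on\ }\sB^c\,,
\end{equation*}
while on $\sB$ the left-hand side is bounded; hence $\Lg\Lyap+(f-\lamstr(f)+\epsilon)\Lyap\le C\Ind_{\sB}$ on $\Rd$ for a suitable $C>0$. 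Applying the It\^o--Krylov formula to this inequality on $B_n\setminus\sB$, localizing with the exit times $\uptau_n$, using that the $\Ind_{\sB}$ term vanishes off $\sB$, and letting $n\to\infty$ via Fatou's lemma, I obtain, with $\uuptau=\uptau(\sB^c)$ the first hitting time of $\sB$,
\begin{equation*}
\Exp_x\Bigl[\E^{\int_0^{\uuptau}[f(X_t)-\lamstr(f)+\epsilon]\,\D t}\,\Lyap(X_{\uuptau})\,\Ind_{\{\uuptau<\infty\}}\Bigr]\;\le\;\Lyap(x)\,,\qquad x\in\Bar\sB^c\,,
\end{equation*}
and consequently, since $\inf_\Rd\Lyap>0$, that $\Exp_x\bigl[\E^{\int_0^{\uuptau}[f(X_t)-\lamstr(f)+\epsilon]\,\D t}\,\Ind_{\{\uuptau<\infty\}}\bigr]<\infty$ for all $x\in\Bar\sB^c$. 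This is exactly the analogue of \cref{ET2.2A}, which was the only consequence of exponential ergodicity of $Y^*$ that was used in the proof of \cref{T2.2}.

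With this estimate, strict monotonicity follows as in the implication (i)$\Rightarrow$(ii) of \cref{T2.2}. Fix $h\in\Cc_{\mathrm{o}}^+(\Rd)$, put $\Tilde f\df f-h$ (so $\lamstr(\Tilde f)$ is finite), and suppose for contradiction that $\lamstr(\Tilde f)=\lamstr(f)$. Then $f-h-\lamstr(\Tilde f)\le f-\lamstr(f)+\epsilon$, so the estimate above makes $\Exp_x\bigl[\E^{\int_0^{\uuptau}[f(X_t)-h(X_t)-\lamstr(\Tilde f)]\,\D t}\,\Ind_{\{\uuptau<\infty\}}\bigr]$ finite, and repeating the computation in the proof of \cref{L2.2}\,(iii) yields the stochastic representation $\Tilde\Psi^*(x)=\Exp_x\bigl[\E^{\int_0^{\uuptau}[f(X_t)-h(X_t)-\lamstr(\Tilde f)]\,\D t}\,\Tilde\Psi^*(X_{\uuptau})\,\Ind_{\{\uuptau<\infty\}}\bigr]$ on $\sB^c$ for a ground state $\Tilde\Psi^*$ of $\Lg^{\Tilde f}$; the It\^o--Krylov formula and Fatou's lemma applied to \cref{EL2.4A} give the reverse inequality for a ground state $\Psi^*$ of $\Lg^f$. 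Scaling $\Psi^*$ so that $\Psi^*-\Tilde\Psi^*\ge0$ with a zero minimum attained in $\sB$, and applying the strong maximum principle to
\begin{equation*}
\Lg(\Psi^*-\Tilde\Psi^*) - \bigl(f-\lamstr(f)\bigr)^{-}(\Psi^*-\Tilde\Psi^*)
\;=\; -\bigl(f-\lamstr(f)\bigr)^{+}(\Psi^*-\Tilde\Psi^*) - h\,\Tilde\Psi^* \;\le\;0\,,
\end{equation*}
forces $\Psi^*=\Tilde\Psi^*$, hence $h\,\Tilde\Psi^*\equiv0$, which is impossible. Therefore $\lamstr(f-h)<\lamstr(f)$ for every $h\in\Cc_{\mathrm{o}}^+(\Rd)$, i.e. (P1) holds.

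For the remaining identity: (P1) and \cref{T2.1}\,(b) make the ground-state process $Y^*$ exponentially ergodic, so by \cref{L2.3} and \cite[Theorem~1.3.10]{Kunita} one has $\Exp_x\bigl[\E^{\int_0^T[f(X_t)-\lamstr(f)]\,\D t}\,g(X_T)\bigr]\to C_g\,\Psi^*(x)$, with $0<C_g<\infty$, for every $g\in\Cc_{\mathrm{c}}^+(\Rd)$; in particular $t\mapsto\Exp_x\bigl[\E^{\int_0^t[f(X_s)-\lamstr(f)]\,\D s}\,\Ind_{\sB}(X_t)\bigr]$ is bounded on $[0,\infty)$. Applying the It\^o--Krylov formula and Fatou's lemma to $\Lg\Lyap+(f-\lamstr(f))\Lyap\le C\Ind_{\sB}$ then yields $(\inf_\Rd\Lyap)\,\Exp_x\bigl[\E^{\int_0^T[f(X_t)-\lamstr(f)]\,\D t}\bigr]\le\kappa' T+\Lyap(x)$ for some $\kappa'>0$; taking logarithms, dividing by $T$ and letting $T\to\infty$ gives $\lamstr(f)\ge\sE_x(f)$, while $\lamstr(f)\le\sE_x(f)$ holds for every $x$ by \cite[Lemma~2.3]{ari-anup}, so $\lamstr(f)=\sE_x(f)$ on $\Rd$. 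I expect the main obstacle to be the exponential-integrability estimate of the second paragraph: turning the Foster--Lyapunov inequality \cref{ET3.3A} into control of the Feynman--Kac functional up to the hitting time of $\sB$ requires carefully exploiting that the corrected operator $\Lg+(f-\lamstr(f)+\epsilon)$ applied to $\Lyap$ is nonpositive off $\sB$ --- which is exactly where $\limsup f<\gamma$ and $\lamstr(f)\ge0$ enter --- together with the localization and the limiting argument; once that estimate is in place, the rest is an application of the machinery already developed in \cref{S2}.
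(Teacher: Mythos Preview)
Your proof is correct and follows essentially the same route as the paper's: establish $\lamstr(f)\ge0$, use the Foster--Lyapunov inequality \cref{ET3.3A} to obtain finite exponential moments up to the hitting time of a large ball, derive the stochastic representation of the ground state(s), and conclude via the strong maximum principle; the identity $\lamstr(f)=\sE_x(f)$ is then handled exactly as in \cref{T3.2}. The only cosmetic difference is that the paper extracts the simpler bound $\Exp_x[\E^{\gamma\uuptau}]<\infty$ directly from \cref{ET3.3A} and uses $f-\lamstr(f)<\gamma$ on $\sK_0^c$ to dominate both Feynman--Kac functionals (thus obtaining the stochastic representation for $\Psi^*$ as an equality too, not just via Fatou), whereas you build the slightly sharper inequality $\Lg\Lyap+(f-\lamstr(f)+\epsilon)\Lyap\le C\Ind_\sB$ and then invoke the machinery of \cref{T2.2}; both reach the same contradiction.
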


\begin{proof}
Let $\Tilde{f}= f-h$.
Suppose $\lamstr(\Tilde{f})=\lamstr(f)$.
Applying an argument similar to \cref{ET3.2A} we can find $\Psi^*$ and $\Tilde\Psi^*$
that satisfy
\begin{align*}
\Lg\Psi^* + f\,\Psi^* &\;=\;\lamstr(f)\,\Psi^*,\\[2pt]
\Lg\Tilde\Psi^* + (f-h)\Tilde\Psi^* &\;=\;\lamstr(\Tilde{f})\Tilde\Psi^*.
\end{align*}
Let $\sK_0\supset\sK$ be any compact set such that
$f<\gamma$ on $\sK_0^c$.
If $\uuptau$ denotes the first hitting time to the compact set $\sK_0$,
then by an application of the It\^o--Krylov formula to \cref{ET3.3A} we obtain
\begin{equation*}
\Exp_x\bigl[\E^{\gamma\uuptau} \bigr] <\infty\,, \quad x\in\sK_0^c\,.
\end{equation*}
We next use the fact that if $\Lg$ corresponds to a recurrent diffusion and
$f$ is nonnegative then $\lamstr(f)\ge0$.
Indeed, in this case we have $\Lg \Psi^* \le \lamstr(f) \Psi^*$.
If $\lamstr(f)\le0$, this implies that $\Psi^*(X_t)$ is a nonnegative supermartingale
and since it is integrable, it converges a.s.
Since the process is recurrent, this implies that $\Psi^*$
must equal to a constant, which, in turn,
necessitates that $\lamstr(f)=0$ (and $f=0$).
Thus, since $\lamstr(f)\ge0$, an argument similar to the proof of \cref{L2.2}\,(ii)
shows that
\begin{align*}
\Psi^*(x) &\;=\;\; \Exp_{x}
\Bigl[\E^{\int_{0}^{\uuptau}
[f(X_{t})-\lamstr(f)]\,\D{t}}\, \Psi^* (X_{\uuptau})\Bigr]\,,\\[5pt]
\Tilde\Psi^*(x) &\;=\;\; \Exp_{x}
\Bigl[\E^{\int_{0}^{\uuptau}
[\Tilde{f}(X_{t})-\Tilde\lambda\str(\Tilde{f})]\,\D{t}}\,
\Tilde\Psi^* (X_{\uuptau})\Bigr]\,,
\end{align*}
for $x\in\sK_0^c$.
Therefore, applying the strong maximum principle as in \cref{T3.2},
we obtain $h\,\Tilde\Psi^*=0$ which is a contradiction since $h\neq 0$
and $\Tilde\Psi^*>0$. Thus 
we have $\lamstr(f-h)<\lamstr(f)$.
That $\lamstr(f)=\sE_x(f)$ for all $x\in\Rd$
follows by an argument similar to the one used in the proof \cref{T3.2}.
\qed\end{proof}

\begin{example}
Suppose $a=\frac{1}{2}I$, where $I$ denotes the identity matrix, and 
\begin{equation*}
\langle b(x), x\rangle\;\le\; -\abs{x}, \quad \text{outside a compact set\ } \sK_1\,.
\end{equation*}
With $\Lyap(x)=\exp(\abs{x})$ for $\abs{x}\ge 1$, we have 
\begin{equation*}
\Lg\Lyap\;=\; \biggl(\frac{d-1}{2\abs{x}}
+\frac{1}{2} - \frac{\langle b(x), x\rangle}{\abs{x}}\biggr)
\Lyap\;\le\; \biggl(\frac{d-1}{2\abs{x}}-\frac{1}{2}\biggr)\Lyap
\quad \text{for}\; \abs{x}\ge 1\,.
\end{equation*}
\end{example}

\section{Risk-sensitive control}\label{S4}
In this section we apply the results developed in the previous sections to
the risk-sensitive control problem.
As mentioned earlier, we establish the existence and
uniqueness of solutions to the risk-sensitive HJB equation,
and use this to completely characterize the optimal Markov controls
(see \cref{T4.1,T4.2}).
Another interesting result is the continuity of the controlled principal eigenvalue
with respect to the stationary Markov controls.
This is done in \cref{T4.3}. 
We first introduce the control problem.

\subsection{The controlled diffusion model}

Consider a
controlled diffusion process $X = \{X_{t},\,t\ge0\}$
which takes values in the $d$-dimensional Euclidean space $\RR^{d}$, and
is governed by the It\^o  equation
\begin{equation}\label{E4.1}
\D{X}_{t} \;=\;b(X_{t},U_{t})\,\D{t} + \upsigma(X_{t})\,\D{W}_{t}\,.
\end{equation}
All random processes in \cref{E4.1} live in a complete
probability space $(\Omega,\sF,\Prob)$.
The process $W$ is a $d$-dimensional standard Wiener process independent
of the initial condition $X_{0}$.
The control process $U$ takes values in a compact, metrizable set $\Act$, and
$U_{t}(\omega)$ is jointly measurable in
$(t,\omega)\in[0,\infty)\times\Omega$.
The set $\Uadm$ of \emph{admissible controls} consists of the
control processes $U$ that are \emph{non-anticipative}:
for $s < t$, $W_{t} - W_{s}$ is independent of
\begin{equation*}
\sF_{s} \;\df\;\text{the completion of~}
\cap_{y>s}\sigma\{X_{0},U_{r},W_{r},\;r\le y\}
\text{~relative to~}(\sF,\Prob)\,.
\end{equation*}

We impose the following standard assumptions on the drift $b$
and the diffusion matrix $\upsigma$ to guarantee existence
and uniqueness of solutions.
\begin{itemize}
\item[(B1)]
\emph{Local Lipschitz continuity:\/}
The functions
$b\colon\RR^{d}\times\Act\to\RR^{d}$ and 
$\upsigma\colon\RR^{d}\to\RR^{d\times d}$
are continuous, and satisfy
\begin{equation*}
\abs{b(x,u)-b(y, u)} + \norm{\upsigma(x) - \upsigma(y)}
\;\le\;C_{R}\,\abs{x-y}\qquad\forall\,x,y\in B_R\,,\ \forall\, u\in\Act\, .
\end{equation*}
for some constant $C_{R}>0$ depending on $R>0$.

\item[(B2)]
\emph{Affine growth condition:\/}
For some $C_0>0$, we have
\begin{equation*}
\sup_{u\in\Act}\; \langle b(x,u),x\rangle^{+} + \norm{\upsigma(x)}^{2}\;\le\;C_0
\bigl(1 + \abs{x}^{2}\bigr) \qquad \forall\, x\in\RR^{d},
\end{equation*}
\item[(B3)]
\emph{Nondegeneracy:\/}
Assumption (A3) in \cref{S1.1} holds.
\end{itemize}

It is well known that under (B1)--(B3), for any admissible control
there exists a unique solution of \cref{E4.1}
\cite[Theorem~2.2.4]{book}.
We define the family of operators $\cL_u\colon\Cc^{2}(\RR^{d})\mapsto\Cc(\RR^{d})$,
where $u\in\Act$ plays the role of a parameter, by
\begin{equation*}
\cL_u f(x) \;\df\;  a^{ij}(x)\,\partial_{ij} f(x)
+ b^{i}(x,u)\, \partial_{i} f(x)\,,\quad u\in\Act\,.
\end{equation*}

\paragraph{The risk-sensitive criterion}
Let $\mathfrak{C}$ denote the class of functions
$c(x,u)$ in $\Cc(\Rd\times\Act,\RR_+)$
that are locally Lipschitz in $x$ uniformly with respect to $u\in\Act$.
We let $c\in\mathfrak{C}$ denote the \emph{running cost} function,
and 
for any admissible control $U\in\Uadm$, we define
the risk-sensitive objective function $\sE^U_x(c)$ by
\begin{equation}\label{E-sEU}
\sE^U_x(c)\;\df\;\limsup_{T\to\infty}\;\frac{1}{T}\,
\log\Exp_x\Bigl[\E^{\int_0^T c(X_s, U_s)\, \D{s}}\Bigr]\,.
\end{equation}
We also define $\Lambda\str_x\df\inf_{U\in\Uadm}\,\sE^U_x(c)$.

\subsection{Relaxed controls}

We adopt the well-known \emph{relaxed control} framework \cite{book}.
According to this relaxation, a stationary Markov control is a measurable
map from $\Rd$ to $\cP(\Act)$, the latter denoting
the set of probability measures on $\Act$ under
the Prokhorov topology.
Let $\Usm$ denote the class of all such stationary Markov controls.
A control $v\in\Usm$ may be viewed as a kernel on $\cP(\Act)\times\Rd$,
which we write as $v(\D{u}\!\mid\! x)$.
We say that a control $v\in\Usm$ is precise if it is a measurable map
from $\Rd$ to $\Act$.
We extend the definition of $b$ and $c$ as follows.
For $v\in\Usm$ we let
\begin{equation*}
b_v(x)\;\df\;\int_{\Act} b(x,u)\,v(\D{u}\!\mid\! x)\,,
\quad \text{and}\quad c_v(x)\;\df\;\int_{\Act} c(x,u)\,v(\D{u}\!\mid\! x)
\quad \text{for}\; v\in\cP(\Act)\,.
\end{equation*}
It is easy to see from (B2) and Jensen's inequality that
\begin{equation*}
\sup_{v\in\Usm}\langle b_v(x), x\rangle ^+ \;\le\; \; C_0(1+\abs{x}^2)
\qquad\forall\, x\in\Rd\,.
\end{equation*}

For $v\in\Usm$, consider the relaxed diffusion
\begin{equation}\label{E-rsde}
\D{X}_{t} \;=\;b_v(X_{t})\,\D{t} + \upsigma(X_{t})\,\D{W}_{t}\,.
\end{equation}
It is well known that under $v\in\Usm$
\cref{E-rsde} has a unique strong solution \cite{Gyongy-96},
which is also a strong Markov process.
It also follows from the work in \cite{Bogachev-01} that under
$v\in\Usm$, the transition probabilities of $X$
have densities which are locally H\"older continuous.
Thus $\Lg_{v}$ defined by
\begin{equation*}
\Lg_{v} f(x) \;\df\; a^{ij}(x)\,\partial_{ij} f(x)
+ b^{i}_v(x)\, \partial_{i} f(x)\,,\quad v\in\Usm\,,
\end{equation*}
for $f\in\Cc^{2}(\RR^{d})$,
is the generator of a strongly-continuous
semigroup on $\Cc_{b}(\RR^{d})$, which is strong Feller.
We let $\Prob_{x}^{v}$ denote the probability measure and
$\Exp_{x}^{v}$ the expectation operator on the canonical space of the
process under the control $v\in\Usm$, conditioned on the
process $X$ starting from $x\in\RR^{d}$ at $t=0$.
We denote by $\Ussm$ the subset of $\Usm$ that consists
of \emph{stable controls}, i.e.,
under which the controlled process is positive recurrent,
and by $\mu_v$ the invariant probability measure of the process
under the control $v\in\Ussm$.

\begin{definition}
For $v\in\Usm$ and a locally bounded measurable
function $f\colon\Rd\to\RR$, we let $\lambda\str_v(f)$
denote the principal eigenvalue of the operator $\Lg_v^f\df\Lg_v+f$ on $\Rd$
(see \cref{D2.1}).

We also adapt the notation in \cref{E2.3} to the control setting,
and define
\begin{equation*}
\sE_x^v(f)\;\df\;\limsup_{T\to\infty}\, \frac{1}{T}\,
\log\Exp_x^v\Bigl[\E^{\int_0^T f(X_s)\, \D{s}}\Bigr]\,,
\quad \text{and}\quad \sE^v(f)\;\df\;\inf_{x\in\Rd}\;\sE^v_x(f)\,,
\qquad v\in\Usm\,.
\end{equation*}
We refer to $\sE^v(f)$ as the \textit{risk-sensitive} average of $f$
under the control $v$.

Recall the risk-sensitive objective function $\sE_x^U$ defined in
\cref{E-sEU} and the optimal value $\Lambda\str$.
We say that a stationary Markov control $v\in\Usm$ is optimal (for the
risk-sensitive criterion) if
$\sE^v_x(c_v)=\Lambda\str_x$ for all $x\in\Rd$, and we let
$\Usm^*$ denote the class of these controls.
\end{definition}

\subsection{Optimal Markov controls and the risk-sensitive HJB}

We start with the following assumption.

\begin{assumption}[uniform exponential ergodicity]\label{A4.1}
There exists an inf-compact function $\ell\in\Cc(\Rd)$ and a positive function
$\Lyap\in \Sobl^{2,d}(\Rd)$, satisfying
$\inf_\Rd\,\Lyap>0$, such that
\begin{equation}\label{EA4.1A}
\sup_{u\in\Act}\;\cL_u \Lyap \;\le\; \Bar{\kappa}\, \Ind_{\sK} - \ell\Lyap
\quad\text{a.e.\ on\ }\Rd\,,
\end{equation}
for some constant $\Bar{\kappa}$, and a compact set $\sK$.
\end{assumption}
 
It is easy to see that for $\Bar{\kappa}_\circ\df\frac{\Bar{\kappa}}{\min_{\Rd}\,\Lyap}$
we obtain from \cref{EA4.1A} that
\begin{equation*}
\sup_{u\in\Act}\;\cL_u \Lyap + (\ell-\Bar{\kappa}_\circ)\Lyap\;\le\; 0\,,
\end{equation*}
and therefore, applying the It\^{o}--Krylov formula, we have
$\sE^v_x(\ell)\le \Bar{\kappa}_\circ$
for any stationary Markov control $v\in\Usm$, and all $x\in\Rd$.
\begin{example}
Let $\upsigma$ be bounded and $b:\Rd\times\Act\to\Rd$ be such that
\begin{equation*}
\langle b(x,u)-b(0, u), x\rangle\; \le \;-\kappa\, \abs{x}^\alpha,
\quad \text{for some}\; \alpha\in(1,2]\,, \quad (x,u)\in\Rd\times\Act\,.
\end{equation*}
Then as seen in \cref{EG3.1}, $\Lyap(x)=\exp(\theta\,\abs{x}^{\alpha})$,
for $\abs{x}\ge 1$, satisfies \cref{EA4.1A}
for sufficiently small $\theta>0$, and $\ell(x)\sim \abs{x}^{2\alpha-2}$.
Note that $\alpha=2$ and $\upsigma=I$ is considered in \cite{Fleming-95}.
\end{example}

We introduce the class of running costs
$\cC_\ell$ defined by
\begin{equation*}
\cC_\ell\;\df\;\Bigl\{c\in\mathfrak{C}\;\colon\;
\ell(\cdot)-\max_{u\in\Act}\, c(\cdot, u)\; \text{is inf-compact}\Bigr\}\,.
\end{equation*}

The first important result of this section is the following.

\begin{theorem}\label{T4.1}
Suppose \cref{A4.1} holds, and $c\in\cC_\ell$.
Then $\Lambda\str=\Lambda\str_x$ does not depend on $x$,
and there exists a positive solution $V\in\Cc^2(\Rd)$ satisfying
\begin{equation}\label{ET4.1A}
\min_{u\in\Act}\;[\cL_u V + c(\cdot, u) V]\;=\;\Lambda\str V\quad\text{on\ }\Rd\,,
\quad\text{and\ }V(0)=1\,.
\end{equation}
In addition, if $\;\bUsm\subset\Usm$ denotes the class of Markov controls $v$
which satisfy
\begin{equation*}
\Lg_v V + c_v V \;=\; \min_{u\in\Act}\;[\cL_u V + c(\cdot, u) V]
\qquad\text{a.e.\ in\ }\Rd\,,
\end{equation*}
then the following hold.
\begin{enumerate}[(a)]
\item
$\bUsm\subset\Usm^*$, and it holds that $\lamstr_v(c_v)=\Lambda\str$
for all $v\in\bUsm$;

\item
$\Usm^*\subset\bUsm\,$;

\item
\Cref{ET4.1A} has a unique positive solution in $\Cc^2(\Rd)$
\textup{(}up to a multiplicative constant\/\textup{)}.
\end{enumerate}
\end{theorem}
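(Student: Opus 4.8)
The plan is to reduce everything to the eigenvalue theory of Section 2 applied to the frozen operators $\Lg_v^{c_v}$, $v\in\Usm$, together with the uniform ergodicity afforded by \cref{A4.1}. First I would establish existence of the solution $V$ to the HJB equation \cref{ET4.1A} and the independence of $\Lambda\str_x$ on $x$. The natural route is to consider, for each $n$, the Dirichlet problem for the Bellman operator $\min_{u}[\cL_u\varphi + c(\cdot,u)\varphi]=\Hat\lambda_n\varphi$ on $B_n$ with zero boundary data and normalization $\varphi(0)=1$; existence of a principal Dirichlet eigenpair $(\widehat V_n,\Hat\lambda_n)$ follows from Krein--Rutman-type arguments for the nonlinear operator (or by a fixed-point argument over the selectors, using \cref{L2.1} for each $v$). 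The uniform Foster--Lyapunov inequality \cref{EA4.1A}, combined with $c\in\cC_\ell$, gives the key a priori bounds: it forces $\Hat\lambda_n$ to be bounded above (uniformly in $n$) by $\Bar\kappa_\circ$ plus $\max c$ on the relevant compact set, and via the It\^o--Krylov formula and the stochastic representation (exactly as in \cref{L2.2}) it yields a uniform Harnack estimate on $\widehat V_n$ on compact sets. Passing to the limit along a subsequence, $\widehat V_n\to V\in\Cc^2(\Rd)$ (interior Schauder/$W^{2,p}$ estimates and the continuity of the Hamiltonian in $u$ let one pass the $\min$ through the limit), giving a positive $V$ solving \cref{ET4.1A} with $\Lambda\str=\lim_n\Hat\lambda_n$; a standard argument then shows $\Lambda\str=\Lambda\str_x$ for all $x$ and that $\Lambda\str\le\Lambda\str_x$ for the value function, i.e.\ $\Lambda\str$ is a lower bound for the optimal risk-sensitive cost.

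Next I would treat parts (a) and (b). Given a measurable selector $v\in\bUsm$ attaining the minimum in \cref{ET4.1A}, the pair $(V,\Lambda\str)$ satisfies $\Lg_v V + c_v V = \Lambda\str V$, so $(V,\Lambda\str)$ is an eigenpair of $\Lg_v^{c_v}$; by \cref{C2.1} this gives $\lamstr_v(c_v)\le\Lambda\str$. For the reverse inequality note that under \cref{A4.1} the frozen coefficients satisfy (H2) (take the same $\ell,\Lyap$), so $\ell-c_v$ is inf-compact and \cref{T3.2} applies to $\Lg_v$: hence $\lamstr_v(c_v)=\sE^v_x(c_v)$ and $\lamstr_v$ is strictly monotone at $c_v$, so by \cref{T2.1}(a) the ground-state diffusion is positive recurrent and, crucially, the stochastic representation \cref{ET2.1A} holds for the ground state of $\Lg_v^{c_v}$; comparing $V$ (which satisfies the same equation with value $\Lambda\str$ and, by the It\^o--Krylov formula and Fatou, a \emph{super}solution version of that representation) with the ground state forces $\Lambda\str=\lamstr_v(c_v)$ and $V$ to be the ground state. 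Since then $\lamstr_v(c_v)=\sE^v_x(c_v)=\Lambda\str$ and $\Lambda\str$ was shown to be a lower bound for $\Lambda\str_x$, we get $v\in\Usm^*$, proving (a). For (b), let $v\in\Usm^*$, so $\sE^v_x(c_v)=\Lambda\str$. Under \cref{A4.1}, the bound $\sE^v_x(\ell)\le\Bar\kappa_\circ$ together with inf-compactness of $\ell$ forces the $v$-controlled process to be positive recurrent, hence $c\in\cC_\ell$ puts us in the setting of \cref{T3.2}, giving $\lamstr_v(c_v)=\sE^v_x(c_v)=\Lambda\str$. Taking the principal eigenfunction $\Psi^*_v$ of $\Lg_v^{c_v}$ and comparing with $V$ via \cref{ET4.1A} (which says $\Lg_v V + c_v V\ge\Lambda\str V$), the It\^o--Krylov formula plus the recurrence of the $\Lg_v$-ground-state diffusion and the strong maximum principle (as in the proof of \cref{L2.4}) force $V=\Psi^*_v$ up to scaling, whence $\Lg_v V + c_v V=\Lambda\str V$, i.e.\ $v$ realizes the minimum in \cref{ET4.1A} a.e.; thus $v\in\bUsm$.

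Finally, part (c): uniqueness of the positive $\Cc^2$ solution of \cref{ET4.1A}. Suppose $(\Tilde V,\Lambda\str)$ is another positive solution (the eigenvalue must equal $\Lambda\str$: picking any selector $\Tilde v$ for $\Tilde V$ makes $(\Tilde V,\Lambda\str)$ an eigenpair of $\Lg_{\Tilde v}^{c_{\Tilde v}}$, and then parts (a)--(b) above identify its eigenvalue with $\lamstr_{\Tilde v}(c_{\Tilde v})=\Lambda\str$). Then $\Tilde V=\Psi^*_{\Tilde v}$ is the ground state of $\Lg_{\Tilde v}^{c_{\Tilde v}}$, which satisfies the exact stochastic representation \cref{ET2.1A} by \cref{T2.1}(a); meanwhile $V$ satisfies $\Lg_{\Tilde v}V + c_{\Tilde v}V\ge\Lambda\str V$, so by It\^o--Krylov and Fatou $V$ dominates the right-hand side of that representation. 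Scaling $\Tilde V$ so that $V-\Tilde V$ attains an interior minimum of $0$ on a ball containing the recurrence set, the strong maximum principle argument of \cref{L2.4}/\cref{T2.2} yields $V\equiv\Tilde V$. The main obstacle I anticipate is the first step — producing $V$ and justifying the limit $\widehat V_n\to V$ with the Hamiltonian passing through: one must control measurable selectors uniformly in $n$ and upgrade $W^{2,p}$ weak limits to genuine $\Cc^2$ solutions of the nonlinear equation, which is where the uniform Lyapunov bound \cref{EA4.1A} and the inf-compactness built into $\cC_\ell$ do the essential work of preventing loss of mass at infinity and giving the needed equicontinuity.
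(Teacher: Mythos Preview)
Your strategy for existence and part (a) matches the paper's: Dirichlet eigensolutions on $B_n$, Harnack/$W^{2,p}$ compactness, and the chain $\Lambda^*_x\le\sE^v_x(c_v)=\lamstr_v(c_v)\le\Hat\lambda\le\inf_z\Lambda^*_z$ via \cref{T3.2} and \cref{C2.1}. That part is fine.

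The gap is in parts (b) and (c). You write that from $\Lg_{\Bar v}V+c_{\Bar v}V\ge\Lambda^*V$ one gets ``by It\^o--Krylov and Fatou'' that ``$V$ dominates the right-hand side of that representation.'' This is backwards: the PDE inequality makes $\E^{\int_0^t(c_{\Bar v}-\Lambda^*)\,\D s}V(X_t)$ a local \emph{sub}martingale under $\Prob^{\Bar v}$, so at bounded stopping times $V(x)\le\Exp^{\Bar v}_x[\,\cdots\,]$, not $\ge$. Fatou's lemma does not help pass this inequality to the limit $\uptau_n\to\infty$, since it points in the wrong direction; you would need to control $\Exp^{\Bar v}_x\bigl[\E^{\int_0^{\uptau_n}(c_{\Bar v}-\Lambda^*)\,\D s}V(X_{\uptau_n})\Ind_{\{\uptau_n<\uuptau\}}\bigr]$, and under the bare hypothesis $c\in\cC_\ell$ there is no a priori growth bound on $V$ that makes this obvious.

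The paper handles this with a separate construction you are missing. It solves auxiliary inhomogeneous Dirichlet problems
\[
\min_{u\in\Act}\bigl[\cL_u\varphi_n+(c(\cdot,u)-\Lambda^*)\varphi_n\bigr]=-\alpha_n\Ind_\sB\quad\text{on }B_n,\qquad\varphi_n=0\text{ on }\partial B_n,
\]
with $\alpha_n>0$ chosen so that $\varphi_n(0)\le1$. The zero boundary data is exactly what kills the $\uptau_n$ term in the stochastic representation (via the argument of \cite[Lemma~2.11]{ari-anup}), and the limit $\Phi$ satisfies $\Phi(x)\le\Exp^{\Bar v}_x\bigl[\E^{\int_0^\uuptau(c_{\Bar v}-\Lambda^*)\,\D s}\Phi(X_\uuptau)\bigr]$ for \emph{every} $\Bar v\in\Usm^*$. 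Strict monotonicity (via \cref{T3.2}) forces the limiting forcing to vanish, so $\Phi$ solves the same HJB. The touching/strong-maximum-principle comparison with $\Psi_{\Bar v}$ (which satisfies equality in the representation by \cref{L2.2}) then yields $\Phi=\Psi_{\Bar v}$ for all $\Bar v\in\Usm^*$; since $V=\Psi_v$ for $v\in\bUsm\subset\Usm^*$ by part (a) and simplicity of $\lamstr_v(c_v)$ (\cref{L2.4}), the chain $V=\Psi_v=\Phi=\Psi_{\Bar v}$ closes and gives (b), from which (c) follows in one line. This auxiliary Dirichlet construction with the forcing term $-\alpha_n\Ind_\sB$ is the key technical device absent from your plan.
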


\begin{proof}
Using a standard argument (see \cite{biswas-11a, biswas-11, ari-anup}) we can find
a pair
$(V, \Hat\lambda)\in\Cc^2(\Rd)\times\RR$, with $V>0$ on $\Rd$, and $V(0)=1$,
that satisfies
\begin{equation}\label{ET4.1B}
\min_{u\in\Act}\;[\cL_u V + c(\cdot, u) V]\;=\;\Hat\lambda V\,,\quad
\Hat\lambda\;\le\; \inf_{x\in\Rd}\;\Lambda\str_x\,.
\end{equation}
This is obtained as a limit of Dirichlet eigensolutions
$(\widehat{V}_n,\Hat\lambda_n)
\in\bigl(\Sobl^{2,p}(B_n)\cap\Cc(\Bar{B}_n)\bigr)\times\RR$,
for any $p>d$, satisfying
$\widehat{V}_n>0$ on $B_n$, $\widehat{V}_n=0$ on
$\partial B_n$, $\widehat{V}_n(0)=1$,
and
\begin{equation*}
\min_{u\in\Act}\;\bigl[\cL_u\widehat{V}_n(x) + c(x,u)\,\widehat{V}_n(x)\bigr]
\;=\; \Hat\lambda_n\,\widehat{V}_n(x)
\qquad\text{a.e.\ }x\in B_n\,.
\end{equation*}
For $v\in\bUsm$ we have
\begin{equation}\label{ET4.1C}
\Lg_v V + c_v V \;=\; a^{ij}\partial_{ij} V + \langle b_v,\grad V\rangle + c_v V
\;=\;\Hat\lambda V\quad\text{on\ }\Rd\,.
\end{equation}
By \cref{C2.1} we obtain $\Hat\lambda\ge\lamstr_v(c_v)$.
Also by \cref{T3.2} we have $\lamstr_v(c_v)=\sE^v_x(c_v)$ for all $x\in\Rd$.
Combining these estimates with \cref{ET4.1B} we obtain
\begin{equation*}
\Lambda\str_x \;\le\; \sE^v_x(c_v) \;=\; \lamstr_v(c_v)
\;\le\; \Hat\lambda \;\le\; \inf_{z\in\Rd}\;\Lambda\str_z\qquad
\forall\,x\in\Rd\,.
\end{equation*}
This of course shows that $\Hat\lambda=\lamstr_v(c_v)=\Lambda\str_x$ for all $x\in\Rd$,
and also proves part (a).

We continue with part (b).
By \cref{T3.2} we have
\begin{equation}\label{ET4.1D}
\lamstr_v(c_v-h)\;<\;\lamstr_v(c_v)\qquad\forall\,h\in\Cc_{\mathrm{o}}^+(\Rd)\,,
\quad\forall\,v\in\Usm\,.
\end{equation}
In turn, by \cref{L2.4} there exists a unique 
eigenfunction $\Psi_v\in\Sobl^{2,d}(\Rd)$
which is associated with the principal eigenvalue
$\lamstr_v(c_v)$ of the operator $\Lg_v^{c_v}=\Lg_v+c_v$.
Since $\Hat\lambda=\lamstr_v(c_v)$ for all
$v\in\bUsm$ by part (a), it follows by \cref{ET4.1C} that
\begin{equation}\label{ET4.1E}
V \;=\; \Psi_v\quad\forall\,v\in\bUsm\,.
\end{equation}
By \cref{ET4.1D} and \cref{L2.2}\,(ii), and since \cref{E-rsde} is recurrent,
we have
\begin{equation}\label{ET4.1F}
\Psi_v(x)\;=\;\Exp_x^v\Bigl[e^{\int_0^{\uuptau}[c_v(X_s)-\Lambda\str]\D{s}}\,
\Psi_v(X_{\uuptau})\Bigr]
 \qquad \forall\,x\in\sB^c\,,\quad\forall\,v\in\Usm^*\,,
\end{equation}
and all sufficiently large balls $\sB$ centered at $0$, where
$\uuptau=\uptau(\sB^c)$, as usual.

Since the Dirichlet eigenvalues satisfy
$\Hat\lambda_n<\Hat\lambda=\Lambda\str$ for all $n\in\NN$,
 the Dirichlet problem
\begin{equation}\label{ET4.1G}
\min_{u\in\Act}\;
\bigl[\cL_u\varphi_{n}(x)+ \bigl(c(x,u)-\Lambda\str\bigr)\,\varphi_{n}(x)\bigr]
\;=\; -\alpha_n\,\Ind_{\sB}(x)\qquad\text{a.e.\ }x\in B_n\,,\qquad
\varphi_{n}=0\text{\ \ on\ \ }\partial B_n\,,
\end{equation}
with $\alpha_n>0$,
has a unique solution $\varphi_{n}\in\Sobl^{2,p}(B_n)\cap\Cc(\Bar{B}_n)$,
for any $p\ge1$ \cite[Theorem~1.9]{Quaas-08a}
(see also \cite[Theorem~1.1\,(ii)]{Yoshimura-06}).
We choose $\alpha_n$ as follows: first select
$\Tilde\alpha_n>0$ such that the solution
$\varphi_n$ of \cref{ET4.1G} with $\alpha_n=\Tilde\alpha_n$
satisfies $\varphi_n(0)=1$, and then set $\alpha_n=\min(1,\Tilde\alpha_n)$.
Passing to the limit in \cref{ET4.1G} as $n\to\infty$ along a subsequence,
we obtain a nonnegative solution
$\Phi\in\Sobl^{2,p}(\Rd)$ of
\begin{equation}\label{ET4.1H}
\min_{u\in\Act}\;
\bigl[\cL_u\Phi(x)+ \bigl(c(x,u)-\Lambda\str\bigr)\,\Phi(x)\bigr]
\;=\; -\alpha\,\Ind_{\sB}(x)\,,\qquad x\in\Rd\,.
\end{equation}
It is evident from the construction that if $\alpha=0$ then
$\Phi(0)=1$.
On the other hand, if $\alpha>0$, then necessarily $\Phi$ is positive on $\Rd$.
Let $\Hat{v}\in\Usm$ be a selector from the minimizer of \cref{ET4.1H}.
If $\alpha>0$, then \cref{ET4.1H} implies that there exists
$h\in\Cc_{\mathrm{o}}^+(\Rd)$ such that
$\lamstr_{\Hat{v}}(c_{\Hat{v}}+h) \le \Lambda\str$.
Since $\lamstr_{\Hat{v}}(c_{\Hat{v}})=\sE^{\Hat{v}}_x(c_{\Hat{v}})$ for all $x\in\Rd$
by \cref{T3.2}, and $\sE^{\Hat{v}}_x(c_{\Hat{v}})\ge\Lambda\str$,
then, in view of \cref{C2.1},
this contradicts \cref{ET4.1D} and the convexity of $\lamstr_{\Hat{v}}$.
Therefore, we must have $\alpha=0$.
Let $\Bar{v}\in\Usm^*$.
Applying the It\^{o}--Krylov formula to \cref{ET4.1G} we obtain
\begin{equation*}
\varphi_n(x) \;\le\; \Exp_x^{\Bar{v}} \Bigl[\E^{\int_{0}^{\uuptau}
[c_{\Bar{v}}(X_s)-\Lambda\str]\,\D{s}}\,
\varphi_n(X_{\uuptau})\,\Ind_{\{\uuptau<T\wedge\uptau_n\}}\Bigr]
+\Exp_x^{\Bar{v}} \Bigl[\E^{\int_{0}^{T}[c_{\Bar{v}}(X_s)-\Lambda\str]\,\D{s}}\,
\varphi_n(X_{T})\,
\Ind_{\{T<\uuptau\wedge\uptau_n\}}\Bigr]
\qquad\forall\,T>0\,,
\end{equation*}
and for all $x\in B_n\setminus\sB$,
where $\uuptau=\uptau(\sB^c)$.
Using the argument in the proof of \cite[Lemma~2.11]{ari-anup},
we obtain 
\begin{equation}\label{ET4.1I}
\Phi(x) \;\le\;
\Exp_x^{\Bar{v}} \Bigl[\E^{\int_{0}^{\uuptau}[c_{\Bar{v}}(X_s)-\Lambda\str]\,\D{s}}\,
\Phi(X_{\uuptau})\Bigr]
\qquad\forall\,x\in \sB^c\,,\quad\forall\,\Bar{v}\in\Usm^*\,.
\end{equation}
Comparing \cref{ET4.1F} and \cref{ET4.1I}, it follows that, given any
$\Bar{v}\in\Usm^*$,
we can scale $\Psi_{\Bar{v}}$ by a positive constant so that it touches $\Phi$ from
above at some point in $\Bar\sB$.
However, $\Bar{v}$ satisfies
\begin{equation*}
\Lg_{\Bar{v}} \Phi + c_{\Bar{v}}\,\Phi\;\ge\; \Lambda\str\,\Phi
\qquad \text{a.e.\ in\ }\Rd
\end{equation*}
by \cref{ET4.1H}.
Thus we have
\begin{equation*}
\Lg_{\Bar{v}} (\Psi_{\Bar{v}}-\Phi)
- (c_{\Bar{v}}-\Lambda\str)^{-}\,(\Psi_{\Bar{v}}-\Phi)\;\le\; 0\,
\qquad \text{a.e.\ in\ }\Rd\,,
\end{equation*}
and it follows by the strong maximum principle that
$\Phi=\Psi_{\Bar{v}}$ for all $\Bar{v}\in\Usm^*$.
Since $\bUsm\subset\Usm^*$ by part (a), it then follows by
\cref{ET4.1E} that $V=\Psi_{\Bar{v}}$ for all $\Bar{v}\in\Usm^*$.
Thus we have
\begin{equation*}
\Lg_{\Bar{v}}V+c_{\Bar{v}}V
\;=\; \Lg_{\Bar{v}}\Psi_{\Bar{v}}+c_{\Bar{v}}\Psi_{\Bar{v}}
\;=\; \lamstr_{\Bar{v}}(c_{\Bar{v}})\Psi_{\Bar{v}}
\;=\; \Lambda\str V
\;=\; \min_{u\in\Act}\;[\cL_u V + c(\cdot, u) V]\,.
\end{equation*}
This proves the verification of optimality result in part (b).

Suppose now that $\Tilde{V}\in\Cc^2(\Rd)$ is a positive solution of
\begin{equation}\label{ET4.1J}
\min_{u\in\Act}\;[\cL_u \Tilde{V} + c(\cdot, u) \Tilde{V}]\;=\;\Lambda\str\, \Tilde{V}
\quad\text{on\ }\Rd\,.
\end{equation}
Let $\Tilde{v}\in\Usm$ be a selector from the minimizer of \cref{ET4.1J}.
We have
$\lamstr_{\Tilde{v}}(c_{\Tilde{v}})=\sE^{\Tilde{v}}_x(c_{\Tilde{v}})\ge\Lambda\str$
for all $x\in\Rd$
by \cref{T3.2} and the definition of $\Lambda\str$,
and $\lamstr_{\Tilde{v}}(c_{\Tilde{v}})\le \Lambda\str$ by \cref{C2.1}.
Thus $\sE^{\Tilde{v}}_x(c_{\Tilde{v}})=\Lambda\str$ for all $x\in\Rd$,
which implies that $\Tilde{v}\in\Usm^*$.
Then $\Tilde{V}=\Psi_{\Tilde{v}}$ by the uniqueness of the latter.
Therefore, $\Tilde{V}=\Psi_{\Tilde{v}}=V$ by part (b).
This completes the proof.
\qed\end{proof}

As mentioned in \cref{R3.2} the existence of an
inf-compact $\ell$ in \cref{A4.1} is not possible
when $a$ and $b$ are bounded. So we consider the following alternative assumption.

\begin{assumption}\label{A4.2}
There exists a function $\Lyap\in\Sobl^{2,d}(\Rd)$,
such that $\inf_{\Rd}\Lyap>0$,
a compact set $\sK$, and positive constants $\kappa_0$ and $\gamma$,
satisfying
\begin{align*}
\max_{u\in\Act}\;\cL_u \Lyap (x)&\;\le\;
\kappa_0 \Ind_{\sK}(x) -\gamma \Lyap(x),\quad x\in\Rd\,,\\[3pt]
\limsup_{\abs{x}\to\infty}\;\sup_{u\in\Act}\,c(x,u) &\;<\; \gamma\,.
\end{align*}
\end{assumption}

A similar assumption is used in \cite{biswas-11a} where the author has obtained only
the existence of the solution $V$ to the HJB, and an optimal control.
Also it is shown in \cite{biswas-11a} that there exists a constant
$\gamma_1$, depending on $\gamma$, such that if $\norm{c}_\infty<\gamma_1$,
then \cref{ET4.2A} below has a solution. 
We improve these results substantially by proving uniqueness of the solution
$V$, and verification of optimality.

\begin{theorem}\label{T4.2}
Under \cref{A4.2}, there exists a positive solution $V\in\Cc^2(\Rd)$ satisfying
\begin{equation}\label{ET4.2A}
\min_{u\in\Act}\;[\cL_u V + c(\cdot, u) V]\;=\;\Lambda\str V.
\end{equation}
Let $\;\bUsm\subset\Usm$ be as in \cref{T4.1}.
Then \textup{(}a\textup{)} and \textup{(}b\textup{)} of \cref{T4.1} hold,
and \cref{ET4.2A} has
a unique positive solution in $\Cc^2(\Rd)$ up to a multiplicative constant.
\end{theorem}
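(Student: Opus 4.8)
The plan is to follow the template of the proof of \cref{T4.1}, adapting each step to the setting of \cref{A4.2}, where the inf-compactness of $\ell-\max_u c(\cdot,u)$ is replaced by the Foster--Lyapunov inequality with drift $-\gamma\Lyap$ together with the condition $\limsup_{\abs{x}\to\infty}\sup_u c(x,u)<\gamma$. First I would establish existence of a pair $(V,\Hat\lambda)\in\Cc^2(\Rd)\times\RR$ with $V>0$, $V(0)=1$, solving $\min_{u\in\Act}[\cL_u V + c(\cdot,u)V]=\Hat\lambda V$ with $\Hat\lambda\le\inf_{x}\Lambda\str_x$, exactly as in \cref{ET4.1B}: one takes the Dirichlet eigensolutions $(\widehat V_n,\Hat\lambda_n)$ on $B_n$ and passes to the limit, the needed a priori bounds coming from \cref{A4.2} (the Lyapunov function $\Lyap$ controls the $\widehat V_n$ from above on compacta via a comparison argument, and Harnack gives the lower bound). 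The monotonicity $\Hat\lambda_n<\Hat\lambda$ and finiteness of $\Hat\lambda$ again follow by comparing with $\Lyap$.

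The key replacement of \cref{T3.2} by \cref{T3.3} happens next. For any $v\in\Usm$, \cref{A4.2} gives $\max_u\cL_u\Lyap\le\kappa_0\Ind_\sK-\gamma\Lyap$, hence in particular $\Lg_v\Lyap\le\kappa_0\Ind_\sK-\gamma\Lyap$, so \cref{ET3.3A} holds for the operator $\Lg_v$ with the same $\sK,\kappa_0,\gamma$; moreover $c_v(x)\le\sup_u c(x,u)$, so $\limsup_{\abs{x}\to\infty}c_v(x)<\gamma$. Therefore \cref{T3.3} applies to $\Lg_v^{c_v}$ and yields $\lamstr_v(c_v-h)<\lamstr_v(c_v)=\sE^v_x(c_v)$ for all $h\in\Cc_{\mathrm{o}}^+(\Rd)$ and all $x\in\Rd$ — this is precisely the analogue of \cref{ET4.1D}, and it is the engine for everything that follows. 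Armed with it, part (a) is immediate: for $v\in\bUsm$ we have $\Lg_v V+c_v V=\Hat\lambda V$, so $\Hat\lambda\ge\lamstr_v(c_v)$ by \cref{C2.1}, while $\Lambda\str_x\le\sE^v_x(c_v)=\lamstr_v(c_v)\le\Hat\lambda\le\inf_z\Lambda\str_z$, forcing equality and $\bUsm\subset\Usm^*$ with $\lamstr_v(c_v)=\Lambda\str$. For part (b) one reproduces verbatim the $\varphi_n$ construction of \cref{ET4.1G}--\cref{ET4.1I}: solve the Dirichlet HJB with a forcing $-\alpha_n\Ind_\sB$, normalize, pass to the limit to get $\Phi$ solving \cref{ET4.1H}; rule out $\alpha>0$ using \cref{T3.3} (the strict monotonicity of $\lamstr_{\Hat v}$ and its convexity); derive the stochastic representations \cref{ET4.1F} and \cref{ET4.1I} — here the stochastic representation for $\lamstr_v(c_v)$ needs the recurrence of \cref{E-rsde} under $v\in\Usm^*$, which follows from \cref{A4.2} since $\Lyap$ is a Lyapunov function making \cref{E-rsde} positive recurrent for every $v\in\Usm$; then the strong maximum principle gives $\Phi=\Psi_{\Bar v}$ for all $\Bar v\in\Usm^*$, hence $V=\Psi_{\Bar v}$ and $\Lg_{\Bar v}V+c_{\Bar v}V=\Lambda\str V=\min_u[\cL_u V+c(\cdot,u)V]$, i.e.\ $\Usm^*\subset\bUsm$. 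Uniqueness of the positive $\Cc^2$ solution of \cref{ET4.2A} follows as in \cref{T4.1}(c): a minimizing selector $\Tilde v$ of any solution $\Tilde V$ lies in $\Usm^*$ (sandwich $\lamstr_{\Tilde v}(c_{\Tilde v})$ between $\Lambda\str$ via \cref{C2.1} and $\sE^{\Tilde v}_x(c_{\Tilde v})=\lamstr_{\Tilde v}(c_{\Tilde v})\ge\Lambda\str$ via \cref{T3.3}), so $\Tilde V=\Psi_{\Tilde v}=V$.

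The main obstacle I anticipate is verifying that the comparison/Lyapunov machinery genuinely transfers from the inf-compact-$\ell$ setting of \cref{A4.1,T3.2} to the bounded-coefficient setting of \cref{A4.2,T3.3} at every place where \cref{T4.1} invoked \cref{T3.2}. Concretely: (i) ensuring the a priori estimates on the Dirichlet eigenfunctions $\widehat V_n$ (needed for the limit producing $(V,\Hat\lambda)$) hold using only the weaker drift condition $-\gamma\Lyap$ rather than $-\ell\Lyap$ with $\ell$ inf-compact — one must check the comparison $\widehat V_n\le C\Lyap$ still works when $c_v$ merely satisfies $\limsup c_v<\gamma$ outside a compact set rather than being dominated by an inf-compact barrier; and (ii) justifying the passage to the limit in the stochastic representations \cref{ET4.1F} and \cref{ET4.1I}, which in \cref{T4.1} relied on the dominating exponential moment coming from $\ell$, and here must instead come from $\Exp^v_x[\E^{\gamma\uuptau}]<\infty$, valid by \cref{A4.2} for $\uuptau$ the hitting time of a large enough compact set (as in the proof of \cref{T3.3}). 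Once one confirms that $\Lg_v$ inherits \cref{ET3.3A} uniformly over $v\in\Usm$ — which it does, since $b_v$ is a $\cP(\Act)$-average of $b(\cdot,u)$ and \cref{A4.2} bounds $\max_u\cL_u\Lyap$ — the rest is a faithful rerun of the \cref{T4.1} argument with \cref{T3.3} in place of \cref{T3.2}, and no genuinely new idea is required.
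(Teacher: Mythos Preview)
Your proposal is correct and follows essentially the same approach as the paper: the paper's proof is a terse sketch stating that part~(a) goes through exactly as in \cref{T4.1}, that \cref{T3.3} and \cref{L2.7} supply, for every $v\in\Usm$, the unique principal eigenpair $(\Psi_v,\lamstr_v)$ together with its stochastic representation, and that ``the rest follows as in \cref{T4.1}''. You have correctly unpacked this, identifying \cref{T3.3} as the drop-in replacement for \cref{T3.2} (made possible because \cref{A4.2} gives $\Lg_v\Lyap\le\kappa_0\Ind_\sK-\gamma\Lyap$ and $\limsup_{\abs{x}\to\infty}c_v<\gamma$ uniformly in $v$), and the obstacles you flag are exactly the routine verifications implicit in the paper's ``follows as in \cref{T4.1}''.
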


\begin{proof}
Part (a) follows exactly as in the proof of \cref{T4.1}.

By \cref{T3.3,L2.7} for any $v\in\Usm$ there exists a unique eigenpair
$(\Psi_v,\lambda^*_v)$ for $\Lg_v^{c_v}$.
In addition,
\begin{equation*}
\Psi_v(x)\;=\;\Exp_x^v\Bigl[\E^{\int_0^{\uuptau_r}[c_v(X_s)-\lambda^*_v]\, \D{s}}\,
\Psi_v(X_{\uuptau_r})\Bigr]\,,
\quad x\in \Bar{B}_r^c\,.
\end{equation*}
The rest follows as in \cref{T4.1}.
\qed\end{proof}

\subsection{Continuity results}

It is known from \cite{book} that the set of relaxed stationary Markov controls
$\Usm$ is compactly metrizable (see also \cite{Borkar-topology} for a detailed
construction of this topology).
In particular $v_n\to v$ in $\Usm$ if and only if
\begin{equation*}
\int_{\Rd} f(x)\int_{\Act} g(x,u)\, v_n(\D{u}| x)\, \D{x}
\;\xrightarrow[n\to\infty]{}\;
\int_{\Rd} f(x)\int_{\Act} g(x,u)\, v(\D{u}| x)\, \D{x}
\end{equation*}
for all $f\in L^1(\Rd)\cap L^2(\Rd)$ and $g\in\Cc_b(\Rd\times\Act)$.
For $v\in\Usm$ we denote by $(\Psi_v,\lamstr_v(f))$ the principal eigenpair of the
operator $\Lg_v^{f}$, i.e.,
\begin{equation*}
\Lg_v\Psi_v(x) + f(x)\,\Psi_v(x)\;=\; \lamstr_v(f)\,\Psi_v(x),\quad \Psi_v(x)>0,
\quad x\in\Rd\,.
\end{equation*}

When $f=c_v$, we occasionally
drop the dependence on $c_v$ and denote the eigenvalue
as $\lamstr_v=\lamstr_v(c_v)$. 
The next result concerns the continuity of $\lamstr_v$ with respect to
stationary Markov controls, and
extends the result in \cite[Proposition~9.2]{Berestycki-15}.
The continuity result in \cite[Proposition~9.2]{Berestycki-15} is established
with respect to the $L^\infty$ norm convergence of the
coefficients, whereas \cref{T4.3} that
follows asserts continuity under a much weaker topology.

\begin{theorem}\label{T4.3}
Assume one of the following.
\begin{enumerate}[(i)]
\item
\cref{A4.1} holds,
and $c\in\cC_{\beta\ell}$ for some $\beta\in(0,1)$.

\item
\cref{A4.2} holds.
\end{enumerate}
Then the map $v\mapsto \lamstr_v$ is continuous.
\end{theorem}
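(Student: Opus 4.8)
The plan is to establish continuity of $v \mapsto \lamstr_v$ by exploiting the characterizations developed in \cref{S2,S3}, namely the stochastic representation of the ground state and the strict monotonicity property, together with elliptic estimates that survive the weak topology on $\Usm$. First I would fix a convergent sequence $v_n \to v$ in $\Usm$ and let $(\Psi_{v_n}, \lamstr_{v_n})$ denote the corresponding principal eigenpairs, normalized by $\Psi_{v_n}(0)=1$. The key preliminary observation is that under either \cref{A4.1} (with $c \in \cC_{\beta\ell}$, $\beta<1$) or \cref{A4.2}, the quantities $\lamstr_{v_n} = \sE^{v_n}_x(c_{v_n})$ are uniformly bounded: the Foster--Lyapunov inequality \cref{EA4.1A} (resp.\ the one in \cref{A4.2}) holds with a bound uniform in $u \in \Act$, hence uniform in $v \in \Usm$, so an application of the It\^o--Krylov formula to $\Lyap$ gives $\sE^{v}_x(\ell) \le \Bar\kappa_\circ$ (resp.\ the analogous bound), and since $c_{v_n} \le \beta\ell$ outside a compact set (resp.\ $c_{v_n}$ is bounded), convexity of $\lambda \mapsto \lamstr_{v_n}(\lambda\,\cdot)$ and \cref{L2.1}\,(b) yield a uniform upper bound; a uniform lower bound follows since $c_{v_n} \ge 0$ and the Dirichlet eigenvalues on any fixed ball are uniformly bounded below. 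Thus $\lamstr_{v_n}$ lies in a compact interval, and along a subsequence $\lamstr_{v_n} \to \bar\lambda$.

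Next I would obtain uniform local estimates on $\Psi_{v_n}$. From the eigenvalue equation $\Lg_{v_n}\Psi_{v_n} + c_{v_n}\Psi_{v_n} = \lamstr_{v_n}\Psi_{v_n}$, the boundedness of $b_{v_n}$ on compact sets (uniform in $n$, by (B2) and Jensen), the local uniform ellipticity, and the normalization $\Psi_{v_n}(0)=1$, Harnack's inequality gives that $\{\Psi_{v_n}\}$ is locally bounded above and below away from $0$, uniformly in $n$; standard interior $\Sob^{2,p}$ estimates then give a uniform bound in $\Sobl^{2,p}(\Rd)$ for every $p<\infty$. Hence, passing to a further subsequence, $\Psi_{v_n} \to \Psi$ weakly in $\Sobl^{2,p}(\Rd)$ and in $\Ccl^{1,\alpha}(\Rd)$, with $\Psi > 0$ and $\Psi(0)=1$. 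The weak-topology convergence $v_n \to v$ passes to the limit in the drift term $b_{v_n} \to b_v$ (tested against $\Cc_{\mathrm c}^\infty$ functions paired with the strongly convergent gradients $\grad\Psi_{v_n}$), and likewise $c_{v_n}\Psi_{v_n} \to c_v\Psi$, so $\Psi$ satisfies $\Lg_v\Psi + c_v\Psi = \bar\lambda\Psi$ on $\Rd$. By \cref{C2.1} this forces $\bar\lambda \ge \lamstr_v(c_v)$; establishing the reverse inequality is the crux.

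The hard part will be showing $\bar\lambda \le \lamstr_v(c_v)$, i.e.\ ruling out that the limiting eigenfunction corresponds to a larger eigenvalue. Here I would invoke the strict-monotonicity machinery. In case (i), $c \in \cC_{\beta\ell}$ with $\beta<1$ means $\ell - \max_u c(\cdot,u)$ is inf-compact, so by \cref{T3.2} (and its uniform version, using that \cref{EA4.1A} is uniform in $u$) each $\lamstr_{v_n}(c_{v_n})$ is strictly monotone at $c_{v_n}$, and moreover there is a \emph{uniform} gap: for $h \in \Cc_{\mathrm c}^+(\Rd)$ fixed, one has $\lamstr_{v_n}(c_{v_n}) - \lamstr_{v_n}(c_{v_n}-h) \ge 2\delta > 0$ with $\delta$ independent of $n$, because the exponential ergodicity of the ground-state diffusions is controlled by the same Lyapunov function $\Lyap$ (via \cref{EL3.1A} and \cref{T3.1}). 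This uniform gap, combined with the estimate \cref{E3.6}-type inequality $\Exp_x^{v_n}[\E^{\int_0^{\uuptau}[c_{v_n}-\lamstr_{v_n}+\delta]\,\D s}\Ind_{\{\uuptau<\infty\}}] < \infty$ uniformly (choosing the ball $\sB$ large enough using inf-compactness of $\ell - \beta\ell$, hence independent of $n$), lets me write the stochastic representation
\begin{equation*}
\Psi_{v_n}(x) \;=\; \Exp_x^{v_n}\Bigl[\E^{\int_0^{\uuptau}[c_{v_n}(X_s)-\lamstr_{v_n}]\,\D s}\,\Psi_{v_n}(X_{\uuptau})\,\Ind_{\{\uuptau<\infty\}}\Bigr]\,, \quad x\in\sB^c\,,
\end{equation*}
and pass to the limit using dominated convergence (the exponent is dominated using the uniform bound $c_{v_n}-\lamstr_{v_n} \le c_{v_n}-\lamstr_{v_n}+\delta \le \ell-\lamstr_{v_n}(\ell)$ on $\sB^c$ for $n$ large, together with weak convergence of the laws $\Prob_x^{v_n} \to \Prob_x^{v}$ which follows from $b_{v_n}\to b_v$ and the martingale problem being well posed). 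The limit yields that $\Psi$ satisfies the same representation with $c_v$ and $\bar\lambda$, and then \cref{C2.3} (or \cref{L2.7}) forces $\bar\lambda = \lamstr_v(c_v)$. Since every subsequence has a further subsequence converging to $\lamstr_v(c_v)$, the whole sequence converges, proving continuity. In case (ii) the argument is the same, with \cref{T3.3} replacing \cref{T3.2} and $\Lyap$ from \cref{A4.2} providing the uniform control; here $c$ is bounded, which simplifies the domination step. I expect the main obstacle to be the \emph{uniformity} of the gap $\delta$ and of the ball $\sB$ in the stochastic representation across the sequence $v_n$ — one must check carefully that the constants produced in the proofs of \cref{T3.2,T3.3} depend only on the Lyapunov data in \cref{A4.1,A4.2} and on $\norm{c}$ (or the inf-compact envelope $\ell - \beta\ell$), all of which are common to the whole family $\{v_n\}$.
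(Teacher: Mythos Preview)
Your overall architecture---compactness of $\{\lamstr_{v_n}\}$, Harnack and Sobolev to get $\Psi_{v_n}\to\Psi$ in $\Sobl^{2,p}$, passage of the PDE to the limit via \cite[Lemma~2.4.3]{book} giving $\bar\lambda\ge\lamstr_v$, and then identifying $\bar\lambda=\lamstr_v$ via a stochastic representation and \cref{C2.3}---matches the paper exactly through the first half. The divergence, and the genuine problem, is in how you obtain the stochastic representation for the limit $\Psi$.

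You propose to write the representation for each $\Psi_{v_n}$ under $\Prob_x^{v_n}$ and pass to the limit by ``dominated convergence together with weak convergence of the laws.'' This step does not go through as stated. The functional inside $\Exp_x^{v_n}[\,\cdot\,]$ depends on $n$ through $c_{v_n}$, and in the topology of $\Usm$ the functions $c_{v_n}$ converge to $c_v$ only when integrated against $L^1\cap L^2$ test functions---there is no pointwise convergence. So even after Skorokhod-coupling the laws on a common probability space, $\int_0^{\uuptau} c_{v_n}(X^{(n)}_s)\,\D s$ need not converge to $\int_0^{\uuptau} c_v(X_s)\,\D s$ pathwise, and no dominating function repairs this. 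Weak convergence of measures handles fixed continuous functionals; dominated convergence handles a fixed measure; here both the measure and the integrand vary, and the integrand variation is in a genuinely weak mode. The ``uniform gap $\delta$'' you flag as the main obstacle is in fact not needed at all, and the paper does not invoke it.

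The paper sidesteps the issue entirely. It uses the representations of the $\Psi_{v_n}$ \emph{only} to extract a uniform pointwise growth bound $\Psi_{v_n}\le\Tilde\kappa\,\Lyap^\beta$ on $\sB^c$ (via H\"older's inequality on the exponential, using $\abs{c_{v_n}-\lamstr_n}\le\beta\ell$ outside a ball chosen uniformly in $n$, and the common Lyapunov function from \cref{A4.1}). This inequality passes trivially to the limit: $\Psi\le\Tilde\kappa\,\Lyap^\beta$. The paper then works directly with the \emph{limiting} equation $\Lg_v\Psi+(c_v-\lamstr)\Psi=0$, applies the It\^o--Krylov formula under the single control $v$ with stopping at $\uuptau\wedge\uptau_R\wedge T$, lets $T\to\infty$, and uses the growth bound together with a truncation on $\{\Psi\ge m\}\cap\partial B_R$ (exploiting that $\Psi\le\Tilde\kappa\,\Lyap^\beta$ with $\beta<1$ forces $\Psi/\Lyap\to0$ on these sets) to show the contribution at $\uptau_R$ vanishes as $R\to\infty$. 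This produces the representation for $\Psi$ under $v$ without ever tracking the $v_n$-representations through the limit.
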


\begin{proof}
We demonstrate the result under (i).
For case (ii) the proof is analogous.
Let $v_n\to v$ in the topology of Markov controls.
Let $(\Psi_{n}, \lamstr_{n})$ be the principal eigenpair which satisfies
\begin{equation}\label{ET4.3A}
\Lg_{v_n}\Psi_{n}(x) + c_{v_n}(x)\,\Psi_n(x)\;=\; \lamstr_n\, \Psi_n(x), \quad x\in\Rd,
\quad \text{and}\quad
\lamstr_n=\sE^{v_n}(c_{v_n})\,,
\end{equation}
where the equality $\lamstr_n=\sE^{v_n}(c_{v_n})$ is a consequence of \cref{T3.2,P3.1}.
It is obvious that $\lamstr_n\ge 0$ for all $n$.

Since
$\ell(\cdot)-\max_{u\in\Act}c(\cdot, u)$ is inf-compact, we can find a constant
$\kappa_1$ such that
$\max_{u\in\Act}c(x,u)\le \kappa_1 + \ell(x)$.
Recall that 
$\sE^v(\ell)<\Bar{\kappa}_\circ$ for all $v\in\Usm$
(as shown in the paragraph after \cref{A4.1}),
and this implies that $\lamstr_n\le \kappa_1+\Bar{\kappa}_\circ$ for all $n$.
Thus $\{\lamstr_n\, \colon n\ge 1\}$ is bounded.
Therefore, passing to a subsequence we may assume that $\lamstr_n\to\lamstr$ as
$n\to\infty$.
To complete the proof we only need to show that $\lamstr=\lamstr_v$.
Since $\Psi_n(0)=1$ for all $n$, and the coefficients 
$b_{v_n}$, and $c_{v_n}$ are uniformly locally bounded,
applying Harnack's inequality and Sobolev's estimate we can find
$\Psi\in\Sobl^{2,p}(\Rd)$, $p\ge 1$, such that $\Psi_n\to\Psi$
weakly in $\Sobl^{2,p}(\Rd)$.
Therefore, by \cite[Lemma~2.4.3]{book} and \cref{ET4.3A}, we obtain
\begin{equation}\label{ET4.3B}
\Lg\Psi(x) + c_v(x)\,\Psi(x)\;=\; \lamstr\, \Psi(x), \quad x\in\Rd,\quad \Psi>0\,.
\end{equation}
By \cref{C2.1} we have $\lamstr\ge\lamstr_v$.

Let  $\sB\supset\sK$ be an open ball such that 
$\abs{c(x,u)-\lambda^*}\le \beta \ell(x)$ for all $(x,u)\in\sB^c\times\Act$,
and $R>0$ be large enough so that $\sB\subset B_R$.
Let $\uuptau=\uptau(\sB^c)$.
Applying the It\^{o}--Krylov
formula to \cref{ET4.3B}, we obtain 
\begin{equation}\label{ET4.3C}
\Psi(x)\;=\; \Exp^{v}_x
\Bigl[\E^{\int_0^{\uuptau\wedge\uptau_R\wedge T}[c_{v}(X_s)-\lamstr]\, \D{s}}\, 
\Psi(X_{\uuptau\wedge\uptau_R\wedge T})\Bigr]\,, \quad x\in\sB^c\cap B_R\,,
\end{equation}
for any $T>0$.
Since 
\begin{align}\label{ET4.3D}
\Exp^{v}_x\Bigl[\E^{\int_0^{\uuptau}[c_{v}(X_s)-\lamstr]\, \D{s}}\, \Bigr]
&\;\le\;
\Exp^{v}_x\Bigl[\E^{\int_0^{\uuptau}\beta\ell(X_s)\, \D{s}}\, \Bigr]
\nonumber\\[5pt]
&\;\le\; \biggl(\Exp^{v}_x\Bigl[\E^{\int_0^{\uuptau}\ell(X_s)\, \D{s}}\, \Bigr]
\biggr)^{\beta}
\;<\;\infty
\quad \text{for}\; x\in\sB^c,
\end{align}
and $\Psi$ in bounded in $\sB^c\cap B_R$, for every fixed $R$,
letting $T\to\infty$ in \cref{ET4.3C} we have
\begin{equation}\label{ET4.3E}
\Psi(x)\;=\;
\Exp^{v}_x\Bigl[\E^{\int_0^{\uuptau\wedge\uptau_R}[c_{v}(X_s)-\lamstr]\, \D{s}}\, 
\Psi(X_{\uuptau\wedge\uptau_R})\Bigr]\,, \quad x\in\sB^c\cap B_R\,.
\end{equation}

\Cref{ET4.3D} which also holds, possibly for a larger ball $\sB$,
if we replace $v$ and $\lamstr$ with $v_n$ and $\lamstr_n$, respectively,
shows that, for some constant $\Tilde\kappa$, we have
$\Psi_n(x) \le \Tilde\kappa \bigl(\Lyap(x)\bigr)^\beta$ for all $n\in\NN$,
and $x\in\sB^c$.
Therefore, $\Psi(x) \le \Tilde\kappa \bigl(\Lyap(x)\bigr)^\beta$
for all $x\in\sB^c$.

We write
\begin{equation}\label{ET4.3F}
\Exp_x^v\Bigl[\E^{\int_0^{\uuptau\wedge\uptau_R}\ell(X_s)\, \D{s}}\,\Bigr]
\;=\;
\Exp_x^v\Bigl[\E^{\int_0^{\uuptau}\ell(X_s)\, \D{s}}\,
\Ind_{\{\uuptau<\uptau_R\}}\Bigr]
+ \Exp_x^v\Bigl[\E^{\int_0^{\uptau_R}\ell(X_s)\, \D{s}}\,
\Ind_{\{\uptau_R<\uuptau\}}\Bigr]\,.
\end{equation}
The left hand side of \cref{ET4.3F} and the first term on the right
hand side both converge to
$\Exp_x^v\bigl[\E^{\int_0^{\uuptau}\ell(X_s)\, \D{s}}\bigr]$ as $R\to\infty$,
by monotone convergence.
Thus we have
\begin{equation}\label{ET4.3G}
\Exp_x^v\Bigl[\E^{\int_0^{\uptau_R}\ell(X_s)\, \D{s}}\,
\Ind_{\{\uptau_R<\uuptau\}}\Bigr]\;\xrightarrow[R\to\infty]{}\; 0\,.
\end{equation}

On the other hand \cref{A4.1} implies that
\begin{equation*}
\Exp_x\Bigl[\E^{\int_0^{\uptau_R}\ell(X_s)\, \D{s}}\,
\Lyap(X_{\uptau_R})\,\Ind_{\{\uptau_R<\uuptau\}}\Bigr]
\;\le\; \Lyap(x)
\qquad \forall\,x\in B_R\setminus\sB^c\,,\ \forall\,R>0\,.
\end{equation*}
We proceed as in the proof of \cref{T2.5}.
Let $\Gamma(R,m) \df \{x\in\partial B_R\colon \Psi(x)\ge m\}$
for $m\ge 1$.
Since $\Psi \le \Tilde\kappa \Lyap^\beta$ on $\sB^c$,
we have
$\Lyap^{\beta-1}\le \bigl(\tfrac{\Psi}{\Tilde\kappa}\bigr)^{1-\frac{1}{\beta}}$
on $\sB^c$, and, therefore,
\begin{equation}\label{ET4.3new}
\Psi\,\Ind_{\Gamma(R,m)}\;\le\; \Tilde\kappa \Lyap\,\Lyap^{\beta-1}\,\Ind_{\Gamma(R,m)}
\;\le\;\Tilde\kappa^{\frac{1}{\beta}}\,m^{1-\frac{1}{\beta}}\,\Lyap
\quad\text{on\ }\partial B_R\,.
\end{equation}
Thus, using \cref{ET4.3new}, we obtain
\begin{align}\label{ET4.3H}
\Exp^{v}_x\Bigl[\E^{\int_0^{\uptau_R}[c_{v}(X_s)-\lambda^*]\, \D{s}}\, 
\Psi(X_{\uptau_R})\,\Ind_{\{\uptau_R<\uuptau\}}\Bigr]
& \le\;
m\Exp^{v}_x\Bigl[\E^{\int_0^{\uptau_R}\ell(X_s)\, \D{s}}\, 
\Ind_{\{\uptau_R<\uuptau\}}\Bigr]\nonumber\\[3pt]
&\mspace{50mu}+
\Exp^{v}_x\Bigl[\E^{\int_0^{\uptau_R}\ell(X_s)\, \D{s}}\, 
\Psi(X_{\uptau_R})\,\Ind_{\Gamma(R,m)}(X_{\uptau_R})\,\Ind_{\{\uptau_R<\uuptau\}}\Bigr]
\nonumber\\[5pt]
&\le\;
m\Exp^{v}_x\Bigl[\E^{\int_0^{\uptau_R}\ell(X_s)\, \D{s}}\, 
\Ind_{\{\uptau_R<\uuptau\}}\Bigr]\nonumber\\[3pt]
&\mspace{50mu}+
\Tilde\kappa^{\frac{1}{\beta}}\,m^{1-\frac{1}{\beta}}
\,\Exp^{v}_x\Bigl[\E^{\int_0^{\uptau_R}\ell(X_s)\, \D{s}}\, 
\Lyap(X_{\uptau_R})\Ind_{\{\uptau_R<\uuptau\}}\Bigr]
\nonumber\\[5pt]
&\le\; m\Exp^{v}_x\Bigl[\E^{\int_0^{\uptau_R}\ell(X_s)\, \D{s}}\, 
\Ind_{\{\uptau_R<\uuptau\}}\Bigr]
+ \Tilde\kappa^{\frac{1}{\beta}}\,m^{1-\frac{1}{\beta}}\,\Lyap(x)\,,
\end{align}
and by first letting $R\to\infty$, using
\cref{ET4.3G}, and then $m\to\infty$, it follows
that the left hand side of \cref{ET4.3H} vanishes as $R\to\infty$.
Therefore,
letting $R\to\infty$ in \cref{ET4.3E}, we obtain
\begin{equation*}
\Psi(x)\;=\; \Exp^{v}_x\Bigl[e^{\int_0^{\uuptau}[c_{v}(X_s)-\lamstr]\, \D{s}}\, 
\Psi(X_{\uuptau})\Bigr]\,, \quad x\in\sB^c\,.
\end{equation*}
It then follows by \cref{C2.3} that $\lamstr=\lamstr_v$, and this completes
the proof.
\qed\end{proof}

\begin{remark}\label{R4.1}
Following the proof of \cref{T4.3} we can obtain the following continuity
result which should be compared with \cite[Proposition~9.2\,(ii)]{Berestycki-15}.
Consider a sequence of operators $\Lg_n^{f_n}$ with coefficients $(a_n, b_n, f_n)$,
where $b_n$, $f_n$ are locally bounded uniformly in $n$,
and $\inf_n(\inf_{\Rd} f_n)>-\infty$.
The coefficients $a_n$ and $b_n$ are
assumed to satisfy (A1)--(A3) uniformly in $n$.
Assume that $a_n\to a$ in $\Cc_{\mathrm{loc}}(\Rd)$, and
$b_n\to b$ and $f_n\to f$ weakly in $L^1_{\mathrm{loc}}(\Rd)$.
Moreover we suppose that one of the following hold.
\begin{enumerate}[(a)]
\item
There exists an inf-compact function $\ell\in\Cc(\Rd)$ and
$\Lyap\in \Sobl^{2,d}(\Rd)$, with $\inf_{\Rd}\Lyap>0$, such that
$\Lg_n \Lyap \;\le\; \Bar{\kappa}\, \Ind_{\sK} - \ell\Lyap$ 
a.e.\ on $\Rd$
for some constant $\Bar{\kappa}$, and a compact set $\sK$.
In addition, $\beta\ell-\sup_n f_n$ is inf-compact for some $\beta\in (0,1)$.

\item
The sequence $\Lg_n$ satisfies \cref{ET3.3A}
for all $n$, 
$\lim_{n\to\infty}\norm{f^-_n}_\infty=\norm{f^-}_\infty$, and 
$$\limsup_{\abs{x}\to\infty}\, \sup_n f_n(x)+\norm{f^-_n}_\infty\;<\; \gamma\,.$$
\end{enumerate}
Then the principal eigenvalue $\lamstr(f_n)$ converges to $\lamstr(f)$ as $n\to\infty$.
\end{remark}

As an application of \cref{T4.3} we have the following existence result for the
risk-sensitive control problem
under (Markovian) risk-sensitive type constraints.

\begin{theorem}
Assume one of the following.
\begin{enumerate}[(i)]
\item
\cref{A4.1} holds,
and $c, r_1, \dotsc, r_m\in\cC_{\beta\ell}$ for some $\beta\in(0,1)$.

\item
\cref{A4.2} holds, and $r_1, \dotsc, r_m\in\mathfrak{C}$ 
satisfy
\begin{equation*}
 \max_{i=1,\dotsc,m}\;
 \Bigl\{\limsup_{\abs{x}\to\infty}\,\max_{u\in\Act}\;r_i(x,u) \Bigr\}
 \;<\; \gamma\,.
 \end{equation*}
\end{enumerate}
In addition, suppose that $K_i$, $i=1,\dotsc,m$, are closed subsets of $\RR$,
and that there exists $\Hat{v}\in\Usm$ such that
$\sE^{\Hat{v}}(r_{i,\Hat{v}})\in K_i$ for all $i$,
where we use the usual notation $r_{i,v}(x)\df r_i\bigl(x,v(x)\bigr)$.

Then the following constrained
minimization problem admits an optimal control in $\Usm$
\begin{equation*}
\text{minimize over $v\in\Usm$}:\ \ \sE^v(c_v),
\quad \text{subject to\ \ }
\sE^v(r_{i,v})\in K_i\,,\ i=1,\dotsc, m\,.
\end{equation*}
\end{theorem}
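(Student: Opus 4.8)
The plan is to prove existence of a constrained optimal control by a compactness argument in the space $\Usm$ of relaxed stationary Markov controls, using the continuity of $v\mapsto\lamstr_v(f)$ from \cref{T4.3} together with the identity $\sE^v(f)=\lamstr_v(f)$ supplied by \cref{T3.2,T3.3}. First I would observe that under either hypothesis (i) or (ii), for every $v\in\Usm$ and every $f\in\{c,r_1,\dotsc,r_m\}$ (or the relevant class in case (ii)), \cref{T3.2} (respectively \cref{T3.3}) gives $\sE^v_x(f_v)=\lamstr_v(f_v)$ for all $x\in\Rd$, so the objective and the constraint functionals do not depend on the initial point and can be written purely in terms of principal eigenvalues. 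Thus the feasible set is
\[
\Usm^{\mathrm{feas}}\;\df\;\bigl\{v\in\Usm\;\colon\;\lamstr_v(r_{i,v})\in K_i,\ i=1,\dotsc,m\bigr\}\,,
\]
which is nonempty by the assumed existence of $\Hat v$, and the problem is to minimize $v\mapsto\lamstr_v(c_v)$ over this set.

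Next I would invoke that $\Usm$ is compact and metrizable (stated in \cref{S4}, citing \cite{book,Borkar-topology}). I would then argue that each map $v\mapsto\lamstr_v(r_{i,v})$ is continuous on $\Usm$: this is essentially \cref{T4.3}, but applied with running cost $r_i$ in place of $c$; the hypotheses (i) or (ii) of the present theorem are precisely arranged so that the proof of \cref{T4.3} applies verbatim with $c$ replaced by any of $c,r_1,\dotsc,r_m$ (in case (i), all of $c,r_1,\dotsc,r_m$ lie in $\cC_{\beta\ell}$; in case (ii), the growth condition on each $r_i$ relative to $\gamma$ is exactly what \cref{T3.3} and the proof of \cref{T4.3} require, modulo the elementary fact that if $v_n\to v$ then $c_{v_n}\to c_v$ and $r_{i,v_n}\to r_{i,v}$ weakly in $L^1_{\mathrm{loc}}$, since the integrands are bounded uniformly on compacts). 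Continuity of $v\mapsto\lamstr_v(r_{i,v})$ and closedness of $K_i$ then show that $\Usm^{\mathrm{feas}}$ is a closed, hence compact, subset of $\Usm$. Since $v\mapsto\lamstr_v(c_v)$ is likewise continuous on $\Usm$ by \cref{T4.3}, it attains its infimum over the nonempty compact set $\Usm^{\mathrm{feas}}$, and any minimizer is the desired optimal control.

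The main obstacle is verifying that the proof of \cref{T4.3} genuinely transfers to the constraint functionals $\lamstr_v(r_{i,v})$ and that it is robust enough to handle all of them simultaneously. Concretely, the proof of \cref{T4.3} uses: a uniform upper bound $\lamstr_n\le\kappa_1+\Bar\kappa_\circ$ coming from the inf-compactness of $\ell-\max_u c(\cdot,u)$ in case (i) (this is available for each $r_i$ since $r_i\in\cC_{\beta\ell}$), a uniform-in-$n$ bound $\Psi_n\le\Tilde\kappa\Lyap^\beta$ on $\sB^c$ from the stochastic representation, and the Harnack/Sobolev compactness to extract a limiting eigenfunction $\Psi$ solving the limit equation, followed by \cref{C2.3} to identify $\lamstr=\lamstr_v$. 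In case (ii) one instead uses \cref{T3.3,L2.7} and the uniform exponential-hitting-time estimate from \cref{ET3.3A}. All of these ingredients are already in place; the only care needed is to choose the ball $\sB$ large enough to simultaneously accommodate $c$ and all the $r_i$ (take a finite intersection of the conditions), and to note that the lower bound $\lamstr_n\ge0$ used for $c_{v_n}$ is not needed for the $r_i$ — only continuity is. One should also remark that if some $K_i$ is unbounded, compactness of $\Usm^{\mathrm{feas}}$ still holds because it is a closed subset of the compact $\Usm$; there is no coercivity issue. I would close by noting that the optimal $v^*$ need not be precise, but by the discussion on relaxed controls and the convexity of the drift in the control one may replace it by its barycenter $\Hat v^*$ without increasing any of the eigenvalues, if a precise optimal control is desired.
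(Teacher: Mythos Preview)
Your argument is correct and follows essentially the same route as the paper: identify $\sE^v$ with $\lamstr_v$ via \cref{T3.2,T3.3}, invoke compactness of $\Usm$, and use the continuity result \cref{T4.3} (applied to each of $c,r_1,\dotsc,r_m$) to conclude that the feasible set is closed and the objective is continuous, hence a minimizer exists. The paper phrases this as ``take a minimizing sequence, extract a convergent subsequence, pass to the limit,'' which is equivalent to your ``continuous function on a nonempty compact set attains its infimum.''

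One caution on your closing remark: the claim that replacing a relaxed optimal $v^*$ by its barycenter $\Hat v^*$ does not increase any of the eigenvalues is not justified here. Linearity of the drift in $u$ gives $b_{\Hat v^*}=b_{v^*}$, but for the running costs one has $c_{\Hat v^*}(x)=c\bigl(x,\Hat v^*(x)\bigr)$ versus $c_{v^*}(x)=\int c(x,u)\,v^*(\D u\mid x)$, and without convexity of $u\mapsto c(x,u)$ (and of each $u\mapsto r_i(x,u)$) there is no inequality between them, so neither the objective nor the constraints are guaranteed to be preserved. This remark should be dropped or qualified.
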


\begin{proof}
Let $v_n\in\Usm$ be a sequence of controls
along which the constraints are met,
and $\sE^{v_n}(c_{v_n})$ converges to its infimum.
Since $\Usm$ is compact under the topology of Markov controls,
we may assume, without loss of generality,
that $v_n$ converges to some $\Bar{v}\in\Usm$ as $n\to\infty$.
By \cref{T4.3} we know that $v\mapsto \lamstr_v(c_v)$,
and $v\mapsto\lamstr_v(r_{i,v})$, $i=1,\dotsc,m$, are continuous maps,
and that
$\sE^v(c_v)=\lamstr_v(c_v)$, and 
$\sE^v(r_{i,v})=\lamstr_v(r_{i,v})$ for $i=1,\dotsc,m$.
It follows that the constraints are met at $\Bar{v}$.
Therefore,
$\Bar{v}$ is an optimal Markov control for the constrained problem.
\qed\end{proof}

Another application of \cref{T4.3} is a following characterization of
$\lamstr$ which provides a 
positive answer to \cite[Conjecture~1.8]{Berestycki-15} for a certain
class of $a, b$ and $f$.
In \cref{T4.5} below, we consider the uncontrolled generator $\Lg$ in
\cref{S3}. Let us introduce the following definition from \cite{Berestycki-15}
\begin{equation*}
\lambda^{\prime}(f)\;=\;\sup\;\bigl\{\lambda\, \colon\;
\exists\, \varphi\in\Sobl^{2,d}(\Rd)\cap L^\infty(\Rd), \varphi>0, \; \Lg\varphi
+ (f-\lambda)\varphi\;\ge\; 0\text{\ a.e. in\ }\Rd\bigr\}\,.
\end{equation*}
Recall the definition of $\lambda''$ in \cref{E-l''}.
From \cite[Theorem~1.7]{Berestycki-15},  under (A1)--(A2), we have
$\lamstr(f)\le\lambda^{\prime}(f)\le \lambda^{\prime\prime}(f)$
whenever $f$ is bounded above.
It is conjectured in 
\cite[Conjecture~1.8]{Berestycki-15} that for bounded $a$, $b$, and $f$,
one has $\lambda^{\prime}(f)= \lambda^{\prime\prime}(f)$.
It should be noted from \cref{E3.1} that $\lamstr(f)$
could be strictly smaller than $\lambda^{\prime\prime}(f)$.
The following result complements those in \cite[Theorems~1.7 and~1.9]{Berestycki-15}.

\begin{theorem}\label{T4.5}
For a potential $f$
the following are true.
\begin{enumerate}[(i)]
\item
Suppose that $\sE_x(f)<\infty$. Then under (A1)--(A3) we have
$$\lamstr(f)\,\le\,\lambda^{\prime}(f)\,\le\,\sE_x(f)\,\le\,
\lambda^{\prime\prime}(f)\,.$$

\item
Let $\Lg$, $\Lyap$ and $\gamma$ satisfy \cref{ET3.3A}, and suppose that
$\sup_{\Rd}(f+\norm{f^-}_\infty)<\gamma$.
Then
$\lamstr(f)\;=\;\lambda''(f)$.

\item
Let $\Lg$, $\Lyap$ and $\ell$ satisfy \cref{EA4.1A}, and suppose
that $\beta\ell-f$ is inf-compact
for some $\beta\in(0, 1)$.
Then $\lamstr(f)=\lambda^{\prime\prime}(f)$.
\end{enumerate}
\end{theorem}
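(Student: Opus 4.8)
The plan is to prove the three inequalities of part~(i) one at a time, and then to deduce parts~(ii) and~(iii) from the observation that the only gap left to close there is $\lambda^{\prime\prime}(f)\le\lamstr(f)$. For part~(i) I begin from the right. If $\lambda$ lies in the set defining $\lambda^{\prime\prime}(f)$ in \cref{E-l''}, take the associated $\varphi\in\Sobl^{2,d}(\Rd)$ with $\inf_\Rd\varphi>0$ and $\Lg\varphi+(f-\lambda)\varphi\le0$ a.e.; applying the It\^o--Krylov formula to $\varphi$, localizing along $\uptau_n$, and using Fatou's lemma (which here points the right way) yields
\begin{equation*}
\Exp_x\Bigl[\E^{\int_0^t[f(X_s)-\lambda]\,\D s}\,\varphi(X_t)\Bigr]\;\le\;\varphi(x)\,,\qquad t>0\,,
\end{equation*}
whence $\sE_x(f)\le\lambda$; taking the infimum over such $\lambda$ gives $\sE_x(f)\le\lambda^{\prime\prime}(f)$. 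For the middle inequality, if $\lambda$ lies in the set defining $\lambda^{\prime}(f)$, take the associated bounded positive $\varphi$ with $\Lg\varphi+(f-\lambda)\varphi\ge0$ a.e.; since $\grad\varphi$ is locally bounded, $\E^{\int_0^{t\wedge\uptau_n}[f(X_s)-\lambda]\,\D s}\varphi(X_{t\wedge\uptau_n})$ is a genuine submartingale, so its expectation dominates $\varphi(x)$. The delicate step is the passage $n\to\infty$: splitting over $\{t<\uptau_n\}$ and $\{\uptau_n\le t\}$, the first contribution increases to $\Exp_x[\E^{\int_0^t[f(X_s)-\lambda]\,\D s}\varphi(X_t)]$ (finite because $\sE_x(f)<\infty$ forces $\Exp_x[\E^{\int_0^T f(X_s)\,\D s}]<\infty$ for every $T$), while the boundary contribution vanishes because $f$ is bounded below (so that $\int_0^{\uptau_n}[f-\lambda]\le\int_0^t[f-\lambda]+(\lambda-\inf_\Rd f)^+\,t$ on $\{\uptau_n\le t\}$), $\varphi$ is bounded, and $\uptau_n\to\infty$ a.s.\ under~(A2). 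Hence $\Exp_x[\E^{\int_0^t[f(X_s)-\lambda]\,\D s}\varphi(X_t)]\ge\varphi(x)$, so $\sE_x(f)\ge\lambda$, and $\lambda^{\prime}(f)\le\sE_x(f)$.

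It remains to show $\lamstr(f)\le\lambda^{\prime}(f)$, which I obtain by truncation: for $f_M\df f\wedge M$ the function $f_M$ is bounded above, so \cite[Theorem~1.7]{Berestycki-15} gives $\lamstr(f_M)\le\lambda^{\prime}(f_M)$, while $\lambda^{\prime}(f_M)\le\lambda^{\prime}(f)$ by monotonicity of $\lambda^{\prime}$ in $f$; since $f_M\to f$ in $\Lpl^1(\Rd)$ as $M\to\infty$, the monotonicity of $\lamstr$ in $f$ (clear from \cref{L2.1}\,(b) and \cref{D2.1}) together with its lower semicontinuity in the $\Lpl^1(\Rd)$ topology give $\lamstr(f_M)\to\lamstr(f)$, and letting $M\to\infty$ completes part~(i). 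For parts~(ii) and~(iii), note first that $\lamstr(f)\le\lambda^{\prime\prime}(f)$ always holds, since every $\varphi$ admissible in \cref{E-l''} is in particular a positive subsolution, whereas the infimum defining $\Hat\Lambda(f)=\lamstr(f)$ (by \cref{L2.2}\,(ii)) in \cref{D2.1A} ranges over the wider class of all positive subsolutions; hence it suffices to prove $\lambda^{\prime\prime}(f)\le\lamstr(f)$.

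Under the hypotheses of either~(ii) or~(iii), a comparison of $\Lyap$ with a multiple of itself through the Foster--Lyapunov inequality shows that $\lamstr(f)$ is finite (it is bounded above, and bounded below by $\inf_\Rd f>-\infty$), so by \cref{C2.1} there is a ground state $\Psi^*\in\Sobl^{2,d}(\Rd)$, $\Psi^*>0$, with $\Lg\Psi^*+(f-\lamstr(f))\Psi^*=0$; being in $\Sobl^{2,p}(\Rd)$ for every $p$, it is continuous and bounded away from $0$ on compacts. Fix $\epsilon>0$ and set $\varphi_\epsilon\df\Psi^*+\delta\Lyap$, with $\delta>0$ to be chosen; then $\inf_\Rd\varphi_\epsilon\ge\delta\inf_\Rd\Lyap>0$ and
\begin{equation*}
\Lg\varphi_\epsilon+(f-\lamstr(f)-\epsilon)\varphi_\epsilon
\;=\;-\epsilon\,\Psi^*+\delta\bigl(\Lg\Lyap+(f-\lamstr(f)-\epsilon)\Lyap\bigr)\,.
\end{equation*}
In case~(ii), \cref{E2.1} is exponentially ergodic, hence recurrent, by \cref{T3.1} applied to \cref{ET3.3A}, so $\lamstr(0)=0$ and $\lamstr(f)\ge\inf_\Rd f\ge-\norm{f^-}_\infty$; combined with $\sup_\Rd(f+\norm{f^-}_\infty)<\gamma$ this gives $f-\lamstr(f)\le\gamma$ on $\Rd$, and then by \cref{ET3.3A} the parenthesis above is $\le\kappa_0\Ind_{\sK}-\epsilon\Lyap$. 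In case~(iii), the inf-compactness of $\ell-f$ forces $f-\lamstr(f)-\epsilon-\ell\to-\infty$, so by \cref{EA4.1A} the parenthesis is $\le\Bar{\kappa}\,\Ind_{\sK}+(f-\lamstr(f)-\epsilon-\ell)\Lyap\le C\,\Ind_{\sK_0}-\epsilon\Lyap$ for some larger compact set $\sK_0\supset\sK$ and a finite constant $C$. In both cases the right-hand side of the displayed identity is $\le-\epsilon\,\Psi^*-\epsilon\delta\inf_\Rd\Lyap<0$ off the compact set, and is at most $-\epsilon\min_{\sK}\Psi^*+\delta\kappa_0$ (respectively $-\epsilon\min_{\sK_0}\Psi^*+\delta C$) on the compact set, which is nonpositive once $\delta$ is taken small relative to $\epsilon\min_{\sK}\Psi^*$ (respectively $\epsilon\min_{\sK_0}\Psi^*$). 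Thus $\varphi_\epsilon$ is admissible in \cref{E-l''} at the level $\lamstr(f)+\epsilon$, so $\lambda^{\prime\prime}(f)\le\lamstr(f)+\epsilon$; letting $\epsilon\downarrow0$ gives $\lambda^{\prime\prime}(f)=\lamstr(f)$.

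The main obstacle sits in part~(i): the submartingale identity behind $\lambda^{\prime}(f)\le\sE_x(f)$ cannot be closed by a naive Fatou estimate (that bound runs the wrong way), and one must genuinely control the boundary term $\Exp_x\bigl[\E^{\int_0^{\uptau_n}[f(X_s)-\lambda]\,\D s}\varphi(X_{\uptau_n})\Ind_{\{\uptau_n\le t\}}\bigr]$ as $n\to\infty$ --- precisely the step where the hypotheses $\sE_x(f)<\infty$ and $\inf_\Rd f>-\infty$ are used. In parts~(ii)--(iii) the only genuine choice is the size of the corrector $\delta\Lyap$: it must not overwhelm $\epsilon\,\Psi^*$ on the exceptional compact set, which is automatic since $\min_{\sK}\Psi^*>0$.
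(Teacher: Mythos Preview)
Your proof is correct. Part~(i) follows the same lines as the paper's argument; your handling of the passage $n\to\infty$ in the submartingale inequality is in fact more careful, and your truncation $f_M=f\wedge M$ is a legitimate way to reduce the inequality $\lamstr(f)\le\lambda'(f)$ to the bounded-above case covered by \cite[Theorem~1.7]{Berestycki-15}, whereas the paper simply cites that result directly.

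Parts~(ii) and~(iii), however, take a genuinely different route. The paper argues by approximation: it modifies $f$ outside a large ball to obtain $f_n$ (in case~(ii), $f_n=\chi_n f+(1-\chi_n)\norm{f}_\infty$), invokes the continuity result of \cref{R4.1} to get $\lamstr(f_n)\to\lamstr(f)$, and then shows via the stochastic representation that the ground state $\Psi^*_n$ of $\Lg^{f_n}$ satisfies $\inf_{\Rd}\Psi^*_n>0$, so that $\lamstr(f_n)\ge\lambda''(f_n)\ge\lambda''(f)$. Your argument bypasses all of this by constructing a single explicit competitor $\varphi_\epsilon=\Psi^*+\delta\Lyap$ which is directly admissible in \cref{E-l''} at level $\lamstr(f)+\epsilon$: the Foster--Lyapunov inequality controls $\Lg\Lyap$ off a compact set, and the strict positivity of $\Psi^*$ on that compact set absorbs the residual term once $\delta$ is chosen small. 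This is more elementary and self-contained --- it does not rely on the continuity machinery of \cref{T4.3}/\cref{R4.1} nor on any stochastic representation of $\Psi^*$. The paper's approximation, on the other hand, yields as a byproduct the convergence $\lamstr(f_n)\to\lamstr(f)$ for a specific modification of $f$ at infinity, which is of some independent interest. Both approaches hinge on the same structural input: a Lyapunov function $\Lyap$ that dominates the potential at infinity.
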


\begin{proof}
We first show (i).
By \cite[Theorem~1.7\,(ii)]{Berestycki-15} we have $\lamstr(f)\le\lambda^{\prime}(f)$.
Let 
$\varphi\in\Sobl^{2,d}(\Rd)\cap L^\infty(\Rd)$, $\varphi>0$, be such that 
$$\Lg\varphi+ (f-\lambda)\varphi\;\ge\; 0\,.$$
Recall that $\uptau_n$ is the exit time from the open ball $B_n(0)$.
Therefore, applying the It\^{o}--Krylov formula,  we obtain
\begin{equation}\label{ET4.5A}
\varphi(x)\;\le\; \Exp_x\Bigl[e^{\int_0^{\uptau_n\wedge T} [f(X_s)-\lambda]\, \D{s}}\,
\varphi(X_{\uptau_n\wedge T})\Bigr]
\;\le\; \Bigl(\sup_{\Rd}\varphi\Bigr)\, \Exp_x\Bigl[e^{\int_0^{\uptau_n\wedge T}
[f(X_s)-\lambda]\, \D{s}}\Bigr]\;,
\quad T\ge 0\,.
\end{equation}
Since $\sE_x(f)$ is finite, letting $n\to\infty$ in \cref{ET4.5A},
taking logarithms on both sides, dividing by $T$ and then letting
$T\to\infty$ we obtain $\lambda\le\sE_x(f)$.
This implies $\lambda^\prime(f)\le\sE_x(f)$.
Now suppose 
$\varphi\in\Sobl^{2,d}(\Rd)$, with $\inf_\Rd\,\varphi>0$, satisfies
$$\Lg\varphi+ (f-\lambda)\varphi\;\le\; 0\,.$$
Repeating the analogous calculation as above, we obtain $\lambda\ge \sE_x(f)$,
which implies that $\sE_x(f)\le \lambda^{\prime\prime}(f)$.

Next we prove (ii). Since $\lamstr(f+c)=\lamstr(f) +c$ for any constant $c$,
we may replace $f$ by $f+\norm{f^-}_\infty$.
Therefore, $f$ is non-negative and $\norm{f}_\infty<\gamma$.
By (i) we have $\lamstr(f)\le\lambda''(f)$.
Let $\chi_n\colon\Rd\to[0, 1]$ be a cut-off function such that
$\chi_n(x)=1$ for $\abs{x}\le n$, and $\chi_n(x)=0$ for $\abs{x}\ge n+1$.
Define $f_n\df\chi_n\, f + (1-\chi_n)\norm{f}_\infty$.
Let $\bigl(\Psi^*_n,\lamstr(f_n)\bigr)$ denote the principal
eigenpair of $\Lg^{f_n}$.
By \cref{R4.1} we have $\lamstr(f_n)\to\lamstr(f)$ as $n\to\infty$.
Thus to complete the proof it is enough to show that $\inf_{\Rd}\Psi_n>0$,
which implies that $\lamstr(f_n)=\lambda''(f_n)\ge\lambda''(f)$ for all $n$, and thus 
$\lamstr(f)\ge\lambda''(f)$.
Note that $\lamstr(f_n)\le\sE(f_n)\le \norm{f}_\infty$ for all $n$. Now fix $n$
and let $\uuptau_n$ be the first hitting time to the ball $B_n$.
Then applying the It\^o--Krylov formula to
\begin{equation*}
\Lg\Psi^*_n + \bigl(f_n-\lamstr(f_n)\bigr)\, \Psi^*_n\,=\,0
\end{equation*} 
together with Fatou's lemma, we have
\begin{equation*}
\min_{z\in B_{n+1}}\Psi^*_n(z)\;\le\;
\Exp_x\Bigl[\E^{\int_0^{\uuptau_n} [f_n(X_s)-\lamstr(f_n)]\, \D{s}}\,
\Psi^*_n(X_{\uuptau_n})\Bigr]\;\le\; \Psi^*_n(x)
\end{equation*}
for all $x\in \overline{B}^c_{n+1}(0)$.
Hence $\inf_{\Rd}\Psi^*_n>0$ which completes the proof.

The proof of (iii) is completely analogous to the proof of part (ii).
Since $\beta\ell-f^+$ is inf-compact, we can find $g\colon\Rd\to\RR_+$,
such that $\lim_{\abs{x}\to\infty} g(x)=\infty$,
and $\beta\ell-f^+ - g$ is inf-compact.
We let $f_n=\chi_n f + (1-\chi_n)(g + f^+)$.
Note that 
$$\inf_n\,(\beta\ell-f_n)\;=\;\inf_{n\in\NN}\,
\bigl(\chi_n (\beta\ell-f)+(1-\chi_n)(\beta\ell-f^+-g)\bigr)$$
is inf-compact.
On the other hand, $f_n\ge f$ for all $n$.
The rest follows as part (ii).
\qed\end{proof}

\section*{Acknowledgements}The research of Ari Arapostathis
was supported in part by the Army Research Office through grant W911NF-17-1-001,
in part by the National Science Foundation through grant DMS-1715210,
and in part by the Office of Naval Research through grant N00014-16-1-2956.
The research of Anup Biswas was supported in part by an INSPIRE faculty fellowship,
and a DST-SERB grant EMR/2016/004810.

\def\polhk#1{\setbox0=\hbox{#1}{\ooalign{\hidewidth
  \lower1.5ex\hbox{`}\hidewidth\crcr\unhbox0}}}

\end{document}